\newtheorem{prop}{Proposition}[section]
\newtheorem{coro}[prop]{Corollary}
\newtheorem{thm}[prop]{Theorem}
\newtheorem{lemma}[prop]{Lemma}
\newtheorem{induction}[prop]{Induction}
\begin{document}
\title{The number of string C-groups of high rank}
\author[P. J. Cameron]{Peter J. Cameron}
\address{Peter J. Cameron,
School of Mathematics and Statistics, 
University of St Andrews,
North Haugh,
St Andrews, Fife KY16 9SS,
UK
}
\email{pjc20@st-andrews.ac.uk}

\author[M. E. Fernandes]{Maria Elisa Fernandes}
\address{Maria Elisa Fernandes,
Center for Research and Development in Mathematics and Applications, Department of Mathematics, University of Aveiro, Portugal
}
\email{maria.elisa@ua.pt}

\author[D. Leemans]{Dimitri Leemans}
\thanks{This research was supported by an Action de Recherche Concertée of the Communauté Française Wallonie Bruxelles 	and by the Center for Research and Development 
in Mathematics and Applications (CIDMA) through the Portuguese 
Foundation for Science and Technology 
(FCT - Fundação para a Ciência e a Tecnologia), 
references UIDB/04106/2020 and UIDP/04106/2020. Dimitri Leemans also thanks warmly the Université Libre de Bruxelles for awarding a Chaire Internationale IN to Maria Elisa Fernandes to support a one-month stay during which this work was finalized.}
\address{Dimitri Leemans, Universit\'e Libre de Bruxelles, D\'epartement de Math\'ematique, C.P.216 - Alg\`ebre et Combinatoire, Boulevard du Triomphe, 1050 Brussels, Belgium
}
\email{leemans.dimitri@ulb.be}

\date{}
\maketitle

\begin{abstract}
If $G$ is a transitive group of degree $n$ having a string C-group of rank $r\geq (n+3)/2$, then $G$ is necessarily the symmetric group $S_n$.
We prove that if $n$ is large enough, up to isomorphism and duality, the number of string C-groups of rank $r$ for $S_n$ (with $r\geq (n+3)/2$) is the same as the number of string C-groups of rank $r+1$ for $S_{n+1}$. 
This result and the tools used in its proof, in particular the rank and degree extension, imply that if one knows the string C-groups of rank $(n+3)/2$ for $S_n$ with $n$ odd, one can construct from them all string C-groups of rank $(n+3)/2+k$ for $S_{n+k}$ for any positive integer $k$. 
The classification of the string C-groups of rank $r\geq (n+3)/2$ for $S_n$ is thus reduced to classifying string C-groups of rank $r$ for $S_{2r-3}$.
A consequence of this result is the complete classification of all string C-groups of $S_n$ with rank $n-\kappa$ for $\kappa\in\{1,\ldots,6\}$,  when $n\geq 2\kappa+3$, which extends previously known results.
The number of string C-groups of rank $n-\kappa$, with $n\geq 2\kappa +3$, of this classification gives the following sequence of integers indexed by $\kappa$ and starting at $\kappa = 1$:
 $$(1,1,7,9,35,48)$$ 
 This sequence of integers is new according to the On-Line Encyclopedia of Integer Sequences. It will be available as sequence number A359367.

\end{abstract}

\noindent \textbf{Keywords:} Abstract Regular Polytopes, String C-Groups, Symmetric Groups, Permutation Groups, Coxeter Groups.

\noindent \textbf{2000 Math Subj. Class:} 20B30, 52B11.

\section{Introduction}\label{intrud}
String C-group representations (or string C-groups for short) of a group $G$ consist of the group $G$ and an ordered set $S$ of generators of $G$ that are all involutions and that satisfy two properties called the string property and the intersection property (see Section~\ref{Cgroups} for the definitions). 
As observed in~\cite{arp}, string C-groups are in one-to-one correspondence with abstract regular polytopes. 
Starting from an abstract regular polytope, one can choose a flag of the polytope and obtain from it a set of involutory automorphisms that generate the group of automorphisms of the polytope. In the other direction, the usual way to reconstruct an abstract regular polytope from a string C-group is to use the so-called Tits algorithm, introduced by Jacques Tits in~\cite{Tits57}.
String C-groups are also interesting in the study of the subgroup structure of Coxeter groups as they are smooth quotients of Coxeter groups with a linear diagram.

The number of generators is the \emph{rank} of the string C-group. It is easy
to see that the string generators form an independent generating set for the
group $G$. The size of a minimal generating set for a permutation group of
degree $n$ is at most $n-1$, a result of Whiston~\cite{Whiston2000} who shows that an independent generating set of size $n-1$ necessarily generates the whole symmetric group $S_n$.
A string C-group of rank $n-1$ for $S_n$ can be obtained by taking the well known set of generators $\{(i,i+1): i \in \{1, \ldots, n-1\}\}$ consisting of consecutive transpositions. It was already known by Moore~\cite{Moore1896} and together with $S_n$, it gives a string C-group of maximal rank of $S_n$. This string C-group corresponds to the $(n-1)$-simplex.

In 2010, Fernandes and Leemans started working on string C-groups of high rank for $S_n$. They looked at the classification of all string C-groups of $S_n$ for $5\leq n \leq 9$ available in~\cite{LVatlas}.
These data suggested the following three theorems proved in~\cite{fl}.
In the first theorem, they proved that the group $S_n$ has, up to isomorphism and duality, a unique string C-group of rank $n-1$ for $n\geq 5$. It is Moore's representation mentioned above.
In the second theorem, they proved that $S_n$ has, up to isomorphism and duality, a unique string C-group of rank $n-2$ for $n\geq 7$.
Finally, in the third theorem, they proved that for $n\geq 4$, the group $S_n$ has string C-groups of rank $r$ for each $3\leq r\leq n-1$.

In 2011, together with Mixer, they started looking at alternating groups $A_n$ and managed to produce string C-groups of rank $\lfloor (n-1)/2 \rfloor$ for $n\geq 12$~\cite{flm,flm2}.  They then conjectured that these representations are of the highest possible rank for $A_n$ when $n\geq 12$.
They also pushed the classifications of string C-groups further to obtain all string C-groups of $S_n$ for $n\leq 14$ (see Table~\ref{sn} where the numbers of such representations are given for ranks $n-k$ with $1\leq k\leq 6$ and $5\leq n\leq 16$).
\begin{table}
\begin{tabular}{|c|c|c|c|c|c|c|c|c||}
\hline
$S_n$&Rk $n-1$&Rk $n-2$&Rk $n-3$&Rk $n-4$&Rk $n-5$&Rk $n-6$\\
\hline 
$S_5$&1&4&&&&\\
$S_6$&1&4&2&&&\\
$S_7$&1&1&7&35&&\\
$S_8$&1&1&11&36&68&\\
$S_9$&1&1&7&7&37&129\\
$S_{10}$&1&1&7&13&52&203\\
$S_{11}$&1&1&7&9&25&43\\
$S_{12}$&1&1&7&9&40&75\\
$S_{13}$&1&1&7&9&35&41\\
$S_{14}$&1&1&7&9&35&54\\
$S_{15}$&1&1&7&9&35&48\\
$S_{16}$&1&1&7&9&35&48\\
\hline
\end{tabular}
\caption{The number of pairwise nonisomorphic string C-groups of rank $n-k$ for $S_n$ with $1\leq k \leq 6$ and $5\leq n\leq 16$.}\label{sn}
\end{table}
Looking at the computational results, they proved that $S_n$ has, up to isomorphism and duality, 7 (resp. 9) string C-groups of rank $n-3$ (resp. $n-4$) when $n\geq 9$ (resp. 11)~\cite{extension}.

In 2016, Cameron, Fernandes, Leemans and Mixer proved that a string C-group for a transitive imprimitive group of degree $n$ has rank at most $\lfloor(n+2)/2\rfloor$ and that for a primitive group,  other than $S_n$ or $A_n$, the rank is bounded above by $n/2$. 
 They then proceeded to prove the conjecture above, what they now call the ``Aveiro Theorem''~\cite[Theorem 1.1]{2017CFLM}, which states that the maximal rank of a string C-group for $A_n$ is $\lfloor (n-1)/2\rfloor$ when $n\geq12$. 
 
 Similarly to what happens to the symmetric groups, Fernandes and Leemans proved in~\cite{2019fl} that the range
 for alternating groups is the interval from 3 to $\lfloor (n-1)/2\rfloor$ when $n\geq 12$. A major tool used in that proof is the Rank Reduction theorem by Brooksbank and Leemans~\cite[Theorem 1.1]{2019bl}.


As corollary of the aforementioned results, if $G$ is a transitive group of degree $n$ having a string C-group representation of rank $r\geq (n+3)/2$, then $G$ is necessarily the symmetric group $S_n$.
The authors then conjectured that if $n$ is large enough, up to isomorphism and duality, the number of string C-groups of rank $r$ for $S_n$, with $r\geq (n+3)/2$, is the same as the number of string C-group representations of rank $r+1$ for $S_{n+1}$ (see~\cite[Conjecture 1]{survey}).
The main aim of this paper is to prove this conjecture.

Let $\mathcal S(n,r)$ be the set of all string C-group representations of rank $r$ for $S_n$.
Define a relation $\sim$ on $\mathcal S(n,r)\times \mathcal S(n,r)$ by saying that for any elements $P, Q \in\mathcal S(n,r)$, $P\sim Q$ if and only if $P$ is isomorphic to $Q$ or to the dual of $Q$. The relation $\sim$ is an equivalence relation. 
Let $\Sigma^{\kappa}(n) = \mathcal S(n,n-\kappa)/\sim$. As mentioned before, the results of~\cite{fl,extension} give the following sequence.
\begin{center}
\begin{tabular}{l}
$|\Sigma^1(n)|=1$ for $n\geq 5$\\
$|\Sigma^2(n)|=1$ for $n\geq 7$\\
$|\Sigma^3(n)|=7$ for $n\geq 9$\\
$|\Sigma^4(n)|=9$ for $n\geq 11$\\
\end{tabular}
\end{center}
In addition, relying on computational results, it was conjectured that  
\begin{center}
\begin{tabular}{l}
$|\Sigma^5(n)|=35$ for $n\geq 13$ and \\
$|\Sigma^6(n)|=48$ for $n\geq 15$. 
\end{tabular}
\end{center}
In this paper we prove the following result, which implies the above conjectures.
\begin{thm}\label{main}
For each fixed integer $\kappa\geq 1$, there exists an integer $c_\kappa$ such that, for all $n\geq 2\kappa +3$, $|\Sigma^\kappa(n)| = c_\kappa$.
%
%
\end{thm}

This theorem and the tools used in its proof, in particular the rank and degree extension described in Section~\ref{RD}, imply that if one knows the string C-groups of rank $(n+3)/2$ for $S_n$ with $n$ odd, one can construct from them all string C-groups of rank $(n+3)/2+k$ for $S_{n+k}$ for any positive integer $k$. The classification of the string C-groups of rank $r\geq (n+3)/2$ for $S_n$ is thus reduced to classifying string C-groups of rank $r$ for $S_{2r-3}$.


The paper is organised as follows.
In Section~\ref{Cgroups}, we give the basic definitions about string C-groups.
We also include in that section some results on sesqui-extensions of string C-groups which were introduced in \cite{flm} and developed in \cite{flm2}.

In Section~\ref{T} we collect the known results on the upper bounds for the rank of string C-groups $\Gamma=(G,S)$ where $G$ is a transitive permutation group. We include in that section the classification of the highest rank string C-groups for the primitive groups of degree $n$ other than $S_n$ and $A_n$, and the classification of the string C-groups of highest rank for transitive imprimitive groups.

The study of string C-groups of high rank, specially for the alternating and symmetric  groups of degree $n$, was accomplished using permutation representation graphs of these groups on $n$ points. 
For a better understanding of these representations we use  a subgraph called fracture graph \cite{extension}. The concept of fracture graph turns out to be essential to the classification of the string C-groups for the symmetric group $S_n$ with ranks $n-3$ and $n-4$. This concept was developed further in \cite{2017CFLM} where a new subgraph appeared in the proof of the  ``Aveiro Theorem'', the 2-fracture graph.
In Section~\ref{frac} we classify all possible permutation representation graphs of string C-groups,  which have rank at least $(n-1)/2$ and admit a 2-fracture graph.
This classification includes string C-groups of the highest rank for alternating groups of odd degree. So it gives an important contribution
to the classification of the string C-groups of the highest rank for alternating groups. This classification has not yet been accomplished.

At the end of Section~\ref{frac}, we introduce the notion of a split in a permutation representation graph of a string group generated by involutions (sggi).
To be a little bit more precise, a permutation representation graph has a split if it has a fracture graph but no 2-fracture graph.

In Section~\ref{SSplit} we analyse the string C-groups that have a permutation representation graph with a split. The section begins with the definition of a perfect split. 
The main result here, is the classification of the sggi's  for $S_n$ of rank  $r\geq n/2$ that have a permutation representation graph with splits but no perfect splits.

In Section~\ref{RD}, we introduce a rank and degree operation (which we call the r\&d-extension) that, given a sggi of degree $n$ and rank $r$, constructs a sggi of degree $n+1$ and rank $r+1$ provided the sggi given has a perfect split.
We also describe this operation on string C-groups, having as main concern to give sufficient conditions to ensure that the r\&d extension of a string C-group is a string C-group.

In Section~\ref{se} we give special attention  to sesqui-extensions of alternating or symmetric groups and introduce the concepts of proper and improper sesqui-extensions.

In Section~\ref{tranS} we consider transitive string C-groups having a perfect split and show that when the rank is sufficiently large the r\&d-extension is possible.

Finally, in Section~\ref{theTh} we use the r\&d-extension to determine a one-to-one correspondence between $\Sigma^{\kappa}(n)$ and $\Sigma^{\kappa}(n+1)$ when $n\geq 2\kappa + 3$ and therefore prove Theorem~\ref{main}.

\section{String C-groups}\label{Cgroups}

A pair $\Gamma:=(G,S)$ is a \emph{string group generated by involutions} (for short, sggi) if $G := \langle S \rangle$ and $S := \{\rho_0, \ldots, \rho_{r-1}\}$ is an ordered set of involutions
satisfying the following property, called the {\em commuting property}.
\[\forall i,j\in\{0,\ldots, r-1\}, \;|i-j|>1\Rightarrow (\rho_i\rho_j)^2=1\]
If in addition $S$ satisfies the following property called the {\em intersection property}, that is,
\[\forall J, K \subseteq \{0,\ldots,r-1\}, \langle \rho_j \mid j \in J\rangle \cap \langle \rho_k \mid k \in K\rangle = \langle \rho_j \mid j \in J\cap K\rangle,\]
then $\Gamma$ is a {\em string C-group representation} or {\em string C-group} for short. In this case $G$ is a quotient of a Coxeter group with the following linear diagram (where $p_i$ is the order of $\rho_{i-1}\rho_i,\,i\in\{1,\ldots,r-1\}$).
$$ \xymatrix@-1pc{*{\bullet}  \ar@{-}[rr]^{p_1} && *{\bullet} \ar@{-}[rr]^{p_2} && *{\bullet}  \ar@{-}[rr]^{p_3} && *{\bullet} \ar@{.}[rr] && *{\bullet} \ar@{-}[rr]^{p_{r-2}} && *{\bullet} \ar@{-}[rr]^{p_{r-1}} &&*{\bullet} }$$
The above diagram is called the \emph{Coxeter diagram} of $\Gamma$ and the sequence $\{p_1,\,\ldots,\,p_{r-1}\}$ is  the  \emph{(Schl\"afli) type} of $\Gamma$. 
The \emph{rank} of a string C-group (or of a sggi) $\Gamma$ is the size of $S$. The group $G$ is called {\em the group of} $\Gamma$.
If $G$ is a transitive (resp. primitive, imprimitive) group, we say that $\Gamma$ is a transitive (resp. primitive, imprimitive) string C-group.
The \emph{dual} of a sggi $\Gamma=(G,\{\rho_0, \ldots, \rho_{r-1}\})$ is the sggi $(G,\{\rho_{r-1}, \ldots, \rho_0\})$.

Two string C-groups $\Gamma = (G,S)$ and $\Phi=(H,T)$ are {\em isomorphic} if $G\cong H$ and there exists an isomorphism $\varphi:G\rightarrow H$ such that $\varphi(S)=T$. 
Most of the results presented here are up to isomorphism and duality, meaning that we make no difference between string C-groups that are dual to each other.

Let $\Gamma:=(G,S)$ be a sggi with $S:=\{\rho_0,\,\ldots,\,\rho_{r-1}\}$. 
We let
\begin{eqnarray*}
G_{i_1,\ldots,i_k} &:=& \langle \rho_j : j \notin \{i_1,\ldots,i_k\}\rangle;\\
G_{\{i_1,\ldots,i_k\}} &:=& \langle \rho_j : j \in \{i_1,\ldots,i_k\}\rangle;\\
G_{<i} &:=& \langle \rho_0,\ldots, \rho_{i-1}\rangle\qquad (i\neq 0);\\
G_{>i} &:=& \langle \rho_i+1,\ldots, \rho_{r-1}\rangle\qquad (i\neq r-1).
\end{eqnarray*}
(Note the distinction between the first two definitions.)

Thus $G_{<0}$ and $G_{>r-1}$ are the trivial group.
We then let
\begin{eqnarray*}
\Gamma_{i_1,\ldots,i_k} &:=&(G_{i_1,\ldots,i_k}, \{\rho_j : j \notin \{i_1,\ldots,i_k\}\});\\
\Gamma_{\{i_1,\ldots,i_k\}} &:=&(G_{\{i_1,\ldots,i_k\}},\{ \rho_j : j \in \{i_1,\ldots,i_k\}\});\\
\Gamma_{<i} &:=& (G_{<i},\{ \rho_0,\ldots, \rho_{i-1}\})\qquad (i\neq 0);\\
\Gamma_{>i} &:=& (G_{<i},\{ \rho_i+1,\ldots, \rho_{r-1}\})\qquad (i\neq r-1).
\end{eqnarray*}
The {\em maximal parabolic subgroups} of $\Gamma$ are the subgroups $G_i$ with $i\in S$.

 Also, for $i,j \in  \{0,\ldots,\,r-1\}$, we denote 
 \begin{eqnarray*}
\Gamma_i&:=&(G_i,\{ \rho_j\mid j\in \{0, \ldots, r-1\}\setminus \{ i\}\});\\
\Gamma_{i,j}&:=&(\Gamma_i)_j.
\end{eqnarray*}
Usually our sggi or string C-group $\Gamma = (G,S)$ will be such that $G$ is a subgroup of the symmetric
group $S_n$; we call the integer $n$ the \emph{degree} of $\Gamma$.

The following result shows that when $\Gamma_0$ and $\Gamma_{r-1}$ are string C-groups, the intersection property for $\Gamma$ is verified by checking only one condition.

\begin{prop}\cite[Proposition 2E16]{arp}\label{arp}
Let $\Gamma:=(G,\{\rho_0,\ldots,\rho_{r-1}\})$ be a sggi.
Suppose that $\Gamma_0$ and $\Gamma_{r-1}$ are string C-groups. 
Then $\Gamma$ is a string C-group if and only if $G_0\cap G_{r-1} = G_{0,r-1}$.
\end{prop}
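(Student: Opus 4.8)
The plan is to prove that $\Gamma$ is a string C-group precisely when $G_0 \cap G_{r-1} = G_{0,r-1}$, where the nontrivial direction is showing this single intersection condition suffices. So I would assume $\Gamma_0$ and $\Gamma_{r-1}$ are string C-groups and $G_0 \cap G_{r-1} = G_{0,r-1}$, and deduce the full intersection property
\[
\langle \rho_j : j \in J\rangle \cap \langle \rho_k : k \in K\rangle = \langle \rho_j : j \in J \cap K\rangle
\]
for all $J, K \subseteq \{0,\ldots,r-1\}$. (The reverse implication is trivial: if $\Gamma$ is a string C-group, apply the intersection property with $J = \{0,\ldots,r-1\}\setminus\{r-1\}$ and $K = \{0,\ldots,r-1\}\setminus\{0\}$ to get $G_0 \cap G_{r-1} = G_{0,r-1}$, and one always has $G_{0,r-1} \cong G_{\{1,\ldots,r-2\}}$ trivially, or rather $G_{0,r-1} = G_{0,r-1}$; the isomorphism statement is just equality here.)

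First I would record the standard reduction: it is a classical fact about sggi's (sometimes attributed to the theory of coset geometries, and implicit in \cite{arp}) that the intersection property is equivalent to the family of conditions $G_{<i} \cap G_{>i-1}\cdot(\text{something})$, but more usefully, the intersection property holds for $\Gamma$ iff it holds for each $\Gamma_{\{i,i+1,\ldots,j\}}$ and additionally $G_{<i} \cap G_{\geq i} = \langle\ \rangle$-type conditions are met. The cleanest route is an induction on the rank $r$. For $r \leq 2$ the statement is vacuous or immediate. For the inductive step, note that $\Gamma_0$ and $\Gamma_{r-1}$ have rank $r-1$ and are string C-groups by hypothesis; I want to upgrade this to $\Gamma$ itself. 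The key sub-lemma I would isolate and prove is that for a sggi $\Gamma$ whose one-element-removed subgroups $\Gamma_0, \Gamma_{r-1}$ satisfy the intersection property, the intersection property for $\Gamma$ reduces to checking $\langle \rho_j : j \in J\rangle \cap \langle \rho_k : k \in K\rangle = \langle \rho_j : j \in J \cap K\rangle$ only in the case $J \cup K = \{0,\ldots,r-1\}$, $0 \in J \setminus K$, $r-1 \in K \setminus J$; every other case either lives inside $G_0$ or inside $G_{r-1}$ (where it holds by hypothesis) or follows by a short diagram-chase using the string/commuting property to factor the relevant subgroups.

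The main work — and the main obstacle — is that last reduction together with the endgame. Once one is reduced to $J, K$ with $J \cup K = \{0,\ldots,r-1\}$, $0 \in J\setminus K$, $r-1\in K\setminus J$, write $H = J \cap K \subseteq \{1,\ldots,r-2\}$. One must show $\langle\rho_j : j\in J\rangle \cap \langle\rho_k : k\in K\rangle = \langle\rho_h : h\in H\rangle$. The inclusion $\supseteq$ is clear. For $\subseteq$, take $g$ in the intersection. Since $g \in \langle\rho_j : j\in J\rangle \leq G_{r-1}$ and $g \in \langle\rho_k : k\in K\rangle \leq G_0$, we get $g \in G_0 \cap G_{r-1} = G_{0,r-1} = \langle\rho_1,\ldots,\rho_{r-2}\rangle$. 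Now $g$ lies in the rank-$(r-2)$ string C-group $\Gamma_{0,r-1} = (\Gamma_0)_{r-1}$, which inherits the intersection property. Within that smaller group, $g \in \langle\rho_j : j\in J, j\neq 0\rangle \cap \langle\rho_k : k\in K, k\neq r-1\rangle$ — here I use that $G_0 \cap \langle\rho_k : k\in K\rangle = \langle\rho_k : k \in K\setminus\{0\}\rangle$, which itself needs the intersection property in $\Gamma_{r-1}$ (since $K \ni r-1$ is not... wait, $K$ may not contain $r-1$) — so one applies the intersection property in $\Gamma_0$ or $\Gamma_{r-1}$ as appropriate to strip the outer index. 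Applying the intersection property of $\Gamma_{0,r-1}$ then yields $g \in \langle\rho_j : j\in J\cap K\setminus\{0,r-1\}\rangle = \langle\rho_h : h\in H\rangle$, as desired. I expect the bookkeeping of which ambient subgroup ($G_0$, $G_{r-1}$, or $G_{0,r-1}$) to invoke at each stripping step to be the delicate part, and one should be careful that the chain of applications of the hypothesis is actually valid (i.e. the relevant index sets really do contain $0$, respectively $r-1$, when needed), but no deep idea beyond this layered application of the intersection property in successively smaller parabolics is required.
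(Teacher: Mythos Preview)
The paper does not supply a proof; it cites Proposition~2E16 of McMullen--Schulte~\cite{arp}. Your sketch follows the standard argument found there and is essentially correct: induct on the rank, and for the key case $0\in J\setminus K$, $r-1\in K\setminus J$, push $g\in\langle\rho_j:j\in J\rangle\cap\langle\rho_k:k\in K\rangle$ first into $G_0\cap G_{r-1}=G_{0,r-1}$ and then strip indices using the intersection property in $\Gamma_0$, $\Gamma_{r-1}$, and $\Gamma_{0,r-1}$. (Your parenthetical worry is misplaced: in that case $r-1\in K$ by assumption.)

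The one place your reduction needs firming up: when $J\cup K$ omits a middle index $i$ with $0<i<r-1$, or when both $0$ and $r-1$ lie in the same set, you are not in your main case and cannot immediately work inside $\Gamma_0$ or $\Gamma_{r-1}$. The fix is to show first, via the rank induction, that \emph{every} $\Gamma_i$ is a string C-group: its two outer parabolics $\Gamma_{0,i}$ and $\Gamma_{i,r-1}$ inherit the intersection property from $\Gamma_0$ and $\Gamma_{r-1}$, and $G_{0,i}\cap G_{i,r-1}\subseteq G_0\cap G_{r-1}=G_{0,r-1}$, whence $G_{0,i}\cap G_{i,r-1}=G_{0,i}\cap G_{0,r-1}=G_{0,i,r-1}$ by the intersection property inside $\Gamma_0$. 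Once all $\Gamma_i$ are string C-groups the remaining cases are routine. Equivalently, one may quote the companion lemma in~\cite{arp} that for a sggi the full intersection property is equivalent to the interval conditions $G_{<k}\cap G_{>j}=G_{\{j+1,\dots,k-1\}}$ for all $j<k$, each of which is exactly an instance of your main case.
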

Let  $\Gamma$ be a sggi of degree $n$ generated by a set of $r$ involutions $S:=\{\rho_0,\ldots,\rho_{r-1}\}$.
The \emph{permutation representation graph} of $\Gamma$ is a $r$-edge-labeled multigraph  with $n$ vertices and with an $i$-edge $\{a,\,b\}$ whenever $a\rho_i=b$ with $a\neq b$. 

A pair $\Gamma:=(G,S)$ is a sggi if and only if its permutation representation graph satisfies the following properties:
\begin{enumerate}
\item The graph induced by edges of label $i$ is a matching;
\item Each connected component of the graph induced by edges of labels $i$ and $j$, for $|i-j| \geq 2$, is a single vertex, a single edge, a double edge, or a square with alternating labels.
\end{enumerate}

The term sesqui-extension was first introduced in \cite{flm}. Let us recall its meaning.
Let $\Gamma := (G,\{\rho_0,\ldots,\rho_{r-1}\})$ be a sggi, and let $\tau$ be an involution in a supergroup of $G$ such that $\tau \not \in G$ and $\tau$ commutes with all of $G$.  
For a fixed $k$, we define the set $S^* := \{\rho_i \tau^{\eta_i}\mid i\in \{0,\,\ldots,\,r-1\}\}$ where $\eta_i = 1$ if $i=k$ and 0 otherwise. Let $G^*:=\langle S^*\rangle$.
The {\it sesqui-extension} of $\Gamma$ with respect to $\rho_k$ and $\tau$ is the sggi $\Gamma^* := (G^*,S^*)$.

\begin{prop} \label{sesqui0}\cite[Proposition 3.3]{flm}
If $\Gamma$ is a string C-group, and $\Gamma^*$ is a sesqui-extension of $\Gamma$ with respect to the first generator, then
$\Gamma^*$ is a string C-group.
\end{prop}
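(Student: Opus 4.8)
The plan is to use Proposition~\ref{arp}, since it reduces the verification of the intersection property for a sggi to a single condition once the two ends are known to be string C-groups. So the first step is to identify the relevant subgroups of $\Gamma^* = (G^*, S^*)$ with $S^* = \{\rho_0\tau, \rho_1, \ldots, \rho_{r-1}\}$ (taking $k=0$). Because $\tau$ is a central involution outside $G$, the map $\rho_i \mapsto \rho_i\tau^{\eta_i}$ extends to a surjective homomorphism from a quotient of the relevant Coxeter group, and one checks directly that $G^*_0 = \langle \rho_1, \ldots, \rho_{r-1}\rangle \cong G_0$ (the first generator, the only one twisted by $\tau$, has been deleted), so $\Gamma^*_0 \cong \Gamma_0$ is a string C-group by hypothesis. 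Similarly $\Gamma^*_{r-1}$ is the sesqui-extension of $\Gamma_{r-1}$ with respect to its first generator; since $\Gamma_{r-1}$ is a string C-group, one would like to invoke the statement inductively, so the cleanest organisation is an induction on the rank $r$, with the rank-$1$ (or rank-$2$) case handled by inspection.

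Next I would record the structure of $G^*$ relative to $G$. Consider the homomorphism $\pi\colon G^* \to G$ sending each generator $\rho_i\tau^{\eta_i}$ to $\rho_i$; this is well defined because $\tau$ is central and the defining relations of a sggi (involutions and the commuting relations among generators at distance $\geq 2$) are respected when one inserts a central involution into the first generator only. Then either $\tau \in G^*$, in which case $G^* = G \times \langle\tau\rangle$ and $|G^*| = 2|G|$, or $\tau \notin G^*$ and $\pi$ is an isomorphism $G^* \cong G$. In both cases every subgroup $G^*_J := \langle \rho_i\tau^{\eta_i} : i \in J\rangle$ maps isomorphically onto $G_J$ when $0 \notin J$ (no twisting occurs), and when $0 \in J$ one has $\pi(G^*_J) = G_J$ with kernel contained in $\langle\tau\rangle$; a short argument shows the kernel is the same $\langle\tau\rangle \cap G^*_J$ for every such $J$, so passing to $\pi$ changes all the relevant subgroups $G^*_0$, $G^*_{r-1}$, $G^*_{0,r-1}$ by at most a common central factor of order $2$.

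Now apply Proposition~\ref{arp}: it suffices to show $G^*_0 \cap G^*_{r-1} \cong G^*_{0,r-1}$. Since $0 \notin \{$ the index set of $G^*_0\}$, we have $G^*_0 \cong G_0$ exactly, and $G^*_0 \cap G^*_{r-1} = G^*_0 \cap G^*_{r-1}$; intersecting inside $G^*$ and projecting by $\pi$, using that $\pi$ restricted to $G^*_0$ is injective, one gets $G^*_0 \cap G^*_{r-1} \cong \pi(G^*_0) \cap \pi(G^*_{r-1}) = G_0 \cap G_{r-1}$, which equals $G_{0,r-1}$ because $\Gamma$ is a string C-group (here one uses Proposition~\ref{arp} in the forward direction for $\Gamma$, or directly the intersection property of $\Gamma$). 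Finally $G^*_{0,r-1} = \langle \rho_1, \ldots, \rho_{r-2}\rangle$ with no twisting (the index $0$ is removed), so $G^*_{0,r-1} \cong G_{0,r-1}$. Chaining these isomorphisms gives $G^*_0 \cap G^*_{r-1} \cong G^*_{0,r-1}$, as required, and $\Gamma^*$ is a string C-group.

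The main obstacle I anticipate is the bookkeeping around whether $\tau$ lies in the various subgroups $G^*_J$: the identity $G^*_0 \cap G^*_{r-1} \cong G_0 \cap G_{r-1}$ is transparent only once one knows that the "$\tau$-ambiguity" is confined to subgroups containing the twisted generator $\rho_0\tau$, and that $G^*_0$ (which omits that generator) therefore sits inside $G^*$ as a faithful copy of $G_0$. Making this precise — essentially showing that $G^* \cap \langle\tau\rangle \subseteq G^*_0$ is impossible unless $\tau=1$, equivalently that $\tau \notin G^*_0$ — is the one place where the hypothesis "with respect to the first generator" is genuinely used, and it is the crux of the argument; everything else is a routine application of Proposition~\ref{arp}.
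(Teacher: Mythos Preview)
The paper does not give its own proof of this proposition; it is quoted from \cite{flm}. Your approach is correct, and in fact it mirrors almost exactly the argument the paper does give for the converse direction in Proposition~\ref{sesqui1}: induction on rank, the identifications $\Gamma^*_0=\Gamma_0$ and $\Gamma^*_{r-1}=(\Gamma_{r-1})^*$, and then Proposition~\ref{arp} together with the projection onto $G$.

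Two small points worth tightening. First, the crucial fact is not merely that $\pi|_{G^*_0}$ is injective but that it is the \emph{identity}: since the first generator is the only one twisted, $G^*_0=\langle\rho_1,\ldots,\rho_{r-1}\rangle=G_0\le G$ literally as subgroups. Hence any $g\in G^*_0\cap G^*_{r-1}$ already lies in $G$, so $g=\pi(g)\in\pi(G^*_{r-1})=G_{r-1}$, giving $G^*_0\cap G^*_{r-1}\subseteq G_0\cap G_{r-1}=G_{0,r-1}=G^*_{0,r-1}$; the reverse inclusion is trivial. Phrase this as an inclusion of subgroups rather than ``chaining isomorphisms'': in general $\pi(A\cap B)\subsetneq\pi(A)\cap\pi(B)$, and it is the equality $G^*_0=G_0$ (not just $G^*_0\cong G_0$) that makes the step go through. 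Second, the obstacle you flag at the end is not one: $\tau\notin G$ by hypothesis and $G^*_0=G_0\le G$, so $\tau\notin G^*_0$ immediately. The hypothesis ``with respect to the first generator'' is used precisely to ensure that $G^*_0$ and $G^*_{0,r-1}$ are the untwisted groups $G_0$ and $G_{0,r-1}$.
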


\begin{lemma}\cite[Lemma 5.4]{flm2}\label{sesqui}  
If  $\Gamma^*$ is a sesqui-extension of $\Gamma$ with respect to $\rho_k$, then the following hold:
\begin{enumerate}
\item $G^*\cong G$ or $G^*\cong G\times \langle \tau\rangle\cong G\times C_2$. 
\item If the identity element of $G$ can be written as a product of generators involving an odd number of $\rho_k$'s, then $G^*\cong G\times \langle \tau\rangle$.
\item If $G$ is a finite permutation group, $\tau$ and $\rho_k$ are odd permutations, and all other $\rho_i$ are even permutations, then $G^*\cong G$.
\item Whenever $\tau\notin  G^*$, $\Gamma$ is a string C-group if and only if $\Gamma^*$ is a string C-group.
\end{enumerate}
\end{lemma}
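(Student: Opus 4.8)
The plan is to analyze the natural surjection from the free product structure down to $G^*$ via comparison with $G$. First I would observe that there is a well-defined homomorphism $\phi\colon G^*\to G$ sending each generator $\rho_i\tau^{\eta_i}$ to $\rho_i$; this is consistent because $G$ is a quotient of the Coxeter group with the linear diagram, and the relators of that diagram are respected after killing $\tau$ (the generators $\rho_i\tau^{\eta_i}$ satisfy the same commuting relations since $\tau$ is central of order $2$, and the orders of the products $(\rho_{i-1}\tau^{\eta_{i-1}})(\rho_i\tau^{\eta_i})$ differ from those of $\rho_{i-1}\rho_i$ only by the harmless central factor). The kernel of $\phi$ is contained in $\langle\tau\rangle$: indeed $G^*\langle\tau\rangle = G\langle\tau\rangle$ as subgroups of the common supergroup, and $\phi$ is just the restriction of the projection $G\langle\tau\rangle\to G$ (along the central $\langle\tau\rangle$) to $G^*$, whose kernel is $G^*\cap\langle\tau\rangle$. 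Hence either $\tau\notin G^*$ and $\phi$ is an isomorphism, giving $G^*\cong G$; or $\tau\in G^*$, in which case $G^* = G^*\cap(G\langle\tau\rangle)$ contains $\langle\tau\rangle$ and surjects onto $G$ with kernel $\langle\tau\rangle$, and since $\tau$ is central and $\tau\notin G$ one checks $G^*\cong G\times\langle\tau\rangle\cong G\times C_2$. That establishes part (a).

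For part (b): if $1_G$ can be written as a word $w(\rho_0,\dots,\rho_{r-1})$ using $\rho_k$ an odd number of times, then the corresponding word $w(\rho_0\tau^{\eta_0},\dots,\rho_{r-1}\tau^{\eta_{r-1}})$ in $G^*$ evaluates to $1_G\cdot\tau^{(\text{number of }\rho_k\text{'s})} = \tau$, because $\tau$ is central of order $2$; hence $\tau\in G^*$ and part (a) forces $G^*\cong G\times\langle\tau\rangle$. For part (c): define the sign homomorphism $\varepsilon\colon G^*\to C_2$ by composing the permutation sign on the supergroup with the inclusion; under the stated hypotheses $\varepsilon(\rho_i\tau^{\eta_i}) = -1$ for every $i$ (for $i\neq k$ because $\rho_i$ is even and $\tau$ odd, wait --- here I must be careful: the hypothesis is that $\tau$ and $\rho_k$ are odd and the other $\rho_i$ are even, so $\varepsilon(\rho_i)=+1$ for $i\neq k$ and $\varepsilon(\rho_k\tau)=(-1)(-1)=+1$). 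Thus every generator of $G^*$ is an even permutation, so $G^*\leq \Alt$ on the relevant point set, whereas $\tau$ is odd, whence $\tau\notin G^*$ and part (a) gives $G^*\cong G$.

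Finally part (d): assume $\tau\notin G^*$, so by the above $\phi\colon G^*\to G$ is an isomorphism carrying $S^*$ bijectively onto $S$ and preserving the ordering. An isomorphism of sggis transports the intersection property verbatim: for subsets $J,K\subseteq\{0,\dots,r-1\}$ we have $\phi(G^*_{\{J\}}) = G_{\{J\}}$, and intersections are preserved by $\phi$ since it is injective, so $\langle\rho_j\tau^{\eta_j}:j\in J\rangle\cap\langle\rho_k\tau^{\eta_k}:k\in K\rangle$ maps onto $G_{\{J\}}\cap G_{\{K\}}$, which equals $G_{\{J\cap K\}}$ iff the corresponding identity holds in $G^*$. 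Hence $\Gamma$ is a string C-group iff $\Gamma^*$ is. I expect the only genuinely delicate point to be the claim $G^*\langle\tau\rangle = G\langle\tau\rangle$ together with the identification $G^*\cong G\times C_2$ in the case $\tau\in G^*$ --- one must verify that $G\cap\langle\tau\rangle$ is trivial (given, since $\tau\notin G$) and that the central product with a central involution outside $G$ is genuinely a direct product; the sign and word-length bookkeeping in (b) and (c) is then routine once (a) is in hand.
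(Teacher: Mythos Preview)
The paper does not prove this lemma at all: it is quoted verbatim from \cite[Lemma 5.4]{flm2} and used as a black box. So there is no in-paper proof to compare against.

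Your argument is correct and is essentially the standard one. The cleanest way to phrase part~(a), which you eventually reach after a slight detour through Coxeter quotients, is simply: since $\tau\notin G$ and $\tau$ centralizes $G$, the subgroup $G\langle\tau\rangle$ of the ambient supergroup is the internal direct product $G\times\langle\tau\rangle$; the generators of $G^*$ lie in this product, so $G^*\le G\times\langle\tau\rangle$, and the projection onto the first factor restricts to a surjection $\phi\colon G^*\to G$ with kernel $G^*\cap\langle\tau\rangle$. The two cases follow. Your initial appeal to the Coxeter diagram relations to justify well-definedness of $\phi$ is unnecessary once you see $\phi$ as a restriction of an existing homomorphism. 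In part~(c) you momentarily misstate the parity calculation but immediately correct yourself; the final version is right: for $i\neq k$ the generator is $\rho_i$ itself (even), and $\rho_k\tau$ is a product of two odd permutations (even), so $G^*$ consists of even permutations while $\tau$ is odd. Parts~(b) and~(d) are straightforward as you have them.
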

We now extend Proposition~\ref{sesqui0}.
\begin{prop}\label{sesqui1}
Let $\Gamma^*$ be a sesqui-extension of a sggi $\Gamma$ with respect to $\rho_0$.
Then $\Gamma^*$ is a string C-group if and only if $\Gamma$ is a string C-group.
\end{prop}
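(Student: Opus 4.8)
The plan is to reduce the statement to Proposition~\ref{sesqui0} together with part~(4) of Lemma~\ref{sesqui}, handling the two possible behaviours of the sesqui-extension separately. First I would recall the dichotomy from Lemma~\ref{sesqui}(1): either $G^*\cong G$, which forces $\tau\in G^*$, or $G^*\cong G\times\langle\tau\rangle$, in which case $\tau\notin G^*$. So the argument splits into exactly these two cases.

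In the case $\tau\notin G^*$, there is nothing new to do: Lemma~\ref{sesqui}(4) says directly that $\Gamma$ is a string C-group if and only if $\Gamma^*$ is, and this holds regardless of which generator we sesqui-extend with respect to. So the content of the proposition is entirely in the case $\tau\in G^*$ (equivalently $G^*\cong G$). Here I would argue as follows. Proposition~\ref{sesqui0} already gives the forward direction: if $\Gamma$ is a string C-group then so is $\Gamma^*$, since the sesqui-extension is with respect to the first generator $\rho_0$. It remains to prove the converse in this case: assuming $\Gamma^*=(G^*,S^*)$ is a string C-group and $\tau\in G^*$, show $\Gamma$ is a string C-group. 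The key observation is that $\tau$ commutes with everything, so conjugation is trivial and the map sending $\rho_i\tau^{\eta_i}\mapsto\rho_i$ is a well-defined surjective homomorphism $\pi\colon G^*\to G$ (its kernel is $\langle\tau\rangle\cap G^*$, which is trivial precisely because $\tau\notin G$; indeed $\pi$ is an isomorphism when $G^*\cong G$). Under $\pi$, the generating set $S^*$ maps bijectively to $S$, so $\Gamma$ is the image of $\Gamma^*$ under a group isomorphism carrying generators to generators — hence $\Gamma$ inherits the commuting property and, crucially, the intersection property from $\Gamma^*$. That yields the converse.

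A cleaner way to package the whole thing, which I would probably adopt to avoid splitting into cases by hand: note that $S$ and $S^*$ generate subgroups of $G\times\langle\tau\rangle$ and the projection $G\times\langle\tau\rangle\to G$ restricts to an isomorphism on $G^*$ exactly when $\tau\notin G^*$, and to an isomorphism $G^*\to G$ when $G^*\cong G$; in either case one has enough of a correspondence between the parabolic subgroups $G^*_{i_1,\dots,i_k}$ and $G_{i_1,\dots,i_k}$ (they differ only by the central factor $\langle\tau\rangle$ or not at all) that the intersection property for one is equivalent to the intersection property for the other. Concretely, $G^*_{i_1,\dots,i_k}$ is either $G_{i_1,\dots,i_k}$ or $G_{i_1,\dots,i_k}\times\langle\tau\rangle$ depending on whether $0\in\{i_1,\dots,i_k\}$, and intersecting two such subgroups commutes with adjoining the central $\langle\tau\rangle$; so the intersection property $\langle\rho_j:j\in J\rangle\cap\langle\rho_k:k\in K\rangle=\langle\rho_j:j\in J\cap K\rangle$ transfers verbatim between $\Gamma$ and $\Gamma^*$. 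The commuting property is immediate on both sides since $\tau$ is central.

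The main obstacle is the bookkeeping in this last step: one must be careful about which parabolic subgroups of $\Gamma^*$ contain $\tau$ and which do not, since this depends on whether the index $0$ (the distinguished generator) lies in the relevant index set, and on whether $\tau\in G^*$ at all. Getting the case $\tau\notin G^*$ folded in cleanly — where $G^*$ is a genuine subgroup of $G\times\langle\tau\rangle$ and not the whole product — is where Lemma~\ref{sesqui}(4) does the real work and should simply be quoted rather than re-derived. Once the correspondence of parabolic subgroups is set up, the equivalence of the intersection properties is formal.
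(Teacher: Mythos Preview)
Your argument has two related errors that leave a genuine gap.

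First, the dichotomy is stated backwards: $G^*\cong G$ corresponds to $\tau\notin G^*$, and $G^*\cong G\times\langle\tau\rangle$ to $\tau\in G^*$ (if $\tau\in G^*$ then $\rho_0=(\rho_0\tau)\tau\in G^*$, whence $G\le G^*$ and $G^*=G\times\langle\tau\rangle$). So the case already dispatched by Lemma~\ref{sesqui}(4) is exactly the case $G^*\cong G$, and the case requiring work is $G^*=G\times\langle\tau\rangle$.

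Second, and fatally, in that remaining case the projection $\pi\colon G^*\to G$ has kernel $\langle\tau\rangle$, not the trivial group. You write ``trivial precisely because $\tau\notin G$'', but this conflates $G$ with $G^*$: the kernel is $\langle\tau\rangle\cap G^*=\langle\tau\rangle$ here. So $\pi$ is a proper quotient map, not an isomorphism, and quotients of string C-groups by central subgroups need not be string C-groups. Your ``cleaner'' route has the same defect. When $0$ is among the retained generators, $G^*_{i_1,\dots,i_k}$ is itself a sesqui-extension of $G_{i_1,\dots,i_k}$, and this extension may be improper: then $G^*_{i_1,\dots,i_k}$ is isomorphic to $G_{i_1,\dots,i_k}$ but sits \emph{diagonally} in $G\times\langle\tau\rangle$ rather than equalling $G_{i_1,\dots,i_k}$ or $G_{i_1,\dots,i_k}\times\langle\tau\rangle$. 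Intersections are sensitive to this, so the assertion that ``intersecting two such subgroups commutes with adjoining the central $\langle\tau\rangle$'' is not justified.

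The paper does not attempt a direct parabolic-by-parabolic transfer. It proceeds by induction on the rank, using Proposition~\ref{arp}: since $\Gamma^*_0=\Gamma_0$ and $\Gamma^*_{r-1}$ is again a sesqui-extension of $\Gamma_{r-1}$ with respect to $\rho_0$, both $\Gamma_0$ and $\Gamma_{r-1}$ are string C-groups (the first trivially, the second by induction). It then argues via the chain
\[
G_{0,r-1}=G^*_{0,r-1}=G^*_0\cap G^*_{r-1}=G_0\cap G^*_{r-1}=G_0\cap G_{r-1},
\]
where the first three equalities are immediate from $G^*_0=G_0$, $G^*_{0,r-1}=G_{0,r-1}$, and the intersection property of $\Gamma^*$. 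The whole content sits in the final equality $G_0\cap G^*_{r-1}=G_0\cap G_{r-1}$, and it is precisely this step---not the mere existence of $\pi$---that your approach never confronts.
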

\begin{proof}
This proposition holds trivially when the rank of $\Gamma$ is $1$.
Suppose by induction that it is true for rank smaller than $r$.
Let $\Gamma^*$ be a string C-group of rank $r$.


As $\Gamma_0=\Gamma^*_0$, $\Gamma_0$ is a string C-group.
Now either  $\Gamma^*_{r-1}\cong \Gamma_{r-1}$ or $\Gamma^*_{r-1}$ is a proper sesqui-extension with respect to $\rho_0$ of  $\Gamma_{r-1}$. Then, by induction, $\Gamma_{r-1}$ is a string C-group.

We have
$$G_{0,r-1}=G^*_{0,r-1}=G^*_0\cap G^*_{r-1}=G_0\cap G^*_{r-1}=G_0\cap G_{r-1}.$$
Hence $\Gamma$ is a string C-group.

The converse follows from Proposition~\ref{sesqui0}.
\end{proof}


In this paper $D_n$ denotes the dihedral group of degree $n$ and order $2n$.

We give in the following proposition a series of permutation representation graphs that do not satisfy the intersection property. These graphs, when appearing as subgraphs of larger permutation representation graphs, will permit us later in the paper to prove that several sggi's we will look at do not satisfy the intersection property.

\begin{prop}\label{IPfails}
If $\Gamma:=(G,\{ \rho_i\mid i\in I\})$ is a sggi that has one of the following permutation representation graphs, then $\Gamma$ is not a string C-group.
\begin{center}
\begin{small}
\begin{tabular}{cc}
\begin{tabular}{cc}
(1) &$\xymatrix@-0.7pc{  *+[o][F]{} \ar@{-}[r]^1 & *+[o][F]{}  \ar@{=}[r]^2_0 & *+[o][F]{} \ar@{-}[r]^1 & *+[o][F]{} \ar@{-}[r]^2 & *+[o][F]{} \ar@{-}[r]^3& *+[o][F]{} \\
 & & &*+[o][F]{} \ar@{-}[r]_2  \ar@{-}[u]_0&  *+[o][F]{} \ar@{-}[u]_0 \ar@{-}[r]_3 & *+[o][F]{}  \ar@{-}[u]_0  } $\\
(2) &$\xymatrix@-0.7pc{   *+[o][F]{}  \ar@{=}[r]^2_0 & *+[o][F]{} \ar@{-}[r]^1 & *+[o][F]{} \ar@{-}[r]^2 & *+[o][F]{} \ar@{-}[r]^3 & *+[o][F]{} \\
  & *+[o][F]{} \ar@{-}[r]_1 &*+[o][F]{} \ar@{-}[r]_2  \ar@{-}[u]_0&  *+[o][F]{} \ar@{-}[u]_0 \ar@{-}[r]_3 & *+[o][F]{}  \ar@{-}[u]_0 } $\\
(3)&$\xymatrix@-0.7pc{  *+[o][F]{} \ar@{-}[r]^0 & *+[o][F]{}  \ar@{-}[r]^1 & *+[o][F]{} \ar@{-}[r]^2 & *+[o][F]{} \ar@{-}[r]^1 & *+[o][F]{} \ar@{-}[r]^2&*+[o][F]{} \ar@{-}[r]^3 &*+[o][F]{} }$\\
(4) &$\xymatrix@-0.7pc{  *+[o][F]{} \ar@{-}[r]^1 & *+[o][F]{}  \ar@{-}[r]^2 & *+[o][F]{} \ar@{-}[r]^1 & *+[o][F]{} \ar@{-}[r]^2 & *+[o][F]{} \ar@{-}[r]^3& *+[o][F]{} \\
 & & &*+[o][F]{} \ar@{-}[r]_2  \ar@{-}[u]_0&  *+[o][F]{} \ar@{-}[u]_0 \ar@{-}[r]_3 & *+[o][F]{} \ar@{-}[u]_0  } $\\
(5a) &$\xymatrix@-0.35pc{ *+[o][F]{}  \ar@{-}[r]^0   & *+[o][F]{}  \ar@{-}[r]^1 & *+[o][F]{}  \ar@{-}[r]^2& *+[o][F]{}  \ar@{-}[r]^3& *+[o][F]{}    \\
  *+[o][F]{}  \ar@{-}[r]_0\ar@{-}[u]_2& *+[o][F]{}   \ar@{-}[u]_2&     &  *+[o][F]{}  \ar@{-}[r]_3\ar@{-}[u]_1& *+[o][F]{}  \ar@{-}[u]_1 } $\\
  (5b) &$\xymatrix@-0.35pc{ *+[o][F]{}  \ar@{-}[r]^0   & *+[o][F]{}  \ar@{-}[r]^1 & *+[o][F]{}  \ar@{-}[r]^2& *+[o][F]{}  \ar@{-}[r]^3& *+[o][F]{} & *+[o][F]{}   \\
  *+[o][F]{}  \ar@{-}[r]_0\ar@{-}[u]_2& *+[o][F]{}   \ar@{-}[u]_2&     &  *+[o][F]{}  \ar@{-}[r]_3\ar@{-}[u]_1& *+[o][F]{}  \ar@{-}[u]_1& *+[o][F]{}  \ar@{-}[u]_1}$ \\
\end{tabular}
&
\begin{tabular}{cc}
(6a) &$\xymatrix@-0.6pc{ *+[o][F]{}  \ar@{-}[r]^0& *+[o][F]{} \ar@{-}[r]^1& *+[o][F]{}   \ar@{-}[r]^2 & *+[o][F]{}  \ar@{-}[r]^3& *+[o][F]{}  \\
 & & *+[o][F]{}  \ar@{-}[r]_2& *+[o][F]{}  \ar@{-}[r]_3\ar@{-}[u]_1 & *+[o][F]{} \ar@{-}[u]_1 }$\\
(6b) &$\xymatrix@-0.6pc{ *+[o][F]{}  \ar@{-}[r]^0& *+[o][F]{} \ar@{-}[r]^1& *+[o][F]{}   \ar@{-}[r]^2 & *+[o][F]{}  \ar@{-}[r]^3& *+[o][F]{} & *+[o][F]{}   \\
 & & *+[o][F]{}  \ar@{-}[r]_2& *+[o][F]{}  \ar@{-}[r]_3\ar@{-}[u]_1 & *+[o][F]{} \ar@{-}[u]_1& *+[o][F]{}  \ar@{-}[u]_1 }$\\
(7) &$\xymatrix@-0.6pc{ *+[o][F]{}  \ar@{-}[r]^0 & *+[o][F]{} \ar@{-}[r]^1& *+[o][F]{}   \ar@{-}[r]^2& *+[o][F]{}  \ar@{-}[r]^3& *+[o][F]{} &\\
 & && *+[o][F]{}  \ar@{-}[r]_3 \ar@{-}[u]_1& *+[o][F]{}  \ar@{-}[r]_2\ar@{-}[u]_1& *+[o][F]{}   }$ \\
(8)&$\xymatrix@-0.6pc{ *+[o][F]{}  \ar@{-}[r]^0 & *+[o][F]{} \ar@{-}[r]^1 & *+[o][F]{}   \ar@{-}[r]^2 & *+[o][F]{}  \ar@{-}[r]^3& *+[o][F]{} \ar@{-}[r]^2&*+[o][F]{} \\
& && *+[o][F]{}  \ar@{-}[r]_3 \ar@{-}[u]_1& *+[o][F]{} \ar@{-}[u]_1&   }$\\
(9)&$\xymatrix@-0.7pc{  *+[o][F]{} \ar@{-}[r]^0 & *+[o][F]{}  \ar@{-}[r]^1 & *+[o][F]{} \ar@{-}[r]^2 & *+[o][F]{} \ar@{=}[r]^1_3 & *+[o][F]{} \ar@{-}[r]^2&*+[o][F]{} \ar@{-}[r]^3 &*+[o][F]{} }$\\
&\\
&\\
&\\
\end{tabular}
\end{tabular}
\end{small}
\end{center}
\end{prop}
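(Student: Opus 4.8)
The plan is to show, for each of the nine permutation representation graphs listed, that the intersection property fails for $\Gamma$, using the criterion of Proposition~\ref{arp} wherever possible. The common strategy is: first identify the maximal parabolic subgroups $G_0$ and $G_{r-1}$ (and the diagram subgroups $\Gamma_0$, $\Gamma_{r-1}$) from the graph, verify that $\Gamma_0$ and $\Gamma_{r-1}$ are themselves string C-groups (this is immediate here since each of them, read off as a subgraph on fewer labels, is a small string group on a short linear diagram — typically a product of symmetric groups acting on disjoint sets of points, whose intersection property is classical), and then exhibit a permutation lying in $G_0\cap G_{r-1}$ that does \emph{not} lie in $G_{0,r-1}$. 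By Proposition~\ref{arp} this inequality $G_0\cap G_{r-1}\neq G_{0,r-1}$ is exactly the failure of the intersection property. In several of the cases (those with a double edge, such as (1), (2), (9)) one instead argues more directly: the double $\{0,2\}$- or $\{1,3\}$-edge forces a relation, and combined with the connectivity pattern of the rest of the graph one finds a nontrivial element of $G_J\cap G_K$ outside $G_{J\cap K}$ for a suitable pair $J,K$, contradicting the intersection property in its raw form.

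Concretely, for each graph I would proceed as follows. Compute the orbits of the subgroups generated by each proper subset of the $\rho_i$ that appears in the relevant intersections; the vertices of the graph and the labels of the edges make these orbits visible at a glance. The key observation in every case is that a vertex (or small set of vertices) is moved by some generator $\rho_i$ but, because of the shape of the graph, the permutation realizing that motion can also be written using generators from a subset $K$ that does \emph{not} contain $i$ — this is precisely what a ``extra'' branch or an extra edge in the picture encodes. Tracking where the branch attaches tells us which intersection $\langle\rho_j:j\in J\rangle\cap\langle\rho_k:k\in K\rangle$ is strictly larger than $\langle\rho_j:j\in J\cap K\rangle$. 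For the graphs that differ only by an appended pendant edge with the same label (the (a)/(b) pairs: (5a)/(5b), (6a)/(6b)), the argument for the ``(b)'' version is obtained from the ``(a)'' version by noting the extra fixed-point-free action on the two new vertices does not affect the relevant parabolic intersection, so a single computation handles both.

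A cleaner way to organize the whole proposition, which I would use to avoid nine ad hoc verifications, is induction on rank together with Proposition~\ref{arp}: in each graph, delete the label $0$ (respectively the top label $r-1$) and check that the resulting smaller sggi is a string C-group — often it is one of the known small string C-groups of $S_m$, or it reduces to an even smaller configuration already on the list — and then the only thing left to check is the single equality $G_0\cap G_{r-1}=G_{0,r-1}$, which is a finite computation on explicit permutation groups of small degree ($n\le 7$ in all nine cases). I would present one representative case in full, say (3), where $G_{<1}=\langle\rho_0\rangle$ splits off and the remaining graph on labels $1,2,3$ has the repeated $\{1,2\}$-pattern forcing $\langle\rho_1,\rho_2\rangle\cap\langle\rho_2,\rho_3\rangle\supsetneq\langle\rho_2\rangle$, hence the intersection property already fails inside $\Gamma$; and then indicate that cases (1), (2), (4), (5a), (5b), (6a), (6b), (7), (8), (9) are handled identically, listing for each the offending pair $(J,K)$ and the witnessing element.

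The main obstacle I anticipate is purely bookkeeping: correctly reading each multigraph — in particular keeping straight which vertices the lower ``branch'' is glued to, and not miscounting the order of $\rho_{i-1}\rho_i$ when a double edge is present — and then, for the direct-intersection arguments, producing the right subset pair $(J,K)$ so that the witnessing permutation genuinely lies in \emph{both} $G_J$ and $G_K$ but not in $G_{J\cap K}$. There is no deep difficulty here, only the risk of a sign- or index-error in one of the small permutation computations, so I would double-check each witness by explicitly composing the generators as permutations on the labelled vertex set.
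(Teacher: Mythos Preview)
Your overall strategy matches the paper's: for each graph one shows $G_0\cap G_{r-1}\supsetneq G_{0,r-1}$, which already violates the intersection property for the pair $J=\{1,\ldots,r-1\}$, $K=\{0,\ldots,r-2\}$. Note that you do \emph{not} need to verify that $\Gamma_0$ and $\Gamma_{r-1}$ are string C-groups: Proposition~\ref{arp} is only needed in the ``if'' direction, whereas here you are using the trivial ``only if'' direction (any string C-group satisfies $G_0\cap G_{r-1}=G_{0,r-1}$). The paper simply records, for each graph, the isomorphism types of $G_0$, $G_3$, $G_0\cap G_3$ and $G_{0,3}$ and observes the strict containment.

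However, your one worked example is wrong. In graph~(3) with vertices $1,\ldots,7$ and $\rho_0=(1,2)$, $\rho_1=(2,3)(4,5)$, $\rho_2=(3,4)(5,6)$, $\rho_3=(6,7)$, one has $\langle\rho_1,\rho_2\rangle\cong D_5$ on $\{2,\ldots,6\}$ and $\langle\rho_2,\rho_3\rangle\cong C_2\times S_3$; the only nontrivial element of $D_5$ fixing the point~$2$ is $\rho_2$ itself, so in fact $\langle\rho_1,\rho_2\rangle\cap\langle\rho_2,\rho_3\rangle=\langle\rho_2\rangle$ and there is no failure at that pair. The failure is at the top: $G_0=\langle\rho_1,\rho_2,\rho_3\rangle\cong S_6$ on $\{2,\ldots,7\}$ and $G_3=\langle\rho_0,\rho_1,\rho_2\rangle\cong S_6$ on $\{1,\ldots,6\}$, so $G_0\cap G_3$ contains the full $S_5$ on $\{2,\ldots,6\}$, strictly larger than $G_{0,3}=\langle\rho_1,\rho_2\rangle\cong D_5$. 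Also, your claim that $n\le 7$ in all cases is incorrect: graphs (1), (2), (4), (5a), (6b) have $9$ vertices and (5b) has $10$; the computations remain finite and small, but adjust your bookkeeping accordingly.
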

\begin{proof}
Let $\Gamma$ be a sggi having one of the permutation representation graphs given in this proposition.
In what follows we show that, by Proposition~\ref{arp}, $\Gamma$ does not satisfy the intersection property.\\

\begin{tabular}{cl}
(1) &$G_0\cong S_6\times S_3$; $G_3\cong S_7\times C_2$;  $G_3\cap G_0\cong S_5\times C_2$; $G_{0,3} \cong D_{10}$\\[5pt]
(2) & $G_0\cong S_5\times S_4$; $G_3\cong S_7\times C_2$; $G_3\cap G_0\cong S_4\times S_3$; $G_{0,3} \cong D_{12}$\\[5pt]
(3) & $G_0\cong S_6$;  $G_3\cong S_6$;  $G_0\cap G_3\cong S_5$; $G_{0,3}\cong D_5$\\[5pt]
(4) & $G_0\cong S_6\times S_3$; $G_3 \cong S_7\times C_2$; $G_3\cap G_0\cong S_5\times C_2$; $G_{0,3} \cong D_{10}$\\[5pt]
(5a) & $G_0\cong G_3 \cong  S_7\times C_2$, $G_3\cap G_0\cong S_5\times C_2\times C_2$; $G_{0,3}\cong D_{10}$\\[5pt]
(5b) & $G_0\cong G_3 \cong  S_7\times C_2$, $G_3\cap G_0\cong S_5\times C_2\times C_2$; $G_{0,3}\cong D_{10}$\\[5pt]
(6a) &$G_0 \cong S_7$; $G_3\cong S_6\times C_2$; $G_3\cap G_0\cong S_5\times C_2$; $G_{0,3} \cong D_{10}$\\[5pt]
(6b) &$G_0 \cong S_7$; $G_3\cong S_6\times C_2$; $G_3\cap G_0\cong A_5\times C_2$; $G_{0,3} \cong D_{10}$\\[5pt]
(7) &$G_0 \cong  S_7$; $G_3\cong S_5\times S_3$; $G_3\cap G_0\cong S_4\times S_3$; $G_{0,3} \cong D_{12}$\\[5pt]
(8) &$G_0 \cong  S_7$; $G_3\cong S_5\times S_3$; $G_3\cap G_0\cong S_4\times S_3$; $G_{0,3}\cong D_{12}$\\[5pt]
(9)& $G_0\cong A_6$;  $G_3\cong S_6$; $G_3\cap G_0\cong A_5$; $G_{0,3}\cong D_5$
\end{tabular}

\end{proof}
\section{Transitive String C-groups of high rank}\label{T}
We recall in this section what is known about maximal ranks of transitive string C-groups.
Transitive groups are either primitive or imprimitive.
Theorem~\ref{altn} gives the maximal ranks of alternating groups, Theorem~\ref{bigimp} gives the string C-groups of highest possible ranks for imprimitive groups and Theorem~\ref{bigprim} deals with primitive groups that are not alternating groups nor symmetric groups.

\begin{thm}\cite[Theorem 1.1]{2017CFLM}\label{altn}
The maximal rank of a string C-group for $A_n$ is $3$ if $n=5$; $4$ if $n=9$; $5$ if $n=10$; $6$ if $n=11$ and $\lfloor\frac{n-1}{2}\rfloor$ if $n\geq 12$.
Moreover, if $n=3, 4, 6, 7$ or $8$, the group $A_n$ does not admit a string C-group.
\end{thm}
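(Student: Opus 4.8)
The statement splits into a lower bound (the claimed rank is attained), an upper bound (no string C-group of $A_n$ has larger rank), and the small-degree non-existence claims. For $n\geq 12$ the lower bound is witnessed by the explicit families of string C-groups of rank $\lfloor (n-1)/2\rfloor$ for $A_n$ constructed in \cite{flm,flm2}; I would simply cite that construction, the verification of the intersection property for its members (via Proposition~\ref{arp}) being exactly what those papers carry out. For $n\in\{5,9,10,11\}$ one exhibits a single string C-group of the stated rank directly. So the substance is the matching upper bound together with the exceptional degrees.

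\textbf{The upper bound via fracture graphs.} Let $\Gamma=(A_n,\{\rho_0,\dots,\rho_{r-1}\})$ be a string C-group and view $A_n$ in its natural (primitive) action on $n$ points; let $\mathcal G$ be the permutation representation graph of $\Gamma$, which is connected. The first step is to show that each maximal parabolic $G_i$ is intransitive: this uses the intersection property together with the string (commuting) relations, and were it false one could force $G$ to be properly contained in $A_n$, contrary to hypothesis. Intransitivity of $G_i$ means $\rho_i$ carries an edge $e_i=\{a_i,b_i\}$ of $\mathcal G$ joining two distinct $G_i$-orbits; the edges $e_0,\dots,e_{r-1}$ have pairwise distinct labels and form the \emph{fracture graph}. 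A count of its vertices, edges and components recovers Whiston's bound $r\leq n-1$; to go below it one must exploit that the group is $A_n$ and not $S_n$.

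\textbf{Doubling: the 2-fracture graph.} The decisive point is that $\rho_i\in A_n$ is an \emph{even} involution, hence a product of at least two disjoint transpositions, so besides $e_i$ it contributes at least one further $i$-labelled edge $e_i'$ of $\mathcal G$, disjoint from $e_i$. Adjoining all the $e_i'$ to the fracture graph produces the \emph{2-fracture graph}, a spanning subgraph of $\mathcal G$ carrying $2r$ edges (with multiplicity) on $n$ vertices. The core of the proof is to show that, apart from finitely many degrees, the 2-fracture graph is a forest: one classifies every local configuration in which a cycle or a repeated edge could arise, and checks that each either contradicts the intersection property — typically by producing one of the forbidden pictures of the type collected in Proposition~\ref{IPfails}, so that $G_0\cap G_{r-1}\neq G_{0,r-1}$ — or bounds $n$. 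When the 2-fracture graph is a forest one gets $2r\leq n-1$, i.e. $r\leq\lfloor(n-1)/2\rfloor$. The residual, non-forest configurations are finite in number and must be enumerated by hand: they yield precisely the string C-groups of $A_5,A_9,A_{10},A_{11}$ of ranks $3,4,5,6$, and show there is none for $A_3,A_4,A_6,A_7,A_8$ — for $A_3$ (no involutions) and $A_4$ (all involutions lie in a single $C_2\times C_2$) this is immediate, and for $A_6,A_7,A_8$ it follows from the same case analysis, in which no exceptional configuration survives.

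\textbf{Main obstacle.} The difficulty is entirely in the last paragraph: the combinatorial case analysis of how the second edges $e_i'$ can meet the fracture graph — sharing vertices, closing short cycles, or forcing several generators to act on a common small point set — and eliminating each possibility through the intersection property is long and intricate, and it is exactly there that the exceptional small degrees and their maximal ranks get pinned down. A secondary but necessary difficulty is the clean proof that every maximal parabolic of a string C-group of $A_n$ is intransitive, since this underlies the existence of the fracture graph and hence the whole counting scheme.
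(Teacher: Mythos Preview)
The paper does not give a proof of this statement; it is quoted from \cite{2017CFLM} as a known result, so there is no in-paper argument to compare against. Your sketch is a reasonable high-level outline of the strategy in \cite{2017CFLM}, but it has a genuine gap at the step where you pass from the fracture graph to the 2-fracture graph.

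You argue that since every $\rho_i\in A_n$ is an even involution it has at least two disjoint transpositions, and that adjoining the second one produces a 2-fracture graph. This is not correct: the definition of a 2-fracture graph requires that \emph{both} chosen $i$-edges join points in \emph{different} $G_i$-orbits. The mere parity of $\rho_i$ guarantees a second transposition, but that second transposition may well lie inside a single $G_i$-orbit, in which case it cannot serve as a 2-fracture edge. When this happens for some $i$, the label $i$ is what the present paper calls a \emph{split}, and no 2-fracture graph exists. This is not a boundary case that can be absorbed into ``finitely many exceptional degrees'': the split case is the bulk of the work in \cite{2017CFLM}, requiring a separate and lengthy analysis (imprimitive vs.\ primitive actions on the two $G_i$-orbits, the machinery reflected in Sections~\ref{SSplit}--\ref{tranS} of this paper). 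Your forest-counting inequality $2r\le n-1$ is only available once one knows a 2-fracture graph exists, so the argument as written does not close.

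A second, smaller gap is your assertion that every maximal parabolic $G_i$ is intransitive. This is not forced by the intersection property or by $G=A_n$ in any obvious way, and in \cite{2017CFLM} it is handled as a separate case (if some $G_i$ is transitive one bounds its rank using Theorems~\ref{bigimp} and~\ref{bigprim}); your one-line justification (``one could force $G$ to be properly contained in $A_n$'') is not an argument.
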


The string C-groups for $A_n$ with rank greater than $\lfloor\frac{n-1}{2}\rfloor$ are, up to duality, the ones listed in Table~\ref{an} (see \cite{flm}).

\begin{table}
\[\begin{array}{|c|c|c|c|}
\hline
\# & n &r&\emph{Permutation Representation Graph}\\
\hline
(1) & 5& 3 &\xymatrix@-1.3pc{*+[o][F]{}  \ar@{-}[rr]^1 && *+[o][F]{} \ar@{-}[rr]^0&& *+[o][F]{} \ar@{-}[dll]_1\\
  &&*+[o][F]{}  \ar@{-}[rr]_0 \ar@{-}[u]^2&&*+[o][F]{}  \ar@{-}[u]_2}\\
(2)& 5& 3 &\xymatrix@-1.3pc{*+[o][F]{}  \ar@{-}[rr]^1 && *+[o][F]{} \ar@{-}[rr]^0&& *+[o][F]{}\\
  &&*+[o][F]{}  \ar@{-}[rr]_0 \ar@{-}[u]^2&&*+[o][F]{}  \ar@{=}[u]_1^2}\\
(3)& 10& 5 &\xymatrix@-1.3pc{*+[o][F]{} \ar@{-}[rr]^0 && *+[o][F]{}  \ar@{-}[rr]^1&&*+[o][F]{} \ar@{=}[rr]^0_2 && *+[o][F]{} \ar@{-}[rr]^1 && *+[o][F]{} \ar@{-}[rr]^2&&*+[o][F]{}  \ar@{-}[rr]^3 && *+[o][F]{} \ar@{-}[rr]^4&& *+[o][F]{} \ar@{-}[rr]^3&&*+[o][F]{}  \ar@{-}[rr]^4&&*+[o][F]{} }\\
(4) & 10& 5 &\xymatrix@-1.3pc{*+[o][F]{} \ar@{=}[rr]^0_2 && *+[o][F]{}  \ar@{-}[rr]^1&&*+[o][F]{} \ar@{-}[rr]^0 && *+[o][F]{} \ar@{-}[rr]^1 && *+[o][F]{} \ar@{-}[rr]^2&&*+[o][F]{}  \ar@{-}[rr]^3 && *+[o][F]{} \ar@{-}[rr]^4&& *+[o][F]{}\\
&& && && && &&  &&*+[o][F]{}  \ar@{-}[rr]_4 \ar@{-}[u]^2&&*+[o][F]{}  \ar@{=}[u]_3^2}\\
(5)& 10& 5 &\xymatrix@-1.3pc{*+[o][F]{} \ar@{-}[rr]^0 && *+[o][F]{}  \ar@{-}[rr]^1&&*+[o][F]{} \ar@{=}[rr]^0_2 && *+[o][F]{} \ar@{-}[rr]^1 && *+[o][F]{} \ar@{-}[rr]^2&&*+[o][F]{}  \ar@{-}[rr]^3 && *+[o][F]{} \ar@{-}[rr]^4&& *+[o][F]{}\\
&& && && && &&  &&*+[o][F]{}  \ar@{-}[rr]_4 \ar@{-}[u]^2&&*+[o][F]{}  \ar@{=}[u]_3^2}\\
(6)& 10& 5 &\xymatrix@-1.3pc{*+[o][F]{} \ar@{=}[rr]^0_2  && *+[o][F]{}  \ar@{-}[rr]^1&&*+[o][F]{} \ar@{-}[rr]^0 && *+[o][F]{} \ar@{-}[rr]^1 && *+[o][F]{} \ar@{-}[rr]^2&&*+[o][F]{}  \ar@{-}[rr]^3 && *+[o][F]{} \ar@{-}[rr]^4&& *+[o][F]{} \ar@{-}[rr]^3&&*+[o][F]{}  \ar@{-}[rr]^4&&*+[o][F]{} }\\
(7)& 11& 6 &\xymatrix@-1.3pc{*+[o][F]{} \ar@{=}[rr]^0_2  && *+[o][F]{}  \ar@{-}[rr]^1&&*+[o][F]{} \ar@{-}[rr]^0 && *+[o][F]{} \ar@{-}[rr]^1 && *+[o][F]{} \ar@{-}[rr]^2&&*+[o][F]{}  \ar@{-}[rr]^3 && *+[o][F]{} \ar@{-}[rr]^4&& *+[o][F]{} \ar@{-}[rr]^5&&*+[o][F]{}  \ar@{-}[rr]^4&&*+[o][F]{}  \ar@{=}[rr]^5_3&& *+[o][F]{}}\\
(8)& 11& 6 &\xymatrix@-1.3pc{*+[o][F]{} \ar@{-}[rr]^0  && *+[o][F]{}  \ar@{-}[rr]^1&&*+[o][F]{} \ar@{=}[rr]_2^0 && *+[o][F]{} \ar@{-}[rr]^1 && *+[o][F]{} \ar@{-}[rr]^2&&*+[o][F]{}  \ar@{-}[rr]^3 && *+[o][F]{} \ar@{-}[rr]^4&& *+[o][F]{} \ar@{-}[rr]^5&&*+[o][F]{}  \ar@{-}[rr]^4&&*+[o][F]{}  \ar@{=}[rr]^5_3&& *+[o][F]{}}\\
(9)& 11& 6 &\xymatrix@-1.3pc{*+[o][F]{} \ar@{-}[rr]^0&& *+[o][F]{}  \ar@{-}[rr]^1&&*+[o][F]{} \ar@{=}[rr]_2^0 && *+[o][F]{} \ar@{-}[rr]^1 && *+[o][F]{} \ar@{-}[rr]^2&&*+[o][F]{}  \ar@{-}[rr]^3 && *+[o][F]{} \ar@{-}[rr]^4&& *+[o][F]{} \ar@{=}[rr]^5_3&&*+[o][F]{}  \ar@{-}[rr]^4&&*+[o][F]{}  \ar@{-}[rr]^5&& *+[o][F]{}}\\
\hline
\end{array} \]
\caption{Permutation representation graphs of string C-groups of rank $\geq n/2$ for $A_n$.}\label{an}
\end{table}

\begin{thm}\label{bigimp}
Let $\Gamma$ be a transitive imprimitive string C-group of degree $n$ and rank $r>n/2$.
The permutation representation graph of $\Gamma$ is one of the graphs of Table~\ref{BI}.
\end{thm}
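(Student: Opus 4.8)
The plan is to classify transitive imprimitive string C-groups $\Gamma=(G,S)$ of degree $n$ and rank $r>n/2$ by exploiting the block structure together with the permutation representation graph. First I would fix a nontrivial block system preserved by $G$, with $b$ blocks of size $m$, so $n=bm$ with $b,m\geq 2$, and let $K\leq S_b$ be the induced action on blocks. The key quantitative input is the bound from \cite{2017CFLM} that a transitive imprimitive group of degree $n$ has a string C-group of rank at most $\lceil (n+1)/2\rceil=(n+2)/2$ (for $n$ even) or $(n+1)/2$ (for $n$ odd); combined with the hypothesis $r>n/2$, this forces $r$ to be extremal, namely $r=\lceil(n+1)/2\rceil$ when $n$ is even and the situation is very tight when $n$ is odd. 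So the argument is really about the \emph{equality} (or near-equality) case of that rank bound, and the goal is to pin down exactly which graphs can occur.

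The main structural step is to look at how the generators $\rho_i$ act relative to the block system. Each $\rho_i$ either fixes all blocks setwise or swaps some blocks in pairs; let $I_0$ be the set of indices where $\rho_i$ fixes every block and $I_1$ the complement. The generators in $I_1$ project onto a generating set of involutions of $K\leq S_b$, and the string/commuting relations are inherited by the quotient, so $(K, \text{projected } S)$ is (a quotient giving) an sggi on $b$ points; meanwhile the generators whose indices lie between two $I_1$-indices must, by the commuting property, interact in a constrained way with the block-swapping generators. A counting argument on independent generators — splitting the ``rank budget'' $r$ between the part that moves blocks around and the part that acts inside blocks, using Whiston's bound on each piece and the fact that $m\geq 2$, $b\geq 2$ — should show $m=2$ (blocks of size $2$) and that $K$ must be (close to) $S_b$ acting with an sggi of near-maximal rank $b-1$ on $b=n/2$ points, i.e. essentially the path graph / Moore representation on the blocks. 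Once $m=2$ is forced, the permutation representation graph is built from the $b$-vertex path-like graph on blocks by ``doubling'': each block becomes two vertices, and the $0$-labelled (or whichever) extra generator acts inside blocks. At this point I would invoke Proposition~\ref{IPfails} and the sesqui-extension machinery (Propositions~\ref{sesqui0}, \ref{sesqui1}, Lemma~\ref{sesqui}) to eliminate the configurations that fail the intersection property and to recognize the survivors as specific (sesqui-)extensions of the Moore representation of $S_b$, producing exactly the finite list in Table~\ref{BI}.

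Concretely, the steps in order: (1) set up the block system and the partition $I_0\sqcup I_1$ of generator indices; (2) prove $m=2$ via the rank-budget counting argument, ruling out $m\geq 3$ because then too few independent generators remain to reach rank $>n/2$; (3) with $m=2$, show the action on blocks is $S_{n/2}$ with a rank-$(n/2-1)$ string C-group, hence the linear (simplex) diagram on blocks, and determine the position(s) where an ``inside-block'' generator can be inserted consistently with the commuting property — this gives a small number of candidate graphs parametrized by where the doubling generator sits and whether it creates a double edge; (4) for each candidate, either verify it is a string C-group by recognizing it as a sesqui-extension with respect to the first generator (Proposition~\ref{sesqui0}/\ref{sesqui1}) or kill it using one of the forbidden subgraphs of Proposition~\ref{IPfails}; (5) collect the survivors as the rows of Table~\ref{BI}, and separately handle the parity subtlety when $n$ is odd (where $r=(n+1)/2$ rather than $(n+2)/2$), which typically restricts the doubling generator to an end of the diagram.

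I expect the main obstacle to be step (2): showing that blocks of size $\geq 3$ cannot support rank $>n/2$. The naive counting ($\text{rank} \leq (b-1) + \text{(something involving }m)$) is not immediately sharp enough, because the generators can act on blocks and inside blocks simultaneously and the commuting relations couple the two parts; one must argue carefully that a generator moving blocks and a generator acting inside blocks at non-adjacent positions in the diagram genuinely commute and hence cannot both contribute ``fresh'' rank in an overlapping region. The cleanest route is probably to use the string diagram directly: the block-moving indices $I_1$ must form a set that, together with the commuting property, confines the $I_0$-generators to act trivially on all but a bounded number of blocks, and then a direct degree count $n\geq 2|I_1| + (\text{contributions of }I_0\text{-parts}) \geq 2(r-c)$ for a small constant $c$ forces the inequality $r\leq n/2 + c$ to be tight only in the $m=2$ case. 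Getting the bookkeeping exactly right — and not losing the few genuinely exceptional small-degree graphs that appear in Table~\ref{BI} — is where the real work lies; the rest is checking the intersection property on a finite explicit list.
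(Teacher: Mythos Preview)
The paper's proof of this theorem is a single sentence: ``This is a direct consequence of~\cite[Theorem 1.2]{2016CFLM}.'' The classification you are planning to carry out was already done in that earlier paper, and here it is simply imported as a black box. So your proposal is not so much a different route as an attempt to reprove a cited result from scratch.

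As a plan for reproving \cite[Theorem 1.2]{2016CFLM}, your outline is in the right spirit but underestimates the sporadic cases. Your step~(3) envisions the survivors as ``doublings'' of the Moore path on the blocks, with one extra inside-block generator slotted in; that template does produce the infinite family (1) in Table~\ref{BI}, but graphs (2), (3) and (5) are not of that shape --- they are small ($n\in\{6,8\}$) configurations with diagonal $3$-edges, long cycles, and multiple double edges that do not arise from inserting a single block-fixing generator into a linear diagram. Capturing these requires a genuine small-degree search rather than the structural argument you sketch, and your plan does not allocate a step for it. Relatedly, the rank-budget argument in step~(2) (which you already flag as the obstacle) would at best yield $r\le n/2+c$ for some constant; turning that into the exact list in Table~\ref{BI}, including these exceptional graphs, is exactly the content of the cited theorem and is more delicate than a counting bound followed by Proposition~\ref{IPfails} checks.
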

\begin{proof}
This is a direct consequence of~\cite[Theorem 1.2]{2016CFLM}. 
\end{proof}
Observe that, as pointed out to us by Mark Mixer, in~\cite[Theorem 1.2]{2016CFLM}, one graph was missing, namely the graph (1) with $x=r$ of Table~\ref{BI}.
\begin{small}
\begin{table}[htbp]
\[\begin{array}{|cccc|}
\hline

(1) & \xymatrix@-1.4pc{
*+[o][F]{}\ar@{=}[dd]_x^1\ar@{-}[rr]^{r-1}&&*+[o][F]{}\ar@{=}[dd]_x^1\ar@{.}[rr]&&*+[o][F]{}\ar@{-}[rr]^3 \ar@{=}[dd]_x^1 &&*+[o][F]{}   \ar@{=}[dd]_x^1\ar@{-}[rr]^2&& *+[o][F]{}   \ar@{-}[dd]^x \\
&&&&&&&&\\
*+[o][F]{}\ar@{-}[rr]_{r-1}*+[o][F]{}&&*+[o][F]{}\ar@{.}[rr]&&*+[o][F]{}\ar@{-}[rr]_3&&*+[o][F]{} \ar@{-}[rr]_2&& *+[o][F]{}\\
x &=&0 &\textrm{or} &r&&&&\\ }
&
(3)&  \xymatrix@-1.4pc{
  &&*+[o][F]{}   \ar@{-}[dd]_0\ar@{-}[rr]^2\ar@{-}[ddrr]^3&& *+[o][F]{}   \ar@{-}[dd]^0 \ar@{-}[rr]^1&&  *+[o][F]{}   \ar@/_{2.5pc}/@{-}[lllllldd]_3\\
&&&&\\
*+[o][F]{}\ar@{-}[rr]_1&&*+[o][F]{} \ar@{-}[rr]_2\ar@/_{3.3pc}/@{-}[uurr]_3&& *+[o][F]{} }\\
(2)& \quad \xymatrix@-1.3pc{*+[o][F]{}  \ar@/_{.5pc}/@{=}[dd]_0^3\ar@{-}[rr]^2  &&*+[o][F]{}  \ar@{=}[rr]^3_1&& *+[o][F]{}\ar@{-}[rr]^2&& *+[o][F]{} \ar@/_{.7pc}/@{=}[dd]_4^0\\
&&&&&&\\
*+[o][F]{} \ar@/_{.5pc}/@{-}[rrrr]_1\ar@/^{3pc}/@{-}[uurr]^4\ar@{-}[rr]_2&&*+[o][F]{} \ar@{-}[rr]_3 \ar@{=}[uull]^1_4\ar@{-}[uu]_0 &&*+[o][F]{} \ar@{=}[uu]_4^0\ar@{-}[rr]_2&& *+[o][F]{} \ar@/_{.7pc}/@{=}[uu]_1^3}
&
(4) &\xymatrix@-1.4pc{
*+[o][F]{}\ar@{=}[rr]^1_2 \ar@{=}[dd]_0^3 &&*+[o][F]{}   \ar@{-}[dd]^0\ar@{-}[rr]^3&& *+[o][F]{}   \ar@{-}[dd]_0 \\
&&&&\\
*+[o][F]{} \ar@/_{4pc}/@{-}[rrrruu]^1\ar@{-}[rr]_2&&*+[o][F]{} \ar@{=}[rr]_1^3&& *+[o][F]{} }
\\
&&&\\
\hline
\end{array}\] 
\caption{Permutation representation graphs of transitive imprimitive string C-groups of degree $n$ with rank $\ge \frac{n+1}{2}$.
}\label{BI}
\end{table}
\end{small}

\begin{thm}\cite[Proposition 3.2] {2016CFLM}\label{bigprim}
Let $G$ be a primitive permutation group of degree $n$ which is not $S_n$ or $A_n$.
Let $\Gamma := (G,S)$ be a string C-group of rank $r$.
Then  $r< n/2$ or the permutation representation graph of $\Gamma$ is one of those listed in Table~\ref{primPolys}.
\end{thm}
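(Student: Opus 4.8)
The plan is to leverage the classification of primitive permutation groups together with the structural constraints coming from the string C-group axioms, particularly the intersection property. First I would recall that by the results already cited (notably the bound for primitive groups other than $S_n$ and $A_n$ having rank at most $n/2$, which is precisely the content being refined here), the interesting regime is when the rank $r$ is close to $n/2$; outside this regime the conclusion $r<n/2$ holds automatically, so the whole burden is to enumerate the extremal configurations. The key observation is that a string C-group of rank $r$ forces a chain of maximal parabolic subgroups $G_0 > G_{0,1} > \cdots$ and, more usefully, the subgroup lattice must support $r$ pairwise-commuting-in-the-appropriate-sense involutions generating $G$; combining this with the intersection property severely restricts $G$.

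The main steps, in order, would be: (i) invoke the O'Nan--Scott theorem to split primitive groups into the usual types (affine, almost simple, diagonal, product, twisted wreath); (ii) for each type, use the known bound on the size of an independent generating set — or more sharply, on the rank of a string C-group — to show that $r<n/2$ unless $n$ is small or $G$ lies in a short explicit list; (iii) for the affine type $G \leq \AGL_d(p)$, observe that the rank of a string C-group is bounded by roughly $d+1$ while $n=p^d$, forcing $p^d$ small, and then run the remaining finite check; (iv) for the almost simple type, use the fact that the smallest faithful permutation degree of the socle grows quickly relative to any chain of subgroups of length $r$, again reducing to a finite list which includes $\PGL$ and $\PGaL$ on projective points and a handful of sporadic-looking examples; (v) assemble the surviving cases and verify by the intersection-property criterion (Proposition~\ref{arp}) that exactly the permutation representation graphs of Table~\ref{primPolys} arise. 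Throughout, the permutation representation graph language lets one read off the maximal parabolics $G_0$ and $G_{r-1}$ directly and check $G_0\cap G_{r-1}\cong G_{0,r-1}$.

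The hard part, I expect, is step (ii)/(iv): controlling the almost simple case uniformly. One needs a clean lower bound on the permutation degree $n$ in terms of the rank $r$ of a putative string C-group representation — essentially a statement that a string C-group of rank $r$ inside an almost simple group acting primitively forces $n \geq 2r$ with equality (or near-equality) pinned down — and this is where the bulk of the casework (using the classification of maximal subgroups of the relevant almost simple groups, and degree bounds à la the minimal degree results for finite simple groups) lives. Once that uniform bound is in hand, only finitely many groups survive, and each can be handled by the permutation representation graph analysis already developed, together with Proposition~\ref{IPfails} to discard configurations whose graphs contain one of the forbidden sub-configurations. The remaining bookkeeping is to present the survivors in the normal form of Table~\ref{primPolys}, noting that each listed graph does satisfy the intersection property, so the list is exactly the set of exceptions.
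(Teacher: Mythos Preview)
This paper does not contain a proof of Theorem~\ref{bigprim}: the statement is quoted verbatim from \cite[Proposition 3.2]{2016CFLM} and used as a black-box input to the subsequent arguments. There is therefore no ``paper's own proof'' here to compare your proposal against.

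For what it is worth, your outline is broadly the right shape for how such a result is established in the literature: invoke O'Nan--Scott to break into the standard types, use degree bounds (and known bounds on independent generating sets) to force $n$ small in each non-trivial case, and then finish by a finite enumeration checked against the intersection property. Whether the specific case analysis in \cite{2016CFLM} follows exactly your steps (ii)--(iv) is something you would need to verify by consulting that reference directly; but nothing in the present paper bears on that comparison.
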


\begin{table}[htbp]
\[\begin{array}{|cc|}
\hline
\begin{array}{cc}
(1)& \xymatrix@-1.3pc{*+[o][F]{}  \ar@{-}[rr]^2 &&*+[o][F]{}  \ar@{-}[rr]^3 && *+[o][F]{} \ar@{-}[rr]^4&& *+[o][F]{}\\
&&&&&&\\
&&*+[o][F]{}  \ar@{-}[rr]^3  \ar@{-}[uu]^1   &&*+[o][F]{}  \ar@{-}[rr]^4 \ar@{-}[uu]^1&&*+[o][F]{}  \ar@{-}[uu]_1\\
&&&&&&\\
&&*+[o][F]{}  \ar@{-}[rr]_3  \ar@{-}[uu]^0  &&*+[o][F]{}  \ar@{-}[rr]_4 \ar@{-}[uu]^0 \ar@/^.2pc/@{-}[uurr]_2&&*+[o][F]{}  \ar@{-}[uu]_0 \ar@/_.2pc/@{-}[uull]_2}\\
(2)&\xymatrix@-1.3pc{*+[o][F]{} \ar@{=}[rr]^0_2 && *+[o][F]{}  \ar@{-}[rr]^1&&*+[o][F]{} \ar@{-}[rr]^0 && *+[o][F]{} \ar@{-}[rr]^1 && *+[o][F]{} \ar@{-}[rr]^2&&*+[o][F]{} }\\
(3)& \xymatrix@-1.3pc{*+[o][F]{} \ar@{-}[rr]^0 && *+[o][F]{}  \ar@{-}[rr]^1&&*+[o][F]{} \ar@{=}[rr]^0_2 && *+[o][F]{} \ar@{-}[rr]^1 && *+[o][F]{} \ar@{-}[rr]^2&&*+[o][F]{} }\\

(4)&\xymatrix@-1.3pc{*+[o][F]{}  \ar@{-}[rr]^1 &&*+[o][F]{}  \ar@{-}[rr]^0 && *+[o][F]{} \\
*+[o][F]{} \ar@{=}[u]_0^2 \ar@{-}[rr]_1&&*+[o][F]{} \ar@{-}[rr]_2  &&*+[o][F]{} \ar@{-}[u]_1}\\
\end{array}&
\begin{array}{cc}
(5)&\xymatrix@-1.3pc{*+[o][F]{}  \ar@{-}[rr]^0 &&*+[o][F]{}  \ar@{-}[rr]^1 && *+[o][F]{} \ar@{-}[rr]^0&& *+[o][F]{} \ar@{-}[dll]_1\\
&&  &&*+[o][F]{}  \ar@{-}[rr]_0 \ar@{-}[u]^2&&*+[o][F]{}  \ar@{-}[u]_2}\\
(6) &\xymatrix@-1.3pc{*+[o][F]{}  \ar@{=}[rr]^0_2 &&*+[o][F]{}  \ar@{-}[rr]^1 && *+[o][F]{} \ar@{-}[rr]^0&& *+[o][F]{}\\
&&  &&*+[o][F]{}  \ar@{=}[rr]_0^1 \ar@{-}[u]^2&&*+[o][F]{}  \ar@{-}[u]_2}\\
(7)&\xymatrix@-1.3pc{*+[o][F]{}  \ar@{-}[rr]^2  \ar@/^1pc/@{-}[rrrr]^1 &&*+[o][F]{}  \ar@{-}[rr]^3&& *+[o][F]{} \\
&&&&\\
*+[o][F]{} \ar@{=}[uu]_0^3  \ar@{-}[uurr]^1\ar@{-}[rr]_2&&*+[o][F]{} \ar@{=}[rr]_3^1 \ar@{-}[uu]^0  &&*+[o][F]{} \ar@{=}[uu]_2^0}\\
(8)&\xymatrix@-1.3pc{*+[o][F]{}  \ar@{-}[rr]^0 &&*+[o][F]{}  \ar@{-}[rr]^1 && *+[o][F]{} \ar@{-}[rr]^0&& *+[o][F]{}\\
&&  &&*+[o][F]{}  \ar@{=}[rr]_0^1 \ar@{-}[u]^2&&*+[o][F]{}  \ar@{-}[u]_2}\\
\end{array}\\
\hline
\end{array}\]
\caption{Permutation representation graphs of primitive string C-groups of degree $n$ whose group is neither $S_n$ nor $A_n$,  with rank $\ge n/2$.}\label{primPolys}
\end{table}

\section{String C-groups of high rank having a 2-fracture graph}\label{frac}

Let $\Gamma := (G,\{\rho_0,\,\ldots,\,\rho_{r-1}\})$ be a sggi such that $G$ acts faithfully on a set $\{1,\,\ldots,\,n\}$.
Suppose that for every $i\in \{0, \ldots, r-1\}$, the subgroup $G_i$ 
has at least one more orbit than $G$.
Then, for each $i$, the involution $\rho_i$ permutes a pair of points lying in
different $G_i$-orbits. Choosing one such transposition of $\rho_i$ for each $i$, and
regarding them as the edges of a graph on the vertex set $\{1,\ldots,n\}$,
we obtain a graph with $r$ edges that we call a \emph{fracture graph} for $\Gamma$ \cite{extension}.
A fracture graph is a spanning forest of the permutation representation graph of $\Gamma$.


\begin{prop}\label{int}
Let $G$ be a transitive group of degree $n$. Let $\Gamma := (G,\{\rho_0,\ldots, \rho_{r-1}\})$ be a string C-group of rank $r$. Let $\kappa = n - r$.
If $n\geq 2\kappa+3$ then $G\cong S_n$ and $\Gamma$ admits a fracture graph.
\end{prop}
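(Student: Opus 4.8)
The plan is to establish the two assertions of Proposition~\ref{int} in turn: first that $G \cong S_n$, and second that the string C-group admits a fracture graph. The first part should follow directly from the upper-bound machinery recalled in Section~\ref{T}. Indeed, since $r \geq n - \kappa$ and $n \geq 2\kappa + 3$, we have $r \geq n - \kappa \geq n - (n-3)/2 = (n+3)/2 > (n+1)/2$. Now $G$ is transitive, hence either primitive or imprimitive. If $G$ is imprimitive, Theorem~\ref{bigimp} forces $r \le (n+1)/2$ (more precisely, the permutation representation graph would be one of those in Table~\ref{BI}, all of rank at most $\lceil (n+1)/2\rceil$), a contradiction. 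If $G$ is primitive and not $S_n$ or $A_n$, Theorem~\ref{bigprim} forces $r < n/2$, again a contradiction. If $G = A_n$, Theorem~\ref{altn} gives $r \le \lfloor (n-1)/2 \rfloor < (n+3)/2$, contradiction (one should also dispatch the small exceptional degrees in Theorem~\ref{altn} by noting $n \geq 2\kappa+3 \geq 5$ and checking the bound $n-\kappa \leq r$ is incompatible there too). Hence $G \cong S_n$.

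For the second part, I would show more generally that whenever $r > (n+1)/2$ — which holds here — every maximal parabolic $G_i$ is intransitive, so that a fracture graph exists by the very definition given at the start of Section~\ref{frac}. The key point is that $G_i = \langle \rho_j : j \neq i\rangle$ is generated by $r-1$ involutions; if $G_i$ were transitive on $\{1,\dots,n\}$, then by the intersection property $\Gamma_i$ is itself a string C-group of rank $r-1$ for a transitive group of degree $n$, and one can iterate. The cleanest route is a counting/induction argument: if $G_i$ were transitive for some $i$, then since $\Gamma_i$ is a string C-group of rank $r-1 \geq (n+1)/2$ for the transitive group $G_i \leq S_n$, we again land in the regime covered by Theorems~\ref{altn}, \ref{bigimp}, \ref{bigprim}, forcing $G_i$ to be $S_n$ or $A_n$ with a severely restricted graph — but $G_i = G$ being $S_n$ contradicts independence of the generating set ($\rho_i \in \langle S \setminus \{\rho_i\}\rangle$), while the alternating and exceptional cases are excluded by the rank bound. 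Therefore $G_i$ is intransitive for every $i$, it has at least one more orbit than $G$, and the fracture graph is obtained by selecting, for each $i$, a transposition of $\rho_i$ joining two distinct $G_i$-orbits.

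I would then remark that such a selection always exists: $\rho_i \in G \setminus G_i$, and since $G$ is transitive while $G_i$ is not, $\rho_i$ cannot fix every $G_i$-orbit setwise — if it did, then $\langle G_i, \rho_i \rangle = G$ would also fix that partition, contradicting transitivity — so some cycle $(a\ b)$ of $\rho_i$ has $a,b$ in different $G_i$-orbits. Collecting these $r$ edges gives a graph on $n$ vertices; it is a forest because adding any edge to the spanning forest of $G_i$-orbits (contracted appropriately) cannot create a cycle — more carefully, one checks inductively on $i$ that the chosen edges, restricted to the orbits they merge, never close a cycle, using that each new edge strictly decreases the number of orbits of $G_{\{0,\dots,i\}}$-type subgroups. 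This is essentially the observation already recorded after the definition that a fracture graph is a spanning forest.

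The main obstacle I anticipate is the intransitivity claim for the $G_i$: ruling out that some maximal parabolic is still transitive of degree $n$. The rank bound $r-1 \geq (n+1)/2$ is exactly on the boundary where Theorems~\ref{bigimp} and~\ref{bigprim} apply, so the argument must carefully track whether $r - 1 > n/2$ or $r-1 = n/2$ and invoke the explicit tables when equality is delicate; in the borderline cases one uses that $G_i \lneq G = S_n$ is a proper subgroup, so it is neither $S_n$ nor (by the rank bound, with $r-1 \geq (n+1)/2 > \lfloor(n-1)/2\rfloor$) $A_n$, which then contradicts the classification. Once intransitivity is secured, the construction of the fracture graph is routine and purely combinatorial.
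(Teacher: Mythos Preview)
Your argument that $G\cong S_n$ is correct and matches the paper. The gap is in your intransitivity argument for $G_i$.

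You assert that when $G_i$ is transitive of rank $r-1\geq (n+1)/2$, Theorems~\ref{altn}--\ref{bigprim} force $G_i$ to be $S_n$ or $A_n$. This is false. Theorem~\ref{bigimp} does not bound the rank of transitive imprimitive string C-groups by $n/2$; it says that rank $>n/2$ forces one of the graphs in Table~\ref{BI}, and graph~(1) there is an infinite family achieving rank $(n+2)/2$. Since $r\geq (n+3)/2$ allows $r=(n+4)/2$ when $n$ is even, we get $r-1=(n+2)/2$, and $\Gamma_i$ could a priori be this imprimitive family --- a group that is neither $S_n$ nor $A_n$, so neither your independence argument nor your rank bound touches it. The paper's proof confronts this case head-on: it observes that $\rho_i$ must then be the first or last generator (by the string condition), must break the block system (else $G$ stays imprimitive, not $S_n$), yet must commute with all but one of the $r-1$ generators of graph~(1) --- and these requirements are incompatible with the structure of that graph. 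Your ``severely restricted graph'' remark gestures at this but never engages with it.

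A secondary issue: for small $n$ the exceptional alternating cases in Theorem~\ref{altn} are not killed by your bound. For instance $A_{11}$ has a string C-group of rank $6=(11+1)/2$, exactly the borderline value of $r-1$ when $n=11$, $\kappa=4$. The paper sidesteps this by invoking~\cite{extension} for $n\leq 11$ and only running the structural argument for $n\geq 12$. Finally, your last two paragraphs on actually building the fracture graph are unnecessary: once every $G_i$ is intransitive, existence is immediate from the definition in Section~\ref{frac}, as the paper itself notes.
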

\begin{proof}
First observe that $n\geq 2\kappa+3\Leftrightarrow r\geq \frac{n+3}{2}$. Then by Theorems~\ref{altn} to~\ref{bigprim}, $G\cong S_n$.
It remains to prove that $\Gamma$ admits a fracture graph.
If $n\leq 11$ (and therefore $\kappa\leq 4$), this is a direct consequence of the main theorem of~\cite{extension}. 
Assume that $n\geq 12$.
Suppose $\Gamma$ is such that there exists an $i\in\{0, \ldots, r-1\}$ with $G_i$ transitive. Then $\Gamma_i$ is a string C-group of rank $r-1\geq  \frac{n+1}{2}$. As $G_i$ is transitive, by Theorems~\ref{altn} to~\ref{bigprim}, this inequality is satisfied only if $n \equiv 2 \mod 4$, $r-1=n/2+1$ and $\Gamma_i$ has, up to a duality, the permutation representation graph (1) given in Table~\ref{BI}  (with rank $r-1$). Particularly $\rho_i$ is either the first or the last generator, it
must break the blocks of imprimitivity for $G$ to be isomorphic to $S_n$, and it must commute with all but one existing generators, which is impossible.
Hence every $G_i$ is intransitive and $\Gamma$ admits a fracture graph.
\end{proof}

If it happens that for each $i$ we can find two transpositions of $\rho_i$ such that, for each transposition, its points are in
different $G_i$-orbits, then taking an $i$-edge between each of these pairs of points we obtain a graph on $n$ vertices with $2r$ edges that we call a
\emph{$2$-fracture graph}. 

In \cite[Section 4]{2017CFLM} a series of results for a transitive string C-group $\Gamma$ whose  permutation representation graph $\mathcal{G}$ admits a 2-fracture graph were proven; we need these to prove our main result.
In what follows, for $\{i,\ldots, j\}\subseteq \{0,\ldots,\,r-1\}$, let $\mathcal{G}_{\{i,\ldots,j\}}$ denote the permutation representation graph of $\Gamma_{\{i,\ldots,j\}}$.
In the graphs below the dashed edges represent edges of $\mathcal{G}$ not belonging to the 2-fracture graph.

\begin{prop}\cite[Proposition 4.1]{2017CFLM}\label{2fpaths}
If $e=\{v,w\}$ is an $i$-edge of a $2$-fracture graph $\mathcal Q$ of $\Gamma$, then any path from $v$ to $w$ in $\mathcal{G}$, which does not contain $e$, must contain another $i$-edge.
\end{prop}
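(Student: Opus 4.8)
The plan is to argue by contradiction using the intersection property of $\Gamma$. Suppose $e=\{v,w\}$ is an $i$-edge of the $2$-fracture graph $\mathcal{Q}$, and suppose there is a path $\pi$ from $v$ to $w$ in $\mathcal{G}$ that avoids $e$ and uses no other $i$-edge. The edges of $\pi$ all carry labels in $\{0,\ldots,r-1\}\setminus\{i\}$, so the product of the corresponding generators $\rho_j$ (in the order they appear along $\pi$) is an element $g\in G_i=\langle \rho_j : j\neq i\rangle$ that sends $v$ to $w$. Thus $v$ and $w$ lie in the same $G_i$-orbit. But by the definition of a $2$-fracture graph, the $i$-edge $e$ was chosen precisely so that its endpoints $v$ and $w$ lie in \emph{different} $G_i$-orbits. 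This is the contradiction, and the proposition follows.

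First I would make the translation between walks in the permutation representation graph and group elements completely explicit: if $x_0,x_1,\ldots,x_m$ are the successive vertices of $\pi$ and the edge $\{x_{t-1},x_t\}$ has label $j_t$, then $x_0(\rho_{j_{1}}\rho_{j_{2}}\cdots\rho_{j_{m}}) = x_m$, since an edge of label $j$ records that $\rho_j$ swaps its endpoints (and fixes a vertex if no $j$-edge is incident to it). Here $x_0=v$, $x_m=w$, and every $j_t\neq i$ by hypothesis, so $g:=\rho_{j_1}\cdots\rho_{j_m}\in G_i$. Hence $vg=w$, i.e. $w\in v^{G_i}$.

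The only point that requires care — and the reason the statement is phrased in terms of a $2$-fracture graph rather than an arbitrary subgraph — is recalling why $v$ and $w$ must be in distinct $G_i$-orbits. By construction, for each index $i$ the $2$-fracture graph contains two $i$-edges, each chosen so that its two endpoints lie in different orbits of $G_i$; the edge $e$ is one of these, so $v$ and $w$ are separated by the $G_i$-orbit partition. This is exactly what contradicts $w\in v^{G_i}$, completing the proof. No genuine obstacle arises here: the content is essentially the observation that a walk in $\mathcal{G}$ using only labels $\neq i$ cannot cross between two $G_i$-orbits, combined with the defining property of the $2$-fracture graph.
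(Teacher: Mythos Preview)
Your proof is correct and is essentially the standard argument: a path in $\mathcal{G}$ using only labels $\neq i$ yields an element of $G_i$ connecting its endpoints, contradicting that the endpoints of an $i$-edge in a $2$-fracture graph lie in distinct $G_i$-orbits. One small remark: you open by saying the plan is to use the intersection property of $\Gamma$, but the intersection property plays no role whatsoever in the argument you actually give (nor should it); the proof needs only the definition of the $2$-fracture graph and the correspondence between walks and group elements, so you should drop that first clause.
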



\begin{prop}\cite[Proposition 4.5]{2017CFLM}\label{swap}
If there is a cycle $C$ in $\mathcal{G}$ containing exactly two $i$-edges $e_1, e_2$, such that $e_1$ is in a $2$-fracture graph $\mathcal{Q}$ and $e_2$ is not, then there is another $2$-fracture graph $\mathcal{Q}'$ obtained by removing $e_1$ and adding $e_2$.
\end{prop}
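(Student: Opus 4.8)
The plan is to verify directly that $\mathcal{Q}' := (\mathcal{Q}\setminus\{e_1\})\cup\{e_2\}$ satisfies the defining property of a $2$-fracture graph: it has exactly two $j$-edges for each $j\in\{0,\ldots,r-1\}$, the two endpoints of every such edge lie in distinct $G_j$-orbits, and it is a subgraph of $\mathcal{G}$. Since $\mathcal{Q}$ is a $2$-fracture graph it has precisely two $i$-edges, one of which is $e_1$; as $e_2\notin\mathcal{Q}$, the edge $e_2$ differs from both $i$-edges of $\mathcal{Q}$, so $\mathcal{Q}'$ again has exactly two $i$-edges (the second $i$-edge of $\mathcal{Q}$ together with $e_2$) and the same two $j$-edges as $\mathcal{Q}$ for every $j\neq i$. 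Moreover $e_2$ is an edge of $C$, hence of $\mathcal{G}$, so $\mathcal{Q}'\subseteq\mathcal{G}$. Thus the only thing left to check is that the two endpoints of $e_2$ lie in different $G_i$-orbits; all other edges of $\mathcal{Q}'$ already occurred in $\mathcal{Q}$ and so need no re-checking.

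The key observation is that any walk in $\mathcal{G}$ that uses no $i$-edge joins vertices lying in a common $G_i$-orbit, because each of its edges is a $j$-edge with $j\neq i$, hence realised by $\rho_j\in G_i$. Write $e_1=\{v,w\}$ and $e_2=\{x,y\}$ and traverse $C$ as $v \xrightarrow{e_1} w \xrightarrow{P_1} x \xrightarrow{e_2} y \xrightarrow{P_2} v$, where $P_1$ and $P_2$ are the two arcs of $C$ obtained by deleting $e_1$ and $e_2$. Since $C$ contains exactly two $i$-edges, namely $e_1$ and $e_2$, neither $P_1$ nor $P_2$ uses an $i$-edge, so $w$ and $x$ lie in the same $G_i$-orbit and $y$ and $v$ lie in the same $G_i$-orbit.

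Now invoke $e_1\in\mathcal{Q}$: by the definition of a $2$-fracture graph, $v$ and $w$ lie in \emph{different} $G_i$-orbits. Combining this with the two orbit coincidences above ($v$ with $y$, and $w$ with $x$) forces $x$ and $y$ into different $G_i$-orbits, which is exactly the admissibility condition for $e_2$ as an $i$-edge of a $2$-fracture graph. Together with the bookkeeping of the first paragraph, this shows $\mathcal{Q}'$ is a $2$-fracture graph.

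I do not expect a serious obstacle: the argument is essentially the orbit-tracking idea already behind Proposition~\ref{2fpaths}. The only points needing care are the elementary bookkeeping that $\mathcal{Q}'$ still has exactly two $i$-edges — which genuinely uses the hypothesis $e_2\notin\mathcal{Q}$ rather than merely $e_2\neq e_1$ — and being explicit that $C$ having \emph{exactly} two $i$-edges is what guarantees both arcs $P_1,P_2$ are $i$-edge-free.
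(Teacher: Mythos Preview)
Your argument is correct. The paper does not actually supply a proof of this proposition; it merely cites it from \cite{2017CFLM}, so there is nothing in the present paper to compare against, but your orbit-tracking verification is precisely the standard proof and matches what the cited reference does.
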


\begin{prop}\cite[Proposition 4.7]{2017CFLM}\label{movingsquares}
Let $q_{i,j}$ be an alternating square with a vertex $v$ that is $l$-adjacent to a vertex $w$ in a $2$-fracture graph  $\mathcal{Q}$.  If $l$ is not consecutive with $i$, then the square can be moved to include the edge $\{v,w\}$.  That is, there is another $2$-fracture graph $\mathcal{Q}'$ obtained from  $\mathcal{Q}$ by changing exactly two edges, as pictured below.   Furthermore $\mathcal{Q}'$ does not have more alternating squares than $\mathcal{Q}$.
$$ \xymatrix@-1.3pc{*+[o][F]{}  \ar@{-}[rr]^j \ar@{-}[dd]^i && *+[o][F]{v}  \ar@{-}[rr]^l \ar@{-}[dd]^i&&*+[o][F]{w}\ar@{--}[dd]^i\\
 &&&&\\
 *+[o][F]{}\ar@{-}[rr]_j &&*+[o][F]{}\ar@{--}[rr]_l&& *+[o][F]{}}\qquad\longrightarrow\qquad\xymatrix@-1.3pc{*+[o][F]{}  \ar@{-}[rr]^j \ar@{--}[dd]^i && *+[o][F]{v}  \ar@{-}[rr]^l \ar@{-}[dd]^i&&*+[o][F]{w}\ar@{-}[dd]^i\\
 &&&&\\
 *+[o][F]{}\ar@{-}[rr]_j &&*+[o][F]{}\ar@{-}[rr]_l&& *+[o][F]{}}$$
\end{prop}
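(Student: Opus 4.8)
The plan is to verify directly, from the graph-theoretic characterisation of sggi's given after Proposition~\ref{arp}, that the modified edge set $\mathcal Q'$ is still a $2$-fracture graph. First I would fix notation: let the alternating square $q_{i,j}$ have vertices $a,b,c,d$ with $i$-edges $\{a,c\}$ and $\{b,d\}$ and $j$-edges $\{a,b\}$ and $\{c,d\}$, say $v=b$ and $w$ the vertex joined to $b$ by an $l$-edge in $\mathcal Q$, with $|l-i|\geq 2$ (the hypothesis that $l$ is not consecutive with $i$). The picture shows that we obtain $\mathcal Q'$ from $\mathcal Q$ by deleting the $i$-edge $\{a,c\}$ (or $\{b,d\}$, whichever is the one in the square that I remove) and adding the $i$-edge $\{v,w\}=\{b,w\}$; the $j$-edges and the $l$-edge are kept, so the square $q_{i,j}$ is replaced by a new alternating square on $\{b,w,\ldots\}$ using the same $l$-edge. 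So the total number of edges of each label is unchanged, and hence $\mathcal Q'$ has exactly $2r$ edges, two of each label.

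The first substantive step is to check that $\mathcal Q'$ is again a fracture-type subgraph in the required sense: for each label $i$ the two chosen $i$-edges must be transpositions of $\rho_i$ whose pairs of endpoints lie in distinct $G_i$-orbits. Since $\{b,w\}$ is genuinely an $l$-edge of $\mathcal{G}$ with $l\ne i$, it is a $2$-cycle of $\rho_l$, hence (being in the permutation representation graph) a transposition of $\rho_i\cdot$? — no: rather, one uses Proposition~\ref{swap}. Indeed the edges $\{a,c\}$ and $\{b,w\}$ together with the $j$-edge $\{a,b\}$ and the $l$-edge... I would instead argue that the cycle $C$ consisting of the $i$-edge $\{a,c\}$, the $j$-edge $\{c,d\}$?, is not quite closed; the cleanest route is: in $\mathcal G$ the vertices $a$ and $b$ are joined by the $j$-edge $\{a,b\}$, and $\rho_i$ swaps $a\leftrightarrow c$ and, because $q_{i,j}$ is a square with alternating labels, $\rho_i$ must also swap $b\leftrightarrow d$; since $|l-i|\ge 2$, $\rho_i$ commutes with $\rho_l$, so $\rho_i$ sends the $l$-edge $\{b,w\}$ to the $l$-edge $\{d, w\rho_i\}$. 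Thus $w\rho_i\ne w$, i.e. $\{w,w\rho_i\}$ is an $i$-edge of $\mathcal G$, and $w$ is $i$-adjacent to a vertex; this is the $i$-edge I actually add (equivalently, relabel $w$'s $i$-neighbour). Having identified the correct $i$-edge, I would invoke Proposition~\ref{swap} with the cycle $C$ running $a \xrightarrow{i} c \xrightarrow{j} d \xrightarrow{\text{(i)}}\!? $; concretely $C$ passes through $\{a,c\}$ ($i$-edge, in $\mathcal Q$), the $j$-edge $\{c,d\}$, the $i$-edge $\{b,d\}$ — but that is also in $\mathcal Q$, so $C$ has the wrong parity. The honest statement is that the transformation in the picture is exactly one application of Proposition~\ref{swap} once the ambient cycle $C$ is taken to be the $4$-cycle through $v,w$ and their $i$-neighbours with two $l$-edges and two $i$-edges; since $|l-i|\ge 2$ the two $l$-edges and two $i$-edges really do form a square in $\mathcal G$ (a connected component of the $\{i,l\}$-graph), one $i$-edge lies in $\mathcal Q$ and one does not, and Proposition~\ref{swap} produces $\mathcal Q'$. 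This makes $\mathcal Q'$ a $2$-fracture graph.

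It remains to verify the final sentence, that $\mathcal Q'$ has no more alternating squares than $\mathcal Q$. Here I would argue that the only alternating squares of a $2$-fracture graph are the components $q_{i',j'}$ built from a doubled pair of labels, and the operation replaces the single square $q_{i,j}$ by a single square $q_{i,j}$ on a shifted vertex set using the same $l$-edge in place of one of its $j$-edges? — more precisely, the counted objects are the connected components of $\mathcal Q$ that are $2$-coloured squares, and the move turns $q_{i,j}$ together with the pendant $l$-edge $\{v,w\}$ into a new $2$-coloured square $q_{i,\ell}$ (or leaves a $q_{i,j}$), so the count of such components is unchanged. The main obstacle, and the step requiring the most care, is precisely this bookkeeping: one must check that sliding the square does not accidentally create a \emph{new} alternating square elsewhere (e.g. if $w$ already had a $j$-edge, or if $w\rho_i$ coincides with one of $a,b,c,d$), and rule these out using Proposition~\ref{2fpaths} (no short cycle on an $i$-edge of $\mathcal Q$ avoiding that edge) and the commuting property for non-consecutive labels. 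Everything else is a direct, if slightly fiddly, local picture chase in the permutation representation graph.
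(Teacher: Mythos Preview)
The paper does not prove this proposition; it is quoted from \cite[Proposition~4.7]{2017CFLM} without argument, so there is no in-paper proof to compare against.  I can still assess your plan on its own merits.

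The right ingredients are all present: the commuting property for non-consecutive labels tells you that $\{v,w,v\rho_i,w\rho_i\}$ carries an $\{i,l\}$-alternating square in $\mathcal G$, and Proposition~\ref{swap} is indeed the tool that turns one $2$-fracture graph into another.  But the cycle you finally settle on is the wrong one for the transformation in the picture.  The $4$-cycle through $v,w,v\rho_i,w\rho_i$ has as its two $i$-edges $\{v,v\rho_i\}$ and $\{w,w\rho_i\}$; applying Proposition~\ref{swap} there would remove $\{v,v\rho_i\}$ from the $2$-fracture graph.  In the picture, however, $\{v,v\rho_i\}$ (the middle vertical edge) remains solid in $\mathcal Q'$; it is the \emph{far} $i$-edge of $q_{i,j}$ that is deleted.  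To obtain that, Proposition~\ref{swap} must be applied to the $6$-cycle running around the outside of the figure (two $j$-edges, two $l$-edges, and the two outer $i$-edges), which contains exactly two $i$-edges, one in $\mathcal Q$ and one not.  Your swap produces a different $2$-fracture graph in which the square is destroyed rather than ``moved to include $\{v,w\}$'', so it does not establish the statement as phrased.

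More generally, what you have written is a sequence of abandoned attempts rather than an argument: you change your mind three times about which edge is removed, and the treatment of the final clause (no new alternating squares) is only a gesture.  For that clause you must actually verify that the inserted $i$-edge cannot complete a new alternating square with the surviving $i$-edge and some pair of equally-labelled edges already in $\mathcal Q'$; this is where the fact that a $2$-fracture graph carries exactly two edges of each label, together with Proposition~\ref{2fpaths}, does the work.  None of this is deep, but it has to be written out cleanly, not sketched and retracted.
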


\begin{prop}\cite[Proposition 4.8]{2017CFLM}\label{changelabels}
If a $2$-fracture graph $\mathcal{Q}$ contains the subgraph on the left of the following figure, then there is a $2$-fracture graph $\mathcal{Q}'$ containing the subgraph on the right, such that  $\mathcal{Q}$ and  $\mathcal{Q}'$ differ only in two edges.
$$ \xymatrix@-1.3pc{*+[o][F]{}  \ar@{-}[rr]^{i-1} \ar@{-}[dd]_{i+1}&& *+[o][F]{}   \ar@{-}[dd]_{i+1}&&*+[o][F]{} &&\\
 &&&&\\
 *+[o][F]{}\ar@{-}[rr]_{i-1}&&*+[o][F]{}\ar@{-}[rr]_i\ar@{--}[uurr]^l && *+[o][F]{}\ar@{-}[rr]_l&& *+[o][F]{}\ar@{--}[uull]_i}\qquad\longrightarrow\qquad\xymatrix@-1.3pc{*+[o][F]{}  \ar@{-}[rr]^{i-1} \ar@{-}[dd]_{i+1}&& *+[o][F]{}   \ar@{-}[dd]_{i+1}&&*+[o][F]{} &&\\
 &&&&\\
 *+[o][F]{}\ar@{-}[rr]_{i-1}&&*+[o][F]{}\ar@{--}[rr]_i\ar@{-}[uurr]^l && *+[o][F]{}\ar@{--}[rr]_l&& *+[o][F]{}\ar@{-}[uull]_i} $$
\end{prop}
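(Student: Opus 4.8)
The plan is to produce $\mathcal{Q}'$ from $\mathcal{Q}$ by two successive applications of Proposition~\ref{swap}, both to one and the same $4$-cycle of $\mathcal{G}$. First I would fix names $v_1,\ldots,v_7$ for the seven vertices displayed in $\mathcal{Q}$, so that $v_1v_2$ is the top $(i-1)$-edge, $v_1v_4$ and $v_2v_5$ are the two $(i+1)$-edges, $v_4v_5$ is the bottom $(i-1)$-edge, $v_5v_6$ is the $i$-edge, $v_6v_7$ is the $l$-edge, and $v_3$ is the remaining vertex, joined in $\mathcal{G}$ to $v_5$ by a (dashed) $l$-edge and to $v_7$ by a (dashed) $i$-edge. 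Since the seven vertices are distinct, the edges $v_5v_6$ (label $i$), $v_6v_7$ (label $l$), $v_7v_3$ (label $i$) and $v_3v_5$ (label $l$) form a $4$-cycle $C$ in $\mathcal{G}$; it contains exactly two $i$-edges, $v_5v_6\in\mathcal{Q}$ and $v_7v_3\notin\mathcal{Q}$, and exactly two $l$-edges, $v_6v_7\in\mathcal{Q}$ and $v_3v_5\notin\mathcal{Q}$.

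Next I would apply Proposition~\ref{swap} to $C$ and its two $i$-edges, obtaining a $2$-fracture graph $\mathcal{Q}_1=(\mathcal{Q}\setminus\{v_5v_6\})\cup\{v_7v_3\}$. This first swap replaces only an $i$-edge, so in $\mathcal{Q}_1$ the $l$-edge $v_6v_7$ is still present while $v_3v_5$ is still absent; hence Proposition~\ref{swap} (used with the label $l$ in place of $i$) applies once more to the same cycle $C$ and its two $l$-edges, and returns a $2$-fracture graph $\mathcal{Q}'=(\mathcal{Q}_1\setminus\{v_6v_7\})\cup\{v_3v_5\}=(\mathcal{Q}\setminus\{v_5v_6,v_6v_7\})\cup\{v_7v_3,v_3v_5\}$. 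Finally I would check that $\mathcal{Q}'$ contains the subgraph on the right: the four edges of labels $i-1$ and $i+1$ were never touched, the edges $v_7v_3$ (label $i$) and $v_3v_5$ (label $l$) now lie in $\mathcal{Q}'$, and $\mathcal{Q}$ and $\mathcal{Q}'$ differ exactly in the $i$-edge and the $l$-edge incident with $v_6$, as required.

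I do not expect a genuine obstacle: the statement is little more than Proposition~\ref{swap} applied twice, and the points needing care are purely organisational — that the same cycle $C$ may legitimately be reused for the second application, since $C$ is a cycle of $\mathcal{G}$ and the swaps do not change $\mathcal{G}$; and that $C$ really has exactly two edges of label $i$, exactly two of label $l$, and none of any other label, which requires $v_3,v_5,v_6,v_7$ to be pairwise distinct and is ensured by the hypothesis that $\mathcal{Q}$ literally contains the displayed seven-vertex configuration. The $(i-1)$/$(i+1)$ square on $v_1,v_2,v_4,v_5$ plays no role beyond being carried along untouched, which is why it reappears verbatim in the conclusion.
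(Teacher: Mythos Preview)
Your proposal is correct: the result is precisely two applications of Proposition~\ref{swap} to the alternating $\{i,l\}$-square on $v_3,v_5,v_6,v_7$, and this is the standard argument (the paper cites the result from \cite{2017CFLM} without reproducing a proof). Your bookkeeping is in order, including the observation that $l\neq i$ is forced (since $v_5$ already carries an $i$-edge to $v_6$) so that $C$ genuinely has exactly two $i$-edges and two $l$-edges.
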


\begin{prop}\cite[Proposition 4.12]{2017CFLM}\label{frac1}
If the permutation representation graph of $\Gamma$ has a $2$-fracture graph but has no connected $2$-fracture graph, then it has a $2$-fracture graph that has at least one component which is a tree, and all the others having only one cycle (which is an alternating square).
\end{prop}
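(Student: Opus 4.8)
The plan is to mimic the structure of the earlier 2-fracture graph results from \cite{2017CFLM}, which all follow the same template: take a configuration forbidden by the conclusion, use the local combinatorial moves (Propositions~\ref{2fpaths}--\ref{changelabels}) to normalise it, and then derive a contradiction. Here the target statement of Proposition~\ref{frac1} asserts that if no \emph{connected} $2$-fracture graph exists, then some $2$-fracture graph decomposes into at least one tree component and other components each carrying exactly one cycle, that cycle being an alternating square. So I would argue by a minimality/extremal argument: among all $2$-fracture graphs of $\Gamma$, choose one, call it $\mathcal{Q}$, minimising the total number of cycles (equivalently, minimising $|E(\mathcal Q)| - |V(\mathcal Q)| + (\text{number of components})$, i.e. the first Betti number), and among those, minimising the number of alternating squares. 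Since $\mathcal{Q}$ has $2r$ edges on $n$ vertices and $\Gamma$ is transitive so $n \le 2r$ roughly, the graph is ``dense enough'' to contain cycles; the hypothesis that no connected $2$-fracture graph exists is what forces it to be disconnected and hence forces at least one tree component to appear once we have controlled the cyclomatic structure.

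The key steps, in order, would be: (i) First establish that in the minimal $\mathcal{Q}$ every cycle is an alternating square. Suppose some component contains a cycle $C$ that is not an alternating square. Since $\Gamma$ is a sggi, the two-label subgraphs on labels differing by $\geq 2$ are matchings/edges/squares, so a non-square cycle must use edges of labels that include a consecutive pair, or repeat a label; in the latter case $C$ contains two $i$-edges $e_1,e_2$, and I would invoke Proposition~\ref{swap} (if one of them lies outside a $2$-fracture graph) or a more careful surgery using Proposition~\ref{2fpaths} (which says any $v$--$w$ path avoiding an $i$-edge $e=\{v,w\}$ of $\mathcal Q$ must contain another $i$-edge) to break such a cycle while not creating new ones, contradicting minimality of the Betti number; the non-matching-square cases with consecutive labels are handled by Propositions~\ref{movingsquares} and~\ref{changelabels} to slide/relabel until the offending cycle can be excised. (ii) Next, show that no component contains two cycles: if a component had cyclomatic number $\geq 2$, it would contain either two edge-disjoint alternating squares sharing a path, or two squares meeting in a vertex, and I would use Proposition~\ref{movingsquares} to move one square onto a tree-edge of the other, or Proposition~\ref{swap} along a cycle through both squares, again reducing the Betti number. (iii) Finally, invoke the standing hypothesis: if \emph{every} component of $\mathcal{Q}$ had exactly one cycle (a square), then $|E(\mathcal{Q})| = |V(\mathcal{Q})|$, i.e.\ $2r = n$; but we are in the regime $r \ge (n-1)/2$ with $G$ transitive and $\Gamma$ having no connected $2$-fracture graph, and a short counting/parity check (each alternating square ``wastes'' an edge that a spanning connected $2$-fracture graph would use to reach a new vertex) shows that the number of components exceeds one unless a connected $2$-fracture graph exists — contradiction. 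Hence at least one component is a tree.

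I expect the main obstacle to be step (i): ruling out \emph{all} non-square cycles in a cyclomatically-minimal $2$-fracture graph. The moves in Propositions~\ref{swap}, \ref{movingsquares}, \ref{changelabels} are local and each only guarantees ``no more alternating squares than before'' or ``differs in two edges'', so one must chain them carefully and track that the first Betti number genuinely drops (and does not merely shift a cycle elsewhere). A cycle using a consecutive pair of labels $i, i\pm 1$ is the delicate case, since the two-label subgraph on consecutive labels is \emph{not} constrained to be a matching-or-square; here I would need to argue that such a cycle, together with the edges of $\mathcal{Q}$, produces a shorter cycle or an alternating square that can be swapped out, possibly re-invoking Proposition~\ref{2fpaths} to locate a second $i$-edge and Proposition~\ref{swap} to perform the exchange. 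Once cycles are all alternating squares, steps (ii) and (iii) are comparatively routine graph-theoretic counting, the only care being the bookkeeping of components versus edges against the rank bound $r \ge (n-1)/2$.
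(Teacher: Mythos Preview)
The paper does not prove this proposition; it is quoted verbatim from \cite[Proposition 4.12]{2017CFLM} without argument, so there is no proof in the paper to compare against.

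Evaluating your sketch on its own merits: steps (i) and (ii) follow a plausible extremal template, though the handling of non-square cycles involving consecutive labels is left vague. The real gap is step (iii). You invoke the rank bound $r\ge(n-1)/2$, but that inequality is \emph{not} a hypothesis of Proposition~\ref{frac1}; and even if it were, the conclusion $2r=n$ that you derive from ``every component is unicyclic'' satisfies $r\ge(n-1)/2$, so no contradiction arises from counting. Your sentence ``a short counting/parity check shows that the number of components exceeds one unless a connected $2$-fracture graph exists'' is circular: having more than one component is precisely the standing hypothesis, not a contradiction.

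The way to close this gap is to change the extremal quantity. Rather than minimising the first Betti number, minimise the number of connected components of the $2$-fracture graph. After establishing (as in your steps (i)--(ii)) that each component carries at most one cycle, and that cycle is an alternating square, suppose for contradiction that no component is a tree. Then every component contains a square. Since $G$ is transitive, some edge $e$ of $\mathcal{G}$ joins two distinct components $C_1$, $C_2$. Using Propositions~\ref{movingsquares} and~\ref{swap} one slides the square of $C_1$ towards $e$ and then swaps one of its edges for an edge of $\mathcal{G}$ crossing to $C_2$; this produces a new $2$-fracture graph with one fewer component, contradicting minimality. Hence at least one component is a tree. Your minimisation of the Betti number gives no leverage here, because merging two unicyclic components into one unicyclic component leaves the Betti number unchanged.
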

\begin{prop}\cite[Proposition 4.18 and Corollary 4.19]{2017CFLM}\label{2fracC}
Let $n\geq 9$. If $\Gamma$ has a connected $2$-fracture graph, the $2$-fracture graph is a either a tree or  one of the graphs (1) and (2) of Table~\ref{T2F}.
\end{prop}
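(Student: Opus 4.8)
The plan is to analyze a connected $2$-fracture graph $\mathcal{Q}$ of $\Gamma$ that is not a tree, and show that under the hypothesis $n \geq 9$ only the two configurations of Table~\ref{T2F} survive. First I would recall that $\mathcal{Q}$ has exactly $2r$ edges on $n$ vertices and is connected, so its cycle rank is $2r - n + 1$; since $r \geq n/2$ in the regime of interest (and in particular $r \geq (n-1)/2$ throughout this section), this is a small positive number, so $\mathcal{Q}$ contains at least one cycle but only a bounded number of independent cycles. The key structural input is Proposition~\ref{2fpaths}: for each $i$-edge $e = \{v,w\}$ of $\mathcal{Q}$, every $v$–$w$ path in $\mathcal{G}$ avoiding $e$ uses a second $i$-edge. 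Combined with the fact that for a fixed label $i$ the $i$-edges form a matching, this forces any cycle of $\mathcal{Q}$ to be built from pairs of equally-labelled edges in a very constrained way; in particular a shortest cycle is naturally an alternating square $q_{i,j}$ with $|i-j| \geq 2$ (the only $2$-coloured cycles allowed by the sggi conditions) or must revisit a label.

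Next I would use the moving lemmas — Propositions~\ref{swap}, \ref{movingsquares}, and \ref{changelabels} — as normalization tools: whenever $\mathcal{Q}$ contains a cycle together with a pendant or attached edge whose label is non-consecutive with the cycle's labels, one can slide the cycle, decreasing some complexity measure (e.g. the total distance of cycles from a chosen root, or the number of vertices outside cycles) without increasing the number of alternating squares. Iterating, I would drive $\mathcal{Q}$ into a canonical shape in which the cyclic part is as ``tight'' as possible and the remaining edges hang off it in a controlled pattern. At that point the small cycle rank forces the number of cycles to be one or two, and the alternating-square constraint plus the matching condition on each label pins down the labels around each cycle. The few canonical shapes that emerge are then checked directly against Proposition~\ref{IPfails}: any configuration other than graphs (1) and (2) of Table~\ref{T2F} contains one of the forbidden subgraphs (1)--(9) of Proposition~\ref{IPfails} as an induced labelled subgraph, contradicting that $\Gamma$ is a string C-group; the hypothesis $n \geq 9$ is exactly what is needed to rule out the sporadic small-degree exceptions that would otherwise slip through.

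The main obstacle I anticipate is the bookkeeping in the normalization step: showing that the moving lemmas actually terminate and that the complexity measure chosen genuinely decreases (rather than cycling) requires care, since Proposition~\ref{movingsquares} only applies when the attached label is non-consecutive with $i$, and one must separately argue that consecutive-label attachments either cannot occur on a cycle of $\mathcal{Q}$ (again via Proposition~\ref{2fpaths} and the commuting property) or can be handled by Proposition~\ref{changelabels}. A secondary difficulty is the case analysis for cycle rank $2$, where two cycles may share a vertex or an edge, or be joined by a path; here I would argue that a shared-edge or long-path configuration produces a walk between the two endpoints of some $i$-edge that uses only one $i$-edge, violating Proposition~\ref{2fpaths}, thereby reducing to the case of two vertex-disjoint or vertex-sharing squares, which is precisely graphs (1) and (2). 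Finally I would invoke Proposition~\ref{frac1} to note that the ``no connected $2$-fracture graph'' alternative is the genuinely different branch handled elsewhere, so that the present statement cleanly covers the connected case.
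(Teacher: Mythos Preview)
This proposition is not proved in the paper at all: it is a direct citation of Proposition~4.18 and Corollary~4.19 of \cite{2017CFLM}, so there is no ``paper's own proof'' to compare against beyond the bibliographic reference.

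As for your sketch as an independent argument, there is a genuine gap at the very first step. You compute the cycle rank of a connected $2$-fracture graph as $2r - n + 1$ and then assert that ``since $r \geq n/2$ in the regime of interest \ldots\ this is a small positive number.'' But the inequality $r \geq (n-1)/2$ is a \emph{lower} bound on $r$, hence a lower bound on the cycle rank; it gives you no upper bound whatsoever. A priori nothing in the hypotheses of the proposition prevents $r$ from being much larger than $n/2$, so the cycle rank could be large, and your ``bounded number of independent cycles'' is exactly what needs to be proved, not assumed. Indeed, the content of the proposition is precisely that the non-tree case forces $r = n/2$ (cycle rank~$1$); invoking that at the outset is circular. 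Note also that Proposition~\ref{frac2bis}, which does give $r \leq n/2$, is proved in this paper \emph{using} Proposition~\ref{2fracC}, so you cannot appeal to it here.

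The actual route (in \cite{2017CFLM}) is structural rather than numerical: one first shows that every cycle in a $2$-fracture graph is an alternating square (because each label occurs exactly twice, so by Proposition~\ref{2fpaths} any cycle has even length and uses each of its labels exactly twice, and the commuting property then forces length~$4$), and then uses the moving lemmas together with the intersection property to rule out everything except the two infinite families. Your normalization-and-forbidden-subgraph strategy is broadly in the right spirit for the second half, but it cannot get started until you have independently established that the cyclic part is a single square.
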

{
In what follows we describe, up to duality, the transitive string C-groups of high rank admitting a 2-fracture graph.

\begin{prop}\label{frac2bis}
Let $G$ be a transitive group of degree $n\geq 9$ and let $\Gamma := (G,S)$ be a string C-group of rank $r$.
Suppose that $\Gamma$ has a $2$-fracture graph. Then $r\leq n/2$.
\end{prop}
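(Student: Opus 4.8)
The plan is to assume, for contradiction, that $\Gamma=(G,S)$ is a transitive string C-group of degree $n\geq 9$ and rank $r\geq n/2+1$ (i.e. $r>n/2$) admitting a $2$-fracture graph $\mathcal Q$, and derive a contradiction from the structure results already collected. First I would observe that $\mathcal Q$ has $2r\geq n+2$ edges on $n$ vertices, so $\mathcal Q$ is far from being a forest: its cyclomatic number is at least $2r-n+(\text{number of components})\geq 3$ if connected. The strategy splits naturally according to whether $\Gamma$ has a connected $2$-fracture graph or not.

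\emph{Case 1: $\mathcal Q$ can be taken connected.} By Proposition~\ref{2fracC}, a connected $2$-fracture graph is either a tree — impossible here, since a tree on $n$ vertices has only $n-1<2r$ edges — or one of the two graphs (1), (2) of Table~\ref{T2F}. Each of those two graphs has a fixed, small number of vertices and edges, hence bounds $n$ and $r$ explicitly; one then checks directly that in each the rank $r$ (half the number of edges) satisfies $r\leq n/2$, contradicting $r>n/2$. This case is therefore disposed of by inspection of the (finitely many) exceptional graphs.

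\emph{Case 2: $\Gamma$ has no connected $2$-fracture graph.} Then by Proposition~\ref{frac1} we may choose $\mathcal Q$ so that one component is a tree $T$ and every other component has exactly one cycle, which is an alternating square $q_{i,j}$. If $\mathcal Q$ has $c$ components, then $T$ has (number of vertices of $T$)$-1$ edges and each of the remaining $c-1$ components, having a unique cycle, has exactly as many edges as vertices; summing gives that $\mathcal Q$ has exactly $n-1+(c-1)\cdot 0 = n-1+(\#\text{unicyclic components})$ edges, i.e. $2r = n-1+(c-1)$, so $c = 2r-n+2\geq 4$. Thus there are at least three alternating-square components. The heart of the argument is to use Propositions~\ref{swap}, \ref{movingsquares} and \ref{changelabels} to move the squares and reassign labels until one reaches a configuration that is either forbidden by Proposition~\ref{IPfails} (violating the intersection property) or forbidden by Proposition~\ref{2fpaths} (a short alternative path through an $i$-edge), or until enough squares can be absorbed into the tree to produce a connected $2$-fracture graph, reducing to Case 1. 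In particular each alternating square $q_{i,j}$ forces the labels $i$ and $j$ to be non-consecutive and, together with the commuting property, tightly constrains how its four vertices can attach to the rest of $\mathcal G$; with at least three such squares competing for the $r$ labels and only $n$ vertices, the counting $2r\geq n+2$ becomes untenable.

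The main obstacle is Case 2: it is not a one-line counting argument but a genuine case analysis of how several alternating squares can coexist in a degree-$n$ permutation representation graph while the underlying group stays transitive and the intersection property holds. I expect the bookkeeping to proceed by first showing (via Propositions~\ref{swap} and \ref{movingsquares}) that one may assume the squares are "spread out" — attached to the tree component by edges whose labels are non-consecutive with the square's labels — then showing that any two squares sharing, or having consecutive, labels can be merged or eliminated, and finally that the surviving configuration matches one of the excluded graphs of Proposition~\ref{IPfails} or contradicts Proposition~\ref{2fpaths}. Once no $2$-fracture graph with the required edge count can survive, we conclude $r\leq n/2$, which is the statement of Proposition~\ref{frac2bis}.
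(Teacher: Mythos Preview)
Your Case~2 contains an arithmetic slip that sends you down an unnecessary path. With one tree component $T$ on $v_T$ vertices and $c-1$ unicyclic components on $v_1,\ldots,v_{c-1}$ vertices, the edge count is
\[
2r \;=\; (v_T-1)+\sum_{i=1}^{c-1} v_i \;=\; (v_T+\textstyle\sum_i v_i)-1 \;=\; n-1,
\]
not $n-1+(c-1)$. (You even wrote ``$(c-1)\cdot 0$'' and then added it as $c-1$.) So in the disconnected case you get $r\le (n-1)/2$ immediately, contradicting $r>n/2$; there is no need for the square-moving analysis via Propositions~\ref{swap}--\ref{changelabels} or any appeal to Proposition~\ref{IPfails}. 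More generally, if there are $t$ tree components and the rest unicyclic, then $2r=n-t$.

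Your Case~1 reaches the right conclusion but misdescribes graphs (1) and (2) of Table~\ref{T2F}: they are infinite families (parametrized by $r$), not graphs of ``fixed, small'' size. What matters is that each is connected on $n$ vertices with exactly $n$ edges, hence unicyclic, giving $r=n/2$ on the nose.

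The paper's proof unifies both cases in one line: by Propositions~\ref{frac1} and~\ref{2fracC} one may choose a $2$-fracture graph whose every component is a tree or has a single cycle; then $2r=n-c$ with $c$ the number of tree components, so $r\le n/2$. Your plan is salvageable---and in fact becomes this argument---once the edge count is corrected.
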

\begin{proof}
Suppose that $\Gamma$ has a 2-fracture graph $\mathcal F$. By Propositions~\ref{frac1} and \ref{2fracC}, each connected component of $\mathcal F$ is either a tree or has at most a cycle, which is an alternating square.
Trees with $v$ vertices have $v-1$ edges while for the other components the number of edges and vertices coincide. So in total $\mathcal F$ has a number of vertices $n $ such that $2r = n-c$, with $c$ being the number of components that are trees. Hence $r\leq n/2$.
\end{proof}

\begin{prop}\label{frac2}
Let $G$ be a transitive group of degree $n$ and let $\Gamma := (G,\{\rho_0, \ldots, \rho_{r-1}\})$ be a string C-group of rank $r$.
Suppose that $\Gamma$ has a $2$-fracture graph. If  $r\geq \frac{n-1}{2}$ then $\Gamma$ has a permutation representation graph isomorphic to one of the graphs given in Table~\ref{T2F}.
\end{prop}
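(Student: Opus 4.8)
The plan is to combine the rank bound of Proposition~\ref{frac2bis} with the structural ``moves'' for 2-fracture graphs recalled above, reducing to a short finite case analysis that one matches against Table~\ref{T2F}. First I would fix the coarse shape. Proposition~\ref{frac2bis} gives $r\le n/2$, and its proof shows that for \emph{every} 2-fracture graph $\mathcal F$ of $\Gamma$ one has $2r=n-c$, where $c$ counts the components of $\mathcal F$ that are trees (every other component being unicyclic with an alternating square as its only cycle, by Propositions~\ref{frac1} and~\ref{2fracC}); in particular $c$ is the same for all 2-fracture graphs of $\Gamma$. Since $r\ge(n-1)/2$ by hypothesis, either $n$ is even with $r=n/2$ and $c=0$, or $n$ is odd with $r=(n-1)/2$ and $c=1$.

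Second, the case $c=0$. If $\Gamma$ had no connected 2-fracture graph, Proposition~\ref{frac1} would produce one with at least one tree component, i.e.\ $c\ge 1$, a contradiction; hence $\Gamma$ has a connected 2-fracture graph. By Proposition~\ref{2fracC} this graph is a tree or one of graphs (1), (2) of Table~\ref{T2F}, but a spanning tree has $n-1\ne 2r$ edges, so it is graph (1) or (2). Since the graphs of Table~\ref{T2F} also record the non-2-fracture (dashed) edges, the permutation representation graph of $\Gamma$ is one of them, which settles this case.

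Third, the substantive case $c=1$ (it is also the new one: it contains the string C-groups of highest rank for odd-degree alternating groups). Here I would fix a 2-fracture graph $\mathcal Q$ with a single tree component $T$ and unicyclic components $K_1,\dots,K_m$, each carrying one alternating square, and first normalise $\mathcal Q$ using Propositions~\ref{swap}, \ref{movingsquares} and~\ref{changelabels}: slide each alternating square (without creating new squares, by Proposition~\ref{movingsquares}) and each 2-fracture edge along cycles so that the labels are as constrained as possible. Since $G$ is transitive, $\mathcal G$ is connected, so the components of $\mathcal Q$ must be joined by dashed edges; Proposition~\ref{2fpaths} forces any dashed $i$-edge inside a component to be matched by another $i$-edge already present in $\mathcal Q$, and the commuting property (alternating squares for non-consecutive labels) then propagates rigid label constraints across those dashed edges, bounding both $m$ and the sizes of $T$ and the $K_j$. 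This yields a short list of candidate permutation representation graphs, from which the configurations that fail the intersection property are discarded via Proposition~\ref{arp} together with the forbidden subgraphs of Proposition~\ref{IPfails} (with a direct check for the few small values of $n$), leaving exactly the graphs of Table~\ref{T2F}.

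I expect the normalisation-and-gluing step in the $c=1$ case to be the main obstacle: one must control simultaneously the interaction between the tree component and each unicyclic component, the label constraints forced along the dashed edges by the commuting property, and the intersection property, while verifying at each step that the moves keep $\mathcal Q$ in normal form (for instance that Proposition~\ref{movingsquares} is not applied with consecutive labels). Everything else is the same forbidden-subgraph bookkeeping already used in \cite{extension,2017CFLM}.
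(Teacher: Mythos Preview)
Your plan is essentially the paper's own proof: the same dichotomy $c=0$ versus $c=1$, the same appeal to Propositions~\ref{frac1} and~\ref{2fracC} for $c=0$, and the same toolkit (Propositions~\ref{swap}, \ref{movingsquares}, \ref{IPfails} together with Proposition~\ref{sesqui1}) for $c=1$. Two small points of precision: the equality $2r=n-c$ does not hold for \emph{every} 2-fracture graph but only for those in the normal form guaranteed by Propositions~\ref{frac1} and~\ref{2fracC} (this is harmless, since those are the only ones you actually use); and in the $c=0$ case you still need \cite[Corollary~4.19]{2017CFLM} to pass from the 2-fracture graph to the full permutation representation graph. The paper organises the $c=1$ analysis by further splitting on the minimal number $s$ of alternating squares: when $s=0$ the chosen 2-fracture graph is a spanning tree and the argument is a direct label-sequence analysis (there are no squares to slide, so your normalisation moves do not apply), while for $s>0$ one argues exactly as you sketch, sliding squares towards the tree component and eliminating the resulting finite list of candidates via Proposition~\ref{IPfails}. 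Your outline does not single out the $s=0$ branch, so be prepared for a separate combinatorial argument there.
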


\begin{proof}
If $n\leq 8$ a computer search\footnote{See \url{https://leemans.dimitri.web.ulb.be/transitive/index.html} for the results of the search.} shows that all possible graphs are listed in Table~\ref{T2F}.
Let $n\geq 9$.
By assumption $\Gamma$ has a 2-fracture graph. Proposition~\ref{frac2bis} then implies that $r\leq n/2$.
Suppose that  $r= n/2$. In this case, $n$ is even and Proposition~\ref{frac1} implies that $\Gamma$ has a connected 2-fracture graph that is not a tree, hence by~\cite[Corollary 4.19]{2017CFLM}, either $n$ is even and $G\cong C_2\wr S_{n/2}$ or $n$ is odd and $G\cong C_2^{n/2-1}: S_{n/2}$ corresponding to the graphs (1), (2) of Table~\ref{T2F}. 

Now let us consider the case $r=\frac{n-1}{2}$. 
Let $\mathcal G$ be the permutation representation graph of $\Gamma$.
If $\Gamma$ has a connected 2-fracture graph, this graph has $2r = n-1$ edges and therefore it is a tree.
If there is no connected 2-fracture graph, by Proposition~\ref{frac1}, there exists a 2-fracture graph that has exactly one component which is a tree and all the others having exactly one cycle, which is an alternating square.
Suppose $\mathcal{F}$ is such a 2-fracture graph and assume that the number $s$ of squares of $\mathcal{F}$ is minimal. We deal with cases $s>0$ and $s=0$ separately.

Case $s>0$: As $\mathcal{F}$ contains all vertices of $\mathcal{G}$, any component of $\mathcal{F}$ is at distance one from another component of $\mathcal{F}$ in $\mathcal G$.
Consider two components of $\mathcal{F}$ at distance one. Let $\{v,w\}$ be an edge of $\mathcal{G}$  between two components of  $\mathcal{F}$.
By Proposition~\ref{movingsquares}, we may assume $\mathcal{F}$ contains one of the following subgraphs.

$$ \mbox{(i)} \xymatrix@-1.3pc{*+[o][F]{}  \ar@{-}[rr]^k \ar@{-}[dd]_j && *+[o][F]{}  \ar@{-}[dd]^j\\
 &&&&\\
 *+[o][F]{}\ar@{-}[rr]_k &&*+[o][F]{w}\ar@{--}[rr]_i&& *+[o][F]{v}} \qquad 
 \mbox{(ii)}  \xymatrix@-1.3pc{*+[o][F]{}  \ar@{-}[rr]^{i-1} \ar@{-}[dd]_{i+1} && *+[o][F]{}  \ar@{-}[dd]^{i+1}&&\\
 &&&&\\
 *+[o][F]{}\ar@{-}[rr]_{i-1}&&*+[o][F]{}\ar@{-}[rr]_i&&*+[o][F]{w}\ar@{--}[rr]_l&& *+[o][F]{v}} $$

If the label of the edge $\{w,v\}$, in (i), is nonconsecutive with at least one of the edges of the alternating squares, then by Propositions~\ref{2fpaths} and \ref{swap}, 
there exists a fracture graph with $s-1$ squares. 
$$\xymatrix@-1.3pc{*+[o][F]{}  \ar@{-}[rr]^k \ar@{-}[dd]_j && *+[o][F]{}  \ar@{-}[dd]^j\ar@{--}[rr]^i&& *+[o][F]{}  \ar@{--}[dd]^j\\
 &&&&\\
 *+[o][F]{}\ar@{-}[rr]_k&&*+[o][F]{}\ar@{--}[rr]_i&&*+[o][F]{}} \qquad\longrightarrow\qquad \xymatrix@-1.3pc{*+[o][F]{}  \ar@{-}[rr]^k \ar@{--}[dd]_j && *+[o][F]{}  \ar@{-}[dd]^j\ar@{--}[rr]^i&& *+[o][F]{}  \ar@{-}[dd]^j\\
 &&&&\\
 *+[o][F]{}\ar@{-}[rr]_k &&*+[o][F]{}\ar@{--}[rr]_i&&*+[o][F]{}} $$
As this operation does not create another cycle in the 2-fracture graph (for otherwise, the 2-fracture graph would have at least three $j$-edges, as that cycle would have another $j$-edge by Proposition~\ref{2fpaths}), we get a contradiction with the assumption that the number of squares $s$ is minimal. Hence $|j-k|=2$, $j=i-1$ and $k=i+1$.

In case (ii), $l\in\{i\pm1\}$ otherwise we get a situation as in (i).

$$ \xymatrix@-1.3pc{*+[o][F]{}  \ar@{-}[rr]^{i-1} \ar@{-}[dd]_{i+1}&& *+[o][F]{}   \ar@{-}[dd]_{i+1}&&*+[o][F]{} &&\\
 &&&&\\
 *+[o][F]{}\ar@{-}[rr]_{i-1}&&*+[o][F]{}\ar@{-}[rr]_i\ar@{--}[uurr]^l && *+[o][F]{}\ar@{--}[rr]_l&& *+[o][F]{}\ar@{--}[uull]_i} $$
 
Let us now prove that in both cases, (i) and (ii), the connected component containing $v$ must be a tree.
Suppose that the component containing $v$ has a square. Then, by Proposition~\ref{movingsquares} there exists a 2-fracture graph such that the squares are at distance at most three in $\mathcal{G}$, as in the following figure.

\begin{small}
\[\begin{array}{l}
 \xymatrix@-1.5pc{*+[o][F]{}  \ar@{-}[rr]^{i-1} \ar@{-}[dd]_{i+1}&& *+[o][F]{}  \ar@{-}[dd]_{i+1}&&   *+[o][F]{}\ar@{-}[rr] \ar@{-}[dd]_{i+1}\ar@{-}[rr]^{i-1}&&*+[o][F]{}\ar@{-}[dd]^{i+1}\\
 &&&& &&\\
 *+[o][F]{}\ar@{-}[rr]_{i-1}&& *+[o][F]{}\ar@{.}[rr]_i&& *+[o][F]{}\ar@{-}[rr]_{i-1}&&*+[o][F]{}}\\
 
  \xymatrix@-1.5pc{*+[o][F]{}  \ar@{-}[rr]^{i-1} \ar@{-}[dd]_{i+1}&& *+[o][F]{}  \ar@{-}[dd]_{i+1}&&&&   *+[o][F]{}\ar@{-}[rr] \ar@{-}[dd]_{i+2}\ar@{-}[rr]^i&&*+[o][F]{}\ar@{-}[dd]^{i+2}\\
 &&&& &&&&\\
 *+[o][F]{}\ar@{-}[rr]_{i-1}&& *+[o][F]{}\ar@{-}[rr]_i&& *+[o][F]{}\ar@{.}[rr]_{i+1}&& *+[o][F]{}\ar@{-}[rr]_i&&*+[o][F]{}}\quad 
 \xymatrix@-1.5pc{*+[o][F]{}  \ar@{-}[rr]^{i-1} \ar@{-}[dd]_{i+1}&& *+[o][F]{}  \ar@{-}[dd]_{i+1}&&&&   *+[o][F]{}\ar@{-}[rr] \ar@{-}[dd]_{i-2}\ar@{-}[rr]^i&&*+[o][F]{}\ar@{-}[dd]^{i-2}\\
 &&&& &&&&\\
 *+[o][F]{}\ar@{-}[rr]_{i-1}&& *+[o][F]{}\ar@{-}[rr]_i&& *+[o][F]{}\ar@{.}[rr]_{i-1}&& *+[o][F]{}\ar@{-}[rr]_i&&*+[o][F]{}}\\
 \xymatrix@-1.5pc{*+[o][F]{}  \ar@{-}[rr]^{i-1} \ar@{-}[dd]_{i+1}&& *+[o][F]{}  \ar@{-}[dd]_{i+1}&&&&&&   *+[o][F]{}\ar@{-}[rr] \ar@{-}[dd]_{i+3}\ar@{-}[rr]^{i+1}&&*+[o][F]{}\ar@{-}[dd]^{i+3}\\
 &&&& &&&&&&\\
 *+[o][F]{}\ar@{-}[rr]_{i-1}&& *+[o][F]{}\ar@{-}[rr]_i&& *+[o][F]{}\ar@{.}[rr]_{i+1}&&*+[o][F]{}\ar@{-}[rr]_{i+2}&& *+[o][F]{}\ar@{-}[rr]_{i+1}&&*+[o][F]{}}\quad \xymatrix@-1.5pc{*+[o][F]{}  \ar@{-}[rr]^{i-1} \ar@{-}[dd]_{i+1}&& *+[o][F]{}  \ar@{-}[dd]_{i+1}&&&&&&   *+[o][F]{}\ar@{-}[rr] \ar@{-}[dd]_{i-1}\ar@{-}[rr]^{i-3}&&*+[o][F]{}\ar@{-}[dd]^{i-1}\\
 &&&& &&&&&&\\
 *+[o][F]{}\ar@{-}[rr]_{i-1}&& *+[o][F]{}\ar@{-}[rr]_i&& *+[o][F]{}\ar@{.}[rr]_{i-1}&&*+[o][F]{}\ar@{-}[rr]_{i-2}&& *+[o][F]{}\ar@{-}[rr]_{i-3}&&*+[o][F]{}}
 \end{array}\]
\end{small}
In any case we get at least three edges of the 2-fracture graph with the same label, a contradiction.
So we may assume that the connected component containing $v$ is a tree.

Let $\nu$ be the number of vertices of that component.
Let us first consider the case where $\nu>1$.
In this case the component having a square cannot be as in (i) for, otherwise, the 2-fracture graph would have again at least three edges with same label.
Moreover the edge $\{v,w\}$ of $\mathcal{G}$, in (ii), must be a double edge with labels $i\pm 1$, otherwise, if $\{v,w\}$ is a single edge with label $i-1$ or $i+1$, this edge belongs to a $2$-fracture graph with less squares contradicting the minimality of $s$.
Consequently $\nu= 2$ and $\mathcal G$ must be the following graph.

\[ \begin{array}{c} \xymatrix@-0.7pc{  *+[o][F]{} \ar@{-}[r]^1 & *+[o][F]{}  \ar@{=}[r]^2_0 & *+[o][F]{} \ar@{-}[r]^1 & *+[o][F]{} \ar@{-}[r]^2 & *+[o][F]{} \ar@{.}[r] & *+[o][F]{}  \ar@{-}[r]^{r-4}& *+[o][F]{}  \ar@{-}[r]^{r-3}  & *+[o][F]{}  \ar@{-}[r]^{r-2}  & *+[o][F]{}  \ar@{-}[r]^{r-1}  & *+[o][F]{}   &\\
 & & &*+[o][F]{} \ar@{-}[r]_2  \ar@{-}[u]_0&  *+[o][F]{} \ar@{-}[u]_0 \ar@{.}[r] & *+[o][F]{}  \ar@{-}[r]_{r-4} \ar@{-}[u]_0& *+[o][F]{}  \ar@{-}[r]_{r-3}\ar@{-}[u]_0 &  *+[o][F]{}  \ar@{-}[r]_{r-2}\ar@{-}[u]_0 &*+[o][F]{}  \ar@{-}[r]_{r-1}\ar@{-}[u]_0&  *+[o][F]{} \ar@{-}[u]_0  } 
\end{array}\]
When $n>7$, this graph does not give a permutation representation of a string C-group as $\Gamma_{\{0,1,2,3\}}$ is a sesqui-extension with respect to $\rho_0$ of the sggi having the permutation representation graph (1) of Proposition~\ref{IPfails}.
Thus, by Propositions~\ref{sesqui1} and \ref{IPfails},  $n=7$. This gives one possibility which is the graph (4) of Table~\ref{T2F}.

Now let $\nu=1$. 
Suppose we have situation (i) and there are two components with squares connected to vertex $v$, then we get the following possibilities.
\begin{small}
\[\begin{array}{l}
 \xymatrix@-1.5pc{*+[o][F]{}  \ar@{-}[rr]^{i-1} \ar@{-}[dd]_{i+1}&& *+[o][F]{}  \ar@{-}[dd]_{i+1}&& &&  *+[o][F]{}\ar@{-}[rr] \ar@{-}[dd]_i\ar@{-}[rr]^{i-2}&&*+[o][F]{}\ar@{-}[dd]^i\\
 &&&&&&&&\\
 *+[o][F]{}\ar@{-}[rr]_{i-1}&& *+[o][F]{}\ar@{.}[rr]_i&& *+[o][F]{}\ar@{.}[rr]_{i-1}&& *+[o][F]{}\ar@{-}[rr]_{i-2}&&*+[o][F]{}}\quad
  \xymatrix@-1.5pc{*+[o][F]{}  \ar@{-}[rr]^{i-1} \ar@{-}[dd]_{i+1}&& *+[o][F]{}  \ar@{-}[dd]_{i+1}&& &&  *+[o][F]{}\ar@{-}[rr] \ar@{-}[dd]_i\ar@{-}[rr]^{i+2}&&*+[o][F]{}\ar@{-}[dd]^i\\
 &&&&&&&&\\
 *+[o][F]{}\ar@{-}[rr]_{i-1}&& *+[o][F]{}\ar@{.}[rr]_i&& *+[o][F]{}\ar@{.}[rr]_{i+1}&& *+[o][F]{}\ar@{-}[rr]_{i+2}&&*+[o][F]{}}\\
 
  \xymatrix@-1.5pc{*+[o][F]{}  \ar@{-}[rr]^{i-1} \ar@{-}[dd]_{i+1}&& *+[o][F]{}  \ar@{-}[dd]_{i+1}&&&&&&   *+[o][F]{}\ar@{-}[rr] \ar@{-}[dd]_{i+3}\ar@{-}[rr]^{i+1}&&*+[o][F]{}\ar@{-}[dd]^{i+3}\\
 &&&&&& &&&&\\
 *+[o][F]{}\ar@{-}[rr]_{i-1}&& *+[o][F]{}\ar@{.}[rr]_i&& *+[o][F]{}\ar@{.}[rr]_{i+1}&&*+[o][F]{}\ar@{-}[rr]_{i+2}&& *+[o][F]{}\ar@{-}[rr]_{i+1}&&*+[o][F]{}}\quad 
 \xymatrix@-1.5pc{*+[o][F]{}  \ar@{-}[rr]^{i-1} \ar@{-}[dd]_{i+1}&& *+[o][F]{}  \ar@{-}[dd]_{i+1}&&&&&&   *+[o][F]{}\ar@{-}[rr] \ar@{-}[dd]_{i-3}\ar@{-}[rr]^{i-1}&&*+[o][F]{}\ar@{-}[dd]^{i-3}\\
 &&&&&& &&&&\\
 *+[o][F]{}\ar@{-}[rr]_{i-1}&& *+[o][F]{}\ar@{.}[rr]_i&& *+[o][F]{}\ar@{.}[rr]_{i-1}&&*+[o][F]{}\ar@{-}[rr]_{i-2}&& *+[o][F]{}\ar@{-}[rr]_{i-1}&&*+[o][F]{}}
 \end{array}\]
\end{small}
The first two are excluded by the fact that we can reduce the number of squares.
The last two are excluded because they have at least three edges with the same label in the 2-fracture graph, a contradiction. Hence $\mathcal{F}$ has exactly two components one of which is an isolated vertex. The only possibility for $\mathcal G$ is then case (8) of Table~\ref{T2F} and situation (i) is fully analysed.

Suppose now we have situation (ii) and $\nu=1$.
If $v$ is at distance one from two components with a square then, as $\{w,v\}$ must be a double $(i\pm 1)$-edge, and both components must be as in (ii). Then we get the following graph.
$$ \xymatrix@-1.5pc{*+[o][F]{}  \ar@{-}[rr]^{i-1} \ar@{-}[dd]_{i+1}&& *+[o][F]{}  \ar@{-}[dd]_{i+1}&&&&&&&&   *+[o][F]{}\ar@{-}[rr] \ar@{-}[dd]_i\ar@{-}[rr]^{i-2}&&*+[o][F]{}\ar@{-}[dd]^i\\
 &&&&&&&& &&&&\\
 *+[o][F]{}\ar@{-}[rr]_{i-1}&& *+[o][F]{}\ar@{-}[rr]_i&& *+[o][F]{}\ar@/_{.3pc}/@{.}[rr]_{i-1}\ar@/^{.3pc}/@{.}[rr]^{i+1}&&*+[o][F]{v}\ar@{.}[rr]_i&& *+[o][F]{}\ar@{-}[rr]_{i-1}&& *+[o][F]{}\ar@{-}[rr]_{i-2}&&*+[o][F]{}}$$
But then the $2$-fracture graph has three $i$-edges, a contradiction.
Thus we have only one component with a square. Up to duality, the possibilities we get are graphs (5),  (6) or (7) of Table~\ref{T2F} with degree $7$ or the following graph with $n>7$.
$$ \xymatrix@-0.7pc{   *+[o][F]{}  \ar@{=}[r]^2_0 & *+[o][F]{} \ar@{-}[r]^1 & *+[o][F]{} \ar@{-}[r]^2 & *+[o][F]{} \ar@{.}[r] & *+[o][F]{}  \ar@{-}[r]^{r-4}& *+[o][F]{}  \ar@{-}[r]^{r-3}  & *+[o][F]{}  \ar@{-}[r]^{r-2}  & *+[o][F]{}  \ar@{-}[r]^{r-1}  & *+[o][F]{}   &\\
  & *+[o][F]{} \ar@{-}[r]^1 &*+[o][F]{} \ar@{-}[r]_2  \ar@{-}[u]_0&  *+[o][F]{} \ar@{-}[u]_0 \ar@{.}[r] & *+[o][F]{}  \ar@{-}[r]_{r-4} \ar@{-}[u]_0& *+[o][F]{}  \ar@{-}[r]_{r-3}\ar@{-}[u]_0 &  *+[o][F]{}  \ar@{-}[r]_{r-2}\ar@{-}[u]_0 &*+[o][F]{}  \ar@{-}[r]_{r-1}\ar@{-}[u]_0&  *+[o][F]{} \ar@{-}[u]_0  } $$
But this graph does not give a permutation representation of a string C-group for $n>7$ as $\Gamma_{\{0,1,2,3\}}$ is a sesqui-extension with respect to $\rho_0$ of the string C-group having the permutation representation graph (2) of Proposition~\ref{IPfails}. Hence by Proposition~\ref{sesqui1} and \ref{IPfails} the only possibility is graph (5) of Table~\ref{T2F} with  $n=7$.

Case $s=0$: In this case $\mathcal{F}$ is a tree. 
If any pair of adjacent edges of  $\mathcal{F}$ have consecutive labels, we  get a linear graph whose sequence of labels is $(0,1,0,1,2,3,2,3,\ldots,r-2,r-1,r-2,r-1)$. 
This sequence corresponds to a string C-group only if $n\in\{5,\,9\}$, giving graphs (9) and (10)  of Table~\ref{T2F}.
Indeed for $n>9$, $\Gamma_{\{0,1,2,3\}}$ is a sesqui-extension with respect to $\rho_0$ of a sesqui-extension with respect to $\rho_3$, of the sggi having the permutation representation graph (3) of Proposition~\ref{IPfails}.
 Thus when $n>9$, by Proposition~\ref{sesqui1} and \ref{IPfails} we get a sggi that does not satisfy the intersection property.

Suppose that $\mathcal{F}$ has adjacent edges with nonconsecutive labels.
Hence $\mathcal{G}$ contains an alternating square $q$.
By a similar argument as in the proof of Proposition~\ref{movingsquares} we may assume that the labels of $q$ are $i-1$ and $i+1$ and there exists an $i$-edge incident to $q$.
As $\mathcal{F}$ has exactly one component, three edges of $q$ belong to $\mathcal{F}$, and the label of an edge incident to $q$ must be consecutive either with $i-1$ or with $i+1$.
We may assume, up to duality, that $\mathcal{F}$ contains one of the following subgraphs.

$$\xymatrix@-0.7pc{ *+[o][F]{} \ar@{-}[r]^{i-1} & *+[o][F]{} \ar@{-}[r]^i & *+[o][F]{} \ar@{-}[r]^{i+1}  & *+[o][F]{} \ar@{-}[r]^{i+2}& *+[o][F]{} \ar@{.}[r] & *+[o][F]{}  \ar@{-}[r]^{r-4}& *+[o][F]{}  \ar@{-}[r]^{r-3}  & *+[o][F]{}  \ar@{-}[r]^{r-2}  & *+[o][F]{}  \ar@{-}[r]^{r-1}  & *+[o][F]{}   &\\
 & &*+[o][F]{} \ar@{-}[r]_{i+1}  \ar@{-}[u]_{i-1}&  *+[o][F]{} \ar@{.}[u]_{i-1} \ar@{-}[r]_{i+2}  & *+[o][F]{} \ar@{.}[r]\ar@{.}[u]_{i-1} & *+[o][F]{}  \ar@{-}[r]_{r-4} \ar@{.}[u]_{i-1}& *+[o][F]{}  \ar@{-}[r]_{r-3}\ar@{.}[u]_{i-1} &  *+[o][F]{}  \ar@{-}[r]_{r-2}\ar@{.}[u]_{i-1} &*+[o][F]{}  \ar@{-}[r]_{r-1}\ar@{.}[u]_{i-1}&  *+[o][F]{} \ar@{.}[u]_{i-1}  }  
 $$
or 
$$\xymatrix@-0.7pc{ *+[o][F]{} \ar@{-}[r]^{i+1}& *+[o][F]{} \ar@{-}[r]^i & *+[o][F]{} \ar@{-}[r]^{i+1}  & *+[o][F]{} \ar@{-}[r]^{i+2}& *+[o][F]{} \ar@{.}[r] & *+[o][F]{}  \ar@{-}[r]^{r-4}& *+[o][F]{}  \ar@{-}[r]^{r-3}  & *+[o][F]{}  \ar@{-}[r]^{r-2}  & *+[o][F]{}  \ar@{-}[r]^{r-1}  & *+[o][F]{}   &\\
 &&*+[o][F]{} \ar@{.}[r]_{i+1} \ar@{-}[u]_{i-1}& *+[o][F]{} \ar@{-}[u]_{i-1} \ar@{-}[r]_{i+2}  & *+[o][F]{} \ar@{.}[r]\ar@{.}[u]_{i-1} & *+[o][F]{}  \ar@{-}[r]_{r-4} \ar@{.}[u]_{i-1}& *+[o][F]{}  \ar@{-}[r]_{r-3}\ar@{.}[u]_{i-1} &  *+[o][F]{}  \ar@{-}[r]_{r-2}\ar@{.}[u]_{i-1} &*+[o][F]{}  \ar@{-}[r]_{r-1}\ar@{.}[u]_{i-1}&  *+[o][F]{} \ar@{.}[u]_{i-1}  }  
 $$

Now we consider two subcases: (a) there exists exactly one $i$-edge incident to $q$, or  (b) otherwise.

Suppose we have (a).
First, assume the only edges of $\mathcal{F}$ with nonconsecutive labels that are adjacent, are the edges of $q$. Then, up to duality, we get the graph (11) of Table~\ref{T2F} or the following graph which is a string C-group only if $n=7$ (graph (12) of  Table~\ref{T2F}). 
Indeed, when $n>7$, we get the following graph.
$$\xymatrix@-0.7pc{  *+[o][F]{} \ar@{-}[r]^1 & *+[o][F]{}  \ar@{-}[r]^2 & *+[o][F]{} \ar@{-}[r]^1 & *+[o][F]{} \ar@{-}[r]^2 & *+[o][F]{} \ar@{.}[r] & *+[o][F]{}  \ar@{-}[r]^{r-4}& *+[o][F]{}  \ar@{-}[r]^{r-3}  & *+[o][F]{}  \ar@{-}[r]^{r-2}  & *+[o][F]{}  \ar@{-}[r]^{r-1}  & *+[o][F]{}   &\\
 & & &*+[o][F]{} \ar@{-}[r]_2  \ar@{-}[u]_0&  *+[o][F]{} \ar@{-}[u]_0 \ar@{.}[r] & *+[o][F]{}  \ar@{-}[r]_{r-4} \ar@{-}[u]_0& *+[o][F]{}  \ar@{-}[r]_{r-3}\ar@{-}[u]_0 &  *+[o][F]{}  \ar@{-}[r]_{r-2}\ar@{-}[u]_0 &*+[o][F]{}  \ar@{-}[r]_{r-1}\ar@{-}[u]_0&  *+[o][F]{} \ar@{-}[u]_0  }  
 $$

But, by Propositions~\ref{sesqui1} and~\ref{IPfails},  this graph does not give a permutation representation of a string C-group when $n>7$ as $\Gamma_{\{0,1,2,3\}}$ is a sesqui-extension with respect to $\rho_0$ of the sggi having the permutation representation graph (4) of Proposition~\ref{IPfails}.

Next, assume that there is another pair of adjacent edges of $\mathcal{F}$ with nonconsecutive labels. By Proposition~\ref{movingsquares}, we get the following possibility.
$$\xymatrix@-0.35pc{ *+[o][F]{}  \ar@{-}[r]^0 & *+[o][F]{} \ar@{-}[r]^1 & *+[o][F]{} \ar@{-}[r]^2 & *+[o][F]{} \ar@{.}[r] & *+[o][F]{}  \ar@{-}[r]^{i-2}   & *+[o][F]{}  \ar@{-}[r]^{i-1}  & *+[o][F]{}  \ar@{-}[r]^i& *+[o][F]{}  \ar@{-}[r]^{i+1}& *+[o][F]{}  \ar@{-}[r]^{i+2}&*+[o][F]{}  \ar@{.}[r]  & *+[o][F]{}  \ar@{-}[r]^{r-1}  & *+[o][F]{}   \\
*+[o][F]{}  \ar@{-}[r]_0 \ar@{--}[u]_i & *+[o][F]{} \ar@{-}[r]_1 \ar@{--}[u]_i & *+[o][F]{} \ar@{-}[r]_2  \ar@{--}[u]_i&  *+[o][F]{} \ar@{--}[u]_i \ar@{.}[r] & *+[o][F]{}  \ar@{-}[r]_{i-2} \ar@{--}[u]_i& *+[o][F]{}   \ar@{-}[u]_i&     &  *+[o][F]{}  \ar@{-}[r]_{i+1}\ar@{-}[u]_{i-1}& *+[o][F]{}  \ar@{-}[r]_{i+2}\ar@{--}[u]_{i-1} &*+[o][F]{} \ar@{.}[r]\ar@{--}[u]_{i-1}&*+[o][F]{}  \ar@{-}[r]_{r-1}\ar@{--}[u]_{i-1}&  *+[o][F]{} \ar@{--}[u]_{i-1} } 
 $$
 If $n=9$ we get the graph (5a) of Proposition~\ref{IPfails}  which is not a string C-group. If $n>9$, $\Gamma_{\{i-2,\,i-1,\,i,\, i+1\}}$ is a sesqui-extension  with respect to $\rho_{i-1}$ of a sggi $\Phi$ with permutation representation graph (5b). As in addition $\Gamma_{\{i-2,\,i-1,\,i,\, i+1\}}\cong \Phi$, by item (d) of  Proposition~\ref{sesqui} and Proposition~\ref{IPfails} the graph is not a permutation representation graph of a string C-group.

Suppose we have (b). Then there are more than one $i$-edge incident to $q$ and the only nonconsecutive edges of $\mathcal{F}$ which are adjacent are the edges of $q$.
Hence $\mathcal{G}$ must be, up to a duality,  one of the following graphs.
\[\begin{array}{c}
\xymatrix@-0.38pc{ *+[o][F]{}  \ar@{-}[r]^0& *+[o][F]{}  \ar@{-}[r]^1 &*+[o][F]{}  \ar@{-}[r]^0 & *+[o][F]{} \ar@{-}[r]^1 & *+[o][F]{} \ar@{.}[r]&*+[o][F]{}  \ar@{-}[r]^{i-3} &*+[o][F]{}  \ar@{-}[r]^{i-2} & *+[o][F]{} \ar@{-}[r]^{i-1} & *+[o][F]{}   \ar@{-}[r]^i & *+[o][F]{}  \ar@{-}[r]^{i+1}& *+[o][F]{}  \ar@{.}[r] &  *+[o][F]{}  \ar@{-}[r]^{r-1}  & *+[o][F]{}  \\
&& && && & &*+[o][F]{}  \ar@{-}[r]_i& *+[o][F]{}  \ar@{-}[r]_{i+1} \ar@{-}[u]_{i-1}& *+[o][F]{}  \ar@{.}[r]\ar@{--}[u]_{i-1} & *+[o][F]{}  \ar@{-}[r]_{r-1}\ar@{--}[u]_{i-1}&  *+[o][F]{} \ar@{--}[u]_{i-1}  }\\

\xymatrix@-0.38pc{ *+[o][F]{}  \ar@{-}[r]^0& *+[o][F]{}  \ar@{-}[r]^1 &*+[o][F]{}  \ar@{-}[r]^0 & *+[o][F]{} \ar@{-}[r]^1 & *+[o][F]{} \ar@{.}[r]&*+[o][F]{}  \ar@{-}[r]^{i-3} &*+[o][F]{}  \ar@{-}[r]^{i-2} & *+[o][F]{} \ar@{-}[r]^{i-1} & *+[o][F]{}   \ar@{-}[r]^i & *+[o][F]{}  \ar@{-}[r]^{i+1}& *+[o][F]{} &\\
&& && && & && *+[o][F]{}  \ar@{-}[r]_{i+1} \ar@{-}[u]_{i-1}& *+[o][F]{}  \ar@{-}[r]_i\ar@{--}[u]_{i-1}& *+[o][F]{}   } \\

\xymatrix@-0.38pc{ *+[o][F]{}  \ar@{-}[r]^0& *+[o][F]{}  \ar@{-}[r]^1 &*+[o][F]{}  \ar@{-}[r]^0 & *+[o][F]{} \ar@{-}[r]^1 & *+[o][F]{} \ar@{.}[r]&*+[o][F]{}  \ar@{-}[r]^{i-3} &*+[o][F]{}  \ar@{-}[r]^{i-2} & *+[o][F]{} \ar@{-}[r]^{i-1} & *+[o][F]{}   \ar@{-}[r]^i & *+[o][F]{}  \ar@{-}[r]^{i+1}& *+[o][F]{} \ar@{-}[r]^i &*+[o][F]{} \\
&& && && & && *+[o][F]{}  \ar@{-}[r]_{i+1} \ar@{-}[u]_{i-1}& *+[o][F]{} \ar@{--}[u]_{i-1}&   }
\end{array}\]

When $n=7$, we get the permutation  representation graphs (13),  (14) and (15) of Table~\ref{T2F}.
For $n>7$ only the permutation representation graph (13) is possible. Indeed for all the remaining possibilities
 $\Gamma_{\{i-2,\,i-1,\,i,\, i+1\}}$ is a sggi having one of the  permutation representation graphs (6a), (7) and (8) of Proposition~\ref{IPfails}.
\end{proof}

\begin{table}[htbp]

\[\begin{array}{|l|}
\hline
(1)\, C_2\times S_{n/2}:\; \xymatrix@-1.4pc{
*+[o][F]{}\ar@{--}[dd]_0\ar@{-}[rr]^{r-1}&&*+[o][F]{}\ar@{--}[dd]_0\ar@{.}[rr]&&*+[o][F]{}\ar@{-}[rr]^4\ar@{--}[dd]_0&&*+[o][F]{}\ar@{-}[rr]^3\ar@{--}[dd]_0&&*+[o][F]{}\ar@{-}[rr]^2 \ar@{-}[dd]_0 &&*+[o][F]{}   \ar@{-}[dd]^0\ar@{-}[rr]^1&& *+[o][F]{} \ar@{--}[dd]^0\\
&&&&&&&&&&&&&&\\
*+[o][F]{}\ar@{-}[rr]_{r-1}*+[o][F]{}&&*+[o][F]{}\ar@{.}[rr]&&*+[o][F]{}\ar@{-}[rr]_4&&*+[o][F]{}\ar@{-}[rr]_3&&*+[o][F]{}\ar@{-}[rr]_2&&*+[o][F]{} \ar@{-}[rr]_1&& *+[o][F]{} }\\
(2) \, C_2\wr S_{n/2}\, (\mbox{if }n/2 \mbox{ is even}) \mbox{ or } C_2^{n/2-1}:S_{n/2} \,(\mbox{if }n/2\mbox{ is odd}):\\
\hspace{3cm}\xymatrix@-1.4pc{
*+[o][F]{}\ar@{--}[dd]_0\ar@{-}[rr]^{r-1}&&*+[o][F]{}\ar@{--}[dd]_0\ar@{.}[rr]&&*+[o][F]{}\ar@{-}[rr]^5\ar@{--}[dd]_0&&*+[o][F]{}\ar@{-}[rr]^4\ar@{--}[dd]_0&&*+[o][F]{}\ar@{-}[rr]^3\ar@{--}[dd]_0&&*+[o][F]{}\ar@{-}[rr]^2 \ar@{-}[dd]_0 &&*+[o][F]{}   \ar@{-}[dd]^0\ar@{-}[rr]^1&& *+[o][F]{} \\
&&&&&&&&&&&&&&\\
*+[o][F]{}\ar@{-}[rr]_{r-1}*+[o][F]{}&&*+[o][F]{}\ar@{.}[rr]&&*+[o][F]{}\ar@{-}[rr]_5&&*+[o][F]{}\ar@{-}[rr]_4&&*+[o][F]{}\ar@{-}[rr]_3&&*+[o][F]{}\ar@{-}[rr]_2&&*+[o][F]{} \ar@{-}[rr]_1&& *+[o][F]{} }\\

(3)\,PGL(2,3):  \;  \xymatrix@-0.7pc{   *+[o][F]{}  \ar@{-}[r]^1 & *+[o][F]{}  \ar@{-}[r]^2 & *+[o][F]{} &  \\
   &  *+[o][F]{}  \ar@{-}[r]_2\ar@{-}[u]_0 &*+[o][F]{}\ar@{-}[r]_1 \ar@{-}[u]_0&*+[o][F]{}}  \\
   
 (4) \, S_7: \; \xymatrix@-0.7pc{  *+[o][F]{} \ar@{-}[r]^1 & *+[o][F]{}  \ar@{==}[r]^2_0 & *+[o][F]{} \ar@{-}[r]^1 & *+[o][F]{} \ar@{-}[r]^2 & *+[o][F]{} \\
 & & &*+[o][F]{} \ar@{-}[r]_2  \ar@{-}[u]_0&  *+[o][F]{} \ar@{-}[u]_0 } \\

(5)\, S_7:  \; \xymatrix@-0.7pc{   *+[o][F]{}  \ar@{==}[r]^2_0 & *+[o][F]{} \ar@{-}[r]^1 & *+[o][F]{} \ar@{-}[r]^2 & *+[o][F]{} \\
  & *+[o][F]{} \ar@{-}[r]^1 &*+[o][F]{} \ar@{-}[r]_2  \ar@{-}[u]_0&  *+[o][F]{} \ar@{-}[u]_0 }\quad
   
 (6)\, S_7:  \; \xymatrix@-0.7pc{   *+[o][F]{}  \ar@{==}[r]^2_0 & *+[o][F]{} \ar@{-}[r]^1 & *+[o][F]{} \ar@{-}[r]^2 & *+[o][F]{}& \\
  & &*+[o][F]{} \ar@{-}[r]_2  \ar@{-}[u]_0&  *+[o][F]{} \ar@{-}[u]_0\ar@{-}[r]^1 & *+[o][F]{}  }\quad

   (7)\, S_7:  \; \xymatrix@-0.7pc{   *+[o][F]{}  \ar@{==}[r]^2_0 & *+[o][F]{} \ar@{-}[r]^1 & *+[o][F]{} \ar@{-}[r]^2 & *+[o][F]{}\ar@{-}[r]^1 & *+[o][F]{}  \\
  & &*+[o][F]{} \ar@{-}[r]_2  \ar@{-}[u]_0&  *+[o][F]{} \ar@{-}[u]_0 &}\\[5pt]

(8) \,S_7:\; \xymatrix@-0.3pc{   *+[o][F]{}  \ar@{-}[r]^1 & *+[o][F]{}  \ar@{-}[r]^0 & *+[o][F]{}  \ar@{-}[r]^1 & *+[o][F]{}   &\\
   &  *+[o][F]{}  \ar@{-}[r]_0\ar@{-}[u]_2 &*+[o][F]{}  \ar@{--}[r]_1\ar@{-}[u]_2&  *+[o][F]{}   } \\
   
 (9)\, D_{5}: \; \xymatrix@-0.7pc{ *+[o][F]{}  \ar@{-}[r]^0 & *+[o][F]{}  \ar@{-}[r]^1 &*+[o][F]{}  \ar@{-}[r]^0 & *+[o][F]{} \ar@{-}[r]^1 & *+[o][F]{}  }  \quad

 (10)\,A_9: \; \xymatrix@-0.7pc{ *+[o][F]{}  \ar@{-}[r]^0 & *+[o][F]{}  \ar@{-}[r]^1 &*+[o][F]{}  \ar@{-}[r]^0 & *+[o][F]{} \ar@{-}[r]^1 & *+[o][F]{}  \ar@{-}[r]^2 & *+[o][F]{} \ar@{-}[r]^3 & *+[o][F]{}   \ar@{-}[r]^2  & *+[o][F]{}  \ar@{-}[r]^3  & *+[o][F]{}  } \\[5pt]

 (11)\, A_n\,{\emph or } \,S_n\, (n\geq 7): \; \xymatrix@-0.35pc{  *+[o][F]{} \ar@{-}[r]^1 & *+[o][F]{}  \ar@{-}[r]^0 & *+[o][F]{} \ar@{-}[r]^1 & *+[o][F]{} \ar@{-}[r]^2 & *+[o][F]{} \ar@{.}[r]   & *+[o][F]{}  \ar@{-}[r]^{r-2}  & *+[o][F]{}  \ar@{-}[r]^{r-1}  & *+[o][F]{}   &\\
 & & &*+[o][F]{} \ar@{-}[r]_2  \ar@{-}[u]_0&  *+[o][F]{} \ar@{--}[u]_0 \ar@{.}[r]  &  *+[o][F]{}  \ar@{-}[r]_{r-2}\ar@{--}[u]_0 &*+[o][F]{}  \ar@{-}[r]_{r-1}\ar@{--}[u]_0&  *+[o][F]{} \ar@{--}[u]_0  } \\

  (12) \,S_7:\; \xymatrix@-0.7pc{  *+[o][F]{} \ar@{-}[r]^1 & *+[o][F]{}  \ar@{-}[r]^2 & *+[o][F]{} \ar@{-}[r]^1 & *+[o][F]{} \ar@{--}[r]^2 & *+[o][F]{} \\
 & & &*+[o][F]{} \ar@{-}[r]_2  \ar@{-}[u]_0&  *+[o][F]{} \ar@{-}[u]_0} \\

 (13)\, A_n\,{\emph or } \,S_n \, (n\geq 7): \; \xymatrix@-0.35pc{ *+[o][F]{}  \ar@{-}[r]^0 & *+[o][F]{} \ar@{-}[r]^1 & *+[o][F]{} \ar@{-}[r]  \ar@{-}[r]^2 & *+[o][F]{}  \ar@{.}[r] & *+[o][F]{}  \ar@{-}[r]^{r-2}  & *+[o][F]{}  \ar@{-}[r]^{r-1}  & *+[o][F]{}  &\\
&*+[o][F]{}  \ar@{-}[r]_1 & *+[o][F]{} \ar@{-}[u]_0  \ar@{-}[r]_2  & *+[o][F]{}  \ar@{.}[r] \ar@{--}[u]_0 &  *+[o][F]{}  \ar@{-}[r]_{r-2}\ar@{--}[u]_0&*+[o][F]{}  \ar@{-}[r]_{r-1}\ar@{--}[u]_0&  *+[o][F]{} \ar@{--}[u]_0  } \\

 (14) \,S_7:\; \xymatrix@-0.35pc{ *+[o][F]{}  \ar@{-}[r]^0 &   *+[o][F]{}  \ar@{-}[r]^1 & *+[o][F]{}  \ar@{-}[r]^2 & *+[o][F]{}   &\\
 &  &  *+[o][F]{}  \ar@{-}[r]_2\ar@{--}[u]_0 &*+[o][F]{}  \ar@{-}[r]_1\ar@{-}[u]_0&  *+[o][F]{} } \quad

 (15) \,S_7:\; \xymatrix@-0.35pc{ *+[o][F]{}  \ar@{-}[r]^0 &   *+[o][F]{}  \ar@{-}[r]^1 & *+[o][F]{}  \ar@{-}[r]^2 & *+[o][F]{}   \ar@{-}[r]^1 & *+[o][F]{} \\
 &  &  *+[o][F]{}  \ar@{-}[r]_2\ar@{--}[u]_0 &*+[o][F]{} \ar@{-}[u]_0& } \\
   
\hline
\end{array}\]
\caption{Permutation representation graphs of transitive string C-groups of rank $r\geq\frac{n-1}{2}$ having a 2-fracture graph (edges not in the 2-fracture graph are dashed).} \label{T2F}
\end{table}


\begin{prop}\label{11}
Let $n\geq 7$.  The graph (11) of Table~\ref{T2F} gives a string C-group for $A_n$, when $n\equiv 1 \mod 4$, and for  $S_n$, when $n\equiv 3\mod 4$ .
\end{prop}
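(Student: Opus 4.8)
The plan is to induct on $n$ (equivalently on $r=(n-1)/2$, since graph~(11) forces $n=2r+1$): first identify the underlying group by a short computation, then verify the intersection property through Proposition~\ref{arp}. Write the top path of graph~(11) as $v_1,\dots,v_{r+2}$ (edges labelled $1,0,1,2,3,\dots,r-1$) and the bottom path as $w_4,\dots,w_{r+2}$ (edges labelled $2,3,\dots,r-1$), so the $0$-edges are $\{v_2,v_3\}$ and $\{v_j,w_j\}$ for $4\le j\le r+2$; thus $\rho_0$ is a product of $r$ transpositions and $\rho_1,\dots,\rho_{r-1}$ are products of two transpositions each, so $\rho_0$ is even exactly when $r$ is even, i.e.\ when $n\equiv1\pmod4$. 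The graph is connected, so $G$ is transitive. One checks $(\rho_0\rho_1)^2=(v_1\,v_4\,v_3\,v_2\,w_4)$, a $5$-cycle, and conjugating it successively by $\rho_2,\rho_3,\dots$ yields $5$-cycles with supports $\{v_1,v_2,v_3,v_j,w_j\}$ for $4\le j\le r+2$, which together cover all $n$ points. If $G$ admitted a block system with blocks of size $d$ (necessarily odd, hence $d\ge3$, as $n$ is odd), each such $5$-cycle would have to act inside a single block, forcing the block through $v_1$ to contain everything; so $G$ is primitive, and by Jordan's theorem a primitive group of degree $n\ge9$ containing a $5$-cycle contains $A_n$. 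Hence $G\in\{A_n,S_n\}$, and it is $A_n$ iff all generators are even iff $n\equiv1\pmod4$ — which gives the stated dichotomy once $\Gamma$ is shown to be a string C-group.

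For the intersection property, by Proposition~\ref{arp} it suffices to prove that $\Gamma_0$ and $\Gamma_{r-1}$ are string C-groups and that $G_0\cap G_{r-1}=G_{0,r-1}$. Deleting $\rho_{r-1}$ detaches the pair $\{v_{r+2},w_{r+2}\}$ (joined only by a $0$-edge), while the remaining component is exactly graph~(11) of rank $r-1$ and degree $n-2$; so $\Gamma_{r-1}$ is the sesqui-extension with respect to $\rho_0$ of that smaller instance, with $\tau=(v_{r+2}\,w_{r+2})$, and by the induction hypothesis together with Proposition~\ref{sesqui1} it is a string C-group (the base case $n=9$, which is the known rank-$4$ string C-group of $A_9$ with this graph, being verified directly). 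Deleting all $0$-edges of $\Gamma$ shows that $\Gamma_0$ has as permutation representation graph the disjoint union of the edge $\{v_1,v_2\}$, the path $v_3,\dots,v_{r+2}$ and the path $w_4,\dots,w_{r+2}$, with group $G_0\cong S_r\times S_{r-1}$; that $\Gamma_0$ is a string C-group follows again from Proposition~\ref{arp} by a parallel induction on the rank, the key point being that the first facet $\langle\rho_2,\dots,\rho_{r-1}\rangle$ of $\Gamma_0$ is the diagonal copy of Moore's string C-group for $S_{r-1}$ (hence a string C-group, with transparent point stabilizers), while the last facet $\langle\rho_1,\dots,\rho_{r-2}\rangle$ is the rank-$(r-2)$ instance of the same disconnected graph.

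It remains to show $G_0\cap G_{r-1}=G_{0,r-1}$. The inclusion $\supseteq$ is clear. Conversely, if $g\in G_0\cap G_{r-1}$ then, viewed inside $G_{r-1}$, $g$ preserves the orbit $\{v_{r+2},w_{r+2}\}$, while viewed inside $G_0$ it preserves the orbits $\{v_3,\dots,v_{r+2}\}$ and $\{w_4,\dots,w_{r+2}\}$; intersecting these constraints, $g$ must fix both $v_{r+2}$ and $w_{r+2}$. Counting orbit lengths in $G_0\cong S_r\times S_{r-1}$ gives $|(G_0)_{v_{r+2},w_{r+2}}|=(r-1)!\,(r-2)!=|\langle\rho_1,\dots,\rho_{r-2}\rangle|$, so $(G_0)_{v_{r+2},w_{r+2}}=G_{0,r-1}$ and hence $g\in G_{0,r-1}$; Proposition~\ref{arp} then shows $\Gamma$ is a string C-group, completing the induction. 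I expect the main obstacle to be exactly this bookkeeping — pinning down the parabolic and point-stabilizer subgroups of $G_0$ (and of $G_{r-1}$) precisely enough, in particular controlling the part of $\ker(G_0\to S_r)$ that can escape $\langle\rho_1,\dots,\rho_{r-2}\rangle$ — together with the verification that $\Gamma_0$, whose graph is not in Table~\ref{T2F}, really is a string C-group; the group identification and the sesqui-extension step are comparatively routine.
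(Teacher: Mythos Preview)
Your proof is correct and follows the same inductive plan as the paper: identify $G$ via primitivity plus a $5$-cycle (your $(\rho_0\rho_1)^2$ is right; the paper's $(\rho_0\rho_1)^5$ appears to be a misprint), recognise $\Gamma_{r-1}$ as a sesqui-extension of the smaller instance, handle $\Gamma_0$ separately (the paper cites \cite[Lemma~6.2]{flm2} rather than arguing directly), and conclude with $G_0\cap G_{r-1}=G_{0,r-1}$ via Proposition~\ref{arp}. Your stabiliser-count for that last step is just a concrete rephrasing of the paper's containment argument that the transposition $(v_{r+2}\,w_{r+2})$ lies in $G_{r-1}$ but not in $G_0$.
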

\begin{proof}
Let $\Gamma=(G,S)$ be the sggi with the permutation representation graph (11) of Table~\ref{T2F}.
For $n=7$, $\Gamma$ is a string C-group for $S_7$. For $n=9$, $\Gamma$ is a string C-group for $A_9$. Both can be found in the Atlas of abstract regular polytopes for small almost simple groups~\cite{LVatlas}.
For $n> 9$,  $G$ is primitive and contains a cycle of order $5$, namely $(\rho_0\rho_1)^5$, hence $G$ is isomorphic to $S_n$ when $n\equiv 3\mod 4$ and $G$ is isomorphic to $A_n$ when $n\equiv 1\mod 4$.

By~\cite[Lemma 6.2]{flm2}, $\Gamma_0$ is a sesqui-extension of a string C-group with respect to $\rho_1$.
Moreover, by Lemma~\ref{sesqui}, $G_0\cong S_{\frac{n-3}{2}}\times S_{\frac{n-1}{2}}$. 
We assume by induction that $\Gamma_{r-1}$ is a sesqui-extension of a string C-group. By Proposition~\ref{arp} it remains to prove that $G_0\cap G_{r-1}=G_{0,r-1}$.
We have that $G_0\cap G_{r-1}\subseteq C_2\times S_{\frac{n-5}{2}}\times S_{ \frac{n-3}{2}}$, where $C_2$ is generated by a transposition that does not belong to $G_0$. Hence $G_0\cap G_{r-1}\subseteq G_{0,r-1}\cong S_{\frac{n-5}{2}}\times S_{\frac{n-3}{2}}$.
\end{proof}

\begin{prop}
All the graphs of Table~\ref{T2F} are permutation representation graphs of string C-groups.
\end{prop}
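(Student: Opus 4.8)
The plan is to verify the intersection property for each of the fifteen graphs of Table~\ref{T2F} in turn, splitting them into two kinds. Eleven of them have bounded degree --- graph (3) with group $\PGL(2,3)$, graphs (4)--(8), (12), (14) and (15) with group $S_7$, graph (9) with group $D_5$, and graph (10) with group $A_9$ --- and for these the group is small enough that the intersection property can be checked directly (indeed $D_5$, $S_7$ and $A_9$ are covered by the atlas of regular polytopes for small almost simple groups~\cite{LVatlas}, and the $\PGL(2,3)$ case is a trivial finite check). Graph (11) is already settled by Proposition~\ref{11}. This leaves the three infinite families: (1) with $G\cong C_2\times S_{n/2}$, (2) with $G\cong C_2\wr S_{n/2}$ or $C_2^{n/2-1}:S_{n/2}$, and (13) with $G\cong A_n$ or $S_n$; these I would treat by induction on $n$ (equivalently on the rank $r$), the base case being the smallest admissible degree, which is a small group checked directly (for (13) this is the $S_7$ case of $n=7$ from \cite{LVatlas}).

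For the inductive step I would apply Proposition~\ref{arp}, so one must show that $\Gamma_0$ and $\Gamma_{r-1}$ are string C-groups and that $G_0\cap G_{r-1}\cong G_{0,r-1}$. In all three families $\Gamma_0=(G_0,\{\rho_1,\ldots,\rho_{r-1}\})$ is obtained by deleting the edges of label $0$, and what remains is the disjoint union of two copies of the path carrying the labels $1,2,\ldots,r-1$, together with an isolated vertex in family (13); this is the permutation representation graph of the symmetric group $S_r$ acting diagonally on the two rows, with its standard Coxeter generators, so $\Gamma_0$ is a string C-group with no further work. For $\Gamma_{r-1}=(G_{r-1},\{\rho_0,\ldots,\rho_{r-2}\})$ one deletes instead the edges of label $r-1$; inspecting each family, the resulting graph is the graph of the same family of rank $r-1$, up to adding two isolated vertices (family (2)) or appending to $\rho_0$ one extra transposition supported on a separate pair of points (families (1) and (13)). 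In the first case adding fixed points changes nothing, and in the second $\Gamma_{r-1}$ is a sesqui-extension with respect to $\rho_0$ of the rank-$(r-1)$ member, so Proposition~\ref{sesqui1} together with the induction hypothesis shows $\Gamma_{r-1}$ is a string C-group.

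It then remains to verify $G_0\cap G_{r-1}=G_{0,r-1}$. Here $G_0$ is a diagonally embedded $S_r$ (acting equally on the two rows), $G_{0,r-1}\cong S_{r-1}$ is the diagonal $S_{r-1}$ fixing the last column, and $G_{r-1}$ is the group of the rank-$(r-1)$ member, possibly extended by a central involution coming from the extra transposition of $\rho_0$. An element of the diagonal $S_r$ that also lies in $G_{r-1}$ must fix the last column --- either because it cannot carry the row-swapping part of $G_{r-1}$ (row swaps do not preserve the partition into the two rows) or because $G_{r-1}$ already fixes the two points of that column --- and therefore it lies in the diagonal $S_{r-1}=G_{0,r-1}$, which is the required equality. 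For family (13) one must in addition identify $G$: exactly as in the proof of Proposition~\ref{11}, for $n$ sufficiently large $G$ is primitive and contains a $5$-cycle (a suitable power of $\rho_0\rho_1$), hence $G\supseteq A_n$, and a parity count on the generators shows $G=S_n$ when $r$ is odd and $G=A_n$ when $r$ is even.

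The main obstacle is the bookkeeping around the central/flip involution that distinguishes $G_{r-1}$ from the group of the rank-$(r-1)$ member: one must pin down precisely when this involution lies in each of $G$, $G_0$, $G_{r-1}$ and $G_{0,r-1}$, which is a parity argument of exactly the kind recorded in Lemma~\ref{sesqui}, and one must be careful that in the sesqui-extension step the hypothesis ``$\tau\notin G^*$'' (or its failure) is applied correctly. Everything else --- the explicit descriptions of the maximal parabolic subgroups as direct products of symmetric groups and the resulting intersection computation, as well as the finitely many bounded-degree graphs --- is routine.
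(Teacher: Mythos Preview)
Your approach is correct in outline and differs from the paper's proof, which is much terser: for rank~$3$ it invokes the Conder--Oliveros criterion \cite{CO}, for the remaining bounded-degree cases it appeals to a {\sf GAP} computation, graph~(11) is Proposition~\ref{11}, and the three infinite families (1), (2), (13) are simply cited from the literature (\cite[Corollary~4.19]{2017CFLM} and \cite[Corollaries~5.11, 5.12]{flm}). Your direct inductive argument for (1), (2), (13) via Proposition~\ref{arp} is more self-contained and the intersection computation $G_0\cap G_{r-1}=G_{0,r-1}$ goes through exactly as you sketch: $G_0$ is the diagonal $S_r$, elements of $G_{r-1}$ either fix or swap the two vertices of the detached column, and row-preservation forces the intersection into the diagonal $S_{r-1}$.

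One small correction: in family~(2) the leftmost column \emph{does} carry a $0$-edge, so deleting the $(r{-}1)$-edges leaves an isolated $0$-edge (not two isolated vertices); thus $\Gamma_{r-1}$ is again a sesqui-extension with respect to $\rho_0$, exactly as in families (1) and (13), and your Proposition~\ref{sesqui1} argument applies uniformly to all three. Also, for family~(13) the primitivity of $G$ deserves one extra sentence (it does not follow immediately from transitivity plus the $5$-cycle $(\rho_0\rho_1)^2$); once primitivity is in hand, the $5$-cycle and parity count finish the identification of $G$ as claimed.
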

\begin{proof}
Whenever $r=3$, we may use Theorem 4.1 of \cite{CO}. The remaining sggi of small degree were computed with {\sf Gap}~\cite{GAP4} and they satisfy the intersection property.
The graph (11) gives a string C-group by Proposition~\ref{11} while graphs (1), (2) and (13) are known to be string C-groups by~\cite[Corollary 4.19]{2017CFLM}, \cite[Corollary 5.11 and 5.12]{flm}.
\end{proof}

\section{Transitive string C-groups having a fracture graph with a split}\label{SSplit}

Let $G$ be a permutation group of degree $n$. Let $S=\{\rho_0, \ldots, \rho_{r-1}\}$ be a set of involutions of $G$ such that $(G,S)$ is a sggi.
Let $\mathcal G$ be the permutation representation graph of $\Gamma = (G,S)$.
Suppose $\Gamma$ has a fracture graph.

We say that $i$ {\em is the label of a split} (or a $i$-split) if
\begin{itemize}
\item $G_i$ has exactly one more orbit than $G$; then there exists a partition of $\{1,\ldots,n\}$ into two sets $O_1$ and $O_2$, of sizes $n_1$ and $n_2$ ($n = n_1+n_2$) such that $\rho_i$ is the unique permutation swapping vertices in different blocks of this partition of  $\{1,\ldots,n\}$;
\item there is exactly one pair of points $(a,b) \in O_1\times O_2$ such that $a\rho_i=b$.
\end{itemize}
The pair $\{a,b\}$ is then called a {\em split} or $i$-split of $\Gamma$.

Suppose that $\Gamma$ has an $i$-split $\{a,b\}$. 
For $j\neq i$, $\rho_j=\alpha_j\beta_j$ where $\alpha_j$ acts on $O_1$ and $\beta_j$ acts on $O_2$, and $\rho_i=\alpha_i\beta_i(a,b)$ where $\alpha_i$ acts on $O_1$ and $\beta_i$ acts on $O_2$.
Let $J_A := \{j \in \{0, \ldots, r-1\}\setminus\{i\}\mid\alpha_j \neq 1_G\}$ and $J_B := \{j \in \{0, \ldots, r-1\}\setminus\{i\}\mid \beta_j \neq 1_G\}$. We then have $A=\langle \alpha_j\mid j\in J_A\rangle$ and $B=\langle \beta_j\mid j\in J_B\rangle $. If one of the groups is trivial then the corresponding set of indices is empty.
If $\alpha_j = 1_G$ for each $j\in \{i+1,\ldots, r-1\}$ and
$\beta_j = 1_G$ for each $j\in \{0,\ldots, i-1\}$
, then we say that the $i$-split $\{a,b\}$ is \emph{perfect}.

\begin{prop}\label{primpb}
If $G$ is transitive and $\Gamma$ has a perfect $i$-split, then $G$ is primitive.
\end{prop}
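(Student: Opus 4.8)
The plan is to argue by contradiction: assume $G$ is transitive and imprimitive, with a nontrivial block system $\mathcal{B}$, and show this is incompatible with the existence of a perfect $i$-split $\{a,b\}$. First I would recall the structure imposed by a perfect $i$-split. Writing $O_1, O_2$ for the two parts of the partition associated with the $i$-split, every generator $\rho_j$ with $j < i$ acts trivially on $O_2$ (i.e.\ $\beta_j = 1_G$), every generator $\rho_j$ with $j > i$ acts trivially on $O_1$ (i.e.\ $\alpha_j = 1_G$), and $\rho_i = \alpha_i\beta_i(a,b)$ with $a \in O_1$, $b \in O_2$, and $\{a,b\}$ the \emph{only} pair crossing between $O_1$ and $O_2$ under $\rho_i$. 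Consequently the group $G_i = \langle \rho_j : j \neq i\rangle$ stabilises the partition $\{O_1, O_2\}$ setwise and in fact fixes it pointwise as a partition, with $O_1, O_2$ being exactly its two orbits; moreover $G_i = A \times B$ where $A = \langle \alpha_j : j < i\rangle$ acts on $O_1$ (with $O_2$ fixed pointwise) and $B = \langle \beta_j : j > i\rangle$ acts on $O_2$ (with $O_1$ fixed pointwise), and $A$ and $B$ commute because the generators indexed below $i-1$ commute with those indexed above $i+1$, while $\rho_{i-1}$ lies entirely in $A$-support and $\rho_{i+1}$ entirely in $B$-support, so the only potential clash is handled by disjointness of supports.

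Next I would exploit transitivity to see that $A$ is transitive on $O_1$ and $B$ is transitive on $O_2$. Indeed $\langle G_i, \rho_i\rangle = G$ is transitive, and since $\rho_i$ contributes only the single transposition $(a,b)$ linking the two $G_i$-orbits $O_1$ and $O_2$, connectivity of the permutation representation graph forces $G_i$ to be transitive on each of $O_1$ and $O_2$ separately; that is, $A$ is transitive on $O_1$ and $B$ is transitive on $O_2$. Now suppose for contradiction that $G$ preserves a nontrivial block system $\mathcal{B}$ on $\{1,\dots,n\}$. Consider the block $\Delta \in \mathcal{B}$ containing $a$. The key point is to track what happens to $\Delta$ under $\rho_i$ and under $G_i$. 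Since $\rho_i$ moves $a$ to $b \in O_2$, the block $\Delta\rho_i$ contains $b$; call it $\Delta'$. I would then argue that $\Delta$ is almost entirely contained in $O_1$ and $\Delta'$ almost entirely in $O_2$: more precisely, because $\rho_i$ fixes $O_1 \setminus \{a\}$ and $O_2\setminus\{b\}$ setwise and only swaps $a \leftrightarrow b$, a block $\Delta$ meeting $O_1$ in more than just $a$ would be sent by $\rho_i$ to a block meeting $O_1$ (in $\Delta \cap (O_1\setminus\{a\})$) and also meeting $O_2$ (in $b$), so $\Delta'$ straddles both parts; applying $\rho_i$ again and using that blocks form a system, one forces either $\Delta = \Delta'$ (so $\Delta$ is $\rho_i$-invariant and straddles both parts) or a contradiction with $|\Delta|$ being constant.

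The heart of the argument — and the step I expect to be the main obstacle — is ruling out the surviving configurations, namely (A) $\Delta \subseteq O_1$ with $|\Delta| \geq 2$, and (B) $\Delta$ straddling $O_1$ and $O_2$. For (A): since $A$ is transitive on $O_1$ and $A \leq G_i \leq G$ preserves $\mathcal{B}$, the blocks contained in $O_1$ partition $O_1$, so $|\Delta|$ divides $n_1 = |O_1|$; by the same token for the block through $b$ we get $|\Delta|$ divides $n_2$. But $G$-transitivity together with $\rho_i$ sending $\Delta \ni a$ to a block $\Delta'\ni b$ which (under hypothesis (A) applied on the $O_2$ side, using $B$ transitive) lies in $O_2$, forces a bijection between the blocks inside $O_1$ and those inside $O_2$ compatible with the single crossing edge $(a,b)$ — and one checks this is impossible unless $\Delta = \{a\}$, because $\rho_i$ fixes every point of $O_1\setminus\{a\}$, so a block $\Delta \ni a$ with a second point $c \in O_1$ satisfies $\Delta\rho_i \ni c$ and $\Delta\rho_i \ni b$, contradicting $\Delta\rho_i \subseteq O_2$. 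Hence $\Delta = \{a\}$, i.e.\ $\mathcal{B}$ is the trivial partition into singletons, a contradiction. For (B): if $\Delta$ straddles, then $\Delta$ contains a point $c \in O_1\setminus\{a\}$ and a point $d \in O_2\setminus\{b\}$ (or $a$, $b$ themselves); applying $\rho_i$ fixes $c$ and $d$, so $\Delta\rho_i \supseteq \{c,d\}$ and $\Delta\rho_i \ni b$ (image of $a$) — so $\Delta\rho_i$ shares $c$ with $\Delta$, forcing $\Delta\rho_i = \Delta$, hence $b \in \Delta$, hence $a \in \Delta$, and then $\Delta$ is $\rho_i$-invariant and also $A$-invariant and $B$-invariant up to the block action; transitivity of $A$ on $O_1$ then forces $O_1 \subseteq \Delta$ and likewise $O_2 \subseteq \Delta$, so $\Delta = \{1,\dots,n\}$, again the trivial block system. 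In all cases $\mathcal{B}$ is trivial, so $G$ is primitive. I would write the case analysis carefully using the permutation representation graph picture, since that is where the "single crossing edge" hypothesis of a perfect split does the real work.
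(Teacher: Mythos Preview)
Your overall strategy matches the paper's: argue by contradiction on a nontrivial block system and split into the cases where a block straddles $O_1,O_2$ and where it does not. However, there is a concrete gap in your case (B). You write ``applying $\rho_i$ fixes $c$ and $d$'' for $c\in O_1\setminus\{a\}$ and $d\in O_2\setminus\{b\}$, and then deduce $\Delta\rho_i\ni c$. But $\rho_i=\alpha_i\beta_i(a,b)$ only stabilises $O_1\setminus\{a\}$ \emph{setwise}; in general $c\rho_i=c\alpha_i\neq c$, so there is no reason that $\Delta\rho_i\cap\Delta\neq\emptyset$. (For instance with $O_1=\{1,2,3\}$, $O_2=\{4,5,6\}$, $a=3$, $b=4$, $\alpha_i=(1\,2)$, $\beta_i=(5\,6)$ and $\Delta=\{1,5\}$, one has $\Delta\rho_i=\{2,6\}$, disjoint from $\Delta$.) The paper avoids this by not using $\rho_i$ at all in the straddling case: since every element of $A$ fixes $O_2$ pointwise, a block $\Delta$ containing some $d\in O_2$ satisfies $d\in\Delta g$ for all $g\in A$, hence $\Delta g=\Delta$; by transitivity of $A$ on $O_1$ this gives $O_1\subseteq\Delta$, and symmetrically $O_2\subseteq\Delta$. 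You do reach this conclusion at the very end of (B), but only after the faulty $\rho_i$-step; you should make the $A,B$-invariance the \emph{primary} argument.

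Your case (A) is also muddled: you assert ``contradicting $\Delta\rho_i\subseteq O_2$'' without having established that containment, and the divisibility remarks about $|\Delta|\mid n_1,n_2$ presuppose that no block straddles, which is exactly case (B). The fix is simply to do (B) first. Once straddling is excluded, the paper's version of (A) is cleaner than yours: the blocks $\Delta_a\ni a$ and $\Delta_b\ni b$ lie in $O_1$ and $O_2$ respectively and are interchanged by $\rho_i$, so every point of $\Delta_a$ is sent by $\rho_i$ into $O_2$, producing $|\Delta_a|\ge 2$ distinct $i$-edges between $O_1$ and $O_2$, contradicting the definition of an $i$-split.
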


\begin{proof}
Suppose that $\rho_j$ moves $a$, with $j\ne i$. Then necessarily
$j\in\{i-1,i+1\}$, since otherwise there is an $\{i,j\}$ square, whose remaining
vertices must lie in different $G_i$-orbits and must be swapped by
$\rho_i$. Now our assumption shows that $\rho_{i-1}$ moves $a$, and similarly
$\rho_{i+1}$ moves $b$.

Suppose that a block of imprimitivity contains points in both $O_1$ and $O_2$.
Then it is fixed by both $A$ and $B$, and hence is the whole of
$\{1,\ldots,n\}$, a contradiction. So, in particular, $a$ and $b$ lie in
different blocks, which are interchanged by $\rho_i$. But then there is more
than one $i$-edge linking the orbits $O_1$ and $O_2$, contrary to assumption.
\end{proof}

In what follows we consider the case where $G$ is transitive and $\Gamma$ has an $i$-split. For this $i$-split we consider the notation introduced above.
In addition we assume that all maximal parabolic subgroups of $\Gamma$ (that is all subgroups $G_j$ with $j=0,\ldots, r-1$) are intransitive. 
In what follows we determine an upper bound for the rank of $\Gamma$ when the $i$-split is not perfect.
We first deal with the case where $A$ and $B$ are imprimitive, and after we assume that $B$ is either primitive or trivial.


\subsection{$A$ and $B$ imprimitive}

Let $A$ be embedded into $S_{k_1}\wr S_{m_1}$ and $B$ be embedded into $S_{k_2}\wr S_{m_2}$ with $n_1=m_1k_1$ and $n_2=m_2k_2$. 
Consider a minimal subset $M$ of the set of generators of $G_i$ generating the group induced on the two block systems. 
Let $R$ be the set containing the remaining generators of $G_i$.  

Consider the permutation representation graph  $\mathcal{X}$  for the block action, that is, a graph having  $m_1+m_2$ vertices, corresponding to the blocks, and with a $j$-edge between two blocks whenever $\rho_j$ swaps them.
As $G$ is transitive and has an $i$-split, $G_i$ has exactly two orbits and the graph $\mathcal{X}$ has two connected components.   
Also, consider the subgraph $\mathcal{\bar{X}}$ of $\mathcal{X}$ with the same vertices and with one $j$-edge for each element of $M$, between blocks in different $G_j$-orbits. This is a fracture subgraph of $\mathcal{X}$, particularly $\mathcal{\bar{X}}$ has no cycles. Hence $|M| \leq m_1+m_2- 2 $.

Similarly, consider the graph $\mathcal{Y}$ with $k$ vertices corresponding to the $\langle M\rangle$-orbits, with  a $j$-edge between a pair of $\langle M\rangle$-orbits $L$ and $L'$  whenever there is  $x\in L$ such that $x\rho_j\in L'$ with $\rho_j\in R$. Let $\mathcal{\bar{Y}}$ be a fracture subgraph of  $\mathcal{Y}$  having only one $j$-edge for each element $\rho_j\in R$ between $\langle M\rangle$-orbits in different $G_j$-orbits. As before, $\mathcal{\bar{Y}}$ has no cycles and has at least two components. Hence $|R| \leq k - 2$. Note that no element of $R$ can fix all the $\langle M \rangle$-orbits. Suppose on the contrary that $\rho_j \in R$ fixes all $\langle M\rangle$-orbits. Then for each $j$-edge of $\mathcal G$, there is a path with edges with labels corresponding to the elements of $M$ connecting the two vertices of that $j$-edge. Hence $\mathcal G$ does not have a fracture graph as $G_j$ is transitive, a contradiction.
Note also that each $\rho_j$ must have a transposition corresponding to an edge belonging to a fracture graph. Thus it must connect two different $\langle M\rangle$-orbits. Hence for each element $\rho_j\in R$ there is one $j$-edge of the fracture graph $\mathcal{\bar{Y}}$. As $\mathcal{\bar{Y}}$ is a forest with at least two components,  $|R| \leq k- 2$.   In addition  $k\leq k_1+k_2$, therefore $|R| \leq k_1+k_2- 2$.

In what follows let $L_a$ be the $\langle M\rangle$-orbit containing $a$ and $L_b$ the $\langle M\rangle$-orbit containing $b$.

In \cite{2017CFLM} the authors restricted their study to the case where $G$ is even, that is a subgroup of $A_n$. However, for the following three propositions this assumption was never used.


\begin{prop}\cite[Proposition 5.2]{2017CFLM}\label{M=m-2}
If $|M|= m_1+m_2-2$ then  $\mathcal{X}$ has two connected components and consecutive labels. Up to a duality,  $\mathcal{X}$ is the following graph.
$$\xymatrix@-1.3pc{ *+[F]{} \ar@{-}[rr]^{i-m_1+1} && *+[F]{ }  \ar@{.}[rr] && *+[F]{ }  \ar@{-}[rr]^{i-2} &&*+[F]{}  \ar@{-}[rr]^{i-1} && *+[F]{a} &&  *+[F]{b}  \ar@{-}[rr]^{i+1} && *+[F]{ } \ar@{-}[rr]^{i+2} && *+[F]{ }  \ar@{.}[rr]&&*+[F]{ } \ar@{-}[rr] ^{i+m_2-1}&&*+[F]{ }} $$
\end{prop}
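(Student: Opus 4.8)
The plan is to analyze the graph $\mathcal{X}$ for the block action directly, using the structure forced by the hypothesis $|M| = m_1 + m_2 - 2$. First I would recall that $\mathcal{X}$ has $m_1 + m_2$ vertices and exactly two connected components (one for each $G_i$-orbit of blocks, using that $G$ is transitive with an $i$-split), and that $\bar{\mathcal{X}}$, built from the edges corresponding to $M$, is a fracture subgraph of $\mathcal{X}$ and hence a forest. Since $\bar{\mathcal{X}}$ has $m_1 + m_2$ vertices, at least two components, and $|M| = m_1 + m_2 - 2$ edges, it must be a spanning forest with \emph{exactly} two components, i.e.\ $\bar{\mathcal{X}}$ is the disjoint union of two trees, one on the $m_1$ blocks of $O_1$ and one on the $m_2$ blocks of $O_2$. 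In particular every edge of $\mathcal{X}$ coming from an element outside $M$ would create a cycle, so actually $\mathcal{X} = \bar{\mathcal{X}}$ up to the block-level contribution of $\rho_i$, which is the single block-transposition swapping the block of $a$ with the block of $b$ — but wait, since the $i$-split is a \emph{single} pair $(a,b)$ and $a,b$ lie in different $G_i$-orbits of blocks, $\rho_i$ induces no other block swap between the two components; and within a component $\rho_i$ induces a block permutation only if it contributes to $M$, which it does not (the indices in $M$ are the generators of $G_i$, and $i \notin \{0,\dots,r-1\}\setminus\{i\}$). So $\mathcal{X}$ is: two vertex-disjoint trees $T_1$ on $O_1$-blocks and $T_2$ on $O_2$-blocks, labelled by $\{0,\dots,r-1\}\setminus\{i\}$, with the additional feature that the block of $a$ in $T_1$ and the block of $b$ in $T_2$ are swapped by $\rho_i$.

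Next I would use the commuting property (property (2) of permutation representation graphs of sggi's) at the block level: any two edges of $\mathcal{X}$ whose labels differ by at least $2$ must either be non-adjacent or span a square. Since $\bar{\mathcal{X}}$ is a forest with no squares, adjacent edges in each $T_\ell$ must have \emph{consecutive} labels; hence each $T_\ell$ is in fact a path (a vertex of degree $\geq 3$ would force two of the incident edges to have the same label, contradicting that $\bar{\mathcal{X}}$ arises from distinct generators, or else to have non-consecutive labels). Moreover along such a path the labels must be strictly monotone — they increase or decrease by exactly $1$ at each step — because the set $M$ consists of \emph{distinct} generators and consecutive-label adjacency in a path forces a monotone run; a ``turn-around'' like $\dots, j, j+1, j, \dots$ would repeat a label. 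The key further point is the interaction across the $\rho_i$-swap: the block of $a$ and the block of $b$ are endpoints (or interior vertices) of $T_1$ and $T_2$; for the $\{i,j\}$-squares to be controlled, $\rho_{i-1}$ and $\rho_{i+1}$ are precisely the generators that can move $a$ or $b$, and one shows the label adjacent to $a$ in $T_1$ is $i-1$ and the label adjacent to $b$ in $T_2$ is $i+1$. Combining monotonicity with this boundary condition, $T_1$ is the path with labels $i - m_1 + 1, i - m_1 + 2, \dots, i-1$ ending at $a$, and $T_2$ is the path with labels $i+1, i+2, \dots, i + m_2 - 1$ starting at $b$. This is exactly the displayed graph, up to the duality that swaps the roles of $O_1$ and $O_2$ (equivalently reverses the label order).

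I would finish by checking the edge cases: if $m_1 = 1$ then $T_1$ is a single vertex $a$ (no labels on the left), and similarly $m_2 = 1$; these are the degenerate instances of the displayed graph and cause no trouble. I expect the main obstacle to be the careful bookkeeping that forces the labels on each path to be a \emph{contiguous monotone block} of integers with the correct endpoint adjacent to $a$ (resp.\ $b$): one must rule out a path whose label sequence is consecutive-adjacent but not globally monotone (which the ``no repeated generator'' observation kills) and must pin down that it is precisely $\rho_{i-1}$ (not some $\rho_{i-2}$ etc.) that is adjacent to $a$, using the square argument exactly as in the first paragraph of the proof of Proposition~\ref{primpb}. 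Once the block graph $\mathcal{X}$ is pinned down, the statement — which is only about $\mathcal{X}$ — follows. This is the argument of \cite[Proposition 5.2]{2017CFLM}, and the cited proof there did not use that $G$ is even, so it transfers verbatim.
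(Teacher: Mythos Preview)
The paper does not give its own proof of this proposition: it is quoted from \cite[Proposition~5.2]{2017CFLM} and stated without argument, as are Propositions~\ref{M=m-3(1)} and~\ref{R<=k-3}. So there is nothing in the present paper to compare against beyond the bare citation.

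Your sketch has the right architecture --- from $|M|=m_1+m_2-2$ one deduces that $\bar{\mathcal X}$ is a spanning forest with exactly two tree components, and one then wants each tree to be a path whose labels form a monotone interval terminating at the block of $a$ (respectively $b$). Two of your steps, however, do not go through as written. First, you deduce that adjacent edges in $\bar{\mathcal X}$ have consecutive labels on the grounds that non-consecutive labels $j,k$ would force an alternating square and $\bar{\mathcal X}$ is a forest. But the square lives in $\mathcal X$, not in the fracture subgraph $\bar{\mathcal X}$: only two of its four edges need lie in $\bar{\mathcal X}$, so no cycle in $\bar{\mathcal X}$ is created and the inference does not follow. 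The actual contradiction is more delicate and uses that $\bar{\mathcal X}$ carries exactly one edge per label together with the fracture property (the fourth vertex of the square must still be reached inside the tree by some further label, and chasing this forces an orbit collapse). Second, to pin the block $[a]$ at an endpoint with adjacent label $i-1$, you invoke that only $\rho_{i-1}$ and $\rho_{i+1}$ can move $a$. That is correct at the level of \emph{points}, but it does not control which generators move the \emph{block} $[a]$: other points of $[a]$ may be moved to a different block by $\rho_j$ for arbitrary $j$, and such a $\rho_j$ would then give a $j$-edge at $[a]$ in $\mathcal X$. So the endpoint claim for $[a]$ needs a separate argument, not the point-level square argument you cite from Proposition~\ref{primpb}. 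Your claim that $\mathcal X=\bar{\mathcal X}$ has the same issue: extra edges of $\mathcal X$ would create cycles only in $\mathcal X$, which is not a priori a forest.
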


\begin{prop}\cite[Proposition 5.3]{2017CFLM}\label{M=m-3(1)}
If $|M|= m_1+m_2-3$ then, up to duality, either $m_1=2$ or $m_1\geq 4$,  accordantly to one of the following graphs.
$$(1) \,\xymatrix@-1.2pc{  *+[F]{} \ar@{-}[rr]^{i-1} &&*+[F]{a}  &&*+[F]{b} \ar@{-}[rr]^{i-1} && *+[F]{ }  \ar@{-}[rr]^{i-2}& &*+[F]{} \ar@{.}[rr] &&*+[F]{} \ar@{-}[rr] ^{i-m_2-1}&&*+[F]{ } } $$
$$(2) \,\xymatrix@-1.3pc{  *+[F]{} \ar@{-}[rrr]^{i+m_1-3} &&&*+[F]{} \ar@{-}[rrr]^{i+m_1-2} &&&*+[F]{} \ar@{-}[rrr]^{i+m_1-3} &&&*+[F]{} \ar@{.}[rr] &&*+[F]{} \ar@{-}[rr] ^{i+1}&&*+[F]{a }  & &*+[F]{b} \ar@{-}[rr]^{i-1} &&*+[F]{}  \ar@{.}[rr]&&*+[F]{} \ar@{-}[rr] ^{i-m_2-1}&&*+[F]{ }} $$
\end{prop}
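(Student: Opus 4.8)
The plan is to follow the template of the proof of Proposition~\ref{M=m-2}, now allowing the one extra fracture edge that the hypothesis $|M| = m_1 + m_2 - 3$ permits. Since $\mathcal{\bar{X}}$ is a forest on $m_1 + m_2$ vertices with $m_1 + m_2 - 3$ edges, it has exactly three connected components, while $\mathcal{X}$ has exactly two, namely the two $G_i$-orbits on blocks. As the two graphs share their vertex set, one $G_i$-orbit induces a single tree of $\mathcal{\bar{X}}$ and the other induces a union of two trees joined only by edges of $\mathcal{X}$ lying outside $\mathcal{\bar{X}}$. Using the freedom to interchange $O_1$ and $O_2$ and to dualize (sending each label $j$ to $r-1-j$), I would fix the orbit of $O_1$ to be the one that breaks into two $\mathcal{\bar{X}}$-trees of total size $m_1$, the orbit of $O_2$ being a single $\mathcal{\bar{X}}$-tree on $m_2$ vertices; the unique $i$-edge of $\mathcal{X}$ then joins the block of $a$ to the block of $b$, by definition of an $i$-split.

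Next I would determine the shape of each component, exactly as in Proposition~\ref{M=m-2}. If a tree of $\mathcal{\bar{X}}$ had two edges meeting at a common block and carrying labels $j,\ell$ with $|j-\ell| \geq 2$, the commuting relation $(\rho_j\rho_\ell)^2 = 1$ forces a $\{j,\ell\}$-square in $\mathcal{X}$; following its vertices and using that each $\rho_t$ contributes exactly one edge to the fracture subgraph, one gets either a second orbit-crossing $i$-edge (excluded by the $i$-split) or a cycle inside $\mathcal{\bar{X}}$ (excluded, as it is a forest). Hence every $\mathcal{\bar{X}}$-tree is a path whose labels run through an interval of consecutive integers, and the $i$-edge glues these paths around the value $i$. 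The single tree on $m_2$ vertices is thus a path anchored at the block of $b$ with labels decreasing from $i-1$. For the split orbit, the extra $\mathcal{X}$-edge joining its two $\mathcal{\bar{X}}$-trees must, by the same square argument, carry a label consecutive with its neighbours, so it repeats a label already present (a ``fold''), with the block of $a$ at the end adjacent to the $i$-edge. A constraint used throughout is that $i$ never labels an edge of $\mathcal{X}$, since $\rho_i \notin G_i$.

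Assembling these constraints pins down the picture: when $m_1 = 2$ the split orbit consists of two isolated blocks joined by one additional edge, giving graph~(1); when $m_1 \geq 4$ it is the folded path of graph~(2). The value $m_1 = 3$ is impossible, because a folded three-block path would demand the forbidden label $i$ on $\mathcal{X}$, and the only remaining shapes for a three-block split orbit produce, among the subgroups $\Gamma_J$, one of the forbidden configurations of Proposition~\ref{IPfails} (so that the intersection property fails), or else a transitive maximal parabolic, contrary to the standing assumption. I expect this last case analysis to be both the bulk of the work and the main obstacle: there are several a priori ways to glue the two trees of the split orbit and to position the block of $a$ and the $i$-edge, and each wrong one has to be killed either by exhibiting such a forbidden subgraph or by producing a second orbit-crossing $i$-edge, contradicting that $\{a,b\}$ is a genuine $i$-split. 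Checking that precisely graphs~(1) and~(2) survive is the computational heart of the argument.
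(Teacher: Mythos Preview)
The paper does not give its own proof of this proposition; it is quoted verbatim from \cite[Proposition~5.3]{2017CFLM}, and the only comment is that the evenness hypothesis on $G$ used in that reference plays no role in the argument there. So there is no in-paper proof to compare against, only the original one in \cite{2017CFLM}.

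Your outline is sound up to the point where each $\mathcal{\bar X}$-component is forced to be a path with consecutive labels, and the ``fold'' picture for graph~(2) when $m_1\ge 4$ is the right mechanism. The problematic step is your proposed elimination of $m_1=3$ via Proposition~\ref{IPfails}. That proposition is a list of nine specific \emph{permutation representation graphs} of small degree and rank~$4$ that fail the intersection property; there is no reason a three-block configuration in the \emph{block action graph} $\mathcal X$ should produce one of those nine pictures inside some $\Gamma_J$. More fundamentally, Propositions~\ref{M=m-2}--\ref{R<=k-3} are purely structural facts about a sggi with a fracture graph and an $i$-split; they are stated and used before, and independently of, the intersection property. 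A proof that needs $\Gamma$ to be a string C-group would establish something strictly weaker than what is being cited. The actual exclusion of $m_1=3$ is combinatorial: once the edge adjacent to the block of $a$ is forced to carry label $i\pm 1$ and the path labels must be consecutive while avoiding $i$, a three-block component either forces the forbidden label $i$, or repeats a label in a way that contradicts the minimality of $M$ (one can then drop a generator from $M$ and still induce the same block action). That is the case split you should execute; Proposition~\ref{IPfails} is not the right tool here.
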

\begin{prop}\cite[Proposition 5.4]{2017CFLM}\label{R<=k-3}
If an element of $R$ interchanges points in more than one pair of $\langle M\rangle$-orbits, then $|R|\leq k -3$.
\end{prop}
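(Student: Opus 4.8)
The plan is to sharpen the spanning-forest count that already yields $|R|\le k-2$. Recall that $\mathcal{\bar{Y}}$ is a spanning forest of $\mathcal{Y}$ carrying exactly one $l$-edge for each $\rho_l\in R$, so it has $|R|$ edges on $k$ vertices and hence $k-|R|$ connected components; since every edge of $\mathcal{Y}$ joins two $\langle M\rangle$-orbits lying in a common $G_i$-orbit and $G_i$ has two orbits, $\mathcal{\bar{Y}}$ has at least two components. I would argue by contradiction: suppose $|R|=k-2$, so that $\mathcal{\bar{Y}}$ has exactly two components — a spanning tree of the $\langle M\rangle$-orbits inside $O_1$ together with one of those inside $O_2$ — and derive a contradiction from the hypothesis that some $\rho_j\in R$ interchanges points in two distinct pairs of $\langle M\rangle$-orbits. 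One of these pairs is the $j$-edge $e_j$ of $\mathcal{\bar{Y}}$; call the other one $\{L,L'\}$. It is an edge of $\mathcal{Y}$, and its two endpoints lie in the same $G_i$-orbit, hence in the same component of $\mathcal{\bar{Y}}$, so $\mathcal{\bar{Y}}+\{L,L'\}$ contains a unique cycle $C$.

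The central device is to push $C$ down into the permutation representation graph $\mathcal{G}$: replacing each edge of $C$ by the corresponding transposition of the relevant generator and joining, inside each visited $\langle M\rangle$-orbit, the entry and exit points by a path labelled only by elements of $M$ (note that the label sets of $M$ and $R$ are disjoint, and $\langle M\rangle\le G_l$ for every $\rho_l\in R$, so $\langle M\rangle$-orbits refine $G_l$-orbits), one obtains a closed walk $W$ in $\mathcal{G}$ whose set of $j$-edges is exactly the set of $j$-edges of $C$ — that is, $\{L,L'\}$ together with $e_j$ if $e_j$ lies on $C$, and $\{L,L'\}$ alone otherwise.

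Two cases then arise. If $C$ avoids $e_j$, then $W$ uses a single $j$-edge; deleting it from $W$ leaves a walk between its endpoints using no label $j$, which forces those endpoints into one $G_j$-orbit. When this transposition of $\rho_j$ in fact joins distinct $G_j$-orbits, this is already the contradiction (a fracture graph of $\Gamma$ cannot record it, yet its endpoints are $G_j$-equivalent); when it joins a single $G_j$-orbit one argues instead from $e_j$, whose endpoints do lie in distinct $G_j$-orbits, using that $e_j$ cannot be rerouted around the rest of $\mathcal{\bar{Y}}$ without meeting a $j$-edge. If $C$ does pass through $e_j$, then $W$ has exactly two $j$-edges, one of which joins distinct $G_j$-orbits; the cycle-swapping reasoning of Proposition~\ref{swap} then shows the other also joins distinct $G_j$-orbits, and one may exchange $e_j$ for $\{L,L'\}$ inside $\mathcal{\bar{Y}}$ to obtain another spanning forest with exactly two components, now with the two pairs of $\rho_j$ swapping roles. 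Iterating this exchange — which moves through finitely many spanning forests of $\mathcal{Y}$ and always proceeds along the single cycle determined by the pair currently left out — must eventually land in the first case, a contradiction. Hence $\mathcal{\bar{Y}}$ has at least three components and $|R|\le k-3$.

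The hardest part, I expect, is the bookkeeping in the last two cases. One must ensure the exchange process terminates rather than loops — for instance by choosing, among the configurations realizing $|R|=k-2$, one that is extremal for a suitable complexity measure such as the distance in $\mathcal{G}$ between the two $\rho_j$-transpositions under consideration — keep careful track throughout of which $\rho_j$-transpositions lie between distinct $G_j$-orbits, equivalently of how the $\langle M\rangle$-orbits distribute among the $G_j$-orbits, and handle uniformly the possibility that $\rho_j$ interchanges points in three or more pairs of $\langle M\rangle$-orbits, feeding the additional pairs into the forest one at a time. The preliminary reductions already recorded above (no element of $R$ fixes all $\langle M\rangle$-orbits, and the labels of $M$ and $R$ are disjoint) are exactly what make the lifting construction and the arguments about walks avoiding label $j$ legitimate.
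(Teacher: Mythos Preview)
The paper does not give its own proof of this statement: it is quoted verbatim from \cite[Proposition~5.4]{2017CFLM} with no argument supplied. So there is no in-paper proof to compare against, and I evaluate your attempt on its own merits.

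Your overall strategy is reasonable: assume $|R|=k-2$, so that $\mathcal{\bar Y}$ is a spanning tree on each of the two $G_i$-orbit pieces, add the second $j$-edge $\{L,L'\}$, and analyse the unique cycle $C$ it creates. But both branches of your case split are incomplete.

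In the branch where $C$ passes through $e_j$, your swap-and-iterate argument does not terminate. If $\rho_j$ contributes exactly two edges to $\mathcal Y$, namely $e_j=\{P,Q\}$ and $\{L,L'\}$, then the swap produces $\mathcal{\bar Y}'=(\mathcal{\bar Y}\setminus e_j)\cup\{L,L'\}$; adding back $\{P,Q\}$ gives $\mathcal{\bar Y}'\cup\{P,Q\}=\mathcal{\bar Y}\cup\{L,L'\}$, the \emph{same} graph with the \emph{same} cycle $C$, and since $\{L,L'\}\in C$ you swap again and return to $\mathcal{\bar Y}$. The process has period two and never reaches the other case. Your appeal to an unspecified ``complexity measure'' does not help: no candidate is named, and the swap is its own inverse, so no quantity depending only on the current forest can be strictly monotone under it.

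In the branch where $C$ avoids $e_j$, you correctly deduce that the endpoints of the $\{L,L'\}$-transposition lie in a single $G_j$-orbit. This is a contradiction only under the extra assumption that this transposition joins distinct $G_j$-orbits --- but the hypothesis of the proposition imposes no such condition on the second pair; it merely says $\rho_j$ interchanges points in more than one pair of $\langle M\rangle$-orbits. Your fallback sentence (``one argues instead from $e_j$, using that $e_j$ cannot be rerouted around the rest of $\mathcal{\bar Y}$ without meeting a $j$-edge'') is not an argument: that $\mathcal{\bar Y}$ contains a unique $j$-edge is its defining property and by itself contradicts nothing.

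So the proof does not close. You would need either a genuinely different mechanism to force a third component of $\mathcal{\bar Y}$, or an invariant that strictly decreases under the exchange even when only two $j$-edges are present.
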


Now let us compute an upper bound for the rank of $\Gamma$ in the case where $A$ and $B$ are imprimitive.  The following proposition improves~\cite[Proposition 5.7]{2017CFLM}. Parts of its proofs follow the proof of the latter reference.

\begin{prop}\label{imp}
Suppose that $i$ is label of a split of $\Gamma$ that is not perfect.
If $A$ and $B$ are both imprimitive then $r\leq \frac{n-2}{2}$ or $\Gamma$ has, up to duality, one of the following permutation representation graphs.

\begin{tiny}
$$\xymatrix@-1.57pc{*+[o][F]{} \ar@{-}[rr]^3\ar@{-}[d]_5&&*+[o][F]{} \ar@{-}[rr]^4\ar@{-}[d]^{5} &&*+[o][F]{} \ar@{-}[rr]^3&&*+[o][F]{}  \ar@{-}[rr]^2&&*+[o][F]{}\ar@{-}[d]^1\\
*+[o][F]{} \ar@{-}[rr]^3\ar@{-}[d]_6&& *+[o][F]{}\ar@{-}[d]^6&& *+[o][F]{} \ar@{-}[rr]^3\ar@{-}[d]_0&&  *+[o][F]{}*\ar@{-}[d]^0 \ar@{-}[rr]^2&&*+[o][F]{} \ar@{-}[d]^0\\
*+[o][F]{} \ar@{-}[rr]_3\ar@{.}[d]]&& *+[o][F]{}\ar@{.}[d]&&*+[o][F]{}\ar@{-}[rr]_3&&*+[o][F]{} \ar@{-}[rr]_2&&*+[o][F]{} \\
*+[o][F]{} \ar@{-}[rr]_3\ar@{-}[d]_{r-1}&&*+[o][F]{} \ar@{-}[d]^{r-1}&& &&&&\\
*+[o][F]{}\ar@{-}[rr]_3&& *+[o][F]{}&&&& &&&&}
\,
\xymatrix@-1.57pc{*+[o][F]{} \ar@{-}[rr]^{i-1}\ar@{-}[d]_{i+1}&&*+[o][F]{} \ar@{-}[rr]^i\ar@{-}[d]^{i+1} &&*+[o][F]{} \ar@{-}[rr]^{i-1}&&*+[o][F]{}\ar@{-}[d]^{i-2}\\
*+[o][F]{} \ar@{-}[rr]{i-1}\ar@{-}[d]_{i+2}&& *+[o][F]{}\ar@{-}[d]_{i+2}&& *+[o][F]{} \ar@{-}[rr]^{i-1}\ar@{-}[d]_{i-3}&& *+[o][F]{}*\ar@{-}[d]^{i-3}\\
*+[o][F]{} \ar@{-}[rr]_{i-1}\ar@{.}[d]]&& *+[o][F]{}\ar@{.}[d]&&*+[o][F]{}\ar@{.}[d]\ar@{-}[rr]^{i-1}&&*+[o][F]{}\ar@{.}[d]\\
*+[o][F]{} \ar@{-}[rr]_{i-1}\ar@{-}[d]_{r-1}&&*+[o][F]{} \ar@{-}[d]^{r-1}&&*+[o][F]{}\ar@{-}[d]^0\ar@{-}[rr]^{i-1}&&*+[o][F]{}\ar@{-}[d]^0\\
*+[o][F]{}\ar@{-}[rr]_{i-1}&& *+[o][F]{}&&*+[o][F]{}\ar@{-}[rr]_{i-1} &&*+[o][F]{} }
\,
 \xymatrix@-1.57pc{*+[o][F]{} \ar@{-}[rr]^{i-1}\ar@{-}[d]_{i+1}&&*+[o][F]{} \ar@{-}[rr]^i\ar@{-}[d]^{i+1} &&*+[o][F]{} \ar@{-}[rr]^{i-1} &&*+[o][F]{} \ar@{.}[rr]&&*+[o][F]{}\ar@{-}[rr]^1&&*+[o][F]{}\ar@{-}[d]^0\\
*+[o][F]{} \ar@{-}[rr]_{i-1}\ar@{-}[d]_{i+2}&& *+[o][F]{}\ar@{-}[d]^{i+2}&& *+[o][F]{} \ar@{-}[rr]^{i-1}&& *+[o][F]{} \ar@{.}[rr]&&*+[o][F]{} \ar@{-}[rr]^1&& *+[o][F]{}*\\
*+[o][F]{} \ar@{-}[rr]_{i-1}\ar@{.}[d]]&& *+[o][F]{}\ar@{.}[d]&&&&&&&&\\
*+[o][F]{} \ar@{-}[rr]_{i-1}\ar@{-}[d]_{r-1}&&*+[o][F]{} \ar@{-}[d]^{r-1}&&&& &&&&\\
*+[o][F]{}\ar@{-}[rr]_{i-1}&& *+[o][F]{} &&&& && && &&}
$$
\end{tiny}

\end{prop}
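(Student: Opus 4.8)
The goal is to bound the rank of $\Gamma$ when $i$ is the label of a non-perfect split and both $A$ and $B$ are imprimitive, showing that either $r\leq (n-2)/2$ or one of the three exceptional permutation representation graphs occurs. The strategy is to count edges carefully, using the two-level structure (block action $\mathcal{X}$ on $m_1+m_2$ vertices, and $\langle M\rangle$-orbit action $\mathcal{Y}$ on $k\leq k_1+k_2$ vertices) already set up before the statement. Recall that $|S|=r$, that exactly one generator ($\rho_i$) realizes the split, and that $r-1=|M|+|R|$ with $|M|\leq m_1+m_2-2$ and $|R|\leq k_1+k_2-2$ by the forest arguments given. Adding these, $r\leq m_1+m_2+k_1+k_2-3$; since $n=m_1k_1+m_2k_2\geq 2(m_1+m_2)$ is generally much larger, one expects $r\leq (n-2)/2$ to hold with room to spare except in a handful of tight configurations. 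So the proof is essentially an optimization: determine exactly when the bound $r\leq(n-2)/2$ can fail, and in each such case pin down $\mathcal{G}$.

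\textbf{Key steps.} First I would record the general inequality $r = |M|+|R|+1 \leq (m_1+m_2-2)+(k_1+k_2-2)+1 = m_1+m_2+k_1+k_2-3$ together with $n = m_1k_1 + m_2k_2$, and compare this with $(n-2)/2$. If all of $m_1,m_2,k_1,k_2$ are at least $3$, or even if most are, the inequality $m_1+m_2+k_1+k_2-3 \leq (m_1k_1+m_2k_2-2)/2$ holds easily, so the failure cases force small parameters: typically $m_1=2$ or $k_1=2$ (the non-perfect hypothesis already rules out the degenerate possibility that one block side is trivial, and Proposition~\ref{primpb} is not needed here but the "not perfect" assumption is what keeps $A$ and $B$ genuinely nontrivial). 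Second, I would invoke Propositions~\ref{M=m-2}, \ref{M=m-3(1)}, and \ref{R<=k-3} to control the shape of $\mathcal{X}$ and $\mathcal{Y}$ exactly in the near-extremal regime: if $|M|=m_1+m_2-2$ the block graph is the explicit linear graph of Proposition~\ref{M=m-2}; if $|R|=k_1+k_2-2$ every element of $R$ interchanges points in exactly one pair of $\langle M\rangle$-orbits (contrapositive of Proposition~\ref{R<=k-3}). Third, for each surviving combination of small $(m_1,m_2,k_1,k_2)$ and near-maximal $(|M|,|R|)$, I would reconstruct the full permutation representation graph $\mathcal{G}$ from the block-level and orbit-level data, checking the sggi conditions (matchings, and $\{i,j\}$-components being vertices, edges, double edges or alternating squares for $|i-j|\geq 2$), and weed out those that are not string C-groups using Proposition~\ref{arp} together with the forbidden-subgraph list in Proposition~\ref{IPfails} and the sesqui-extension lemmas (Proposition~\ref{sesqui1}, Lemma~\ref{sesqui}). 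The three graphs in the statement are precisely the configurations that survive all these filters.

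\textbf{Main obstacle.} The hard part is the case analysis when one of the $m$'s equals $2$ combined with a near-maximal $|R|$ (and dually when one $k$ equals $2$): here the edge count $m_1+m_2+k_1+k_2-3$ can land exactly at, or just above, $(n-2)/2$, so the bound is only saved by a finer structural constraint, and one must reconstruct $\mathcal{G}$ essentially completely. In particular the "ladder"-type graphs in the conclusion (long strips of parallel $(i-1)$-edges crossed by consecutively-labelled rungs) arise exactly when $k_1 = 2$ so that the $\langle M\rangle$-orbit structure collapses to a thin strip; verifying that these, and only these, yield genuine string C-groups requires threading the forbidden subgraphs of Proposition~\ref{IPfails} (particularly cases (1), (2), (4)) through the sesqui-extension machinery, exactly as done in the proof of Proposition~\ref{frac2}. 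The bookkeeping — tracking which $j$-edges lie in a fracture graph, which $\langle M\rangle$-orbit each vertex lies in, and how the two halves $O_1,O_2$ are glued along the single $i$-edge $\{a,b\}$ — is the real labor, and following the pattern of \cite[Proposition 5.7]{2017CFLM} while extracting the three exceptional families is where the new work lies.
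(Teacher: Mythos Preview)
Your overall framework matches the paper's: set up $r=|M|+|R|+1\le (m_1+m_2-2)+(k_1+k_2-2)+1$, observe that improving this by two gives $r\le(n-2)/2$, and then analyse the near-extremal cases. But you are missing the key idea that makes the $|M|=m_1+m_2-2$ case disappear entirely, and you overshoot on what needs to be checked.

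The crucial step you do not articulate is how the \emph{non-perfect} hypothesis is used. When $|M|=m_1+m_2-2$, Proposition~\ref{M=m-2} says the labels of $M\cup\{\rho_i\}$ form the interval $\{i-m_1+1,\ldots,i+m_2-1\}$. Since the split is not perfect, some $\rho_h\in R$ with (say) $h>i$ acts on both $G_i$-orbits; but the only label that can connect $L_a$ to another $\langle M\rangle$-orbit is $i-m_1$. Hence the labels of $R$ meeting the first orbit contain both $i-m_1$ and $h$, so they do not form an interval, which forces $\bar{\mathcal Y}$ to have at least three components and hence $|R|\le k_1+k_2-3$. A further look at the unique shape of $\mathcal Y$ in the equality case shows $J_A=\{0,\ldots,i-1\}$, contradicting $h\in J_A$, so in fact $|R|\le k_1+k_2-4$ and this case is done. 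This label-interval argument is the heart of the proof; your plan to simply ``reconstruct $\mathcal G$ for each surviving combination of small parameters'' would not find it, because the case $|M|=m_1+m_2-2$ is eliminated for \emph{all} parameter sizes, not just large ones.

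Second, the only surviving near-extremal case is $|M|=m_1+m_2-3$ with $|R|=k_1+k_2-2$, handled by Proposition~\ref{M=m-3(1)}. Subcase~(2) there yields either a perfect split or a contradiction; subcase~(1) forces $m_1=2$, and then $r>(n-2)/2$ reduces to $(m_2-2)(k_2-2)<2$, giving exactly the three parameter choices $m_2=k_2=3$, $m_2=2$, $k_2=2$ that produce the three graphs in the statement. You do not need Proposition~\ref{IPfails} or any sesqui-extension machinery here: the proposition only claims these three graphs are the possible exceptions, not that they are string C-groups, so there is nothing to ``weed out''.
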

\begin{proof}
We have $r=|M|+|R|+1\leq (m_1+m_2- 2)+ (k_1+k_2- 2)+1$. As $ (m_1+m_2- 2)+ (k_1+k_2- 2)+1\leq \frac{m_1k_1+m_2k_2+2}{2} \Leftrightarrow (m_1-2)(k_1-2)+(m_2-2)(k_2-2)\geq 0$, we get $r \leq \frac{n+2}{2}$. 

Note that, if $r=|M|+|R|+1\leq (m_1+m_2- 2)+ (k_1+k_2- 2)+1-2$, then $r \leq \frac{n-2}{2}$, as wanted.
Hence if we can improve the bound of $|M| + |R|$ down by two, we are done.

Suppose that  $|M|=m_1+m_2- 2$. In this case $\mathcal{X}$ is as in Proposition~\ref{M=m-2}. In particular the set of labels of $M\cup \{\rho_i\}$ is an interval $\{i-m_1+1,\ldots,i+m_2-1\}$.
Now consider the graph $\mathcal{Y}$. The only way to connect $L_a$ to another $\langle M\rangle$-orbit is with and $(i-m_1)$-edge and the only way to connect $L_b$ to another  $\langle M\rangle$-orbit is with and $(i+m_2)$-edge.
As $i$ is the label of a non-perfect split there exists $\rho_h\in R$ acting nontrivially on both $G_i$-orbits. Suppose without loss of generality that $h>i$. Then as the set of labels of  the generators of $R$ acting on the first $G_i$-orbit must contain $i-m_1$ and $h$, the set of labels of $R$ is not an interval. We then have that $\bar{\mathcal{Y}}$ has at least three components, namely at least two in the first $G_i$ orbit and at least one in the second. Hence $|R|\leq k_1+k_2- 3$. Moreover if we have equality, $|R|= k_1+k_2- 3$ then  $\bar{\mathcal{Y}}$ has exactly three components, and there is only one possibility for the graph $\mathcal{Y}$, which is the following.
$$ \xymatrix@-1.57pc{*++[][F]{\;L_a\;} \ar@{-}[dd]_{i-m_1}&&*++[][F]{L_b} \ar@{-}[dd]^{i+m_2}\\
&&\\
*++[][F]{\quad\quad} \ar@{-}[dd]_{i-m_1-1}&&*++[][F]{\quad\quad} \ar@{-}[dd]^{i+m_2+1}\\
&&\\
*++[][F]{\quad\quad} \ar@{.}[dd]&&*++[][F]{\quad\quad} \ar@{.}[dd]\\
&&\\
*++[][F]{\quad\quad} \ar@{-}[dd]_1&&*++[][F]{\quad\quad} \ar@{-}[dd]^{r-1}\\
&&\\
*++[][F]{\quad\quad} \ar@{-}[dd]_0&&*++[][F]{\quad\quad} \\
&&\\
*++[][F]{\quad\quad} \ar@{-}[dd]_1&&\\
&&\\
*++[][F]{\quad\quad} &&
}$$
But then $J_A=\{0,\ldots, i-1\}$ and $J_B=\{i+1,\ldots,r-1\}$, hence $h\not\in J_A$, a contradiction. 
Thus if $|M| = m_1+m_2- 2$ then $|R|\leq k_1+k_2-4$.

Now assume that $|M|=m_1+m_2- 3$ and $|R|=k_1+k_2-2$. Recall that $k$ is the number of vertices of $\mathcal Y$.
If $k< k_1+k_2$ then $|R|\leq k_1+k_2-3$. Hence $k=k_1+k_2$, $|R| = k-2$ and by Proposition~\ref{R<=k-3}, each element of $R$ swaps at most two $\langle M\rangle$-orbits. Then $\mathcal{\bar{Y}}$ has exactly two components and there are at least two $\langle M\rangle$-orbits in each $\Gamma_i$-orbit. 
There are, up to duality, the two possibilities for $\mathcal{X}$ given in  Proposition~\ref{M=m-3(1)}. 

If $\mathcal X$ is graph (1) of Proposition~\ref{M=m-3(1)}, there is only one possibility to connect $L_a$ to another $\langle M\rangle$-orbit, that is using an $(i+1)$-edge. Then, by Proposition~\ref{R<=k-3}, there cannot be $(i+1)$-edges in the other $G_i$-orbit.

In this case, as $m_1=2$, we have  $r\leq \frac{n-2}{2}\Leftrightarrow (m_2-2)(k_2-2)\geq 2$.
Thus $r> \frac{n-2}{2}$ only when $m_2=k_2=3$, $m_2=2$ or $k_2=2$ corresponding, respectively, to the following three possibilities for $\mathcal{G}$.
\begin{tiny}
$$\xymatrix@-1.57pc{*+[o][F]{} \ar@{-}[rr]^3\ar@{-}[d]_5&&*+[o][F]{} \ar@{-}[rr]^{i=4}\ar@{-}[d]^{5} &&*+[o][F]{} \ar@{-}[rr]^3&&*+[o][F]{}  \ar@{-}[rr]^2&&*+[o][F]{}\ar@{-}[d]^{l=1}\\
*+[o][F]{} \ar@{-}[rr]^3\ar@{-}[d]_6&& *+[o][F]{}\ar@{-}[d]^6&& *+[o][F]{} \ar@{-}[rr]^3\ar@{-}[d]_0&&  *+[o][F]{}*\ar@{-}[d]^0 \ar@{-}[rr]^2&&*+[o][F]{} \ar@{-}[d]^0\\
*+[o][F]{} \ar@{-}[rr]_3\ar@{.}[d]]&& *+[o][F]{}\ar@{.}[d]&&*+[o][F]{}\ar@{-}[rr]_3&&*+[o][F]{} \ar@{-}[rr]_2&&*+[o][F]{} \\
*+[o][F]{} \ar@{-}[rr]_3\ar@{-}[d]_{r-1}&&*+[o][F]{} \ar@{-}[d]^{r-1}&& &&&&\\
*+[o][F]{}\ar@{-}[rr]_3&& *+[o][F]{}&&&& &&&&}
\,
\xymatrix@-1.57pc{*+[o][F]{} \ar@{-}[rr]^{i-1}\ar@{-}[d]_{i+1}&&*+[o][F]{} \ar@{-}[rr]^i\ar@{-}[d]^{i+1} &&*+[o][F]{} \ar@{-}[rr]^{i-1}&&*+[o][F]{}\ar@{-}[d]^{l=i-2}\\
*+[o][F]{} \ar@{-}[rr]^{i-1}\ar@{-}[d]_{i+2}&& *+[o][F]{}\ar@{-}[d]_{i+2}&& *+[o][F]{} \ar@{-}[rr]^{i-1}\ar@{-}[d]_{i-3}&& *+[o][F]{}*\ar@{-}[d]^{i-3}\\
*+[o][F]{} \ar@{-}[rr]_{i-1}\ar@{.}[d]]&& *+[o][F]{}\ar@{.}[d]&&*+[o][F]{}\ar@{.}[d]\ar@{-}[rr]^{i-1}&&*+[o][F]{}\ar@{.}[d]\\
*+[o][F]{} \ar@{-}[rr]_{i-1}\ar@{-}[d]_{r-1}&&*+[o][F]{} \ar@{-}[d]^{r-1}&&*+[o][F]{}\ar@{-}[d]^0\ar@{-}[rr]^{i-1}&&*+[o][F]{}\ar@{-}[d]^0\\
*+[o][F]{}\ar@{-}[rr]_{i-1}&& *+[o][F]{}&&*+[o][F]{}\ar@{-}[rr]_{i-1} &&*+[o][F]{} }
\,
 \xymatrix@-1.57pc{*+[o][F]{} \ar@{-}[rr]^{i-1}\ar@{-}[d]_{i+1}&&*+[o][F]{} \ar@{-}[rr]^i\ar@{-}[d]^{i+1} &&*+[o][F]{} \ar@{-}[rr]^{i-1} &&*+[o][F]{} \ar@{.}[rr]&&*+[o][F]{}\ar@{-}[rr]^1&&*+[o][F]{}\ar@{-}[d]^{l=0}\\
*+[o][F]{} \ar@{-}[rr]_{i-1}\ar@{-}[d]_{i+2}&& *+[o][F]{}\ar@{-}[d]^{i+2}&& *+[o][F]{} \ar@{-}[rr]^{i-1}&& *+[o][F]{} \ar@{.}[rr]&&*+[o][F]{} \ar@{-}[rr]^1&& *+[o][F]{}*\\
*+[o][F]{} \ar@{-}[rr]_{i-1}\ar@{.}[d]]&& *+[o][F]{}\ar@{.}[d]&&&&&&&&\\
*+[o][F]{} \ar@{-}[rr]_{i-1}\ar@{-}[d]_{r-1}&&*+[o][F]{} \ar@{-}[d]^{r-1}&&&& &&&&\\
*+[o][F]{}\ar@{-}[rr]_{i-1}&& *+[o][F]{} &&&& && && &&}
$$
\end{tiny}
If $\mathcal{X}$ is graph (2) of Proposition~\ref{M=m-3(1)}, there either $i$ is the label of a perfect split or, to connect  $L_a$ to another $\langle M\rangle$-orbit, we need to use a single edge with label $f=i+m_1-2$ or $f=i+m_1-4$. But then $\rho_f\in M$, a contradiction.
\end{proof}


\subsection{$B$ is either primitive or trivial}\label{Bprim}

\begin{table}[htbp]
\begin{tiny}
 \[\begin{array}{|l|} 
\hline
 (I) \xymatrix@-1.7pc{ *+[o][F]{}\ar@{-}[rr]^{r-1} \ar@{-}[d]_{0} &&  *+[o][F]{}\ar@{-}[d]_{0} \ar@{.}[rr] &&  *+[o][F]{}\ar@{-}[rr]^2\ar@{-}[d]_0 &&  *+[o][F]{}\ar@{-}[rr]^{1} \ar@{-}[d]_{0} && *+[o][F]{} \ar@{-}[rr]^{2}&&  *+[o][F]{}\\
*+[o][F]{}\ar@{-}[rr]_{r-1} && *+[o][F]{} \ar@{.}[rr] &&  *+[o][F]{}\ar@{-}[rr]_{2}&&  *+[o][F]{}&&}   
\textrm{ with } r = n/2
 \\

 (II) \xymatrix@-1.7pc{ *+[o][F]{}\ar@{-}[rr]^{r-1} \ar@{-}[d]_{0} && *+[o][F]{} *+[o][F]{}\ar@{-}[d]_{0} \ar@{.}[rr] &&  *+[o][F]{}\ar@{-}[rr]^2\ar@{-}[d]_0 &&  *+[o][F]{}\ar@{-}[rr]^{1} \ar@{-}[d]_{0} && *+[o][F]{} \ar@{=}[rr]^{2}_0&&  *+[o][F]{}\\
*+[o][F]{}\ar@{-}[rr]_{r-1} &&   *+[o][F]{} \ar@{.}[rr] &&  *+[o][F]{}\ar@{-}[rr]_{2}&&  *+[o][F]{}&&}
\textrm{ with } r = n/2
 \\
 (III) \xymatrix@-1.7pc{ *+[o][F]{}\ar@{-}[rr]^{r-1} \ar@{-}[d]_{0} &&  *+[o][F]{}\ar@{-}[d]_{0} \ar@{.}[rr] &&  *+[o][F]{}\ar@{-}[rr]^2\ar@{-}[d]_0 &&  *+[o][F]{}\ar@{-}[rr]^{1} \ar@{-}[d]_{0} && *+[o][F]{} \\
*+[o][F]{}\ar@{-}[rr]_{r-1} && *+[o][F]{} \ar@{.}[rr] &&  *+[o][F]{}\ar@{-}[rr]_{2}&&  *+[o][F]{}} 
\textrm{ with } r = (n+1)/2
\\
 (IV)\; \xymatrix@-1.7pc{*+[o][F]{} \ar@{-}[rr]^{r-3} &&*+[o][F]{} \ar@{.}[rr] &&*+[o][F]{} \ar@{-}[rr]^1 &&*+[o][F]{} \ar@{-}[rr]^0 &&*+[o][F]{}  \ar@{-}[rr]^1&&*+[o][F]{} \ar@{.}[rr]&&  *+[o][F]{} \ar@{-}[rr]^{r-3} &&*+[o][F]{} \ar@{-}[rr]^{r-2}&&*+[o][F]{}\ar@{-}[rr]^{r-1}&&*+[o][F]{}  \ar@{-}[rr]^{r-2}&&*+[o][F]{}\ar@{-}[rr]^{r-1}&&*+[o][F]{} }
\\\textrm{ with } r = n/2
 \\
(V)\; \xymatrix@-1.7pc{*+[o][F]{} \ar@{-}[rr]^{r-3} &&*+[o][F]{} \ar@{.}[rr] &&*+[o][F]{} \ar@{-}[rr]^1 &&*+[o][F]{} \ar@{-}[rr]^0 &&*+[o][F]{}  \ar@{-}[rr]^1&&*+[o][F]{} \ar@{.}[rr]&&  *+[o][F]{} \ar@{-}[rr]^{r-3} &&*+[o][F]{} \ar@{-}[rr]^{r-2}&&*+[o][F]{}\ar@{-}[rr]^{r-1}&&*+[o][F]{}  \ar@{-}[rr]^{r-2}&&*+[o][F]{}\ar@{=}[rr]^{r-1}_{r-3}&&*+[o][F]{} }
\\\textrm{ with } r = n/2
 \\
(VI)\; \xymatrix@-1.7pc{*+[o][F]{} \ar@{-}[rr]^{r-3} &&*+[o][F]{} \ar@{.}[rr] &&*+[o][F]{} \ar@{-}[rr]^1 &&*+[o][F]{} \ar@{-}[rr]^0 &&*+[o][F]{}  \ar@{-}[rr]^1&&*+[o][F]{} \ar@{.}[rr]&&  *+[o][F]{} \ar@{-}[rr]^{r-3} &&*+[o][F]{} \ar@{-}[rr]^{r-2}&&*+[o][F]{}\ar@{-}[rr]^{r-1}\ar@{-}[d]_{r-3}&&*+[o][F]{}\ar@{=}[d]^{r-3}_{r-2}\\
&& && && && && && && && *+[o][F]{} \ar@{-}[rr]_{r-1}&& *+[o][F]{}\\
 }
\\\textrm{ with } r = n/2
 \\
(VII)\; \xymatrix@-1.7pc{*+[o][F]{} \ar@{-}[rr]^{r-1} &&*+[o][F]{} \ar@{-}[rr]^{r-2} &&*+[o][F]{} \ar@{-}[rr]^{r-1} &&*+[o][F]{} \ar@{-}[rr]^{r-2} &&*+[o][F]{} \ar@{-}[rr]^{r-3} &&*+[o][F]{} \ar@{-}[rr]^{r-4} &&*+[o][F]{}\ar@{-}[rr]^{r-5}\ar@{-}[d]_{r-3}&&*+[o][F]{}\ar@{-}[d]_{r-3}*+[o][F]{} \ar@{.}[rr]&&*+[o][F]{}\ar@{-}[rr]^{0}\ar@{-}[d]_{r-3}&&*+[o][F]{}\ar@{-}[d]_{r-3}&&\\
 && && && && && && *+[o][F]{} \ar@{-}[rr]_{r-5}&& *+[o][F]{}*+[o][F]{} \ar@{.}[rr]&&*+[o][F]{} \ar@{-}[rr]_{0}&& *+[o][F]{}*&&\\
 }
\\\textrm{ with } r = n/2
 \\
(VIII)\; \xymatrix@-1.7pc{*+[o][F]{} \ar@{=}[rr]^{r-1}_{r-3} &&*+[o][F]{} \ar@{-}[rr]^{r-2} &&*+[o][F]{} \ar@{-}[rr]^{r-1} &&*+[o][F]{} \ar@{-}[rr]^{r-2} &&*+[o][F]{} \ar@{-}[rr]^{r-3} &&*+[o][F]{} \ar@{-}[rr]^{r-4} &&*+[o][F]{}\ar@{-}[rr]^{r-5}\ar@{-}[d]_{r-3}&&*+[o][F]{}\ar@{-}[d]_{r-3}*+[o][F]{} \ar@{.}[rr]&&*+[o][F]{}\ar@{-}[rr]^{0}\ar@{-}[d]_{r-3}&&*+[o][F]{}\ar@{-}[d]_{r-3}&&\\
 && && && && && && *+[o][F]{} \ar@{-}[rr]_{r-5}&& *+[o][F]{}*+[o][F]{} \ar@{.}[rr]&&*+[o][F]{} \ar@{-}[rr]_{0}&& *+[o][F]{}*&&\\
 }
 \\\textrm{ with } r = n/2
 \\
(IX)\; \xymatrix@-1.7pc{*+[o][F]{} \ar@{-}[rr]^{r-1}\ar@{=}[d]_{r-3}^{r-2} &&*+[o][F]{} \ar@{-}[rr]^{r-2}\ar@{-}[d]^{r-3} &&*+[o][F]{} \ar@{-}[rr]^{r-3} &&*+[o][F]{} \ar@{-}[rr]^{r-4} &&*+[o][F]{}\ar@{-}[rr]^{r-5}\ar@{-}[d]_{r-3}&&*+[o][F]{}\ar@{-}[d]_{r-3}*+[o][F]{} \ar@{.}[rr]&&*+[o][F]{}\ar@{-}[rr]^{0}\ar@{-}[d]_{r-3}&&*+[o][F]{}\ar@{-}[d]_{r-3}&&\\
  *+[o][F]{} \ar@{-}[rr]_{r-1}&& *+[o][F]{} && && && *+[o][F]{} \ar@{-}[rr]_{r-5}&& *+[o][F]{}*+[o][F]{} \ar@{.}[rr]&&*+[o][F]{} \ar@{-}[rr]_{0}&& *+[o][F]{}*&&\\
 }
 \\\textrm{ with } r = n/2
 \\

(X)\; \xymatrix@-1.7pc{*+[o][F]{} \ar@{-}[rr]^{h} &&*+[o][F]{} \ar@{.}[rr]&&*+[o][F]{} \ar@{-}[rr]^{0} &&*+[o][F]{} \ar@{.}[rr] &&*+[o][F]{} \ar@{-}[rr]^{h} &&*+[o][F]{} \ar@{-}[rr]^{h+1} &&*+[o][F]{} \ar@{-}[rr]^{h+2} &&*+[o][F]{}\ar@{-}[rr]^{h+3}\ar@{-}[d]_{h+1}&&*+[o][F]{}\ar@{-}[d]_{h+1}*+[o][F]{} \ar@{.}[rr]&&*+[o][F]{}\ar@{-}[rr]^{r-1}\ar@{-}[d]_{h+1}&&*+[o][F]{}\ar@{-}[d]_{h+1}&&\\
 && && && && && && && *+[o][F]{} \ar@{-}[rr]_{h+3}&& *+[o][F]{}*+[o][F]{} \ar@{.}[rr]&&*+[o][F]{} \ar@{-}[rr]_{r-1}&& *+[o][F]{}*&&\\
 }\\
\textrm{with } r=(n+1)/2 \textrm{ and } 1\leq h \leq r-2
 \\

(XI)\;  \xymatrix@-1.7pc{*+[o][F]{} \ar@{-}[rr]^{h} &&*+[o][F]{} \ar@{.}[rr] &&*+[o][F]{} \ar@{-}[rr]^1 &&*+[o][F]{} \ar@{-}[rr]^0 &&*+[o][F]{}  \ar@{-}[rr]^1&&*+[o][F]{} \ar@{.}[rr]&&  *+[o][F]{} \ar@{-}[rr]^{r-2}&&*+[o][F]{}\ar@{-}[rr]^{r-1}&&*+[o][F]{}  \ar@{-}[rr]^{r-2} &&*+[o][F]{} \ar@{.}[rr]&&  *+[o][F]{} \ar@{-}[rr]^{h}&&*+[o][F]{}} \\
\textrm{with } r=n/2 \textrm{ and }1 \leq h \leq r-2
 \\
(XII)\;  \xymatrix@-1.7pc{*+[o][F]{} \ar@{=}[rr]^{2}_{0} &&*+[o][F]{} \ar@{-}[rr]^1 &&*+[o][F]{} \ar@{-}[rr]^0 &&*+[o][F]{}  \ar@{-}[rr]^1&&*+[o][F]{} \ar@{.}[rr]&&  *+[o][F]{} \ar@{-}[rr]^{r-2}&&*+[o][F]{}\ar@{-}[rr]^{r-1}&&*+[o][F]{}  \ar@{-}[rr]^{r-2} &&*+[o][F]{} \ar@{.}[rr]&&  *+[o][F]{} \ar@{-}[rr]^{2}&&*+[o][F]{} }
 \\\textrm{ with } r = n/2
\\
 (XIII) \xymatrix@-1.7pc{ 
 *+[o][F]{}\ar@{-}[rr]^{4} \ar@{-}[d]_{1} &&  *+[o][F]{}\ar@{-}[d]_{1} \ar@{-}[rr]^3 &&  *+[o][F]{}\ar@{-}[rr]^2\ar@{-}[d]^1 && *+[o][F]{} \\
 *+[o][F]{}\ar@{-}[rr]^{4} \ar@{-}[d]_{0} &&  *+[o][F]{}\ar@{-}[d]_{0} \ar@{-}[rr]^3 &&  *+[o][F]{}\ar@{}[rr]\ar@{-}[d]^0 && \\
 *+[o][F]{}\ar@{-}[rr]_{4} &&  *+[o][F]{} \ar@{-}[rr]_3 &&  *+[o][F]{} &&}
 \textrm{ with } n=10 \textrm{ and } r = 5
\\
(XIV)\xymatrix@-1.7pc{ 
 *+[o][F]{}\ar@{-}[rr]^{0} \ar@{-}[d]_{h} &&  *+[o][F]{}\ar@{-}[d]_{h} \ar@{.}[rr] &&  *+[o][F]{}\ar@{-}[rr]^{h-3}\ar@{-}[d]^{h} &&*+[o][F]{}\ar@{-}[rr]^{h-2}\ar@{-}[d]^{h} && *+[o][F]{} \ar@{-}[d]^{h}\ar@{-}[rr]^{h-1}&&*+[o][F]{} \ar@{-}[rr]^{h}&&*+[o][F]{} \ar@{-}[rr]^{h+1}&&*+[o][F]{} \ar@{-}[rr]^{h+2}&&*+[o][F]{}\ar@{-}[d]^{h+1} \ar@{-}[rr]^{h+3}&&*+[o][F]{}\ar@{-}[d]^{h+1} \ar@{.}[rr]&&*+[o][F]{}\ar@{-}[d]^{h+1} \ar@{-}[rr]^{r-1}&&*+[o][F]{}\ar@{-}[d]^{h+1} \\
 *+[o][F]{}\ar@{-}[rr]_{0}&& *+[o][F]{}\ar@{.}[rr]&&*+[o][F]{}\ar@{-}[rr]_{h-3}&&*+[o][F]{}\ar@{-}[rr]_{h-2}&& *+[o][F]{}&& &&  && && *+[o][F]{}\ar@{-}[rr]_{h+3}&&*+[o][F]{}\ar@{.}[rr]&&  *+[o][F]{} \ar@{-}[rr]_{r-1}&&*+[o][F]{}}
 \\
\textrm{with } r=(n+1)/2 \textrm{ and } 2\leq h \leq r-2
 \\

(XV)\xymatrix@-1.7pc{ 
 *+[o][F]{}\ar@{-}[rr]^{r-1} \ar@{-}[d]_{h} &&  *+[o][F]{}\ar@{-}[d]_{h} \ar@{.}[rr] &&  *+[o][F]{}\ar@{-}[rr]^{h+3}\ar@{-}[d]^{h} &&*+[o][F]{}\ar@{-}[rr]^{h+2}\ar@{-}[d]^{h} && *+[o][F]{} \ar@{-}[d]^{h}\ar@{-}[rr]^{h+1}&&*+[o][F]{} \ar@{-}[rr]^{h}&&*+[o][F]{} \ar@{-}[rr]^{h-1}&&*+[o][F]{} \ar@{-}[rr]^{h-2}&&*+[o][F]{} \ar@{.}[rr]&&*+[o][F]{} \ar@{-}[rr]^1&&*+[o][F]{}\ar@{-}[d]^0 \\
 *+[o][F]{}\ar@{-}[rr]_{r-1}&& *+[o][F]{}\ar@{.}[rr]&&*+[o][F]{}\ar@{-}[rr]_{h+3}&&*+[o][F]{}\ar@{-}[rr]_{h+2}&& *+[o][F]{} &&*+[o][F]{}\ar@{-}[rr]_{h} && *+[o][F]{}\ar@{-}[rr]_{h-1}&&*+[o][F]{}\ar@{-}[rr]_{h-3} &&*+[o][F]{}\ar@{.}[rr]&&*+[o][F]{}\ar@{-}[rr]_1&&  *+[o][F]{}}
 \\
\textrm{with } r=n/2 \textrm{ and } 2\leq h \leq r-2
 \\
(XVI)\xymatrix@-1.7pc{  *+[o][F]{}\ar@{-}[rr]^0\ar@{=}[d]_{h}^k && *+[o][F]{}\ar@{.}[rr]\ar@{=}[d]_{h}^k && *+[o][F]{}\ar@{-}[rr]^{h-2}\ar@{-}[d]_{h} && *+[o][F]{}\ar@{-}[rr]^{h-1}\ar@{-}[d]_{h} &&  *+[o][F]{}\ar@{-}[rr]^h  && *+[o][F]{} \ar@{.}[rr]&&*+[o][F]{} \ar@{-}[rr]^{r-1}&&  *+[o][F]{} \ar@{.}[rr]&&  *+[o][F]{} \ar@{-}[rr]^h&&*+[o][F]{} \\
 *+[o][F]{}\ar@{-}[rr]_0&& *+[o][F]{}\ar@{.}[rr]&&  *+[o][F]{}\ar@{-}[rr]_{h-2}&&   *+[o][F]{} &&}
  \\
\textrm{with } r=n/2,\, 1\leq k\leq h-3 \textrm{ and } 2\leq h \leq r-2
 \\
(XVII)\xymatrix@-1.6pc{  *+[o][F]{}\ar@{-}[rr]^0\ar@{=}[d]_{h}^k &&*+[o][F]{}\ar@{-}[rr]^1\ar@{=}[d]_{h}^k && *+[o][F]{}\ar@{.}[rr]\ar@{=}[d]_{h}^k && *+[o][F]{}\ar@{-}[rr]^{h-2}\ar@{-}[d]_{h} &&  *+[o][F]{}\ar@{-}[rr]^{h-1}  \ar@{-}[d]_{h} && *+[o][F]{} \ar@{-}[rr]^{h}&&*+[o][F]{} \ar@{-}[rr]^{h+1}&&*+[o][F]{} \ar@{-}[rr]^{h+2} \ar@{-}[d]^{h}&&*+[o][F]{} \ar@{-}[d]^{h} \ar@{.}[rr]&&  *+[o][F]{} \ar@{-}[rr]^{r-1} \ar@{-}[d]^{h}&&  *+[o][F]{} \ar@{-}[d]^{h} \\
 *+[o][F]{}\ar@{-}[rr]_0&&*+[o][F]{}\ar@{-}[rr]_1&& *+[o][F]{}\ar@{.}[rr]&&   *+[o][F]{}\ar@{-}[rr]_{h-2}&&*+[o][F]{} &&&&&& *+[o][F]{} \ar@{-}[rr]_{h+2}&& *+[o][F]{} \ar@{.}[rr]&& *+[o][F]{} \ar@{-}[rr]_{r-1}&& *+[o][F]{} }
   \\
\textrm{with } r=n/2,\, 1\leq k\leq h-3 \textrm{ and } 2\leq h \leq r-2
 \\
 \\
 \hline
 \end{array}\]
 \end{tiny}
\caption{Permutation representation graphs for $S_n$ with rank $r\geq \frac{n}{2}$ having a fracture graph and a split that is not perfect.}\label{notperfect}
\end{table}


Before we analyse this case in general, we give in Table~\ref{notperfect} permutation representation graphs that will arise in this case.
We conjecture that these graphs define string C-groups for the symmetric group. Anyway, these graphs will all turn out to be excluded from our classification.
Hence proving that they give indeed string C-groups would not add much to our result.
But it is an interesting open problem.

\begin{prop}\cite[Proposition 5.1]{2017CFLM}\label{CCD}
If $B$ is primitive, then the set $J_B$  is an interval. The same result holds for $A$.
\end{prop}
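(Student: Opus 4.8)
The plan is to suppose that $J_B$ is not an interval and derive a contradiction; the case $B=1$ is immediate since then $J_B=\emptyset$. First I would record that $B$ is transitive on $O_2$: the set $O_2$ is an orbit of $G_i$, every $\rho_j$ with $j\neq i$ preserves the partition $\{O_1,O_2\}$ (only $\rho_i$ crosses it), and the restriction of $G_i$ to $O_2$ is $\langle\beta_j:j\neq i\rangle=\langle\beta_j:j\in J_B\rangle=B$. Assuming $J_B$ is not an interval, pick $k$ with $\min J_B<k<\max J_B$ and $k\notin J_B$, and set $B^-:=\langle\beta_j:j\in J_B,\ j<k\rangle$ and $B^+:=\langle\beta_j:j\in J_B,\ j>k\rangle$. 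Both are nontrivial, and since any index $\le k-1$ differs from any index $\ge k+1$ by at least $2$, the commuting property forces $B^-$ and $B^+$ to centralise each other; hence $B=B^-B^+$ with $B^-,B^+\trianglelefteq B$.

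The key step uses primitivity: a nontrivial normal subgroup of a primitive permutation group is transitive, so both $B^-$ and $B^+$ are transitive on $O_2$. From this I would deduce that $G_j$ is transitive on $O_2$ for every $j\in J_B$: if $j<k$, the subgroup $\langle\rho_{j'}:j'\in J_B,\ j'>k\rangle$ of $G_j$ restricts on $O_2$ to the transitive group $B^+$; if $j>k$, use $B^-$; and since $k\notin J_B$ these cases cover all of $J_B$. Consequently no transposition of $\rho_j$ contained in $O_2$ can join two distinct $G_j$-orbits, and since $\rho_j$ (for $j\neq i$) preserves $O_1$ and $O_2$, the fracture edge chosen for $\rho_j$ must lie in $O_1$. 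For $j\notin J_B\cup\{i\}$ we have $\beta_j=1$, so $\rho_j$ fixes $O_2$ pointwise and its fracture edge again lies in $O_1$; and the fracture edge of $\rho_i$ can only be the unique pair $\{a,b\}$ crossing $O_1$ and $O_2$. Thus every fracture graph of $\Gamma$ would consist of $r-1$ edges inside $O_1$ together with the single bridge $\{a,b\}$; being a spanning forest on the $n=n_1+n_2$ vertices, this already yields $n_2=|O_2|\le n-r$.

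To finish I would push the structure of $B$ further. Since $B^+$ centralises the transitive group $B^-$ it is semiregular, and being transitive it is regular; symmetrically $B^-$ is regular. So $B$ is a primitive group with a regular normal subgroup, hence $B^-$ is characteristically simple. If $B^-$ is elementary abelian then $B^-=C_B(B^-)=B^+$, so $B=B^-$ is a regular abelian primitive group, hence of prime degree, and being generated by the involutions $\beta_j$ it can only be $C_2$ of degree $2$; if $B^-$ is a nonabelian power of a simple group then $Z(B^-)=1$, $B=B^-\times B^+$ acts by two-sided multiplication, and primitivity forces $B^-$ to be simple, so $B\cong T\times T$ in its diagonal action on $|T|$ points. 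The remaining step, which I expect to be the main obstacle, is to rule out these two degenerate configurations for a string C-group: in the first, every $j\in J_B$ with $|j-i|\ge 2$ produces, via the edge $\{a,b\}$ of $\rho_i$ and the edge of $\beta_j$ in the two-point set $O_2$, a two-label path of length two with non-consecutive labels, impossible in an sggi, which pins $J_B$ down to $\{i-1,i+1\}$, a case then eliminated directly from the intersection property; in the second, the lift $\langle\rho_j:j\in J_B,\,j<k\rangle\times\langle\rho_j:j\in J_B,\,j>k\rangle\le G$ is a genuine direct product (by the intersection property) both of whose factors surject onto $T$, hence have order at least $60$, which is incompatible with the intersection property of $\Gamma$ together with the bound $n_2\le n-r$. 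Once these are excluded, $J_B$ is an interval, and the assertion for $A$ follows by the symmetric argument, interchanging the roles of $O_1$ and $O_2$.
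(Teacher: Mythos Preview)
The paper itself does not prove this proposition; it is quoted from \cite[Proposition~5.1]{2017CFLM}, so there is no in-paper argument to compare against.

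Your opening is the standard one and is correct: if $J_B$ has a gap at $k$, then $B^{-}=\langle\beta_j:j\in J_B,\,j<k\rangle$ and $B^{+}=\langle\beta_j:j\in J_B,\,j>k\rangle$ are nontrivial, centralise each other, and are therefore transitive normal subgroups of the primitive group $B$; hence both are regular, and one reduces to the two structural possibilities you describe. The fracture-graph count giving $n_2\le n-r$ is also correct.

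The difficulty is that neither of your closing arguments actually closes. In the diagonal case you invoke $n_2\le n-r$ together with $|T|\ge 60$; but the proposition carries no hypothesis bounding $r$ from below, so $r\le n-60$ is no contradiction. The intersection property tells you only that the lifts $P,Q\le G$ of $B^{-},B^{+}$ form a genuine direct product, which controls $|G|$ from below but says nothing further about $r$.

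The degree-$2$ case is more serious. After correctly reducing to $J_B=\{i-1,i+1\}$ you assert that the intersection property eliminates it, but give no argument---and under the hypotheses as literally stated (transitive $G$, string C-group with a fracture graph, $i$-split) it \emph{cannot} be eliminated. For instance, the rank-$3$ string C-group for $S_6$ given by
\[
\rho_0=(a\,1)(2\,3)(b\,b'),\qquad \rho_1=(a\,b),\qquad \rho_2=(a\,2)(1\,3)(b\,b')
\]
has all $G_j$ intransitive, a $1$-split with $O_2=\{b,b'\}$, $B\cong C_2$ primitive of degree $2$, and $J_B=\{0,2\}$; one checks directly that $G_0\cap G_2=\langle\rho_1\rangle$, so the intersection property holds. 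Thus some extra hypothesis---for example $n_2\ge 3$, which is implicit in every application the paper makes of this proposition (the conclusion actually used is that the relevant group is \emph{imprimitive})---is needed. Your argument neither identifies nor uses such a hypothesis, and without it the proof cannot be completed.
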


In this case $J_B$ is either empty or an interval. As before let $\mathcal{G}$ be the permutation representation graph of $\Gamma$.

\begin{prop}\label{path}\cite[Proposition 5.18]{2017CFLM}
If $e$ is an $f$-edge of $\mathcal{G}$ not in an alternating square, then any path (not containing another $f$-edge) from $e$ to an edge with label $l$, with $l<f$ (resp. $l>f$), contains all labels between $l$ and $f$. 
Moreover, there exists a path from $e$ to an $l$-edge, that is fixed by $\Gamma_{>l}$  (resp. $\Gamma_{<l}$).
\end{prop}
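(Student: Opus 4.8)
The plan is to prove the two assertions in turn. The engine for the first is a single local fact: \emph{if $e=\{v,w\}$ is an $f$-edge that lies in no alternating square, then every edge of $\mathcal{G}$ incident to $e$ has label in $\{f-1,f+1\}$.} Indeed $\rho_f$ is an involution, so $e$ is the unique $f$-edge at $v$ and the unique one at $w$; and if an edge $\{v,z\}$ had label $h$ with $|h-f|\geq 2$, then the commuting relation $(\rho_f\rho_h)^2=1$ gives $w\rho_h=v\rho_f\rho_h=v\rho_h\rho_f=z\rho_f=:u$, and each possible coincidence among $v,w,z,u$ is absurd (for instance $u=v$ forces $z=w$, so that $\{v,z\}=e$), whence $v,w,z,u$ are the four distinct vertices of an alternating $\{f,h\}$-square one of whose sides is $e$ --- contrary to hypothesis.

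For the first assertion, I would take a path $P=(e=e_0,e_1,\ldots,e_k)$ with $e_k$ an $l$-edge ($l<f$) and no $e_i$ ($i\geq 1$) an $f$-edge, and suppose some label $m$ with $l<m<f$ occurs nowhere on $P$. Deleting every $m$-edge from $\mathcal{G}$ leaves the $G_m$-orbits as its components, so $P$ is confined to one of them. Since the label of $e_0$ is $f>m$ and that of $e_k$ is $l<m$, there is a first index $j$ with $\mathrm{lab}(e_j)<m$; then $j\geq 1$ and $\mathrm{lab}(e_{j-1})>m$ because no $m$-edge occurs. If $j=1$, the edge $e_1$ is incident to $e$ and its label is at least $2$ below $f$, contradicting the local fact. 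If $j>1$, the labels of $e_{j-1}$ and $e_j$ differ by at least $2$, hence commute, so their common vertex lies in an alternating square; replacing those two edges by the other two sides of that square yields a path of the same length that still avoids $m$-edges and other $f$-edges, still terminates in an $l$-edge, and whose first drop below $m$ has moved one step closer to $e$ (when that drop is the last step, the same replacement gives a strictly shorter path ending in a different $l$-edge). Iterating forces $j=1$, a contradiction; hence $P$ meets every label strictly between $l$ and $f$, and --- counting the endpoints $e_0,e_k$ as well --- every label between $l$ and $f$.

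For the ``moreover'' clause I would argue by downward induction on $f-l$, peeling one label at a time: the local fact lets one step from $e$ to an incident $(f-1)$-edge $e'$, which again lies in no alternating square relative to $\rho_0,\ldots,\rho_{f-1}$, so the induction (with the first part controlling the intermediate labels) produces a path from $e$ to an $l$-edge $e''$ whose label sequence is exactly $f,f-1,\ldots,l$. Writing $e''=\{x,y\}$ with the path entering at $y$ and $x=y\rho_l$ new, a minimality bookkeeping shows that $x$ is moved by no $\rho_j$ with $j>l$ --- any such move, being by a generator non-adjacent to the lower path labels, would either shorten the path or, through a square, reproduce a forbidden $f$-edge or a square at $e$ --- so $x$ is a fixed point of $G_{>l}$ and $e''$ is fixed by $\Gamma_{>l}$. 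The case $l>f$ is the dual statement, obtained by reversing the order of the generators.

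The step I expect to fight hardest with is making the rerouting in the first assertion genuinely well-founded: one must check that pushing the bad drop toward $e$ terminates even when the edge preceding the drop meets it at the ``wrong'' vertex --- then one reroutes the junction following the drop instead --- and that no rerouted walk ever manufactures a fresh $f$-edge or $m$-edge (both follow from the label bookkeeping but need care). The subtlest point of all is the fixed-point claim in the ``moreover'' part: one must pin down exactly which endpoint of the terminal $l$-edge is carried unchanged through the whole descent and rule out that a higher generator disturbs it, and this is where the hypothesis that $e$ sits in no square, together with the first assertion, has to be used most delicately.
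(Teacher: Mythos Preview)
This proposition is quoted from \cite[Proposition~5.18]{2017CFLM} and carries no proof in the present paper, so there is nothing here to compare your attempt against.

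On the merits of your argument: the first assertion is handled essentially correctly. The local fact (every edge incident to $e$ has label $f\pm1$) is right, and the square-rerouting iteration does push the first label-drop below $m$ one step closer to $e$ at each move; the termination and walk-versus-path concerns you flag are genuine but routine --- working with a shortest counterexample path disposes of both.

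Your treatment of the ``moreover'' clause, however, remains a sketch, and you say as much. The descent $f,f-1,\ldots,l$ produces a candidate path, but the claim that the terminal vertex $x$ is fixed by every $\rho_j$ with $j>l$ is not established: the phrase ``minimality bookkeeping'' stands in for an argument you have not supplied, and there is no a~priori reason why some higher generator cannot move $x$. Moreover, looking at how the proposition is actually applied in this paper (Propositions~\ref{i=0andh<>r-1} and~\ref{<iint}), what is used is that the \emph{entire path} is fixed by the relevant parabolic, not just one endpoint of its final edge. To get this you would need to show, at each step of the descent, that the newly reached vertex is incident only to edges whose labels are adjacent to the current one --- a property that does not automatically propagate from the hypothesis on $e$. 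Until that inductive invariant is pinned down, the second half of your proof is incomplete.
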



Recall that we deal with the case where $i$ is the label of a split that is not perfect. In what follows the first $G_i$-orbit is the orbit corresponding to the group $A$, while the second $G_i$-orbit is the orbit corresponding to the group $B$.

In the following two propositions we consider the case $i=0$.

\begin{prop}\label{i=0andh=r-1} 
Let $i=0$. If $\rho_{r-1}$ acts non-trivially on both $G_i$-orbits then $r\leq n/2$. Moreover if $r=n/2$ then $\Gamma$ has the following permutation representation graph and $G$ is isomorphic to $C_2\wr S_{n/2}$.

\begin{tabular}{c}
$\xymatrix@-1.3pc{*+[o][F]{}\ar@{-}[rr]^{r-1} && *+[o][F]{} \ar@{-}[rr]^{r-2}&&*+[o][F]{} \ar@{.}[rr] && *+[o][F]{}\ar@{-}[rr]^1 && *+[o][F]{}\ar@{-}[rr]^0 &&*+[o][F]{} \ar@{-}[rr]^1&& *+[o][F]{}\ar@{.}[rr]&& *+[o][F]{} \ar@{-}[rr]^{r-2}&& *+[o][F]{} \ar@{-}[rr]^{r-1}&&*+[o][F]{}  }$\\
\end{tabular}

\end{prop}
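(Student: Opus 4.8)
The plan is to set up notation carefully and then exploit the structure forced by having $i=0$ as the label of a split together with the hypothesis that $\rho_{r-1}$ moves points in \emph{both} $G_0$-orbits. Write $O_1$, $O_2$ for the two $G_0$-orbits, with $A = \langle \alpha_j \mid j \in J_A\rangle$ acting on $O_1$ and $B = \langle \beta_j \mid j \in J_B\rangle$ acting on $O_2$; recall $\rho_0 = \alpha_0\beta_0(a,b)$ is the unique generator swapping the two orbits, and by Proposition~\ref{CCD} (applied whenever $A$ or $B$ is primitive) or directly from the commuting relations, $J_A$ and $J_B$ will be intervals or empty. The hypothesis says $r-1 \in J_A \cap J_B$. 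First I would argue that $r-1$ being in both intervals, together with the fact that $\rho_{r-1}$ commutes with every $\rho_j$ for $j \le r-3$, severely constrains the action: on $O_1$ the label set $J_A$ must be an interval containing both $r-1$ and (since $G_0$-orbit $O_1$ is a union of $A$-orbits that must be connected up) labels coming down towards $0$; similarly for $J_B$.

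The core estimate is a counting/forest argument like the ones used in Propositions~\ref{imp} and the surrounding material: each $\rho_j$ with $j \ne 0$ contributes a fracture edge lying entirely inside $O_1$ or inside $O_2$, and $\rho_0$ contributes the single edge $\{a,b\}$ across. A fracture graph is a spanning forest of $\mathcal G$, so it has at most $n - (\text{number of components})$ edges; since the fracture graph restricted to $O_1 \cup O_2$ together with the edge $\{a,b\}$ must be acyclic and $G_0$ is intransitive, the count of fracture edges is $r$. The point is to show $2r \le n$, equivalently that the fracture graph has at least two components, which follows because $\rho_{r-1}$ acting on both orbits forces extra structure — in fact I expect the argument to show that the permutation representation graph is a union of "doubled paths" glued at a single crossing edge, exactly the claimed picture. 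Concretely: because $r-1 \in J_A$ and $r-1 \in J_B$, and $\rho_{r-1}$ is an involution commuting with $\rho_0, \dots, \rho_{r-3}$, one shows by descending through labels $r-1, r-2, \dots$ (using Proposition~\ref{path} to control paths) that both $O_1$ and $O_2$ carry the same linear chain of labels $r-1, r-2, \dots, 1$, each label appearing as a matching that, when the two copies are identified along $\rho_0$, yields the displayed graph. That graph has $n$ vertices, $2(r-1)$ non-$\rho_0$ edges and one $\rho_0$-edge, giving $n = 2r$ in the extremal case; the group is then recognised as $C_2 \wr S_{n/2}$ because each label $j \ge 1$ acts identically on the two halves (wreath top group $S_{n/2}$ generated by the "diagonal" consecutive transpositions) while $\rho_0$ supplies the base $C_2$ on one coordinate, and conjugation spreads it to all coordinates.

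So the key steps in order are: (1) fix notation and record that $J_A, J_B$ are intervals with $r-1$ in both; (2) use the commuting relations plus Proposition~\ref{path} to force, descending from label $r-1$, that the non-$\rho_0$ part of $\mathcal G$ splits as two isomorphic copies of a linear chain with labels $r-1, \dots, 1$; (3) conclude $2r \le n$ from the forest/edge count, with equality exactly when no label is "wasted", i.e. when $\mathcal G$ is precisely the displayed doubled path with the single crossing edge labelled $0$; (4) identify the group in the equality case as $C_2 \wr S_{n/2}$ by exhibiting the base group and top group generators explicitly and checking the wreath structure. The main obstacle I anticipate is step (2): ruling out "branching" — i.e. showing that no component of either $O_1$ or $O_2$ can contain an alternating square or a vertex of degree $\ge 3$ in the fracture graph without either destroying the split hypothesis or the commuting property — which is where one has to carefully invoke Propositions~\ref{path} and the fact that a second crossing edge between $O_1$ and $O_2$ would contradict uniqueness in the definition of a split. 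Once the shape of $\mathcal G$ is pinned down, the bound $r \le n/2$ and the group identification are routine.
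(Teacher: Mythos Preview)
Your overall strategy---use Proposition~\ref{path} to exhibit, inside each $G_0$-orbit, a path carrying all labels $1,\ldots,r-1$, and then count---is the same as the paper's. Two corrections are needed, one minor and one substantive.

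The minor one: the ``forest/edge count'' in your step~(3) does not yield $r\le n/2$. A fracture graph on $n$ vertices with $r$ edges has $n-r$ components, so ``at least two components'' only gives $r\le n-2$. The bound comes directly from step~(2): once each orbit contains a path with labels $1,\ldots,r-1$, each orbit has at least $r$ vertices, so $n=|O_1|+|O_2|\ge 2r$. This is exactly the paper's argument (invoking Proposition~5.19 of \cite{2017CFLM}).

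The substantive gap is in the equality case $r=n/2$. You assert that the commuting property together with the split hypothesis will force the two halves to be \emph{isomorphic} linear chains, and you list the obstacles as ``branching'', ``alternating squares'', and ``vertices of degree~$\ge 3$''. But there is a second candidate on $2r$ vertices that has none of these features: take the displayed linear graph and double the terminal $(r-1)$-edge in one orbit with an $(r-3)$-edge. This extra double edge is compatible with the commuting property (since $|(r-1)-(r-3)|=2$), does not violate the $0$-split hypothesis, adds no vertices, and creates no branching. The paper explicitly lists this as a second possibility and excludes it only by invoking the \emph{intersection property}: one checks $G_{r-1}\cong S_{n-1}$, $G_{>r-4}\cong S_4\times S_4$, and $G_{r-1}\cap G_{>r-4}\cong S_4\times S_3$, which strictly contains the dihedral group $\langle\rho_{r-2},\rho_{r-3}\rangle$. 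So the commuting relations and split hypothesis alone do not pin the graph down; you must use that $\Gamma$ is a string C-group, not merely an sggi.
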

\begin{proof}
To prove that  $r\leq n/2$ we use the argument of the proof of Proposition 5.19 of \cite{2017CFLM}:
by Proposition~\ref{path} there are two paths, one in the first and the other in the second 
$G_i$-orbit, each containing all labels from $1$ to $r-1$, thus $2(r-1)+1\leq n-1\Leftrightarrow r\leq n/2$.

Now suppose that $r=n/2$. In that case the paths have consecutive labels, and we get the following possibilities for the permutation representation graphs.
\begin{tabular}{cc}
(1)&$\xymatrix@-1.3pc{*+[o][F]{}\ar@{-}[rr]^{r-1} && *+[o][F]{} \ar@{-}[rr]^{r-2}&&*+[o][F]{} \ar@{.}[rr] && *+[o][F]{}\ar@{-}[rr]^1 && *+[o][F]{}\ar@{-}[rr]^0 &&*+[o][F]{} \ar@{-}[rr]^1&& *+[o][F]{}\ar@{.}[rr]&& *+[o][F]{} \ar@{-}[rr]^{r-2}&& *+[o][F]{} \ar@{-}[rr]^{r-1}&&*+[o][F]{}  }$\\
(2) &$ \xymatrix@-1.3pc{*+[o][F]{}\ar@{=}[rr]^{r-1}_{r-3} && *+[o][F]{} \ar@{-}[rr]^{r-2}&&*+[o][F]{} \ar@{.}[rr] && *+[o][F]{}\ar@{-}[rr]^1 && *+[o][F]{}\ar@{-}[rr]^0 &&*+[o][F]{} \ar@{-}[rr]^1&& *+[o][F]{}\ar@{.}[rr]&& *+[o][F]{} \ar@{-}[rr]^{r-2}&& *+[o][F]{} \ar@{-}[rr]^{r-1}&&*+[o][F]{}  }$
\end{tabular}

Graph (2) is not a string C-group as $G_{r-1} \cong S_{n-1}$, $G_{>r-4}\cong S_{4}\times S_{4}$ and $\langle \rho_{r-2},\rho_{r-3}\rangle$ is a dihedral group while $G_{r-1}\cap G_{>r-4} \cong S_4\times S_3$.

Graph (1) clearly generates an imprimitive group with blocks of size 2, isomorphic to $C_2\wr S_{n/2}$.
\end{proof}
The proof of the following result of \cite{2017CFLM} does not require that $G$ is even.

\begin{prop}\cite[Proposition 5.20]{2017CFLM}\label{i=0andh<>r-1}
Let $i=0$. Let $h\neq r-1$ be the maximal label such that $\rho_h$ acts non-trivially on both $G_i$-orbits.
There exists a set of vertices $X$, contained in the $G_i$-orbit fixed by $\rho_{r-1}$, such that $h\leq \frac{n-|X|-1}{2}$ and $G_{>h}$ fixes $\{1,\ldots, n\}\setminus X$. Moreover if $h= \frac{n-|X|-1}{2}$ then $\Gamma_{<h}$ has the following permutation representation graph, where the black dots represent the vertices of $\{1,\ldots, n\}\setminus X$. 
$$\xymatrix@-1.3pc{*{\bullet}\ar@{-}[rr]^h &&*{\bullet} \ar@{-}[rr]^{h-1}&&*{\bullet} \ar@{.}[rr] &&*{\bullet}\ar@{-}[rr]^1 &&*{\bullet}\ar@{-}[rr]^0 &&*{\bullet} \ar@{-}[rr]^1&& *{\bullet} \ar@{.}[rr]&&*{\bullet} \ar@{-}[rr]^{h-1}&& *{\bullet} \ar@{-}[rr]^{h}&&*+[o][F]{}  }
$$
\end{prop}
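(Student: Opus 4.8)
The plan is to refine the argument of Proposition~5.20 of \cite{2017CFLM} so as to pin down not just the bound $h\leq (n-|X|-1)/2$ but also the shape of $\Gamma_{<h}$ in the equality case. First I would set up the notation: let $h$ be the maximal label with $\rho_h$ acting non-trivially on both $G_0$-orbits, and let $O_1$ be the orbit corresponding to $A$ (the orbit containing $a$) and $O_2$ the orbit corresponding to $B$ (the one containing $b$, and on which, by the definition of $h$ together with $i=0$, every $\rho_j$ with $j>h$ acts trivially; this is where $X\subseteq O_2$ will be carved out). The key input is Proposition~\ref{path}: the edge realising the split of $\rho_0$ is not in an alternating square (since $i=0$ is a split label, and a square would force a second $0$-edge between the orbits, contradicting the definition of a split), so from that $0$-edge there is a path in $\mathcal G$ to an $h$-edge passing through every intermediate label, and moreover a path to each $l$-edge that is fixed by $\Gamma_{>l}$. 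Running this argument inside $O_1$, which is fixed pointwise by $\rho_{r-1}$ (and hence by $\Gamma_{>h}$ as far as $O_1$ is concerned once we also account for the part of $O_2$ being removed), I get one path; and then I let $X$ be the set of vertices of $O_2$ not reached by such a $\Gamma_{>h}$-fixed path, so that the remaining vertices $\{1,\dots,n\}\setminus X$ split into a path in $O_1$ and a path in $O_2$, each carrying all labels $0,1,\dots,h$ in the order dictated by Proposition~\ref{path}.

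The counting step then reads: the two paths are vertex-disjoint, live in $\{1,\dots,n\}\setminus X$, the path in $O_2$ has at least $h+1$ vertices (it uses labels $0$ through $h$) hence at least $h$ edges but actually, combining with the $O_1$ path sharing the single $0$-edge's two endpoints — one in $O_1$, one in $O_2$ — we have a connected subgraph on $\{1,\dots,n\}\setminus X$ with $2h+1$ distinct vertices, so $2h+1\leq n-|X|$, i.e.\ $h\leq (n-|X|-1)/2$. In the equality case there is no slack: each path must be a bare path with consecutively-labelled edges and no extra vertices, no doubled edges, no alternating squares attached, and the two paths meet only at the $0$-edge. I would argue that any deviation — a repeated label inside a path, a branch, a double edge, an alternating square — either produces a $G_j$ that is transitive (contradicting the standing hypothesis that all maximal parabolics are intransitive) or produces a strictly larger vertex set, violating equality. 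This forces $\Gamma_{<h}$ to have exactly the displayed linear graph with label sequence $h,h-1,\dots,1,0,1,\dots,h-1,h$ on the black vertices $\{1,\dots,n\}\setminus X$, with the central $0$-edge being the split edge.

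The last piece is to confirm that $G_{>h}$ fixes $\{1,\dots,n\}\setminus X$ pointwise. On the $O_1$ side this is immediate because $\rho_{r-1}$, and more generally every generator with index $>h$, acts trivially on $O_1$ except possibly at the vertices on the path — but by the choice of $h$ as the \emph{maximal} label acting non-trivially on both orbits, and the fact that the path in $O_1$ was built precisely from edges of labels $\leq h$, the generators $\rho_{h+1},\dots,\rho_{r-1}$ cannot move the black vertices of $O_1$; on the $O_2$ side, $X$ was defined exactly as the complement of the $\Gamma_{>h}$-fixed reachable vertices, so $\{1,\dots,n\}\setminus X$ inside $O_2$ is $\Gamma_{>h}$-fixed by construction. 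I expect the main obstacle to be the equality-case rigidity analysis: ruling out alternating squares and double edges hanging off the two paths without losing the equality $2h+1=n-|X|$ requires carefully tracking, for each potential local configuration, whether it adds a vertex or breaks the intransitivity of some $G_j$ — this is where Propositions~\ref{path}, \ref{arp}, and the intersection-property obstructions of Proposition~\ref{IPfails} will have to be invoked case by case, much as in the proofs of Propositions~\ref{frac2} and \ref{imp}.
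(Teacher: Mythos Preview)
The paper does not give its own proof of this proposition; it is quoted as Proposition~5.20 of \cite{2017CFLM}, preceded only by the remark that the proof there does not require $G$ to be even. So there is no in-paper argument to compare against directly.

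Your overall strategy is the right one and matches exactly what the paper does for the companion result, Proposition~\ref{i=0andh=r-1}: invoke Proposition~\ref{path} to obtain, in each $G_0$-orbit, a path from the split $0$-edge out to an $h$-edge that carries every label $1,\ldots,h-1$; the two paths together with the split edge account for $2h+1$ vertices inside $\{1,\ldots,n\}\setminus X$, giving the bound $2h+1\le n-|X|$, and equality forces the paths to be bare with consecutively labelled edges, yielding the displayed linear graph.

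That said, your orbit bookkeeping is internally inconsistent. You first assert that every $\rho_j$ with $j>h$ acts trivially on $O_2$ \emph{and} that $X\subseteq O_2$; later you assert that the generators of index $>h$ act trivially on $O_1$. These claims are mutually incompatible. The set $X$ is the support of $G_{>h}$---indeed, in the proof of Proposition~\ref{split} the paper sets $X:=\{1,\ldots,n\}\setminus\mathrm{Fix}(G_{>h})$---so $X$ must lie in the $G_0$-orbit on which $\rho_{r-1}$ acts \emph{non}-trivially, not the one it fixes pointwise. The phrase ``fixed by $\rho_{r-1}$'' in the statement appears to be a slip (compare the parallel formulation in Proposition~\ref{h}(a), where $X$ is placed in the second $G_i$-orbit, the one carrying $G_{>h}$), and your attempt to honour it literally has propagated confusion through the sketch. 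Once you place $X$ on the correct side, the argument runs cleanly, and the equality-case rigidity is lighter than you anticipate: tightness in the vertex count already forces each path to have exactly $h$ edges with the distinct labels $1,\ldots,h$, so the case analysis via Proposition~\ref{IPfails} that you expect is not needed here.
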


Propositions~\ref{i=0andh=r-1} and \ref{i=0andh<>r-1} can be rewritten for $i=r-1$ using duality.
In what follows we suppose that $i\neq 0,\, r-1$. By Proposition~\ref{CCD}, the set of indices $J_B$ is an interval, either $J_B=\{i+1,\ldots, m\}$  for some $m\leq r-1$ or, $J_B=\{m,\ldots, i-1\}$ for some $m\geq 0$.
As the $i$-split is not perfect, $J_A$ is not an interval. Suppose that $J_B=\{i+1,\ldots, m\}$  for some $m\leq r-1$ (the other case can be obtained by duality).
There are two possibilities, either $G_{<i}$ is transitive or intransitive on the first $G_i$-orbit. 
We consider these cases separately. First, let us restate Proposition 5.22 of~\cite{2017CFLM} so that it includes the case where $G$ is not an even group.

\begin{prop}\label{<itran}
Let $r\geq n/2$. Let $h>i$ be the maximal label of  a permutation acting non-trivially on both $G_i$-orbits.
If $G_{<i}$ is transitive on the first $G_i$-orbit then $h=i+1$ and either  $h=r-1$ is the label of a perfect  split or
 $h<r-1$ and there exists a set of vertices $X$, contained in the second $G_i$-orbit, such that $h\leq \frac{n-|X|-1}{2}$ and $G_{>h}$ fixes $\{1,\ldots, n\}\setminus X$. 
Moreover if $h= \frac{n-|X|-1}{2}$ then $\Gamma_{<h}$ (with $h=i+1$)  has the following permutation representation graph for some $k\in\{2,\ldots,i-1\}$ where the black dots represent the vertices of $\{1,\ldots, n\}\setminus X$. 
 $$
\xymatrix@-1.3pc{*{\bullet}\ar@{-}[rr]^0 \ar@{=}[d]_{i+1}^k &&*{\bullet} \ar@{-}[rr]^1\ar@{=}[d]_{i+1}^k&&*{\bullet}\ar@{=}[d]_{i+1}^k\ar@{.}[rr] && *{\bullet}\ar@{-}[rr]^{i-1}\ar@{-}[d]_{i+1} && *{\bullet}\ar@{-}[rr]^i \ar@{-}[d]_{i+1} &&*{\bullet} \ar@{-}[rr]^{i+1}&&*+[o][F]{} \\
 *{\bullet}\ar@{-}[rr]_0 && *{\bullet}\ar@{-}[rr]_1&&*{\bullet}\ar@{.}[rr] && *{\bullet}\ar@{-}[rr]_{i-1}&&*{\bullet}&&&&
 }
  $$
\end{prop}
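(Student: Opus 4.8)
The plan is to argue by following the structure of the proof of~\cite[Proposition 5.22]{2017CFLM}, checking at each step that the evenness of $G$ was never used, and to re-derive the conclusions about the permutation representation graph directly. First I would observe that if $G_{<i}$ is transitive on the first $G_i$-orbit then, since $G_{<i}$ is generated by $\rho_0,\ldots,\rho_{i-1}$, the only generators that can move $a$ (the endpoint of the $i$-split lying in the first orbit) are $\rho_{i-1}$ and, possibly, generators moving $a$ inside the first orbit. The key point is that $h$, the maximal label $>i$ of a generator acting non-trivially on \emph{both} $G_i$-orbits, must equal $i+1$: if $h>i+1$, then $\rho_h$ and $\rho_i$ commute, so there would be an $\{i,h\}$-square; its two remaining vertices lie in distinct $G_i$-orbits and must be swapped by the unique $i$-edge between the orbits, which is $\{a,b\}$ — but then $a,b$ each lie in such a square, forcing extra $h$-edges and contradicting that $\{a,b\}$ is a split (only one $i$-edge between the orbits, and by Proposition~\ref{primpb}-type reasoning the square structure collides with the split). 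This pins down $h=i+1$. Then either $h=r-1$, in which case $J_A$ and $J_B$ together with the position of $\rho_{r-1}$ force the split to be perfect (all $\rho_j$ with $j\le i-1$ are trivial on the second orbit and all $\rho_j$ with $j\ge i+1$ are trivial on the first — this is exactly the definition of a perfect split after relabelling), or $h<r-1$.

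In the case $h<r-1$, I would use Proposition~\ref{path} applied to an $(i+1)$-edge: since $\rho_{i+1}$ acts non-trivially on the second $G_i$-orbit and $i+1<r-1$, there is a path (containing no other $(i+1)$-edge) from this edge to some edge fixed by $\Gamma_{>l}$ and meeting all labels down to that $l$. The vertices of the second $G_i$-orbit not reachable in this controlled way form the set $X$; the complement $\{1,\ldots,n\}\setminus X$ is by construction fixed by $G_{>h}=G_{>i+1}$. Counting edges along the path in the first orbit (which carries labels $0,1,\ldots,i-1$ via $G_{<i}$ acting transitively, hence at least $i$ edges on at least $i+1$ vertices) together with the path in the second orbit (labels up to $h=i+1$) and the single $i$-edge $\{a,b\}$, gives $2h+1\le n-|X|$, i.e. $h\le \frac{n-|X|-1}{2}$.

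For the equality case $h=\frac{n-|X|-1}{2}$, all the inequalities above must be tight, so every path is linear with consecutive labels and of exactly the forced length. This means $G_{<i}$ acts on the first $G_i$-orbit as the $(i-1)$-simplex group $S_{i+1}$ on a linear graph with labels $0,1,\ldots,i-1$, and the $i$-edge and $(i+1)$-edge attach at the end producing the displayed graph; the only freedom is the label $k\in\{2,\ldots,i-1\}$ of the double edges emanating from the vertices of the first-orbit path that also carry an $(i+1)$-edge, because $\rho_{i+1}$ must commute with $\rho_0,\ldots,\rho_{i-2}$ but may fail to commute only with $\rho_{i-1}$ (consecutive) — hence those $(i+1)$-edges sit on a block paired off by a single label $k$ non-consecutive with $i+1$, yielding the double-edge pattern $\xymatrix@-1.3pc{*{}\ar@{=}[r]_{i+1}^k&*{}}$ shown. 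The main obstacle I expect is the bookkeeping in this last step: verifying that the $(i+1)$-edges from the first-orbit vertices are forced to form a uniform double-edge pattern with a single extra label $k$ (rather than a more complicated attachment), which requires carefully using the commuting property of the $\rho_j$ together with the tightness of the edge count, exactly as in the proof of Proposition~\ref{movingsquares}; everything else is a routine transcription of the even-group argument of~\cite[Proposition 5.22]{2017CFLM} once one checks, as stated in the paragraph preceding the proposition, that evenness is never invoked.
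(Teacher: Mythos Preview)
Your reconstruction of the case $h\ne r-1$ follows the strategy of \cite[Proposition 5.22]{2017CFLM} and is basically sound; in particular your square argument for $h=i+1$ is correct, though the phrasing is garbled: the contradiction is that the $\{i,h\}$-square at $a$ produces a \emph{second} $i$-edge $\{a',b'\}$ between the two $G_i$-orbits (not extra $h$-edges), violating the definition of an $i$-split. The paper simply cites \cite[Proposition 5.22]{2017CFLM} for this part.

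There is, however, a genuine gap in your treatment of the case $h=r-1$. You write that ``all $\rho_j$ with $j\ge i+1$ are trivial on the first [orbit]'', but this is precisely what is \emph{false}: $\rho_h=\rho_{i+1}=\rho_{r-1}$ acts non-trivially on $O_1$ by the definition of $h$, so your sentence contradicts itself, and in any event the $i$-split is assumed non-perfect throughout this section. What must be shown is that $r-1$ (not $i$) is the label of a perfect split. The paper does this by invoking the explicit permutation representation graph obtained in the proof of \cite[Proposition 5.22]{2017CFLM} when $h=r-1$ and $r=n/2$, and then reading off from that graph that the rightmost $(r-1)$-edge is a perfect split. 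A direct argument is also possible: since $h=i+1=r-1$ forces $J_B=\{r-1\}$, the group $B=\langle\beta_{r-1}\rangle$ has order~$2$, so $|O_2|=2$, say $O_2=\{b,b'\}$; one then checks that every $\rho_j$ with $j\le r-2$ fixes $b'$, so that $G_{r-1}$ has the singleton orbit $\{b'\}$ and the unique $(r-1)$-edge $\{b,b'\}$ is a perfect split. Either route works, but your proposal supplies neither.
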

\begin{proof}
The case where $h\neq r-1$ is~\cite[Proposition 5.22]{2017CFLM}.

Let $h=r-1$. In this case, as shown in the proof of ~\cite[Proposition 5.22]{2017CFLM}, $r=n/2$ and $\mathcal G$ is as follows.
 \[\begin{array}{c}
\xymatrix@-1.3pc{*+[o][F]{}\ar@{-}[rr]^0 \ar@{=}[d]_{r-1} ^k && *+[o][F]{} \ar@{-}[rr]^1\ar@{=}[d]_{r-1} ^k&&*+[o][F]{}\ar@{=}[d]_{r-1} ^k\ar@{.}[rr] && *+[o][F]{}\ar@{-}[rr]^{r-3}\ar@{-}[d]_{r-1} && *+[o][F]{}\ar@{-}[rr]^{r-2} \ar@{-}[d]_{r-1} &&*+[o][F]{} \ar@{-}[rr]^{r-1}&&*+[o][F]{} \\
 *+[o][F]{}\ar@{-}[rr]_0 && *+[o][F]{} \ar@{-}[rr]_1&&*+[o][F]{} \ar@{.}[rr] && *+[o][F]{}\ar@{-}[rr]_{r-3}&& *+[o][F]{}&&&&}
 \end{array}\]
where $k< r-2$ or, the graph above with some extra $(r-2)$-edges. In any case $(r-1)$ is the label of a perfect split.
\end{proof}

The following proposition and its proof are similar to~\cite[Proposition 5.23]{2017CFLM}. As we do not assume that $G$ is an even group, we redo what was done in  \cite{2017CFLM} with small adaptations to our setting.

\begin{prop}\label{<iint}
Let $r\geq n/2$ and let $h>i$ be the maximal label of  a permutation acting non-trivially on both $G_i$-orbits.  
If $G_{<i}$ is intransitive in the first $G_i$-orbit, then  one of the following situations occurs.
\begin{enumerate}
\item $h=r-1$, $r= n/2$ and $\mathcal{G}$ is either graph (I) or graph (II) of Table~\ref{notperfect}.
\item $h < r-1$ and there exists a set of vertices $X$, contained in the second $G_i$-orbit,  such that $h\leq \frac{n-|X|-1}{2}$ and $G_{>h}$ fixes $\{1,\ldots, n\}\setminus X$. 
Moreover if $h= \frac{n-|X|-1}{2}$ then $h=i+1$ and $\Gamma_{<h}$ has the following permutation representation graph, where the black dots represent the vertices of $\{1,\ldots, n\}\setminus X$. 
$$\xymatrix@-1.3pc{*{\bullet}\ar@{-}[rr]^0 \ar@{-}[d]_{h} &&*{\bullet} \ar@{-}[rr]^1\ar@{-}[d]_{h}&&*{\bullet}\ar@{-}[d]_{h} \ar@{.}[rr] && *{\bullet}\ar@{-}[rr]^{h-2}\ar@{-}[d]_{h} && *{\bullet}\ar@{-}[rr]^{h-1} \ar@{-}[d]_{h} &&*{\bullet} \ar@{-}[rr]^{h}&&  *+[o][F]{}\\
 *{\bullet}\ar@{-}[rr]_0 && *{\bullet} \ar@{-}[rr]_1&&*{\bullet} \ar@{.}[rr] && *{\bullet}\ar@{-}[rr]_{h-2}&& *{\bullet}&&}
$$
\end{enumerate}
\end{prop}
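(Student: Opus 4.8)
The plan is to adapt the argument of~\cite[Proposition 5.23]{2017CFLM} to the setting where $G$ need not be an even permutation group, keeping track of the (few) places where evenness was invoked. First I would exploit Proposition~\ref{path}: since $h>i$ is the largest label acting on both $G_i$-orbits, any $h$-edge $e$ of $\mathcal G$ that is not in an alternating square has, on each side, a path to a $0$-edge (if $J_B=\{i+1,\dots,m\}$ and $0\in J_A$, which is the case since $J_A$ is not an interval but contains indices on both sides of the gap — here one separates the subcase $i=0$ already treated in Propositions~\ref{i=0andh=r-1}–\ref{i=0andh<>r-1} from $i\neq 0$). The point is that in the first $G_i$-orbit $G_{<i}$ is intransitive, so within that orbit there are at least two $G_{<i}$-orbits, and the edge realizing the $i$-split together with the path structure forces the vertices of the first $G_i$-orbit to split into a "tree part" carrying labels $0,\dots,i-1$ duplicated (as in the displayed graph) and a part only reached via label $i$. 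Counting vertices: a path with all labels $0,\dots,h-1$ occurring, duplicated along the first $G_i$-orbit, uses at least $2h$ vertices there, plus the vertex where the $i$-split lands; one then gets $2h \le n-|X|-1$ where $X$ is the set of vertices in the second $G_i$-orbit not touched by $G_{<i}$.

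Next I would treat the extremal case $h=\frac{n-|X|-1}{2}$: equality in the vertex count forces every path above to be a plain path (no repeated labels beyond the forced duplication, no extra branching), and forces $G_{<i}$ to act on the first orbit exactly as the "doubled simplex" graph $\Gamma_{<h}$ displayed, with $h=i+1$ (any larger gap between $i$ and $h$ would need extra labels and hence extra vertices, contradicting equality). This is where I would invoke Proposition~\ref{2fpaths}-style cycle arguments: the commuting relations (condition (2) on permutation representation graphs) rule out that the label-$h$ edges form anything other than a matching connecting the two copies, so the component of the first $G_i$-orbit is exactly the pictured ladder. For the case $h=r-1$, equality gives $r=n/2$, and then I would push the analysis all the way to the full graph $\mathcal G$: the second $G_i$-orbit carries $B\le$ a group generated by $\rho_{i+1},\dots,\rho_{r-1}$ acting as a path, and the $i$-edge plus the doubling pattern forces $\mathcal G$ to be graph (I) or graph (II) of Table~\ref{notperfect}, the two cases corresponding to whether the $i$-edge (label $0$ here, since $i=0$ in this extremal subcase after reduction, or more precisely the split label) is a single edge or a double edge with its neighbour — exactly the distinction between (I) and (II).

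The main obstacle is the case analysis at equality: in~\cite{2017CFLM} evenness was used to exclude certain local configurations of double edges and alternating squares near the split, and one must check that those exclusions either still hold or are replaced by the explicit graphs (I), (II) now allowed. Concretely, when $G$ may be odd, a double edge $\xymatrix@-1pc{*{\bullet}\ar@{=}[r]^2_0&*{\bullet}}$ of the type appearing in graph (II) is genuinely possible, whereas for even $G$ it was not; so rather than a contradiction one records it as an outcome. I would therefore go through the local possibilities for the edge incident to the first $G_i$-orbit at the $i$-split, using Proposition~\ref{IPfails} to kill the configurations that still fail the intersection property (these are precisely the small forbidden subgraphs listed there), and Propositions~\ref{sesqui1} and~\ref{sesqui} to reduce larger-degree instances to those small cases via sesqui-extension with respect to the first generator. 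What survives is exactly items (1) and (2) of the statement. The bookkeeping is routine but lengthy; the only genuinely new input beyond~\cite{2017CFLM} is the appearance of the double-edge graph (II) as a legitimate case rather than a contradiction.
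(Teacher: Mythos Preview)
Your proposal has the right overall shape (adapt \cite[Proposition~5.23]{2017CFLM}, use Proposition~\ref{path}, count vertices along paths), but there is a genuine structural gap and a missing case.

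The key missing idea is \emph{imprimitivity of $A$}. You argue from ``$G_{<i}$ is intransitive on the first orbit, so there are at least two $G_{<i}$-orbits,'' and then speak loosely of a ``doubled'' or ``ladder'' structure. The paper instead observes that since $J_A$ is not an interval, Proposition~\ref{CCD} forces $A$ to be imprimitive, embedded in $S_k\wr S_m$ with $G_{<i}$ fixing each block. This block structure is what produces the four disjoint paths $\mathcal P_1,\mathcal P_2,\mathcal P_3,\mathcal P_4$ (two in the first orbit, using that $\mathcal P_3$ lies in the block containing $a$ and $\mathcal P_4$ in the block $\beta\rho_{i+1}$), whose union gives the inequality $2h+1\le n-|X|$ cleanly. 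Without the block decomposition you have no reason why the ``duplication'' is exactly twofold, nor why in the extremal case the picture must be the displayed ladder with $k=2$; your appeal to ``Proposition~\ref{2fpaths}-style cycle arguments'' does not substitute for this, since that proposition concerns 2-fracture graphs, which are not present here.

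You also miss a case and mishandle another. First, your treatment of $h=r-1$ is confused: you write ``since $i=0$ in this extremal subcase after reduction,'' but the ambient hypotheses have $i\ne 0$ throughout; the paper handles $h=r-1$ by splitting on the block size $k$ and finds that only $k=2$, $|\mathcal P_2|=2$ survives (giving (I), (II) with $r=3$, $n=6$). Second, and more seriously, you omit the subcase $h\ne r-1$ but $\rho_{h+1}$ acts nontrivially on the first $G_i$-orbit. Here $G_{>h}$ does not fix $\{1,\dots,n\}\setminus X$, so conclusion (b) is unavailable; the paper shows $\rho_{r-1}$ must then act on the first orbit, replaces $\mathcal P_1$ by a longer path starting with an $(r-1)$-edge, obtains $n_1\ge 2(r-1)$, and deduces that the only surviving graphs (for general $r$) are precisely (I) and (II). Your proposal locates (I) and (II) only at $h=r-1$, which is incorrect. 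Finally, Proposition~\ref{IPfails} and sesqui-extensions are not used anywhere in the paper's proof of this proposition; the argument is pure path-counting once the block structure is in hand.
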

\begin{proof}
In this case $J_A$ is not an interval thus, by Proposition~\ref{CCD},  $A$ is  imprimitive embedded into  $S_k\wr S_m$ and $G_{<i}$ is fixing all the blocks (with $k,m\geq 2$).
By Proposition~\ref{path} there exists a path $\mathcal{P}_1$ from the $h$-edge in the first $G_i$-orbit to the vertex $a$, and a path $\mathcal{P}_2$ from the $h$-edge in the second $G_i$-orbit to the vertex $b$, each of them containing all labels from $i+1$ to $h-1$, and fixed by $G_{>h}$. 
In addition there is a path $\mathcal{P}_3$ in the block $\beta$ containing the vertex $a$, and edges with all labels from $0$ to $i-1$.
Moreover, there is also a path $\mathcal{P}_4$ in the block $\beta\rho_{i+1}$ also containing edges with all labels from $0$ to $i-1$.

In Proposition 5.23 of \cite{2017CFLM} the authors proved that  $\mathcal{P}_1$, $\mathcal{P}_2$, $\mathcal{P}_3$ and $\mathcal{P}_4$ have no edge in common, as shown in the following figure. 

 $$\xymatrix@-1.3pc{  *+[o][F]{} \ar@{-}[rr]^{h}&&*+[.][F]{} \ar@{~}[rrrr]^{\mathcal{P}_1}   &&&&*+[.][F]{}\ar@{-}[d]_{i-1}\ar@{-}[rr]^{i+1} && *+[o][F]{a}\ar@{-}[d]^{i-1} \ar@{-}[rr]^i&& *+[o][F]{b}\ar@{~}[rrrr]^{\mathcal{P}_2}&&&& *+[.][F]{}\ar@{-}[rr]^h&& *+[o][F]{c}\\
   *+[o][F]{} \ar@{-}[rr]_{h}&&*+[.][F]{} \ar@{~}[rrrr]  &&&&*+[.][F]{}\ar@{~}[dd]_{\mathcal{P}_4}\ar@{.}[rr]_{i+1}&&*+[.][F]{}\ar@{~}[dd]^{\mathcal{P}_3}&&&&&&\\
  &&&&&&&&&&&&&&\\
  &&&&&&*+[o][F]{}&& *+[o][F]{} &&&&&&}
 $$
 
Suppose that $h=r-1$.  
If $k=2$ then $n\geq 2 |\mathcal{P}_1|+ |\mathcal{P}_2|\geq 2(r-1)+|\mathcal{P}_2| $, hence $r\leq \frac{n-(|\mathcal{P}_2|-2)}{2}$ which gives a contradiction when $|\mathcal{P}_2|>2$.  
If $|\mathcal{P}_2|=2$, then $r=n/2$ and $\mathcal{G}$ is one of the two graphs (I) or (II) of Table~\ref{notperfect} with $r=3$ and $n=6$.  Now consider that  $k>2$, then $n\geq 3(r-1)$ and since $r\geq n/2$, we get a contradiction.

From now on assume that $h\neq r-1$.
Suppose that  $\rho_{h+1}$ acts trivially on the first $G_i$-orbit.
 Let $P$ be the set of vertices of $\mathcal{P}_1\cup\mathcal{P}_2\cup\mathcal{P}_3\cup \mathcal{P}_4$  excluding the vertex $c$ on the right hand side of the diagram above, and $X=\{1,\ldots,n\}\setminus P$. 
The set $P$ is fixed by $G_{>h}$ and $2h+1\leq |P|=n-|X|$. 
Moreover when $2h+1= |P|$ the  permutation representation graph of $\Gamma_{<h}$ is the one given in this proposition with $h=i+1$. 

Finally suppose that $\rho_{h+1}$ does not act trivially on the first $G_i$-orbit. Then $G_{>h}$  does not fix the first $G_i$-orbit.
 Moreover $\rho_{r-1}$ acts nontrivially on the first $G_i$-orbit.
 In this case consider the path $\mathcal{P}_1$ as before, with the label of its first edge being $r-1$ instead of $h$.
 In this case $n_1\geq 2(r-1)$ and $n_2\geq 2$, hence $r\leq n/2$.
If $n_1\neq  2(r-1)$ then $n_1\geq 2(r-1)+2$ and therefore $r\leq \frac{n-2}{2}$, a contradiction.
Consequently, $n_1= 2(r-1)$ and $\mathcal{G}$ is as follows.
$$\xymatrix@-1.3pc{ *+[o][F]{}\ar@{-}[rr]^{r-1} \ar@{-}[d]_{0} && *+[o][F]{} \ar@{-}[rr]^{r-2}\ar@{-}[d]_{0}&& *+[o][F]{}\ar@{-}[d]_{0} \ar@{.}[rr] &&  *+[o][F]{}\ar@{-}[rr]^2\ar@{-}[d]_0 &&  *+[o][F]{}\ar@{-}[rr]^{1} \ar@{-}[d]_{0} && *+[o][F]{} \ar@{-}[rr]^{2}&&  *+[o][F]{} \ar@{.}[rr]&&  *+[o][F]{} \ar@{-}[rr]^{h}&&  *+[o][F]{}\\
  *+[o][F]{}\ar@{-}[rr]_{r-1} &&  *+[o][F]{} \ar@{-}[rr]_{r-2}&& *+[o][F]{} \ar@{.}[rr] &&  *+[o][F]{}\ar@{-}[rr]_{2}&&  *+[o][F]{}&&}$$
Hence $n\geq 2(r-1)+h$, which gives a contradiction with $r\geq n/2$ when $h>2$. For $h=2$ we get the possibilities given by graphs  (I) and (II) of Table~\ref{notperfect}. 
\end{proof}


Let us now summarize Propositions~\ref{<itran} and \ref{<iint}.

\begin{prop}\label{h}
Let $r\geq n/2$. Let $i\neq 0$ and $J_B$ be an interval with labels $>i$.
If  $h>i$ is the maximal label of  a permutation acting non-trivially on both $G_i$-orbits then one of the following situations occurs.
\begin{enumerate}
\item  $h\neq r-1$ and there exists a set of vertices $X$, contained in the second $G_i$-orbit, such that $h\leq \frac{n-|X|-1}{2}$ and 
$G_{>h}$ acts trivially on $\{1,\ldots, n\}\setminus X$;
\item $\Gamma$ has a perfect split; 
\item $h=r-1$, $r=n/2$ and $\mathcal{G}$ is one of the graphs (I) or (II) of Table~\ref{notperfect}.
\end{enumerate}
\end{prop}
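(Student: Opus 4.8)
The plan is to prove Proposition~\ref{h} simply by case-splitting on whether the intransitive/transitive dichotomy from the previous two propositions applies, and then reading off the conclusions. Recall that we are in the situation where $i\neq 0,r-1$, $J_B$ is an interval with all labels larger than $i$, and $h>i$ is the maximal label of a generator acting nontrivially on both $G_i$-orbits. Since $G$ is transitive and all maximal parabolics are intransitive, the action of $G_{<i}$ on the first $G_i$-orbit is either transitive or intransitive, and these are exactly the two hypotheses of Propositions~\ref{<itran} and \ref{<iint} respectively. So the whole proof is a dispatch: apply the relevant one of the two earlier propositions, and check that each of its conclusions falls under one of the three listed alternatives (1), (2), (3).

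Concretely, first suppose $G_{<i}$ is transitive on the first $G_i$-orbit. Then Proposition~\ref{<itran} gives $h=i+1$ and two subcases: either $h=r-1$ is the label of a perfect split — which is conclusion (2) — or $h<r-1$ with a set $X$ in the second $G_i$-orbit such that $h\leq\frac{n-|X|-1}{2}$ and $G_{>h}$ fixes $\{1,\dots,n\}\setminus X$, which is exactly conclusion (1). (Here ``$G_{>h}$ acts trivially on $\{1,\dots,n\}\setminus X$'' and ``$G_{>h}$ fixes $\{1,\dots,n\}\setminus X$'' mean the same thing.) Next suppose $G_{<i}$ is intransitive on the first $G_i$-orbit. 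Then Proposition~\ref{<iint} gives two cases: case~(1) there says $h=r-1$, $r=n/2$, and $\mathcal{G}$ is graph (I) or (II) of Table~\ref{notperfect}, which is our conclusion (3); case~(2) there says $h<r-1$ with the same $X$, inequality, and fixing property, which is our conclusion (1). In each branch we have landed in one of (1)–(3), so we are done.

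The only mild subtlety worth spelling out is why the dichotomy ``$G_{<i}$ transitive or intransitive on the first orbit'' is exhaustive and why it matches the hypotheses of the two cited propositions — but this is immediate since a group action is transitive or it is not, and the two propositions were stated precisely to cover these two cases under the standing assumption $r\geq n/2$, $i\neq 0,r-1$, and $J_B$ an interval of labels $>i$, all of which we are assuming. There is no genuine obstacle here: Proposition~\ref{h} is a packaging statement whose entire content has already been established in Propositions~\ref{<itran} and~\ref{<iint}. The proof I would write is therefore just: ``This is a direct consequence of Propositions~\ref{<itran} and~\ref{<iint}, according to whether $G_{<i}$ acts transitively or intransitively on the first $G_i$-orbit.''
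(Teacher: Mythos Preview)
Your proposal is correct and matches the paper's approach exactly: the paper introduces Proposition~\ref{h} with the sentence ``Now let us summarize Propositions~\ref{<itran} and \ref{<iint}'' and gives no further proof, so the entire content is precisely the case-split on transitivity of $G_{<i}$ on the first $G_i$-orbit that you spell out. Your dispatch of each subcase to one of the three listed conclusions is accurate.
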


 
 The result of Proposition 5.27 of \cite{2017CFLM} can be extended to a group $\Gamma$ that is not even.
 \begin{prop}\label{t}
Let $t\in\{0,\ldots,r-2\}$ and $U:=\{1,\ldots, n\}\setminus {\rm Fix}(G_{>t})$. If $t$ is such that
\begin{itemize}
\item  $t\leq \frac{n-|U|-1}{2}$,
\item $\Gamma_{>t}$ has a $2$-fracture graph,
\item $G_{>t}$  acts intransitively on $U$,
\end{itemize}
then $r\leq \frac{n-1}{2}$.
\end{prop}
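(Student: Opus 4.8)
The plan is to follow the argument of \cite[Proposition 5.27]{2017CFLM}, observing that the hypothesis made there that $G$ is an even permutation group is never actually used.

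First I would set up the reduction. The parabolic subgroup $\Gamma_{>t}=\Gamma_{\{t+1,\dots,r-1\}}$ is itself a string C-group (the intersection property is inherited by parabolic subgroups) of rank $r'=r-t-1$, and by the very definition $U=\{1,\dots,n\}\setminus\mathrm{Fix}(G_{>t})$ the group $G_{>t}$ acts faithfully on $U$; so $\Gamma_{>t}$ may be viewed as a string C-group of degree $|U|$. It then suffices to prove the single inequality
\[
2(r-t-1)\le |U|-2 ,
\]
because, combined with the hypothesis $t\le\frac{n-|U|-1}{2}$, it gives
\[
r=(r-t-1)+(t+1)\le\frac{|U|-2}{2}+\frac{n-|U|-1}{2}+1=\frac{n-1}{2}.
\]

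Second, I would prove the displayed inequality by adapting the counting argument of Proposition~\ref{frac2bis} to the intransitive group $\Gamma_{>t}$. Let $\mathcal Q$ be a $2$-fracture graph of $\Gamma_{>t}$; it has $2r'$ edges, and since the two endpoints of any edge $\{v,v\rho_i\}$ lie in a common $G_{>t}$-orbit, $\mathcal Q$ is a disjoint union $\bigsqcup_j\mathcal Q_j$ of subgraphs, one supported on each $G_{>t}$-orbit $U_1,\dots,U_s$ of $U$; as $s\ge 2$, $\Gamma_{>t}$ has no connected $2$-fracture graph. By Proposition~\ref{frac1} (together with Propositions~\ref{swap}, \ref{movingsquares} and \ref{changelabels}, used to move, merge and relabel alternating squares) we may assume every connected component of $\mathcal Q$ is either a tree or a unicyclic graph whose unique cycle is an alternating square; writing $c$ for the number of tree components, the edge count gives $2r'=|U|-c$. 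The absence of a connected $2$-fracture graph already forces $c\ge 1$, which only yields the weaker bound $r\le n/2$, so the crux is to improve this to $c\ge 2$.

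For that last step — which I expect to be the main obstacle — I would run the square-elimination analysis of the case $s>0$ in the proof of Proposition~\ref{frac2}, choosing $\mathcal Q$ with a minimal number of alternating squares. If $c=1$, then some orbit $U_j$ carries an entire unicyclic component $\mathcal C$; both $a$-edges and both $b$-edges of $\mathcal Q$ (where $a,b$ are the labels of the alternating square of $\mathcal C$) must then lie inside $\mathcal C$, since otherwise $\mathcal Q$ would contain three edges with the same label. Since $\Gamma_{>t}$ is transitive on $U_j$, the graph $\mathcal G_{>t}$ restricted to $U_j$ is connected, so $\mathcal C$ is at distance one in $\mathcal G_{>t}$ from another component of $\mathcal Q$; applying Propositions~\ref{2fpaths} and \ref{swap} along a cycle through that neighbouring component either produces a $2$-fracture graph with fewer squares (contradicting minimality) or forces the local structure exactly as in Proposition~\ref{frac2}, whence the permutation representation graph of $\Gamma_{>t}$ would be one of the graphs of Table~\ref{T2F} — all of which are transitive, contradicting the intransitivity of $\Gamma_{>t}$ on $U$. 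Hence $c\ge 2$, establishing $2(r-t-1)\le |U|-2$ and completing the proof. (The finitely many small cases $|U|\le 8$ are disposed of directly, using the explicit list in Table~\ref{T2F}.)
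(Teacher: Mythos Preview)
Your high-level plan --- appeal to \cite[Proposition 5.27]{2017CFLM} and observe that the evenness of $G$ plays no role --- is exactly what the paper does, and your arithmetic reduction to the single inequality $2(r-t-1)\le |U|-2$ is correct.

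The detailed reconstruction, however, has a real gap in the step $c\ge 2$. You argue that any unicyclic component $\mathcal C$ of $\mathcal Q$ is at distance one in $\mathcal G_{>t}$ from another component of $\mathcal Q$, because ``$\Gamma_{>t}$ is transitive on $U_j$''. But that only shows $\mathcal G_{>t}$ restricted to $U_j$ is connected; it does \emph{not} guarantee that $U_j$ contains a second component of $\mathcal Q$. If $\mathcal C$ is an entire orbit by itself (i.e.\ $\mathcal C=U_j$ as vertex sets), then $\mathcal C$ has no neighbour in $\mathcal G_{>t}$ at all, since edges of $\mathcal G_{>t}$ never cross between $G_{>t}$-orbits. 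In that case the square-moving machinery of Propositions~\ref{swap}--\ref{changelabels} and the case analysis of Proposition~\ref{frac2} cannot even be launched: those arguments rely on the connectedness of the ambient permutation representation graph, which is precisely what the intransitivity of $G_{>t}$ destroys. For the same reason, concluding that $\Gamma_{>t}$ ``would be one of the graphs of Table~\ref{T2F}'' is circular: that table was compiled only under a transitivity hypothesis.

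So you still owe an argument ruling out the configuration ``one orbit is a single unicyclic component, another orbit carries the sole tree'', or an alternative route to $c\ge 2$ that does not rely on connectivity of $\mathcal G_{>t}$. The paper itself does not spell this out, deferring entirely to the construction in \cite{2017CFLM}.
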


\begin{proof}
First, $|U|$ must be at least $4$, otherwise $\Gamma_{>t}$ would not admit a 2-fracture graph. Hence the proposition holds for $t=r-2$. 
For $t<r-2$, the construction used in Proposition 5.27 of \cite{2017CFLM} does not use the assumption that $G$ is even. Thus Proposition 5.27 holds for any string C-group $\Gamma$.
\end{proof}

\begin{prop}\label{ineq0Btrivial}
If  $i\neq 0$ and $J_B=\emptyset$ then $r\leq n/2$. Moreover if $r=n/2$ then $\Gamma$ has, up to duality, the permutation representation graph (III) or (XIII) of Table~\ref{notperfect}.
\end{prop}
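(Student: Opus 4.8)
The plan is to exploit the collapse of the $O_2$-side. Since $J_B=\emptyset$ the group $B=\langle\beta_j\mid j\in J_B\rangle$ is trivial, and as $O_2$ is a single $G_i$-orbit on which $G_i$ acts through $B$ this forces $n_2=1$ and $O_2=\{b\}$; hence every $\rho_j$ with $j\ne i$ fixes $b$. Evaluating the relation $\rho_j\rho_i=\rho_i\rho_j$ (valid whenever $|j-i|\ge 2$) at $b$ gives $a\rho_j=a$ for every such $j$, so in the permutation representation graph $\mathcal G$ the vertex $b$ carries a unique edge, the $i$-edge $\{a,b\}$ --- which therefore lies in no alternating square --- and $a$ is moved only by $\rho_{i-1},\rho_i,\rho_{i+1}$. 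Deleting the pendant $b$, the remainder of $\mathcal G$ is the permutation representation graph of $\bar\Gamma_i$, the (transitive) action of $G_i$ on $O_1$; it is a string C-group of degree $n-1$ and rank $r-1$, since a generator $\rho_j$ ($j\ne i$) acting trivially on $O_1$ would be the identity. With $J_B=\emptyset$ the split is non-perfect exactly when $G_{>i}$ acts non-trivially on $O_1$; in particular $i\le r-2$, and as $i\ne 0$ both $\rho_0$ and $\rho_{r-1}$ move points of $O_1$. Finally, since any $j\le i-1$ and $k\ge i+1$ satisfy $|j-k|\ge 2$, the subgroups $G_{<i}$ and $G_{>i}$ centralize each other and, by the intersection property, meet trivially; both fix $b$, so $G_i|_{O_1}=G_{<i}\times G_{>i}$ is a transitive imprimitive group (imprimitive because $J_A=\{0,\dots,r-1\}\setminus\{i\}$ is not an interval, so $A$ is not primitive by Proposition~\ref{CCD}) having the $G_{<i}$-orbits (the ``columns'', carrying only labels $0,\dots,i-1$) and the $G_{>i}$-orbits (the ``rows'', labels $i+1,\dots,r-1$) as two block systems.

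For the inequality: $G_{<i}$ acts transitively and faithfully on the set of rows --- a nontrivial $g\in G_{<i}$ fixing every row would, commuting with the transitive $G_{>i}$-action on rows, fix $O_1$ pointwise --- and the $i$ generators $\rho_0,\dots,\rho_{i-1}$ stay independent there, so by Whiston's bound~\cite{Whiston2000} there are at least $i+1$ rows; applying Whiston to $\Gamma_{>i}$ (rank $r-1-i$, faithful on a single row) each row has at least $r-i$ vertices. Hence $n-1=|O_1|\ge(i+1)(r-i)\ge 2(r-1)$, i.e.\ $r\le\lceil n/2\rceil$. (The same bound also drops out of the path-chasing of Propositions~\ref{<itran}--\ref{<iint} applied to $e=\{a,b\}$ through Proposition~\ref{path}; and in the subcase where $\rho_{i+1}$ fixes $a$, so that $G_{>i}$ fixes $\{a,b\}$ pointwise, one may instead apply Proposition~\ref{t} with $t=i$, or --- if $\Gamma_{>i}$ has no $2$-fracture graph --- transplant the $i=0$-analysis of Propositions~\ref{i=0andh=r-1}--\ref{i=0andh<>r-1} to $\bar\Gamma_i$, to get the strict bound $r\le\frac{n-1}{2}$.)

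For the extremal case $r=\lceil n/2\rceil$ every estimate is tight: there are exactly $i+1$ rows and $r-i$ vertices per row, forcing $\Gamma_{<i}$ to be the simplex $S_{i+1}$ and $\Gamma_{>i}$ the simplex $S_{r-i}$, the block structure to be a clean grid with $(i+1)(r-i)=n-1$, and $\rho_i$ to be the pure transposition $(a,b)$ attached at the end of a column. Running through the integer solutions of $(i+1)(r-i)\in\{2r-2,\,2r-1\}$ leaves only $i\in\{1,r-2\}$ with $n=2r-1$ odd, and $i=2$ with $(n,r)=(10,5)$; the first gives graph $(\mathrm{III})$ of Table~\ref{notperfect} and its dual, the second gives graph $(\mathrm{XIII})$. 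Every competing near-extremal configuration --- a different attachment point, a non-trivial $\alpha_i$, a non-linear row or column, or a non-product transitive faithful action of $S_{i+1}\times S_{r-i}$ on $O_1$ --- is ruled out because then $\Gamma_{\{0,1,2,3\}}$, respectively $\Gamma_{\{i-2,i-1,i,i+1\}}$, becomes a sesqui-extension with respect to $\rho_0$, respectively $\rho_{i-1}$, of one of the forbidden configurations of Proposition~\ref{IPfails}, so by Propositions~\ref{sesqui1} and \ref{IPfails} it is not a string C-group unless $n$ is as small as stated.

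The hard part is the extremal analysis, namely: ruling out the non-product transitive faithful actions of $S_{i+1}\times S_{r-i}$ on $O_1$ (so that the two block systems really do form a clean grid) using the shape of $\mathcal G$ and the intersection property, and showing that the pendant edge $\{a,b\}$ can be neither attached at an interior vertex of a column nor accompanied by a non-trivial $\alpha_i$. This is where the bookkeeping with Proposition~\ref{path}, alternating squares, and the exclusion lemma Proposition~\ref{IPfails} must be carried through, in parallel with what was done for the even case in \cite{2017CFLM}; the inequality itself is comparatively soft.
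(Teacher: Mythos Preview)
Your setup in the first paragraph is clean and correct. The genuine gap is in your inequality argument: the claim that $G_{<i}$ acts \emph{faithfully} on the set of rows (the $G_{>i}$-orbits) is not established by the argument you give. You write that a nontrivial $g\in G_{<i}$ fixing every row ``would, commuting with the transitive $G_{>i}$-action on rows, fix $O_1$ pointwise''. But the centralizer in $\mathrm{Sym}(R)$ of a transitive group on $R$ is only \emph{semiregular}, not trivial; a nontrivial $g$ can act without fixed points on each row while still commuting with $G_{>i}$. Concretely, this failure occurs precisely when some $\rho_l$ with $l>i$ has a cycle inside a $G_{<i}$-orbit (a ``column''), so that the row/column structure is not a clean grid. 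In that situation your product bound $|O_1|\ge(i+1)(r-i)$ need not hold, and the whole integer-equation extremal analysis that follows collapses with it.

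The paper's proof meets exactly this difficulty by splitting into two cases. In the non-grid case (some $\rho_l$ with $l>i$ acts nontrivially inside a block) it observes that the $\rho_l$-orbits cut each block into sub-blocks of size~$2$, so $G_{<i}$ embeds in $S_2\wr S_{k/2}$; then Theorem~\ref{bigimp} together with the fracture-graph hypothesis forces $i\le k/2$, and combined with $r-i-1\le m-1$ this yields $r\le (n-1)/2$. In the grid case (every $\rho_l$ with $l>i$ acts trivially on the blocks it fixes) the paper uses the four-path construction $\mathcal P_1,\ldots,\mathcal P_4$ to get $2(r-1)+1\le n$ directly, and reads off graphs (III) and (XIII) from the leftover $(k-2)(m-2)$ vertices. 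Your Whiston-and-product idea is essentially a repackaging of the grid case only; to make it a proof you would still need to dispose of the non-grid case, and the tools for that (sub-blocks plus Theorem~\ref{bigimp}) are what the paper invokes and what your sketch omits. The parenthetical appeal to Propositions~\ref{<itran}--\ref{<iint} does not help here, since those are stated under the hypothesis that $B$ is primitive, not trivial.
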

\begin{proof}
The proof uses the same arguments as those of Proposition~5.25 of \cite{2017CFLM}.

Since $A$ is imprimitive, $A$ is embedded into a wreath product $S_k\wr S_m$ with $n=km+1$ and $G_{<i}$ fixing the blocks.
As $G_{<i}$ commutes with $G_{>i}$ which acts transitively on the blocks, $G_{<i}$ acts in the same way on each block.

If there is a label $l>i$ such that $\rho_l$ acts nontrivially inside a block then the orbits of $\rho_l$ form sub-blocks of 2 points inside each block of size $k$. So $k$ must be even and
$G_{<i}$ is embedded into $S_2\wr S_{k/2}$.
Then by Theorem~\ref{bigimp}, $i\leq (k+2)/2$.  But if $i= (k+2)/2$ then $\Gamma$ does not have a fracture graph, a contradiction. Hence, using the fact that $k$ is even, we get
$i\leq k/2$. Moreover, $r-i-1 \leq m-1$ as the generators of $G_{>i}$ act independently on the $m$ orbits of $G_{<i}$.
So $$r \leq k/2+m \leq (n-1)/4 + (n-1)/k.$$
Now, if $k = 2$, we have $G_{i-1}$ transitive, contradicting the fact that we have a fracture graph.
Hence $k\geq 4$. Therefore,  we get
$r\leq (n-1)/2$.

Suppose then that
every generator with label $>i$ acts trivially on the blocks it fixes.
Thus there are four paths  $\mathcal{P}_1$, $\mathcal{P}_2$, $\mathcal{P}_3$ and $\mathcal{P}_4$, as in the following graph, containing all but one label twice and one cycle that is an alternating square. 
$$\xymatrix@-1.3pc{  *+[o][F]{} \ar@{-}[rr]^{r-1}&&*+[.][F]{} \ar@{~}[rrrr]^{\mathcal{P}_1}   &&&&*+[.][F]{}\ar@{-}[d]_{i-1}\ar@{-}[rr]^{i+1} && *+[o][F]{a}\ar@{-}[d]^{i-1} \ar@{-}[rr]^i&& *+[o][F]{b}\\
   *+[o][F]{} \ar@{-}[rr]_{r-1}&&*+[.][F]{} \ar@{~}[rrrr]_{\mathcal{P}_2}   &&&&*+[.][F]{}\ar@{~}[dd]_{\mathcal{P}_4}\ar@{-}[rr]_{i+1}&&*+[.][F]{}\ar@{~}[dd]^{\mathcal{P}_3}&&\\
  &&&&&&&&&&\\
  &&&&&&*+[o][F]{}&& *+[o][F]{} &&}
 $$
 Hence $2(r-1)+1\leq n$.
If equality holds, $\Gamma$ has the graph (III) of Table~\ref{notperfect}.
 Otherwise there are $(k-2)(m-2)$ points not covered by these paths and hence $n-(k-2)(m-2)\geq 2(r-1)+1$.
 Then $n/2 \leq r \leq (n-(k-2)(m-2)+1)/2$. 
 This forces $k = m=3$, $n=10$ and $r=5$, corresponding to graph (XIII) of Table~\ref{notperfect}.
\end{proof}

\begin{prop}\label{IPfails2}
If $\Gamma$ has one of the following permutation representation graphs (where $h\in\{1,\ldots, r-1\}$), then $\Gamma$ is not a string C-group.
\begin{center}
\begin{tabular}{|cr|}
\hline
(1a)& $\xymatrix@-1.6pc{*+[o][F]{}\ar@{-}[rr]^{r-5} &&*+[o][F]{} \ar@{.}[rr] && *+[o][F]{}\ar@{-}[rr]^1 && *+[o][F]{}\ar@{-}[rr]^0 &&*+[o][F]{} \ar@{-}[rr]^1&& *+[o][F]{}\ar@{.}[rr]&&  *+[o][F]{} \ar@{-}[rr]^{r-5}&& *+[o][F]{}\ar@{-}[rr]^{r-4} && *+[o][F]{}\ar@{-}[rr]^{r-3} && *+[o][F]{}\ar@{-}[rr]^{r-4} &&*+[o][F]{} \ar@{-}[rr]^{r-3}&& *+[o][F]{}\ar@{-}[rr]^{r-2}&& *+[o][F]{} \ar@{-}[rr]^{r-1}&& *+[o][F]{} \ar@{-}[rr]^{r-2}&&*+[o][F]{}  \ar@{-}[rr]^{r-1}&&*+[o][F]{}  }$\\

(1b)&$\xymatrix@-1.6pc{  *+[o][F]{}\ar@{-}[rr]^0\ar@{-}[d]_{r-5} &&*+[o][F]{}\ar@{.}[rr]\ar@{-}[d]_{r-5} && *+[o][F]{} \ar@{-}[d]_{r-5} \ar@{-}[rr]^{r-7} && *+[o][F]{}\ar@{-}[d]^{r-5}  \ar@{-}[rr]^{r-6}&&  *+[o][F]{} \ar@{-}[rr]^{r-5}&& *+[o][F]{}\ar@{-}[rr]^{r-4} && *+[o][F]{}\ar@{-}[rr]^{r-3} && *+[o][F]{}\ar@{-}[rr]^{r-4} &&*+[o][F]{} \ar@{-}[rr]^{r-3}&& *+[o][F]{}\ar@{-}[rr]^{r-2}&& *+[o][F]{} \ar@{-}[rr]^{r-1}&& *+[o][F]{} \ar@{-}[rr]^{r-2}&&*+[o][F]{}  \ar@{-}[rr]^{r-1}&&*+[o][F]{} \\
 *+[o][F]{}\ar@{-}[rr]_0&& *+[o][F]{}\ar@{.}[rr] && *+[o][F]{} \ar@{-}[rr]_{r-7}&&   *+[o][F]{}&&}$\\
 \hline
 \end{tabular}
 \end{center}
 \begin{center}
\begin{tabular}{|cr|}
\hline
(2a) & $\xymatrix@-1.3pc{  *+[o][F]{}\ar@{-}[rr]^0\ar@{-}[d]_2 &&  *+[o][F]{}\ar@{-}[rr]^{1} \ar@{-}[d]_{2} && *+[o][F]{} \ar@{-}[rr]^{2}&&  *+[o][F]{} \ar@{-}[rr]^3&&*+[o][F]{} \ar@{.}[rr]&&*+[o][F]{} \ar@{-}[rr]^{r-1}&&  *+[o][F]{} \ar@{.}[rr]&&  *+[o][F]{} \ar@{-}[rr]^3&&*+[o][F]{} \\
  *+[o][F]{}\ar@{-}[rr]_{0}&& *+[o][F]{}\ar@{-}[rr]_{1}&&  *+[o][F]{}&&}$\\

(2b)&  $\xymatrix@-1.3pc{  *+[o][F]{}\ar@{-}[rr]^0\ar@{-}[d]_{h-1} &&*+[o][F]{}\ar@{.}[rr]\ar@{-}[d]_{h-1} && *+[o][F]{}\ar@{-}[rr]^{h-3}\ar@{-}[d]_{h-1} &&  *+[o][F]{}\ar@{-}[rr]^{h-2} \ar@{-}[d]_{h-1} && *+[o][F]{} \ar@{-}[rr]^{h-1}&&  *+[o][F]{} \ar@{-}[rr]^h&&*+[o][F]{} \ar@{.}[rr]&&*+[o][F]{} \ar@{-}[rr]^{r-1}&&  *+[o][F]{} \ar@{.}[rr]&&  *+[o][F]{} \ar@{-}[rr]^h&&*+[o][F]{} \\
 *+[o][F]{}\ar@{-}[rr]_0&& *+[o][F]{}\ar@{.}[rr]&&   *+[o][F]{}\ar@{-}[rr]_{h-3}&& *+[o][F]{} \ar@{-}[rr]_{h-2}&& *+[o][F]{} &&}$
 \\
 (2c)&  $\xymatrix@-1.3pc{  *+[o][F]{}\ar@{-}[rr]^0\ar@{-}[d]_{h-1} &&*+[o][F]{}\ar@{.}[rr]\ar@{-}[d]_{h-1} && *+[o][F]{}\ar@{-}[rr]^{h-3}\ar@{-}[d]_{h-1} &&  *+[o][F]{}\ar@{-}[rr]^{h-2} \ar@{-}[d]_{h-1} && *+[o][F]{} \ar@{-}[rr]^{h-1}&&  *+[o][F]{} \ar@{-}[rr]^h
 &&*+[o][F]{} \ar@{-}[rr]^{h+1} &&*+[o][F]{}\ar@{-}[d]^{h}   \ar@{-}[rr]^{h+2}&&*+[o][F]{} \ar@{.}[rr]\ar@{-}[d]^{h} &&  *+[o][F]{} \ar@{-}[rr]^{r-1}\ar@{-}[d]^{h}&&*+[o][F]{} \ar@{-}[d]^{h}\\
 *+[o][F]{}\ar@{-}[rr]_0&& *+[o][F]{}\ar@{.}[rr]&&   *+[o][F]{}\ar@{-}[rr]_{h-3}&& *+[o][F]{} \ar@{-}[rr]_{h-2}&& *+[o][F]{} && &&&&*+[o][F]{} \ar@{-}[rr]_{h+2}&&*+[o][F]{} \ar@{.}[rr]&&*+[o][F]{} \ar@{-}[rr]_{r-1}&&*+[o][F]{}}$
\\
  \hline
  \end{tabular}
 \end{center}
 \begin{center}
\begin{tabular}{|cr|}
\hline
 (3a)& $\xymatrix@-1.3pc{&&  *+[o][F]{}\ar@{-}[rr]^0\ar@{-}[d]_2 &&  *+[o][F]{}\ar@{-}[rr]^{1} \ar@{-}[d]_{2} && *+[o][F]{} \ar@{-}[rr]^{2}&&  *+[o][F]{} \ar@{-}[rr]^3&&*+[o][F]{} \ar@{.}[rr]&&*+[o][F]{} \ar@{-}[rr]^{r-1}&&  *+[o][F]{} \ar@{.}[rr]&&  *+[o][F]{} \ar@{-}[rr]^3&&*+[o][F]{} \\
    *+[o][F]{}\ar@{-}[rr]_{1}&&*+[o][F]{}\ar@{-}[rr]_{0}&& *+[o][F]{}&& }$
  \\
  (3b)& $\xymatrix@-1.3pc{&&  *+[o][F]{}\ar@{-}[rr]^0\ar@{-}[d]_2 &&  *+[o][F]{}\ar@{-}[rr]^{1} \ar@{-}[d]_{2} && *+[o][F]{} \ar@{-}[rr]^{2}&&  *+[o][F]{} \ar@{-}[rr]^3&&  *+[o][F]{} \ar@{-}[rr]^4 &&  *+[o][F]{} \ar@{-}[rr]^5\ar@{-}[d]_{3} &&  *+[o][F]{} \ar@{.}[rr] \ar@{-}[d]_{3}&&*+[o][F]{}  \ar@{-}[d]^{3}\ar@{-}[rr]^{r-1}&&  *+[o][F]{} \ar@{-}[d]^{3} \\
    *+[o][F]{}\ar@{-}[rr]_{1}&&*+[o][F]{}\ar@{-}[rr]_{0}&& *+[o][F]{}&&&&&& &&  *+[o][F]{} \ar@{-}[rr]_5&&  *+[o][F]{} \ar@{.}[rr]&&*+[o][F]{} \ar@{-}[rr]_{r-1}&&  *+[o][F]{} }$
  \\
\hline
  \end{tabular}
 \end{center}
 \begin{center}
\begin{tabular}{|cr|}
\hline
(4a)& $\xymatrix@-1.3pc{ *+[o][F]{}\ar@{-}[rr]^1&& *+[o][F]{}\ar@{-}[rr]^0\ar@{-}[d]_2 &&  *+[o][F]{}\ar@{-}[rr]^{1} \ar@{-}[d]_{2} && *+[o][F]{} \ar@{-}[rr]^{2}&&  *+[o][F]{} \ar@{-}[rr]^3&& *+[o][F]{} \ar@{.}[rr]&&*+[o][F]{} \ar@{-}[rr]^{r-1}&&  *+[o][F]{} \ar@{.}[rr]&&  *+[o][F]{} \ar@{-}[rr]^3&&*+[o][F]{} \\
   &&*+[o][F]{}\ar@{-}[rr]_{0}&&*+[o][F]{} &&}$
   \\
  (4b)& $\xymatrix@-1.3pc{*+[o][F]{}\ar@{-}[rr]^1&&  *+[o][F]{}\ar@{-}[rr]^0\ar@{-}[d]_2 &&  *+[o][F]{}\ar@{-}[rr]^{1} \ar@{-}[d]_{2} && *+[o][F]{} \ar@{-}[rr]^{2}&&  *+[o][F]{} \ar@{-}[rr]^3&&  *+[o][F]{} \ar@{-}[rr]^4 &&  *+[o][F]{} \ar@{-}[rr]^5\ar@{-}[d]_{3}&&  *+[o][F]{} \ar@{.}[rr] \ar@{-}[d]_{3}&&*+[o][F]{}  \ar@{-}[d]^{3}\ar@{-}[rr]^{r-1}&&  *+[o][F]{} \ar@{-}[d]^{3} \\
   &&*+[o][F]{}\ar@{-}[rr]_{0}&& *+[o][F]{}&&&&&&&&  *+[o][F]{} \ar@{-}[rr]_5&&  *+[o][F]{} \ar@{.}[rr]&&*+[o][F]{} \ar@{-}[rr]_{r-1}&&  *+[o][F]{} }$
 \\
 \hline
 \end{tabular}
 \end{center}
 \begin{center}
\begin{tabular}{|cr|}
\hline
(5)& $ \xymatrix@-1.3pc{*+[o][F]{} \ar@{-}[rr]^{h} &&*+[o][F]{} \ar@{.}[rr] &&*+[o][F]{} \ar@{-}[rr]^1 &&*+[o][F]{} \ar@{-}[rr]^0 &&*+[o][F]{}  \ar@{-}[rr]^1&&*+[o][F]{} \ar@{.}[rr]&&  *+[o][F]{} \ar@{-}[rr]^{r-2}&&*+[o][F]{}\ar@{-}[rr]^{r-1}&&*+[o][F]{}  \ar@{-}[rr]^{r-2} &&*+[o][F]{} \ar@{.}[rr]&&*+[o][F]{}\ar@{-}[rr]^{h+1}&&*+[o][F]{} }
$
\\
\hline
 \end{tabular}
 \end{center}
 \begin{center}
\begin{tabular}{|cr|}
\hline

(6a)& $\xymatrix@-1.3pc{*+[o][F]{} \ar@{=}[rr]^{h}_{h-2} &&*+[o][F]{} \ar@{-}[rr]^{h-1} &&*+[o][F]{} \ar@{-}[rr]^{h-2} &&*+[o][F]{} \ar@{-}[rr]^{h-3} &&*+[o][F]{} \ar@{.}[rr] &&*+[o][F]{} \ar@{-}[rr]^0 &&*+[o][F]{} \ar@{.}[rr]&&*+[o][F]{}\ar@{-}[rr]^{r-1} &&*+[o][F]{} \ar@{.}[rr]&&*+[o][F]{}\ar@{-}[rr]^{h}&&*+[o][F]{} }$
\\
&$h>2$\\
(6b) &$ \xymatrix@-1.3pc{*+[o][F]{} \ar@{=}[rr]^{h}_{h-2}&&*+[o][F]{} \ar@{.}[rr] &&*+[o][F]{} \ar@{-}[rr]^1 &&*+[o][F]{} \ar@{-}[rr]^0 &&*+[o][F]{}  \ar@{-}[rr]^1&&*+[o][F]{} \ar@{.}[rr]&&  *+[o][F]{} \ar@{-}[rr]^{r-2}&&*+[o][F]{}\ar@{-}[rr]^{r-1}&&*+[o][F]{}  \ar@{-}[rr]^{r-2} &&*+[o][F]{} \ar@{.}[rr]&&*+[o][F]{}\ar@{=}[rr]^{h}_{h+2}&&*+[o][F]{} }$
\\
\hline
 \end{tabular}
 \end{center}
\end{prop}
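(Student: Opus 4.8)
The plan is to prove each case by means of Proposition~\ref{arp}, exactly as in the proof of Proposition~\ref{IPfails}, which this proposition continues. Fix one of the listed graphs and let $\Gamma=(G,\{\rho_0,\ldots,\rho_{r-1}\})$ be the corresponding sggi. I will use repeatedly that a parabolic subgroup $\Gamma_{\{i_1,\ldots,i_k\}}$ of a string C-group is itself a string C-group, since the intersection property for it is a special case of the intersection property for $\Gamma$, and that the permutation representation graph of $\Gamma_{\{i_1,\ldots,i_k\}}$ is obtained from that of $\Gamma$ by deleting all edges whose label lies outside $\{i_1,\ldots,i_k\}$. The argument then splits in two: first one shows that $\Gamma_0$ and $\Gamma_{r-1}$ are string C-groups (if one of them is not, $\Gamma$ cannot be a string C-group and we are done); granting this, one applies Proposition~\ref{arp} and shows that $G_0\cap G_{r-1}\not\cong G_{0,r-1}$, so that the intersection property fails.

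For the first step I would read $\Gamma_0$ and $\Gamma_{r-1}$ off the graph: deleting the $0$-edges (respectively the $(r-1)$-edges) breaks the graph into pieces, each of which is a path (contributing a symmetric group), possibly an alternating square (contributing a hyperoctahedral factor), possibly a double edge, possibly an isolated edge (contributing a $C_2$); that these assemble into string C-groups follows from the mixing lemma \cite[Lemma 4F7]{arp} together with Propositions~\ref{sesqui0} and~\ref{sesqui1}, and for the families with a free parameter $h$ from an induction on the rank. For the second step I would compute the four groups $G_0$, $G_{r-1}$, $G_{0,r-1}$, $G_0\cap G_{r-1}$ directly from the graph: the orbits of $G_0$ and of $G_{r-1}$ are visible, $G_{0,r-1}$ is the evident product of symmetric (or dihedral, or hyperoctahedral) factors on the common refinement of the two orbit systems, while $G_0\cap G_{r-1}$ contains one extra element — a transposition, or a product of transpositions realizing a dihedral symmetry on a short cycle — lying in both parabolics but not in $G_{0,r-1}$, producing the discrepancy. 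As in Proposition~\ref{IPfails} this discrepancy is invariably a factor $2$ or a $C_m$-versus-$D_m$ difference. For the parameter families (cases (2b), (2c), (4b), (5), (6a), (6b)) it is more efficient to short-circuit this: one checks that a parabolic $\Gamma_J$ on a window of at most four consecutive labels is, up to duality and after undoing one or two sesqui-extensions with respect to a first or last generator, one of the graphs of Proposition~\ref{IPfails}; then Propositions~\ref{sesqui1} and~\ref{IPfails} (and Lemma~\ref{sesqui}(d) when the corresponding $\tau$ lies outside $G^*$) show $\Gamma_J$ fails the intersection property, hence so does $\Gamma$.

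The main obstacle will be the case-by-case bookkeeping needed to pin down the orbit structures of $G_0$ and $G_{r-1}$ — in particular the places where an $(r-1)$-edge, a double edge, or a small cycle causes two orbits to merge or forces a symmetric factor to degenerate to a dihedral one — and then to exhibit, in each case, the ``extra'' element of $G_0\cap G_{r-1}$: if no such element existed the graph would in fact define a string C-group, so this is the crux. A secondary difficulty is making the sesqui-extension reduction uniform in the free parameter, i.e. verifying that the chosen label window always lands on a configuration already excluded by Proposition~\ref{IPfails}, while treating separately the degenerate ranges (e.g. $h$ small, or $h$ close to $r-1$) where the window is too short and a direct small-rank computation is needed instead.
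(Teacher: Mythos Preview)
Your two methods---direct exhibition of a failing parabolic intersection, and sesqui-extension reduction to Proposition~\ref{IPfails}---are exactly the paper's two methods, but your assignment of cases to methods is nearly inverted. The paper handles \emph{all} of (1a), (1b), (2a), (2b), (2c), (3a), (3b), (4a), (4b) by the short-circuit: a four-label parabolic $\Gamma_J$ is, after undoing one or two sesqui-extensions at an end generator, one of the small graphs already excluded (graph~(3) of Proposition~\ref{IPfails} for (1a) and (1b); graphs~(6a), (6b) for the (2) family; graphs~(7), (8) for the (3) and (4) families). No computation of $G_0\cap G_{r-1}$ is needed for these, and carrying out your proposed direct computation would be substantially more bookkeeping than the one-line reduction.

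Conversely, (5), (6a), (6b)---precisely the cases where you propose the short-circuit---are the ones for which no four-label window reducing to Proposition~\ref{IPfails} presents itself, so you would get stuck. For (5) the paper uses your primary method: $G_0\cap G_{r-1}$ is the full direct product of symmetric groups on the common refinement of the orbit partitions of $G_0$ and $G_{r-1}$, whereas every generator of $G_{0,r-1}$ is an even permutation, so $G_{0,r-1}$ has index at least~$2$ in that product. For (6a) and (6b) the failing intersection is not $G_0\cap G_{r-1}$ but rather $G_{<h}\cap G_{>h-3}$: each of these two parabolics induces the full symmetric group on the four vertices lying between the first $(h-3)$-edge and the second $h$-edge, so their intersection contains $S_4$, while $\langle\rho_{h-2},\rho_{h-1}\rangle$ is only dihedral. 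Your plan to work with $G_0\cap G_{r-1}$ in these two cases is not wrong in principle, but this other parabolic intersection is the one that can be read off the graph immediately.
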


\begin{proof}
Let us deal with each graph separately.

(1a) Observe that $\Gamma_{\{r-5,r-4,r-3,r-2\}}$ is a sesqui-extension with respect to $\rho_{r-2}$ of a sesqui-extension with respect to $\rho_{r-5}$ of the sggi (3) of Proposition~\ref{IPfails} thus, by Propositions~\ref{sesqui1} and~\ref{IPfails}, this graph does not give a permutation representation graph of a string C-group. We may deal  with  graph (1b)  similarly.

(2a) In this case $\Gamma_{\{0,1,2,3\}}$ is a sesqui-extension with respect to $\rho_3$ of the graph (6a) of Proposition~\ref{IPfails}. Thus, by Propositions~\ref{sesqui1} and  ~\ref{IPfails}, $\Gamma$ is not a string C-group. We may deal with graph (3a), (3b), (4a) and (4b) similarly (for that, graphs (7) and (8) of Proposition~\ref{IPfails} may be used).

(2b) In this case $\Gamma_{\{h-3,h-2, h-1,h\}}$ is a sesqui-extension  with respect to $\rho_{h-1}$ of a sesqui-extension with respect to $h$ of the sggi having the permutation representation graph (6b) of Proposition~\ref{IPfails}.
Then, by item (d) of Proposition~\ref{sesqui} and Proposition~\ref{IPfails}, we conclude that $\Gamma$ is not a string C-group. We may deal with graphs (2c) similarly.

(5) We have that  $G_0\cap G_{r-1} > G_{0,r-1}$ as the former is a direct product of symmetric groups acting on the intersections of the respective orbits of $G_0$ and $G_{r-1}$ and  the latter is generated by even permutations and thus it has to be of index at least 2 in the former. Hence this case does not satisfy intersection property.

(6a) Let $h > 2$. In this case $G_{>h-3}$ acts as a symmetric group on the points on the left of the first $(h-3)$-edge, $G_{<h}$ acts as a symmetric group on the points on the left of the second $h$-edge and therefore their intersection will be a symmetric group on 4 points, not a dihedral group. Hence the intersection condition is not satisfied. We may deal with graph (6b) similarly.
\end{proof}

\begin{prop}\label{split}
Let  $r\geq n/2$ and $\Gamma$ be a string C-group for $S_n$ having a fracture graph. 
Then either $\Gamma$ has a perfect split or $\Gamma$ has, up to duality, one of the permutation representation graphs of Table~\ref{notperfect} and is of rank $\leq (n+1)/2$.
\end{prop}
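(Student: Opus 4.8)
The plan is to assemble the case analysis of Section~\ref{SSplit}, using the intersection-property obstructions of Propositions~\ref{IPfails} and~\ref{IPfails2} to discard the graphs that are not string C-groups. Since $\Gamma$ has a fracture graph, every maximal parabolic $G_j$ is intransitive, and hence (by the discussion at the end of Section~\ref{frac}) $\Gamma$ either admits a $2$-fracture graph or has a split. If it admitted a $2$-fracture graph, then since $r\ge n/2\ge\frac{n-1}{2}$ Proposition~\ref{frac2} would force the permutation representation graph of $\Gamma$ to be one of those of Table~\ref{T2F}; but every graph there of rank at least $n/2$ has group different from $S_n$ (the graphs whose group is $S_n$ or $A_n$, namely~(11) and~(13), have rank $\frac{n-1}{2}$), a contradiction. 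So $\Gamma$ has a split, at some label $i$; if $\Gamma$ has a perfect split we are in the first alternative of the statement, so from now on assume no split of $\Gamma$ is perfect. In particular the chosen $i$-split is not perfect, and we may use the notation $O_1,O_2,A,B,J_A,J_B$ of Section~\ref{SSplit}.

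First suppose $A$ and $B$ are both imprimitive. By Proposition~\ref{imp}, either $r\le\frac{n-2}{2}$, which contradicts $r\ge n/2$, or the permutation representation graph of $\Gamma$ is one of the three displayed there; each of these is, for a suitable choice of parameters, one of the graphs of Table~\ref{notperfect}, and has rank $n/2$ or $\frac{n+1}{2}$, so we are done. Otherwise, by renaming the two orbits we may assume $B$ is primitive or trivial; then $J_B$ is empty or an interval by Proposition~\ref{CCD}, and by passing to the dual if necessary we may assume its labels exceed $i$ when it is a nonempty interval. We now distinguish the position of $i$. If $i\in\{0,r-1\}$, say $i=0$ up to duality: when $\rho_{r-1}$ acts non-trivially on both $G_i$-orbits, Proposition~\ref{i=0andh=r-1} together with $r\ge n/2$ forces $r=n/2$ and $G\cong C_2\wr S_{n/2}\ne S_n$, a contradiction; otherwise Proposition~\ref{i=0andh<>r-1} applies to the largest label $h<r-1$ acting on both orbits and produces a set $X$ of vertices with $h\le\frac{n-|X|-1}{2}$ such that $G_{>h}$ fixes $\{1,\dots,n\}\setminus X$ pointwise, with the structure of $\Gamma_{<h}$ pinned down in the boundary case of equality. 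If $i\in\{1,\dots,r-2\}$ and $J_B$ is a nonempty interval of labels $>i$, Proposition~\ref{h} yields either such an $X$ and $h$, or a perfect split (excluded), or $r=n/2$ with the permutation representation graph of $\Gamma$ equal to graph~(I) or~(II) of Table~\ref{notperfect}. If $i\in\{1,\dots,r-2\}$ and $J_B=\emptyset$, Proposition~\ref{ineq0Btrivial} gives $r\le n/2$, hence $r=n/2$, and identifies the permutation representation graph of $\Gamma$ with graph~(III) or~(XIII) of Table~\ref{notperfect}; the remaining degenerate configurations ($B$ trivial with $i\in\{0,r-1\}$, so that $O_2$ is a single point joined to $O_1$ by the split edge) are treated directly, since $\Gamma_i$ is then a string C-group of rank $r-1$ on $n-1$ points with a fracture graph, to which the same classification applies.

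It remains to finish the branches where only the truncated graph $\Gamma_{<h}$ is known. In each of them we apply Proposition~\ref{t} with $t=h$: writing $U=\{1,\dots,n\}\setminus\mathrm{Fix}(G_{>h})$ we have $U\subseteq X$, so $t\le\frac{n-|X|-1}{2}\le\frac{n-|U|-1}{2}$, and therefore if moreover $G_{>h}$ acts intransitively on $U$ and $\Gamma_{>h}$ admits a $2$-fracture graph, then $r\le\frac{n-1}{2}$, contradicting $r\ge n/2$. When one of these two side-conditions fails, $\Gamma_{>h}$ is either itself of high enough rank that the present classification applied recursively to the strictly smaller string C-group $\Gamma_{>h}$ describes it, or is a string C-group without a $2$-fracture graph on which we iterate the whole argument; in either case its description is glued to the explicit graph of $\Gamma_{<h}$, and in the boundary case $h=\frac{n-|X|-1}{2}$ the graph of $\Gamma_{<h}$ is already linear (or near-linear). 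This produces a finite list of candidate permutation representation graphs; discarding from it every graph that contains some $\Gamma_{\{a,\dots,b\}}$ with one of the permutation representation graphs of Proposition~\ref{IPfails} or Proposition~\ref{IPfails2} leaves exactly the graphs of Table~\ref{notperfect}, all of rank $\le\frac{n+1}{2}$. The hard part is this last step: completing every truncated graph, verifying in each branch the two hypotheses of Proposition~\ref{t} (the existence of a $2$-fracture graph for $\Gamma_{>h}$ and the intransitivity of $G_{>h}$ on $U$), and then matching every surviving configuration against Table~\ref{notperfect} while eliminating the rest through the intersection-property obstructions.
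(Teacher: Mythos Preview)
Your outline follows the paper's strategy correctly through the early branches: ruling out a $2$-fracture graph via Proposition~\ref{frac2}, disposing of the case where $A$ and $B$ are both imprimitive via Proposition~\ref{imp}, and invoking Propositions~\ref{i=0andh=r-1}--\ref{h} and~\ref{ineq0Btrivial} to produce the set $X$ and the bound $h\le\frac{n-|X|-1}{2}$. However, from that point on you have written a plan, not a proof, and you say so yourself in your last sentence. Two concrete gaps stand out.

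First, your treatment of the branch where $\Gamma_{>h}$ has \emph{no} $2$-fracture graph is not the paper's argument and does not obviously work. You propose to ``iterate the whole argument'' on $\Gamma_{>h}$, but $\Gamma_{>h}$ need not be a string C-group for a symmetric group on its support, and the hypothesis $r\ge n/2$ does not automatically transfer. The paper instead stays inside $\Gamma$: it chooses at the outset the label $i$ to be \emph{maximal} among those splits admitting some $\rho_l$ with $l>i$ acting on both $G_i$-orbits, and then, when $\Gamma_{>h}$ has no $2$-fracture graph, it produces a second split of $\Gamma$ at some minimal $j>h$, introduces the corresponding groups $C,D$ and the minimal label $g$ acting on both $G_j$-orbits, and carries out a separate three-case analysis (distinguishing $j=r-1$ or not, $J_C$ or $J_D$ an interval) that eventually reduces to the subcases $h=g$, $h+1=g$, or $\Gamma_{\{h+1,\dots,g-1\}}$ having a $2$-fracture graph. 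The maximality of $i$ is essential here (it forces $g<j$), and you never impose it.

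Second, in the branch where $\Gamma_{>h}$ \emph{does} have a $2$-fracture graph, you correctly note that Proposition~\ref{t} forces $G_{>h}$ to be transitive on $X$, but you stop short of the actual work: one must observe that $r-1-h\ge(|X|-1)/2$, hence $\Gamma_{>h}$ is one of the specific graphs (9), (10), (13), (14), (15) of Table~\ref{T2F} (those with a pendant extremal edge), and then combine each of these five with each of the three possible shapes of $\Gamma_{<h}$ from Propositions~\ref{i=0andh<>r-1}, \ref{<itran}, \ref{<iint}. That $5\times 3$ grid is where graphs (IV)--(IX) of Table~\ref{notperfect} arise and where the obstructions of Proposition~\ref{IPfails2} eliminate the rest; asserting that ``discarding \dots\ leaves exactly the graphs of Table~\ref{notperfect}'' is the statement to be proved, not its proof. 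A minor additional point: when $B$ is trivial and $i\in\{0,r-1\}$ the split is already perfect, so no recursion is needed there; and in the imprimitive-imprimitive case one of the three graphs of Proposition~\ref{imp} has group inside $S_{n/2}\wr S_2$ and must be discarded rather than matched to Table~\ref{notperfect}.
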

\begin{proof}
Let us assume that $\Gamma$ is not one of the string C-group of Table~\ref{notperfect} (nor their duals). In addition suppose that $\Gamma$ does not have a perfect split.

Among the string C-groups for $S_n$ found in Table~\ref{T2F}, all of them have rank $r=(n-1)/2$. Thus there are no string C-groups for $S_n$, with rank 
$r\geq n/2$, having a 2-fracture graph. Hence since $\Gamma$ has a fracture graph but does not have a 2-fracture graph, this means that $\Gamma$ has a split which is not perfect.

Let $i$ be the label of a split $e=\{a,\,b\}$ that is not perfect. Up to duality, assume that $i\in\{0,\ldots,\,\lfloor r/2\rfloor\}$.
Consider the groups $A$ and $B$ corresponding to the pair of $G_i$-orbits. 
If $A$ and $B$ are imprimitive then, by Proposition~\ref{imp}, $\Gamma$ has permutation representation graph (XIV) of Table~\ref{notperfect} or the following permutation representation graph. 
$$\xymatrix@-1.6pc{ 
 *+[o][F]{}\ar@{-}[rr]^{r-1} \ar@{-}[d]_{i-1} &&  *+[o][F]{}\ar@{-}[d]_{i-1} \ar@{.}[rr] &&  *+[o][F]{}\ar@{-}[rr]^{i+2}\ar@{-}[d]^{i-1} &&*+[o][F]{}\ar@{-}[rr]^{i+1}\ar@{-}[d]^{i-1} && *+[o][F]{} \ar@{-}[d]^{i-1}\ar@{-}[rr]^i&&*+[o][F]{} \ar@{-}[rr]^{i-1}&&*+[o][F]{} \ar@{-}[rr]^{i-2}&&*+[o][F]{}\ar@{-}[d]^{i-1} \ar@{-}[rr]^{i-3}&&*+[o][F]{}\ar@{-}[d]^{i-1} \ar@{.}[rr]&&*+[o][F]{}\ar@{-}[d]^{i-1} \ar@{-}[rr]^0&&*+[o][F]{}\ar@{-}[d]^{i-1} \\
 *+[o][F]{}\ar@{-}[rr]_{r-1}&& *+[o][F]{}\ar@{.}[rr]&&*+[o][F]{}\ar@{-}[rr]_{i+2}&&*+[o][F]{}\ar@{-}[rr]_{i+1}&& *+[o][F]{}&& && && *+[o][F]{}\ar@{-}[rr]_{i-3}&&*+[o][F]{}\ar@{.}[rr]&&*+[o][F]{}\ar@{-}[rr]_0&&  *+[o][F]{}}
 $$
But the latter corresponds to a sggi whose group is embedded into $S_{n/2}\wr S_2$ with $\rho_{i-1}$ swapping the blocks, a contradiction.

Let us assume that $B$ is either primitive or trivial.

 If $B$ is trivial, we distinguish two cases according to whether $i=0$ or not. If $i=0$, then $e$ is a perfect split, a contradiction. If $i\neq 0$ then, by Proposition~\ref{ineq0Btrivial}, $\Gamma$ has the permutation representation graph given in (III) or (XIII) of Table~\ref{notperfect}, a contradiction.
 Hence  $B$ cannot be trivial.
 
 If $B$ is not trivial, it is primitive and, moreover, as the $i$-split cannot be perfect by hypothesis, there exists $h\neq i$ such that $\rho_h$ acts non-trivially on both $G_i$-orbits. 
 Let $h$ be such that $|h-i|$ is maximal.

Let us first assume that $i=0$. 
Then $h>i$ and, by Proposition~\ref{i=0andh=r-1}, as $G\cong S_n$, $h\neq r-1$. 
Then,  by Proposition~\ref{i=0andh<>r-1}, there exists a set of vertices $X$, contained in the $G_i$-orbit fixed by $\rho_{r-1}$, such that
 $h\leq \frac{n-|X|-1}{2}$ with $X:=\{1,\ldots, n\}\setminus {\rm Fix}(G_{>h})$. 
 
Let us now assume $i\neq 0$. 
If $h > i$, we are then in case (a) of Proposition~\ref{h} and we have  $h\leq \frac{n-|X|-1}{2}$  with $X:=\{1,\ldots, n\}\setminus {\rm Fix}(\Gamma_{>h})$. 
For $h<i$ we get a dual result and the rest of the proof can also be adapted to this case accordantly.

We may then assume  $h>i$ and  $h\leq \frac{n-|X|-1}{2}$  with $X:=\{1,\ldots, n\}\setminus {\rm Fix}(\Gamma_{>h})$. We now consider two cases separately: (a) $\Gamma_{>h}$ admits a 2-fracture graph and (b) $\Gamma_{>h}$ does not admit a 2-fracture graph. Additionally  assume that $i$ is the maximal label of a split with the property that there exist $l>i$ such that $\rho_l$ acts non trivially in both $G_i$-orbits.

In case (a) we use Proposition~\ref{t} to conclude that $G_{>h}$ is transitive on $X$. 
The rank of $\Gamma_{>h}$ is equal to $r-1-h$ and, as $r\geq n/2$ and  $h\leq \frac{n-|X|-1}{2}$, we have 
$$r-1-h\geq \frac{|X|-1}{2}.$$
Therefore $\Gamma_{>h}$ must appear in Table~\ref{T2F}, and because of the string condition, $\Gamma_{>h}$ must be one of graphs (9), (10), (13), (14) or (15) of Table \ref{T2F} (the only ones that have a pendant edge with label $0$ or $r-1$, for otherwise we have a contradiction with the definition of $X$). Moreover, as  $r-1-h = (|X|-1)/2$ in these five cases, we have $r=n/2$ and $h= \frac{n-|X|-1}{2}$, which implies that 
$\Gamma_{<h}$ has, as permutation representation graph, one of the graphs with black dots (representing the elements that are not in $X$), given in Propositions~\ref{i=0andh<>r-1}, ~\ref{<itran} and ~\ref{<iint}. 

Combining graph (9) with the graph given by Proposition~\ref{i=0andh<>r-1} gives cases (IV) and (V)  of Table~\ref{notperfect}, a case where there is a perfect split with label $r-1$ (if we put a double edge with labels $(r-3)$ and $(r-1)$ not at the end of the graph) and a case where $G_{r-1}$ is transitive and we do not have a fracture graph (where each $(r-1)$-edge is double with an $(r-3)$-edge).
Combining graph (9) with the graph given by Proposition~\ref{<itran} always gives a perfect split with label $r-3$ (which corresponds to label $i+1$ in the picture of Proposition~\ref{<itran}).
Combining graph (9) with the graph given by Proposition~\ref{<iint}  gives cases (VII), (VIII) and (IX) of Table~\ref{notperfect}, and a case with a perfect split with label $r-1$, a contradiction.

Combining graph (10) with the graphs of Propositions~\ref{i=0andh<>r-1} and ~\ref{<iint} we get the graphs (1a) and (1b) of Proposition~\ref{IPfails2}, and combining graph (10) with the graph of Proposition~\ref{<itran} we get a sggi having a perfect split with label $r-5$. In any case this gives a contradiction.

Combining  the graph (13) with the graphs of Propositions~\ref{i=0andh<>r-1} or ~\ref{<iint} we get one of the graphs, (2a), (2b) or (2c) of Proposition~\ref{IPfails2}, thus the intersection property fails. 
If we combine (13) with the graph of Proposition~\ref{<itran} we get that $\Gamma$ has a perfect split. In any case we get a contradiction.

Finally if we combine graphs (14) or (15) with any of the graphs of Propositions~\ref{i=0andh<>r-1} or ~\ref{<iint} we get one of the graphs, (3a), (3b), (4a) or (4b)  of Proposition~\ref{IPfails2}. If $\Gamma_{>h}$ is as in Proposition~\ref{<itran} then $\Gamma$ has a perfect split. In any case we get a contradiction.


In case (b),  $\Gamma_{>h}$ does not admit a 2-fracture graph.
Now there exists $j>h$ such that $\rho_j$ only swaps one pair of vertices in different $G_j$-orbits. 
Choose $j$ minimal with this property. 
Let $\rho_l=\gamma_l\delta_l$ with $\gamma_l$ and $\delta_l$ being the permutations in each $G_j$-orbit.
Let $J_C := \{l \in \{0, \ldots, r-1\}\setminus\{j\}| \gamma_l \neq 1_G\}$ and $J_D := \{l \in \{0, \ldots, r-1\}\setminus\{j\}| \delta_l \neq 1_G\}$. We then have $C=\langle \gamma_l\mid l\in J_C\rangle$ and $D=\langle \delta_l\mid l\in J_D\rangle $.
Let $L_1$ be the orbit of $G_j$ containing the $i$-edge $\{a,b\}$ and let $L_2$ be the other $G_j$-orbit.

If one of $C$ or $D$ is trivial then, as $\Gamma$ has no perfect split, $j\neq r-1$ and therefore $r\leq \frac{n-1}{2}$ by the dual of Proposition~\ref{ineq0Btrivial}, a contradiction.
Hence both $C$ and $D$ are nontrivial sggi's.

By Propositions~\ref{imp} and~\ref{CCD} either $J_C$ or $J_D$ is an interval.
If $C=G_{<j}$ and $D=G_{>j}$, the $j$-split is perfect, a contradiction.
It remains to consider the case where there exists a permutation $\rho_g$ acting non-trivially on both $G_j$-orbits. 
Thanks to the maximality of $i$,  $g<j$. 
Choose $g$ minimal.
As $g\in J_B$ and $J_B$ must be an interval (as $B$ is primitive), we have that $g>i$. We now consider three cases:


(1) If $j=r-1$, by Proposition~\ref{path}, both $J_C$ and $J_D$ are intervals and $g$ is the minimal label of a permutation acting nontrivially on $L_2$. 
Then we can use the dual of Proposition~\ref{i=0andh<>r-1} to conclude that 
$r-g\leq \frac{n-|Y|-1}{2}$ with $Y:= \{1,\ldots, n\}\setminus {\rm Fix}(G_{<g})\subseteq L_1$. 
If $g< h$ then, by Proposition~\ref{path}, there is a path in $O_2$ from the vertex $x\in L_1$ to the vertex $y\in L_2$ (as in the picture below), containing all labels from $g$ to $r-2$ twice.
\begin{small}
\[\begin{array}{c}
\xymatrix@-1.55pc{  *+[o][F]{} \ar@{-}[rr]^{h}&&*+[o][F]{} \ar@{.}[rr] &&*+[o][F]{}\ar@{-}[rr]^i&&*+[o][F]{}\ar@{.}[rr] && *+[o][F]{x}\ar@{-}[rr]^g&& *+[o][F]{}\ar@{.}[rr] &&*+[o][F]{v}\ar@{-}[rr]^h&&*+[o][F]{} \ar@{~}[rr] &&*+[o][F]{}\ar@{-}[rr]^{r-2}&&*+[o][F]{}\ar@{-}[rr]^{r-1}&&*+[o][F]{}\ar@{-}[rr]^{r-2}&& *+[o][F]{}\ar@{~}[rr] &&*+[o][F]{}\ar@{-}[rr]^h&&*+[o][F]{w}\ar@{.}[rr] &&*+[o][F]{}\ar@{-}[rr]^g&& *+[o][F]{y}}\\
\hspace{88pt}\underbrace{\hspace{150pt}}\\
\hspace{90pt}\mathcal{P}\\
\end{array}\] 
\end{small}
 This path is an extension of a path $\mathcal P$ containing all labels from $h$ to $r-2$ twice. 
Let $V$ be the set of vertices of $\mathcal P$.
Then $r-h\leq\frac{|V|}{2}$. As $h\leq \frac{n-|X|-1}{2}$, we have $n/2\leq r\leq \frac{n-|X|+|V|-1}{2}$, implying that $|V| \geq |X| +1$.
Let $v$ and $w$ be as in the figure above.
As $V\setminus\{v,w\}\subseteq (\{1, \ldots, n\}\setminus Fix(G_{>h}))$,  $|V| \leq |X| + 2$. 
Moreover there must be at least two vertices in $L_2$ that belong to the extended path, namely those to the left of $v$ in the picture, and they cannot be taken into account when bounding $h$. Thus we must have that $h \leq ((n-2)-|X|-1)/2$ and $r\leq (n-|X|-3+|V|)/2 \leq (n-1)/2$, a contradiction.
Hence $g \geq h$.

(2) Let $j\neq r-1$ and $J_D$  be an interval. In this case both $\rho_0$ and $\rho_{r-1}$ act nontrivially on the first $G_j$-orbit. Hence we can use the dual argument to the one used in the last paragraph of the proof of Proposition~\ref{<iint}, to conclude that  we get the permutation representations $(I)$ or $(II)$ of Table~\ref{notperfect}, a contradiction.

(3) Let $j\neq r-1$ and $J_C$ be an interval. By the dual of Proposition~\ref{h}, $g>0$ and  $r-g\leq \frac{n-|Y|-1}{2}$ with $Y:= \{1,\ldots, n\}\setminus {\rm Fix}(G_{<g})\subseteq L_1$. By the same reasoning as in (1), $h\leq g$.

In conclusion, only cases (1) and (3) are possible, and both give the inequality  $r-g\leq \frac{n-|Y|-1}{2}$ with $Y:= \{1,\ldots, n\}\setminus {\rm Fix}(G_{<g})\subseteq L_1$ and  $h\leq g$.

By choice of $j$, there is no other label $l$ between $h$ and $g$ having only one pair of vertices in different $G_l$-orbits, hence there are three possibilities: either $\Gamma_{\{h+1, \ldots, g-1\}}$ admits a 2-fracture graph, or $h+1=g$ or $h=g$. 

Suppose first that $\Gamma_{\{h+1,\ldots, g-1\}}$ has a 2-fracture graph (which has $|X\cap Y|$ vertices).
If that 2-fracture graph is disconnected, by Proposition~\ref{frac1}, $g-h-1\leq \frac{|X\cap Y|-1}{2}$. 
We have the following inequalities.
$$h\leq \frac{n-|X|-1}{2},\;g-h\leq \frac{|X\cap Y|+1}{2}\mbox{ and }  r-g \leq \frac{n-|Y|-1}{2}.$$
As in addition, $n= |X| + |Y| - |X \cap Y|$, $r\leq (n-1)/2$, a contradiction.
Hence the 2-fracture graph is connected. To avoid the same contradiction as before, we have to get, out of Table~\ref{T2F}, those graphs that have rank $|X \cap Y|/2$. Thus we  must have one of the first three graphs of Table~\ref{T2F}. Also $\Gamma_{<h}$ and $\Gamma_{>g}$ have one of the permutation representation graphs, up to duality, having black dots, given in Propositions~\ref{i=0andh<>r-1}, ~\ref{<itran} and ~\ref{<iint}, which is not possible.

Suppose next that $h+1= g$. 
If either $h < (n-|X|-1)/2$ or $r-g< (n-|Y|-1)/2$, then  we get $r<  \frac{n -|X \cap Y|}{2}$ as shown below.
\begin{center}
\begin{tabular}{lcl}
\begin{tabular}{rl}
$r$&$\leq \frac{n-|Y|-1}{2} + g$\\[5pt]
&$ =  \frac{n-|Y|-1}{2} + h + 1 $\\[5pt]
&$< \frac{n-|Y|-1}{2} + \frac{n-|X|-1}{2}+1$ \\[5pt]
&$= \frac{2n - |X| - |Y|}{2}$ \\[5pt]
&$= \frac{n -|X \cap Y|}{2}$
\end{tabular}
&
\begin{tabular}{rl}
$r$&$< \frac{n-|Y|-1}{2} + g$\\[5pt]
&$=\frac{n-|Y|-1}{2} + h + 1 $\\[5pt]
&$\leq \frac{n-|Y|-1}{2} + \frac{n-|X|-1}{2}+1$ \\[5pt]
&$= \frac{2n - |X| - |Y|}{2}$ \\[5pt]
&$= \frac{n -|X \cap Y|}{2}$
\end{tabular}\\
\end{tabular}
\end{center}
So we can assume that $h = (n-|X|-1)/2$ and $r-g = (n-|Y|-1)/2$.
Then the possibilities for $\Gamma_{<h}$ and $\Gamma_{>h+1}$ are given by Propositions~\ref{i=0andh<>r-1}, ~\ref{<itran} or ~\ref{<iint}.
But we can exclude the possibility of having the graph of  Proposition~\ref{<itran}, as in that case either $h+1$ or $h$ is the label of a perfect split.
Suppose $\Gamma_{<h}$ and $\Gamma_{>h+1}$ are as in Proposition~~\ref{i=0andh<>r-1}. 
Then the permutation representation graph the graph (5) of Proposition~\ref{IPfails2}. Suppose that $\Gamma_{<h}$ as in Proposition~\ref{<iint}, then we get graphs (X)  and (XIV)   of Table~\ref{notperfect}. In any case we get a contradicition.

Finally, suppose $h=g$. Using similar arguments as in the case $h+1=g$, we see that we only need to consider the cases where $\Gamma_{<h}$ and $\Gamma_{>h}$ are as in Propositions~\ref{i=0andh<>r-1}, ~\ref{<itran} or ~\ref{<iint}.

Suppose first that $\Gamma_{<h}$ and $\Gamma_{>h}$ are as in Proposition~\ref{i=0andh<>r-1}. Then we get the following possibilities: the graph (XI) or the graph (XII)  of Table~\ref{notperfect}; the graphs (6a) or (6b) of Proposition~\ref{IPfails2}; or $h=2$ and $2$ is the label of a perfect split. In any case we get a contradiction. 
If both $\Gamma_{<h}$ and $\Gamma_{>h}$ are as in Proposition~\ref{<itran} then $\Gamma$ has a perfect split, a contradiction.
If both are as in are as in Proposition~\ref{<iint} then we get the following permutation representation graph.
$$\xymatrix@-1.6pc{ 
 *+[o][F]{}\ar@{-}[rr]^{0} \ar@{-}[d]_{h} &&  *+[o][F]{}\ar@{-}[d]_{h} \ar@{.}[rr] &&  *+[o][F]{}\ar@{-}[rr]^{h-3}\ar@{-}[d]^{h} &&*+[o][F]{}\ar@{-}[rr]^{h-2}\ar@{-}[d]^{h} && *+[o][F]{} \ar@{-}[d]^{h}\ar@{-}[rr]^{h-1}&&*+[o][F]{} \ar@{-}[rr]^{h}&&*+[o][F]{} \ar@{-}[rr]^{h+1}&&*+[o][F]{}\ar@{-}[d]^{h} \ar@{-}[rr]^{h+3}&&*+[o][F]{}\ar@{-}[d]^{h} \ar@{.}[rr]&&*+[o][F]{}\ar@{-}[d]^{h} \ar@{-}[rr]^{r-1}&&*+[o][F]{}\ar@{-}[d]^{h} \\
 *+[o][F]{}\ar@{-}[rr]_{0}&& *+[o][F]{}\ar@{.}[rr]&&*+[o][F]{}\ar@{-}[rr]_{h-3}&&*+[o][F]{}\ar@{-}[rr]_{h-2}&& *+[o][F]{}&& &&  && *+[o][F]{}\ar@{-}[rr]_{h+3}&&*+[o][F]{}\ar@{.}[rr]&&  *+[o][F]{} \ar@{-}[rr]_{r-1}&&*+[o][F]{}}
 $$
But then $G$ is embedded into $S_{n/2}\wr S_2$ with $\rho_h$ swapping the blocks, a contradiction.
The remaining possibilities correspond to graphs  (XV), (XVI) and (XVII) of Table~\ref{notperfect}, a contradiction.
\end{proof}

A string C-group $\Gamma = (G, \{ \rho_0, \ldots, \rho_{r-1}\})$ is {\em split-attachable} if its permutation representation graph has either a 0-edge or a $(r-1)$-edge that is pendant (i.e. has a vertex of degree one) and if $\Gamma$ can be extended to a larger string C-group that has a perfect split attached to the vertex of degree one.

\begin{prop}\label{att}
Let $\Gamma$ be one the  sggi  of Tables~\ref{an}, \ref{BI}, \ref{primPolys}, \ref{T2F} or \ref{notperfect}.
If $\Gamma$ is a string C-group that is split-attachable then $\Gamma$ has one of the following permutation representation graphs:
the graph (9) of Table~\ref{T2F} or  the graphs (2), (3), (5) and (8) of Table~\ref{primPolys}.

\end{prop}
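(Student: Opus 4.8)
## Proof proposal for Proposition~\ref{att}

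The plan is to go through the finite list of candidate string C-groups provided by Tables~\ref{an}, \ref{BI}, \ref{primPolys}, \ref{T2F} and \ref{notperfect} and, for each of them, decide whether a perfect split can legitimately be attached at a pendant $0$-edge or $(r-1)$-edge. Recall that attaching a perfect split means passing to a larger sggi of degree $n+1$ and rank $r+1$ in which the new first (or last) generator $\rho_{-1}$ (resp.\ $\rho_r$) is the unique permutation linking the two orbits, the attachment vertex lies on the other side of the split, and all the conditions in the definition of perfect split (the one-sided triviality of the $\alpha_j$ and $\beta_j$) hold. So the first step is to reduce the search: a necessary condition for being split-attachable is simply the existence of a pendant $0$-edge or pendant $(r-1)$-edge in the permutation representation graph. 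This alone eliminates a large fraction of the tables (for instance, all the graphs in Table~\ref{BI} are vertex-transitive-ish on the ``ends'', the Moore-type graphs in Table~\ref{an} with double edges at the ends, etc.), so I would first list exactly which graphs survive this crude filter.

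The second step is the key structural one: for each surviving candidate, one attaches a new pendant edge with a fresh extremal label and checks, via Proposition~\ref{arp} together with Propositions~\ref{sesqui0}, \ref{sesqui1} and Lemma~\ref{sesqui}, whether the resulting sggi is a string C-group. The point is that attaching a perfect split to a pendant edge with label $0$ is precisely a sesqui-extension with respect to $\rho_0$ in disguise on one side (because the new generator commutes with everything except the first generator and acts as a transposition), so Proposition~\ref{sesqui1} controls the intersection property: the extension is a string C-group iff $\Gamma$ is. The real content is therefore not the intersection property but whether the \emph{group} is still the right one and, more importantly, whether the split we created is genuinely \emph{perfect}, i.e.\ whether the one-sided vanishing condition on the $\alpha_j,\beta_j$ can actually be realized. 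For most graphs in the list the answer is no, because the pendant edge's label already reappears elsewhere in the graph in a way that forces $\rho_0$ (say) to act nontrivially on the ``wrong'' side of any split, or because attaching the split makes some $G_i$ transitive and kills the fracture graph — both of which I would detect by direct inspection of each graph.

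The third step is to verify positively, for the five claimed graphs — graph (9) of Table~\ref{T2F} and graphs (2), (3), (5), (8) of Table~\ref{primPolys} — that the attachment \emph{does} work: exhibit the extended permutation representation graph, note that in each case the new generator together with the old ones satisfies the commuting property (the new label is consecutive only with one existing label), and invoke Proposition~\ref{sesqui1} (or directly Proposition~\ref{arp}) to conclude the string C-group property. One should also check that the split produced is perfect by matching it against the definition: the new first generator is the unique edge between the two orbits, the attachment vertex is isolated on its side under all of $\rho_1,\ldots,\rho_{r-1}$, and on the large side the relevant $\beta_j$ all vanish. These five graphs are exactly the ones of rank $3$ (or very small rank) with a genuine pendant $0$- or $(r-1)$-edge whose label does not recur, which is why they survive; I would present each extended graph explicitly in a small display.

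The main obstacle I anticipate is the bookkeeping in the negative direction: there are many graphs in Table~\ref{notperfect} and Table~\ref{T2F}, several of them parametrized families (the graphs with a running index $h$, or with $r-1$ appearing generically), and for each family one must argue uniformly that \emph{no} choice of pendant edge yields a perfect split — typically because the extremal label $0$ or $r-1$ already appears on a non-pendant edge, so any candidate split at that label fails the ``exactly one pair of points'' clause, while attaching a split with a \emph{new} extremal label changes the rank/degree balance in a way incompatible with the graph being on the list in the first place. The cleanest way to handle this is to prove a small lemma first: \emph{if $\Gamma$ has a pendant edge with extremal label $\ell\in\{0,r-1\}$ and $\ell$ labels no other edge of the graph, then $\Gamma$ is split-attachable at that edge; otherwise attaching a perfect split there is impossible, and attaching one with a fresh label would put the rank above the bounds of Theorems~\ref{altn}--\ref{bigprim} or contradict the fracture-graph hypothesis.} With that lemma in hand, the proposition reduces to the purely mechanical check of which of the listed graphs have a pendant edge whose extremal label is not repeated, and direct inspection of the tables yields precisely the five asserted graphs.
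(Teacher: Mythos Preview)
Your overall structure is reasonable, and the first filter (pendant extremal edge, via the commuting property) matches the paper. But the second step contains a genuine error: attaching a perfect split is \emph{not} a sesqui-extension. A sesqui-extension multiplies one existing generator by a commuting involution and keeps the rank fixed; attaching a split adds a \emph{new} generator $\rho_r$ (a transposition to a fresh point), increasing both rank and degree by one. Proposition~\ref{sesqui1} therefore does not apply to the extension as a whole, and in particular it does \emph{not} follow that the extended sggi is a string C-group whenever $\Gamma$ is. The paper exhibits explicit counterexamples: for instance, graph~(10) of Table~\ref{T2F} and graph~(IV) of Table~\ref{notperfect} both have a pendant extremal edge, yet the extended sggi fails the intersection property.

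Your proposed lemma is consequently false in both directions. Its ``if'' clause claims that a pendant extremal edge whose label is not repeated guarantees split-attachability; but the actual five survivors---graph~(9) of Table~\ref{T2F} and graphs~(2), (3), (5), (8) of Table~\ref{primPolys}---\emph{all} have their extremal labels repeated, so your criterion would wrongly reject them. The paper's method is quite different: for each candidate that passes the commuting-property filter, it writes down the extended permutation representation graph explicitly and then locates a parabolic sub-sggi $\Gamma_J$ of the extension that is (possibly after a sesqui-extension reduction with respect to an extremal generator of $\Gamma_J$) one of the forbidden graphs of Proposition~\ref{IPfails}. The sesqui-extension lemmas enter only at this local level, to strip off harmless factors from $\Gamma_J$, not to control the intersection property of the whole extension. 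You would need to carry out this case-by-case check against Proposition~\ref{IPfails} rather than appeal to a label-multiplicity criterion.
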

\begin{proof}
 Let $\Gamma$ be one of the string C-groups of Table~\ref{an}. The sggi's with permutation representation graphs (1), (2), (4) and (7) are not split-attachable thanks to the commuting property. 
Let us consider the remaining permutation representation graphs of Table~\ref{an}. 
Attaching one perfect split to these graphs we get, up to a duality, the following  sggi's.

\begin{small}
\begin{tabular}{cc}
\begin{tabular}{cl}
(3)& $\xymatrix@-2pc{*+[o][F]{} \ar@{-}[rr]^0 &&*+[o][F]{} \ar@{-}[rr]^1 && *+[o][F]{}  \ar@{-}[rr]^2&&*+[o][F]{} \ar@{=}[rr]^1_3 && *+[o][F]{} \ar@{-}[rr]^2 && *+[o][F]{} \ar@{-}[rr]^3&&*+[o][F]{}  \ar@{-}[rr]^4 && *+[o][F]{} \ar@{-}[rr]^5&& *+[o][F]{} \ar@{-}[rr]^4&&*+[o][F]{}  \ar@{-}[rr]^5&&*+[o][F]{}  }$ \\
& $\xymatrix@-2pc{*+[o][F]{} \ar@{-}[rr]^0 && *+[o][F]{}  \ar@{-}[rr]^1&&*+[o][F]{} \ar@{=}[rr]^0_2 && *+[o][F]{} \ar@{-}[rr]^1 && *+[o][F]{} \ar@{-}[rr]^2&&*+[o][F]{}  \ar@{-}[rr]^3 && *+[o][F]{} \ar@{-}[rr]^4&& *+[o][F]{} \ar@{-}[rr]^3&&*+[o][F]{}  \ar@{-}[rr]^4&&*+[o][F]{}  \ar@{-}[rr]^5&&*+[o][F]{}}$ \\
(5)&$\xymatrix@-1.8pc{*+[o][F]{} \ar@{-}[rr]^0 &&*+[o][F]{} \ar@{-}[rr]^1&& *+[o][F]{}  \ar@{-}[rr]^2&&*+[o][F]{} \ar@{=}[rr]^1_3 && *+[o][F]{} \ar@{-}[rr]^2 && *+[o][F]{} \ar@{-}[rr]^3&&*+[o][F]{}  \ar@{-}[rr]^4 && *+[o][F]{} \ar@{-}[rr]^5&& *+[o][F]{}\\
&&&& && && && &&  &&*+[o][F]{}  \ar@{-}[rr]_5 \ar@{-}[u]^3&&*+[o][F]{}  \ar@{=}[u]_4^3}$
\end{tabular}
&
\begin{tabular}{cl}
(6)&$\xymatrix@-2pc{*+[o][F]{} \ar@{=}[rr]^0_2  && *+[o][F]{}  \ar@{-}[rr]^1&&*+[o][F]{} \ar@{-}[rr]^0 && *+[o][F]{} \ar@{-}[rr]^1 && *+[o][F]{} \ar@{-}[rr]^2&&*+[o][F]{}  \ar@{-}[rr]^3 && *+[o][F]{} \ar@{-}[rr]^4&& *+[o][F]{} \ar@{-}[rr]^3&&*+[o][F]{}  \ar@{-}[rr]^4&&*+[o][F]{} \ar@{-}[rr]^5&&*+[o][F]{} }$\\
(8)& $\xymatrix@-2pc{*+[o][F]{} \ar@{-}[rr]^0  &&*+[o][F]{} \ar@{-}[rr]^1  && *+[o][F]{}  \ar@{-}[rr]^2&&*+[o][F]{} \ar@{=}[rr]_3^1 && *+[o][F]{} \ar@{-}[rr]^2 && *+[o][F]{} \ar@{-}[rr]^3&&*+[o][F]{}  \ar@{-}[rr]^4 && *+[o][F]{} \ar@{-}[rr]^5&& *+[o][F]{} \ar@{-}[rr]^6&&*+[o][F]{}  \ar@{-}[rr]^5&&*+[o][F]{}  \ar@{=}[rr]^6_4&& *+[o][F]{}}$\\
(9)& $\xymatrix@-2pc{*+[o][F]{} \ar@{-}[rr]^0&& *+[o][F]{}  \ar@{-}[rr]^1&&*+[o][F]{} \ar@{=}[rr]_2^0 && *+[o][F]{} \ar@{-}[rr]^1 && *+[o][F]{} \ar@{-}[rr]^2&&*+[o][F]{}  \ar@{-}[rr]^3 && *+[o][F]{} \ar@{-}[rr]^4&& *+[o][F]{} \ar@{=}[rr]^5_3&&*+[o][F]{}  \ar@{-}[rr]^4&&*+[o][F]{}  \ar@{-}[rr]^5&& *+[o][F]{} \ar@{-}[rr]^6&& *+[o][F]{}}$ \\
\end{tabular}
\end{tabular}
\end{small}\\
In any case either we find  a set of indices $J$ such that the permutation representation graph of $\Gamma_J$ is either the graph (3) or the graph (9) of Proposition~\ref{IPfails} or a sesqui-extension of one of those sggi with respect to the first or the last generator. Thus by Propositions~\ref{sesqui1} and~\ref{IPfails} these are not string C-groups.

The graphs of Table~\ref{BI} cannot be attached to a perfect split thanks to the commuting property.

Let $\Gamma$ be one of the string C-groups of Table~\ref{primPolys}. 
By the commuting property, the string C-groups with permutation representation graphs (1), (4), (6) and (7) are not split attachable. All the remaining graphs (2), (3), (5) and (8) are listed in the statement of this proposition.

Let $\Gamma$ be one of the string C-groups of Table~\ref{T2F}.
 The only candidates to be split-attachable are graphs (9),(10), (13), (14) and (15). 
 In what follows we rule all these cases out except (9) which is in the statement of the proposition.
The possibilities that we now rule out are the following.
 
\begin{small}
\begin{tabular}{cc}
\begin{tabular}{cl}
 (10)&$ \xymatrix@-1pc{ *+[o][F]{}  \ar@{-}[r]^0 & *+[o][F]{}  \ar@{-}[r]^1 &*+[o][F]{}  \ar@{-}[r]^0 & *+[o][F]{} \ar@{-}[r]^1 & *+[o][F]{}  \ar@{-}[r]^2 & *+[o][F]{} \ar@{-}[r]^3 & *+[o][F]{}   \ar@{-}[r]^2  & *+[o][F]{}  \ar@{-}[r]^3  & *+[o][F]{}  \ar@{-}[r]^4  & *+[o][F]{} } $\\
 (13)&$\xymatrix@-1pc{*+[o][F]{}  \ar@{-}[r]^0 & *+[o][F]{}  \ar@{-}[r]^1 & *+[o][F]{} \ar@{-}[r]^2 & *+[o][F]{} \ar@{-}[r]  \ar@{-}[r]^3 & *+[o][F]{}  \ar@{.}[r] & *+[o][F]{}  \ar@{-}[r]^{r-1}  & *+[o][F]{}  \ar@{-}[r]^{r}  & *+[o][F]{}  &\\
&&*+[o][F]{}  \ar@{-}[r]_2 & *+[o][F]{} \ar@{-}[u]_1  \ar@{-}[r]_3  & *+[o][F]{}  \ar@{.}[r] \ar@{-}[u]_1 &  *+[o][F]{}  \ar@{-}[r]_{r-1}\ar@{-}[u]_1&*+[o][F]{}  \ar@{-}[r]_{r}\ar@{-}[u]_1&  *+[o][F]{} \ar@{-}[u]_1  } $\\
\end{tabular}
&
\begin{tabular}{cl}
 (14) &$ \xymatrix@-1pc{ *+[o][F]{}  \ar@{-}[r]^0&*+[o][F]{}  \ar@{-}[r]^1 &   *+[o][F]{}  \ar@{-}[r]^2 & *+[o][F]{}  \ar@{-}[r]^3 & *+[o][F]{}   &\\
& &  &  *+[o][F]{}  \ar@{-}[r]_3\ar@{-}[u]_1 &*+[o][F]{}  \ar@{-}[r]_2\ar@{-}[u]_1&  *+[o][F]{} }$\\
  (15) &$ \xymatrix@-0.7pc{*+[o][F]{}  \ar@{-}[r]^0& *+[o][F]{}  \ar@{-}[r]^1 &   *+[o][F]{}  \ar@{-}[r]^2 & *+[o][F]{}  \ar@{-}[r]^3 & *+[o][F]{}   \ar@{-}[r]^2 & *+[o][F]{} \\
& &  &  *+[o][F]{}  \ar@{-}[r]_3\ar@{-}[u]_1 &*+[o][F]{} \ar@{-}[u]_1& } $\\
 \end{tabular}
  \end{tabular}
\end{small}

In any case there exists $J$ such that $\Gamma_J$ has, as permutation representation graph, one of the graphs of Proposition~\ref{IPfails} or a sesqui-extension, with respect to the first or the last generator, of one of the graphs of Proposition~\ref{IPfails}. 
For graph (10) $\Gamma_{0}$ is a sesqui-extension of graph (3) of Proposition~\ref{IPfails};  for graph (13) $\Gamma_{<4}$ has the graph (6a), (6b) or is a sesqui-extension with respect to $\rho_1$ of the graph (6b) of Proposition~\ref{IPfails}; The graphs (14) and (15) are, respectively, the graphs (7) and (8) of  Proposition~\ref{IPfails}.

Finally let $\Gamma$ be one of the sggi's of Table~\ref{notperfect}. Only the graphs (IV) and (VII) can be extended with a split in such a way that the result is a sggi. But as intersection property fails, these graphs are not split-attachable. Indeed, we have the following graphs.\

\begin{small}
\begin{tabular}{cr}
 (IV)&$ \xymatrix@-1.5pc{*+[o][F]{} \ar@{-}[rr]^{r-3} &&*+[o][F]{} \ar@{.}[rr] &&*+[o][F]{} \ar@{-}[rr]^1 &&*+[o][F]{} \ar@{-}[rr]^0 &&*+[o][F]{}  \ar@{-}[rr]^1&&*+[o][F]{} \ar@{.}[rr]&&  *+[o][F]{} \ar@{-}[rr]^{r-3} &&*+[o][F]{} \ar@{-}[rr]^{r-2}&&*+[o][F]{}\ar@{-}[rr]^{r-1}&&*+[o][F]{}  \ar@{-}[rr]^{r-2}&&*+[o][F]{}\ar@{-}[rr]^{r-1}&&*+[o][F]{}\ar@{-}[rr]^{r}&&*+[o][F]{}  }$\\
 
(VII)&$\xymatrix@-1.5pc{*+[o][F]{} \ar@{-}[rr]^{r} &&*+[o][F]{} \ar@{-}[rr]^{r-1} &&*+[o][F]{} \ar@{-}[rr]^{r-2} &&*+[o][F]{} \ar@{-}[rr]^{r-1} &&*+[o][F]{} \ar@{-}[rr]^{r-2} &&*+[o][F]{} \ar@{-}[rr]^{r-3} &&*+[o][F]{} \ar@{-}[rr]^{r-4} &&*+[o][F]{}\ar@{-}[rr]^{r-5}\ar@{-}[d]_{r-3}&&*+[o][F]{}\ar@{-}[d]_{r-3}*+[o][F]{} \ar@{.}[rr]&&*+[o][F]{}\ar@{-}[rr]^{0}\ar@{-}[d]_{r-3}&&*+[o][F]{}\ar@{-}[d]_{r-3}&&\\
&& && && && && && && *+[o][F]{} \ar@{-}[rr]_{r-5}&& *+[o][F]{}*+[o][F]{} \ar@{.}[rr]&&*+[o][F]{} \ar@{-}[rr]_{0}&& *+[o][F]{}*&&\\
 }$
\end{tabular}
\end{small}\\

In both cases $\Gamma_{>r-4}$ is a sesqui-extension with respect to  $\rho_{r-3}$ of the graph (3) of Proposition~\ref{IPfails}.
\end{proof}

\section{r\&d extension}\label{RD}

In this section we construct a string C-group of rank $r+1$ and degree $n+1$ from a string C-group of rank $r$ and degree $n$. We will refer to this construction as the $r\&d$-extension.

We assume in this section that $\Gamma = (G,\{ \rho_0,\ldots, \rho_{r-1}\})$ is a sggi whose permutation representation graph, with vertex-set $\{1,\ldots, n\}$, and that $\Gamma$ has a perfect $i$-split $\{a,b\}$.
In addition, let $\{O_1,\,O_2\}$ be a partition of $\{1,\ldots, n\}$ with $a\in O_1$, $b\in O_2$ such that $G_{<i}$ and $G_{>i}$ are, respectively, the actions of $G_i$ on $O_1$ and $O_2$.
Let $\rho_i=\alpha_i\beta_i(a,b)$ with $\alpha_i$ and $\beta_i$ being the actions of $\rho_i$ in $O_1$ and $O_2$ respectively.  
Now we extend the sggi $\Gamma$ to another sggi $\Gamma^{i\uparrow} = (G^{i\uparrow}, S^{i\uparrow})$ where $G$ is acting on a set $\{1,\ldots, n\}\cup \{c\}$ of size $n+1$, $\Gamma^{i\uparrow}$ has two adjacent perfect splits $\{a,c\}$ and $\{c,b\}$, and such that $G^{i\uparrow}_{>i+1}\cong G_{>i}$ and $G^{i\uparrow}_{<i}\cong G_{<i}$.
$$\xymatrix@-1.2pc{*++++[F]{}\ar@{-}[rr]^(.6){i-1} &&*+[o][F]{a} \ar@{-}[rr]^i&&*+[o][F]{b} \ar@{-}[rr]^(.4){i+1}&& *++++[F]{} &\mapsto &
*++++[F]{}\ar@{-}[rr]^(.6){i-1} &&*+[o][F]{a} \ar@{-}[rr]^i&& *+[o][F]{c}  \ar@{-}[rr]^{i+1} &&*+[o][F]{b} \ar@{-}[rr]^(.4){i+2}&&*++++[F]{} }
$$ 
More precisely,
$S^{i\uparrow} = \{\delta_0, \ldots, \delta_r\}$ and 
$G^{i\uparrow}=\langle S \rangle$ where
$$\delta_j=\left\{\begin{array} {l}\rho_j, \mbox{ if }   j\in \{0,\ldots, i-1\},\\[5pt]
 \alpha_i(a,c),\mbox{ if } j=i,\\[5pt]
 \beta_i(c,b),\mbox{ if }j=i+1,\\[5pt]
 \rho_{j-1}, \mbox{ if } j\in\{i+2,\ldots, r\}.
 \end{array}\right..
 $$
By construction $\Gamma^{i\uparrow} := (G^{i\uparrow},S^{i\uparrow})$ is a sggi, $G^{i\uparrow}$ is of degree $n+1$ and $\Gamma^{i\uparrow}$ has rank $r+1$. We will say that  $\Gamma^{i\uparrow}$ is a \emph{rank  and degree extension of $\Gamma$}, for short  \emph{r\&d extension of $\Gamma$}, with respect to the perfect $i$-split}.

We now prove that the r\&d extension of a sggi is either a symmetric or an alternating group.
%
%
%
We first recall some facts about permutation groups containing a $3$-cycle.

\begin{prop}\label{3cyc}
\begin{itemize}
\item[(a)] Let $G$ be a permutation group containing a $3$-cycle. Then $G$
has a normal subgroup which is a direct product of alternating groups in
their natural representations.
\item[(b)] Let $G$ be a primitive permutation group containing a $3$-cycle.
Then $G$ is the alternating or symmetric group.
\item[(c)] Let $G$ be an intransitive permutation group containing a 
$3$-cycle $\alpha$. Let $X$ be the orbit of one of the points of $\alpha$,
and $H$ the group induced on $X$ by $G$. If $A_X\le H$, then $A_X\le G$.
\end{itemize}
\end{prop}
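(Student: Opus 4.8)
The plan is to prove the three parts in the order (a), (b), (c), using (a) as the structural backbone for the other two. Part (a) is the classical Jordan-type result. First I would recall Jordan's theorem: a primitive permutation group containing a $3$-cycle (more generally a $p$-cycle for $p$ prime, $p \leq n-3$) is the alternating or symmetric group; but here I want the structural version for arbitrary $G$. The argument: let $\alpha = (x\,y\,z)$ be the $3$-cycle in $G$. Consider the normal closure $N = \langle \alpha^G \rangle$. Every conjugate of $\alpha$ is again a $3$-cycle. It is a standard fact (proved by analysing the support graph whose vertices are the moved points and whose edges join two points lying in a common conjugate $3$-cycle) that the group generated by a conjugacy class of $3$-cycles is a direct product of alternating groups $A_{X_1} \times \cdots \times A_{X_k}$ acting naturally on the connected components $X_1, \ldots, X_k$ of that graph (together with fixed points). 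Since $N$ is generated by such a class, $N \cong \prod_j A_{X_j}$ in its natural action, and $N \trianglelefteq G$. This gives (a). The main subtlety here is making the ``support graph'' argument precise: one shows that if two $3$-cycles share two points then they generate $A_4$ on the four points involved, and if they share one point they generate $A_5$ on five points, so connectivity forces a single alternating group on each component; I would cite this as well-known (it appears e.g.\ in Wielandt or Dixon--Mortimer) rather than redo it.

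For part (b), suppose $G$ is primitive and contains a $3$-cycle. By (a), $G$ has a normal subgroup $N = \prod_j A_{X_j}$ in its natural representation. Since $N \neq 1$ and $G$ is primitive, $N$ is transitive (a nontrivial normal subgroup of a primitive group is transitive), so there is only one component, i.e.\ $N = A_n$ acting naturally on all $n$ points (here $n \geq 3$, in fact $n \geq 5$ since $A_3, A_4$ are not primitive on $3,4$ points in the relevant sense — but we only need $n\ge 3$ and the $3$-cycle hypothesis forces $n \ge 3$; the case $n=3,4$ is handled directly, $G = S_3$ or $G \leq S_4$ primitive gives $A_4$ or $S_4$). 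Then $A_n \leq G \leq S_n$, so $G$ is $A_n$ or $S_n$. This part is short once (a) is in hand.

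For part (c), $G$ is intransitive with a $3$-cycle $\alpha$, $X$ is the orbit containing the points moved by $\alpha$ (note all three points of $\alpha$ lie in the same $G$-orbit since $\alpha \in G$ permutes them cyclically), $H$ is the group induced on $X$, and we assume $A_X \leq H$. I want to conclude $A_X \leq G$. Apply (a) to $G$: there is a normal subgroup $N \trianglelefteq G$ with $N = \prod_j A_{X_j}$ naturally, where the $X_j$ are the connected components of the support graph of the class of $3$-cycles. The component containing the points of $\alpha$ is contained in the $G$-orbit $X$. Let $\pi: G \to H$ be the restriction map to $X$. Then $\pi(N)$ is a normal subgroup of $H$, and $\pi(N) \supseteq \pi(A_{X_{j_0}}) = A_{X_{j_0}}$ where $X_{j_0}$ is the component inside $X$ containing $\mathrm{supp}(\alpha)$. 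So $H$ has a normal subgroup containing the natural $A_{X_{j_0}}$ on a subset $X_{j_0} \subseteq X$ of size $\geq 3$. But $A_X \leq H$ acts transitively (indeed $(|X|-1)$-transitively) on $X$, and I claim this forces $X_{j_0} = X$: the connected components $X_j$ of the support graph are permuted by $G$ hence by $H \supseteq A_X$, and since $A_X$ is transitive on $X$ it cannot fix a proper nonempty subset setwise unless that subset has size $|X|$ (as $|X|\ge 3$ and $A_X$ transitive), so the part of $X$ covered by support-graph components is all of $X$ and, being $A_X$-invariant and the components forming a system, there is a single component $X_{j_0}=X$. Hence $A_X = A_{X_{j_0}} \leq N \leq G$, as required. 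The main obstacle I anticipate is bookkeeping in (c): one must be careful that the $3$-cycle's points all lie in one $G$-orbit, that the support-graph components refine the orbit partition, and that the $A_X$-action (not merely $H$-action) is what rules out a proper invariant component; once these are pinned down the conclusion is immediate. Throughout, I would lean on the classical support-graph lemma for groups generated by a class of $3$-cycles and cite it rather than prove it in detail.
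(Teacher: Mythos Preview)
Your approach is correct and essentially the same as the paper's: both rest on the fact that the subgroup generated by the $G$-conjugates of a $3$-cycle is a direct product of alternating groups on the connected components of the support structure, which the paper encodes via the equivalence relation $a\equiv b \Leftrightarrow a=b$ or $(\exists c)\,(a,b,c)\in G$, proved directly rather than cited. For (c) the paper is slightly more direct than your primitivity-of-$A_X$ argument: since $A_X\le H$, every $3$-cycle on $X$ equals $\alpha^{\beta}$ or $\alpha^{-\beta}$ for some $\beta\in G$ (as $\alpha^{\beta}$ depends only on the action of $\beta$ on $X$), so $X$ is a single $\equiv$-class and $A_X\le G$ immediately.
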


\begin{proof}
Let $G$ act on $\{1,\ldots,n\}$. Define a relation $\equiv$ on this set by
the rule
\[(a\equiv b) \Leftrightarrow (a=b\hbox{ or }(\exists c)((a,b,c)\in G)).\]
We claim that this is an equivalence relation. It is clearly reflexive and
symmetric. Suppose that $a\equiv b$ and $b\equiv c$. Then we have two
$3$-cycles whose supports both contain $b$, and hence which generate a group
on $3$, $4$ or $5$ points. It is readily checked that this group is the
alternating group. Hence $a\equiv c$; and we also see that $(a,b,c)\in G$.

Now it follows that, if $X$ is an equivalence class of $\equiv$, then $G$
contains every $3$-cycle with support contained in $X$; these generate the
alternating group on $X$. So part (a) of the Proposition follows.

If $G$ is primitive and contains a $3$-cycle, then the relation $\equiv$ is
trivial but is not the relation of equality; so there is a single equivalence
class, and $G$ contains the alternating group $A_n$. So (b) holds.

Finally we prove (c). Under the hypothesis of this part, all $3$-cycles on
$X$ are conjugates of $\alpha$ or $\alpha^{-1}$ (since, for $\beta\in G$, the
conjugate $\alpha^\beta$ depends only on the action of $\beta$ on $X$). So $X$
is an equivalence class of $\equiv$, and the result follows from the proof of (a).
\end{proof}

\begin{prop}\label{sntosn}
Let $\Gamma = (G,S)$ be a sggi generated by $r$ involutions with a perfect $i$-split.
If $G$ is a transitive group of degree $n$ then $A_{n+1}\leq G^{i\uparrow}$. 
\end{prop}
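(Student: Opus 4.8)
The plan is to exploit the structure created by the two adjacent perfect splits $\{a,c\}$ and $\{c,b\}$ in $\Gamma^{i\uparrow}$, namely that removing the single edge $\delta_i$ (resp. $\delta_{i+1}$) disconnects the permutation representation graph into the piece on $O_1\cup\{c\}$ and the piece on $O_2$ (resp. $O_1$ and $O_2\cup\{c\}$). First I would produce a $3$-cycle in $G^{i\uparrow}$: the commutator $[\delta_i,\delta_{i+1}]$ is $[\alpha_i(a,c),\beta_i(c,b)]$, and since $\alpha_i$ acts only on $O_1$ and $\beta_i$ acts only on $O_2$, these two permutations commute except for the interaction of the transpositions $(a,c)$ and $(c,b)$ through the shared point $c$. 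Hence $[\delta_i,\delta_{i+1}] = [(a,c),(c,b)] = (a,c,b)$ (or its inverse), a genuine $3$-cycle. So $G^{i\uparrow}$ contains a $3$-cycle, and we are in a position to apply Proposition~\ref{3cyc}.

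Next I would show $G^{i\uparrow}$ is transitive on $\{1,\ldots,n\}\cup\{c\}$. Since $G$ is transitive on $\{1,\ldots,n\}$ and the subgroup $\langle \delta_j : j\neq i, i+1\rangle$ together with the ``restrictions'' of $\delta_i,\delta_{i+1}$ generate a group whose action on $O_1$ and on $O_2$ is essentially that of $G_{<i}$ and $G_{>i}$ (each transitive on its part, as $\Gamma$ had a perfect split and $G$ transitive forces $G_i$ to have exactly orbits $O_1$, $O_2$), the point $c$ is linked to both $a\in O_1$ and $b\in O_2$ by $\delta_i$ and $\delta_{i+1}$ respectively, so every point is in the $G^{i\uparrow}$-orbit of $c$. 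Thus $G^{i\uparrow}$ is a transitive permutation group of degree $n+1$ containing a $3$-cycle. By Proposition~\ref{3cyc}(a) it has a normal subgroup that is a direct product of alternating groups in their natural actions; transitivity forces this to be a single alternating group $A_{n+1}$ acting naturally — more cleanly, one can pass to a block system: if $G^{i\uparrow}$ were imprimitive, the $3$-cycle's support lies in a block, but $A_{n+1}$-type normal subgroups on the $\equiv$-classes are permuted transitively and each has size $\geq 3$, while the graph-connectivity argument (removing $\delta_i$) shows the $\equiv$-class containing $c$ must meet both $O_1$ and $O_2$ and in fact be everything. Either way we conclude $A_{n+1}\leq G^{i\uparrow}$.

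The cleanest route for the last implication is: a transitive group containing a $3$-cycle is primitive unless its unique nontrivial block system has all $\equiv$-classes equal to blocks, but the $\equiv$-class of any point in the support of the $3$-cycle already has size at least $3$ and, by the proof of Proposition~\ref{3cyc}(a), $G^{i\uparrow}$ contains the full alternating group on each $\equiv$-class; since the $\equiv$-classes are permuted transitively by $G^{i\uparrow}$, if there were more than one we would get $G^{i\uparrow}$ preserving a partition into $\geq 2$ parts each of size $\geq 3$, yet the $3$-cycle $(a,c,b)$ has $a\in O_1$, $b\in O_2$, so $a,b,c$ lie in one $\equiv$-class meeting both $O_1$ and $O_2$; using transitivity of $G_{<i}$ on $O_1$ and $G_{>i}$ on $O_2$ (both of which are visible inside $G^{i\uparrow}$ via $\delta_j$, $j\neq i,i+1$, and the restricted actions), this $\equiv$-class is all of $\{1,\ldots,n\}\cup\{c\}$, hence by Proposition~\ref{3cyc}(b) applied after noting primitivity, or directly by the proof of (a), $A_{n+1}\leq G^{i\uparrow}$.

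I expect the main obstacle to be the bookkeeping in the transitivity/primitivity step: one must be careful that the generators $\delta_j$ for $j\notin\{i,i+1\}$, together with the non-transposition parts $\alpha_i$ of $\delta_i$ and $\beta_i$ of $\delta_{i+1}$, really do act on $O_1$ (resp. $O_2$) as transitive groups — this uses precisely the hypothesis that $\{a,b\}$ is a \emph{perfect} $i$-split (so $\alpha_j = 1$ for $j\geq i+1$ and $\beta_j=1$ for $j\leq i-1$) plus transitivity of $G$, which via Proposition~\ref{primpb}-type reasoning pins down the orbit structure of $G_i$. Once the $\equiv$-class of $c$ is shown to meet both $O_1$ and $O_2$, the transitivity of those restricted actions forces it to be everything, and Proposition~\ref{3cyc} finishes the argument. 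No delicate estimates are needed; the content is entirely in correctly tracking which generators act where.
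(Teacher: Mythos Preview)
Your approach is correct and is essentially the paper's: exhibit a $3$-cycle in $G^{i\uparrow}$ (your commutator $[\delta_i,\delta_{i+1}]$ equals $(\delta_i\delta_{i+1})^2$ since both factors are involutions, which is precisely the element the paper uses), then combine this with primitivity and Proposition~\ref{3cyc}(b). The paper's proof is much shorter because it notes that $\Gamma^{i\uparrow}$ is itself a transitive sggi with a perfect split (either $\{a,c\}$ or $\{c,b\}$, by construction), so Proposition~\ref{primpb} applied directly to $\Gamma^{i\uparrow}$ yields primitivity in one line; this bypasses your entire $\equiv$-class/block discussion, which is in effect a reproof of Proposition~\ref{primpb} in this special case.
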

\begin{proof}
By Proposition~\ref{primpb},  $G^{i\uparrow}$ is primitive. Moreover $(\delta_i\delta_{i+1})^2$ is a 3-cycle.
By Proposition~\ref{3cyc}(b),  $G^{i\uparrow}\geq A_{n+1}$.
\end{proof}

 

The r\&d-extension of a string C-group is not in general a string C-group. For instance $\Gamma= (S_4, \{(1,2)(3,4),\,(2,3), \,(3,4)\})$ is the string C-group obtained from the automorphism group of the hemi-cube acting on its vertices. However $\Gamma^{0\uparrow} = (S_5,\{(5,1), (1,2)(3,4),\,(2,3), \,(3,4)\})$ is not a string C-group.
In what follows we give sufficient conditions under which this operation preserves the intersection property.
In what follows transitivity of $G$ is not required.


\begin{prop}\label{i1r3}
Let $\Gamma = (G,S)$  be a string C-group of rank $3$ with a perfect $1$-split $\{a,b\}$. If $\{a,b\}\cap Fix(G_1)=\emptyset$ then $\Gamma^{1\uparrow}$ is a string C-group.  \end{prop}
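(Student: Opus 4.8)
The plan is to verify the intersection property for $\Gamma^{1\uparrow}$ using Proposition~\ref{arp}, which reduces the problem to three ingredients: that $(\Gamma^{1\uparrow})_0$ and $(\Gamma^{1\uparrow})_3$ are string C-groups, and that $G^{1\uparrow}_0 \cap G^{1\uparrow}_3 \cong G^{1\uparrow}_{0,3}$. First I would identify the four generators of $\Gamma^{1\uparrow}$ explicitly: writing $S = \{\rho_0,\rho_1,\rho_2\}$ with $\rho_1 = \alpha_1\beta_1(a,b)$ and the perfect $1$-split giving $\rho_0 = \alpha_0$ (acting only on $O_1$) and $\rho_2 = \beta_2$ (acting only on $O_2$), the extension has generators $\delta_0 = \rho_0$, $\delta_1 = \alpha_1(a,c)$, $\delta_2 = \beta_1(c,b)$, $\delta_3 = \rho_2$, with $c$ the new point. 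Note $\delta_0$ commutes with $\delta_2$ and $\delta_3$, and $\delta_3$ commutes with $\delta_0$ and $\delta_1$, as required by the string diagram.

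Next I would handle the two rank-$3$ subgroups. For $(\Gamma^{1\uparrow})_3 = (G^{1\uparrow}_3, \{\delta_0,\delta_1,\delta_2\})$: the generators act on $\{1,\dots,n\}\cup\{c\}$ minus whatever $\delta_3=\rho_2$ moves, and since $\rho_2$ acts only inside $O_2$ while $\delta_0,\delta_1$ live essentially on $O_1\cup\{c\}$ together with $\beta_1$ on $O_2$, one sees that $(\Gamma^{1\uparrow})_3$ is built from $(\Gamma_3, \{\rho_0,\rho_1\})$ — which is a string C-group since $\Gamma$ is — by splitting the pair $\{a,b\}$ of $\rho_1$ into two adjacent transpositions through $c$. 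I would argue $(\Gamma^{1\uparrow})_3$ is a string C-group directly: it has rank $3$ with a perfect $1$-split, and the hypothesis $\{a,b\}\cap\mathrm{Fix}(G_1)=\emptyset$ transfers to say $a$ is moved by $\rho_0$ (in $O_1$) and $b$ is moved (in $O_2$, but $O_2$-part is now empty after removing $\rho_2$'s orbit... ) — more carefully, I would check the two-generator and three-generator intersection conditions by hand, since rank $3$ is small. For $(\Gamma^{1\uparrow})_0 = (G^{1\uparrow}_0, \{\delta_1,\delta_2,\delta_3\})$: here $\delta_0$ is removed, so $a$ becomes a point that is only moved by $\delta_1$ (within the relevant orbit), making $\{a,c\}$ a pendant edge; the group is a sesqui-extension-like object or directly a string C-group on the chain $a - c - b - \cdots$, which I would verify is isomorphic to a known string C-group (essentially $\Gamma_0$ with an extra pendant $1$-edge attached, which by the linear diagram structure preserves the intersection property).

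The main obstacle, and the step I expect to require the hypothesis $\{a,b\}\cap\mathrm{Fix}(G_1)=\emptyset$ in an essential way, is the final verification $G^{1\uparrow}_0 \cap G^{1\uparrow}_3 \cong G^{1\uparrow}_{0,3}$. Here $G^{1\uparrow}_0 = \langle\delta_1,\delta_2,\delta_3\rangle$ and $G^{1\uparrow}_3 = \langle\delta_0,\delta_1,\delta_2\rangle$, so the intersection contains $\langle\delta_1,\delta_2\rangle = G^{1\uparrow}_{0,3}$ and we must show there is nothing more. The danger is exactly the phenomenon in the counterexample $\Gamma^{0\uparrow}$ for the hemicube: when $a$ or $b$ is fixed by $G_1$, the new point $c$ can be "confused" with the fixed point, creating extra elements in the intersection (a symmetric group where a dihedral group is expected, as in Proposition~\ref{IPfails}). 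I would argue: since $a\notin\mathrm{Fix}(G_1)$, the point $a$ lies in a $G_1$-orbit of size $\geq 2$ inside $O_1$, and this orbit structure persists in $G^{1\uparrow}_3$; the orbit of $c$ under $G^{1\uparrow}_0$ and the orbit of $c$ under $G^{1\uparrow}_3$ meet in a controlled way because $c$'s only neighbours are $a$ (via $\delta_1$) and $b$ (via $\delta_2$), and neither $a$ nor $b$ is a fixed point that would let the new $3$-cycle $(\delta_1\delta_2)^2 = (a,c,b)$-type element escape into $G^{1\uparrow}_0\cap G^{1\uparrow}_3$. Concretely I would compute orbit sizes: $G^{1\uparrow}_0$ fixes everything $\delta_0=\rho_0$ moves outside $\{a\}$, $G^{1\uparrow}_3$ fixes everything $\rho_2$ moves, and the intersection of these fixed-point complements, combined with $a\notin\mathrm{Fix}(G_1)$ forcing $\rho_0$ to move $a$ to another point $a'$ of $O_1$ that $\delta_1,\delta_2$ do not touch, pins down $G^{1\uparrow}_0\cap G^{1\uparrow}_3$ to act only on the points moved by $\delta_1$ or $\delta_2$, which is precisely $G^{1\uparrow}_{0,3}$. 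Finally, one should double-check the degenerate small cases (e.g. where $O_1$ or $O_2$ is tiny) either by the same argument or by direct computation.
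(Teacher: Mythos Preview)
Your plan via Proposition~\ref{arp} is not the route the paper takes, and the sketch has a genuine gap. The paper argues by direct classification: the hypotheses are so restrictive that the permutation representation graph is essentially determined. The observation you are not using is the orbit-count clause in the definition of a split: $G_1=\langle\rho_0,\rho_2\rangle$ has exactly one more orbit than $G$. Since the split is perfect, $\rho_0$ acts only on $O_1$ and $\rho_2$ only on $O_2$, and the condition $\{a,b\}\cap\mathrm{Fix}(G_1)=\emptyset$ then says $a\rho_0\neq a$ and $b\rho_2\neq b$. On the connected component of the permutation representation graph containing $\{a,b\}$ this forces each of the two $G_1$-orbits there to be a single $\langle\rho_0\rangle$- (resp.\ $\langle\rho_2\rangle$-) orbit, hence of size exactly~$2$; so that component is the four-vertex path with edge labels $0,1,2$. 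In the transitive case $\Gamma$ is the $3$-simplex and $\Gamma^{1\uparrow}$ the $4$-simplex; in the intransitive case the remaining components are single $0$- or $2$-edges or double $\{0,1\}$- or $\{1,2\}$-edges, and $\Gamma^{1\uparrow}$ is recognised as a sesqui-extension, with respect to an end generator, of a string C-group for $S_5$, so Proposition~\ref{sesqui0} finishes the job.

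In your argument, the step ``$G^{1\uparrow}_0\cap G^{1\uparrow}_3$ acts only on the points moved by $\delta_1,\delta_2$, which is precisely $G^{1\uparrow}_{0,3}$'' confuses having the right support with being the right subgroup---this is exactly the failure mode in the examples of Proposition~\ref{IPfails}, where the intersection is a full symmetric group on the common support while $G_{0,r-1}$ is only dihedral. Your verifications that $(\Gamma^{1\uparrow})_0$ and $(\Gamma^{1\uparrow})_3$ are string C-groups are likewise deferred to unspecified hand checks. Once the structural rigidity above is used, all three ingredients of Proposition~\ref{arp} become immediate; without it, your orbit-support heuristic does not close the argument.
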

\begin{proof}
If $G$ is transitive, there is only one possibility for the permutation representation graph on $\Gamma$ and thus 
$$\Gamma = (S_4,\{(1,2),\, (2,3),\, (3,4)\}).$$
Then 
$$\Gamma^{1\uparrow}=(S_5,\{ (1,2),\, (2,5),\, (5,3), (3,4)\})$$
which is a string C-group, namely the 4-simplex. 

Suppose now that $G$ is intransitive.
 There is only one possibility for the connected component of the permutation representation graph of $\Gamma$  containing the perfect $1$-split which is the following.
$$
\xymatrix@-1.3pc{*+[o][F]{} \ar@{-}[rr]^0 &&*+[o][F]{} \ar@{-}[rr]^1 &&*+[o][F]{} \ar@{-}[rr]^2 &&*+[o][F]{}}
$$

The remaining components are either single $0$-edges, single $2$-edges, double edges with labels $\{0,1\}$ or $\{1,2\}$. 
In any case $\Gamma^{1\uparrow}$ is a sesqui-extension of a string C-group representation of $S_5$, which is a string C-group by Proposition~\ref{sesqui0}.
\end{proof}

\begin{thm}\label{RDE}
Let $\Gamma = (G,S)$  be a string C-group with a perfect $i$-split and let $\rho_i=\alpha_i(a,b)\beta_i$ as defined at the beginning of this section.  Suppose that $\Gamma$ has a fracture graph and that  $\{a,b\}\cap Fix(G_i)=\emptyset$.
If either $\alpha_i$ or $\beta_i$ is trivial, then $\Gamma^{i\uparrow}$ is a string C-group.
\end{thm}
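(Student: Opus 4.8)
The plan is to verify the intersection property of $\Gamma^{i\uparrow}$ by induction on the rank $r$, using Proposition~\ref{arp} as the main tool. Without loss of generality assume $\beta_i$ is trivial, so that $\rho_i=\alpha_i(a,b)$ acts as $\alpha_i$ on $O_1$ (with $a\in O_1$) and fixes $O_2\setminus\{b\}$ pointwise; since the $i$-split is perfect, $\rho_{i+1},\dots,\rho_{r-1}$ all fix $O_1$ pointwise and $\rho_0,\dots,\rho_{i-1}$ all fix $O_2$ pointwise. After the r\&d extension, $\delta_i=\alpha_i(a,c)$ and $\delta_{i+1}=(c,b)$, which is a transposition fixing everything except the two points $b,c$. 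The base cases are small ranks: rank $3$ with $i=1$ is exactly Proposition~\ref{i1r3}; the cases $i=0$ or $i=r-1$ should be handled by Proposition~\ref{sesqui0}/Proposition~\ref{sesqui1}, since when $i=0$ the new generator $\delta_0=(a,c)$ together with $\delta_1=\alpha_0$ forms a sesqui-like configuration (more carefully, when $\beta_i$ is trivial and $i$ is near an end the extension collapses to a sesqui-extension of $\Gamma$ with respect to its first or last generator). So I would reduce quickly to the case $0<i<r-1$ and rank $r\geq 4$.

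For the inductive step, I would apply Proposition~\ref{arp} to $\Gamma^{i\uparrow}$: I need $\Gamma^{i\uparrow}_0$ and $\Gamma^{i\uparrow}_{r}$ to be string C-groups, and then the single condition $G^{i\uparrow}_0\cap G^{i\uparrow}_{r}\cong G^{i\uparrow}_{0,r}$. The key observation is that $\Gamma^{i\uparrow}_0$ is itself the r\&d extension $(\Gamma_0)^{i'\uparrow}$ of $\Gamma_0$ with respect to the induced perfect split (with $i'=i-1$ if $i\geq 1$), since deleting $\rho_0$ commutes with the extension operation when $i\neq 0$; one must check that $\Gamma_0$ still has a perfect split with $\{a,b\}\cap Fix((G_0)_{i'})=\emptyset$ and still has a fracture graph (a fracture graph for $\Gamma$ restricts to one for $\Gamma_0$ because every $G_j$, $j\neq 0$, already has an extra orbit). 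Then the induction hypothesis gives that $\Gamma^{i\uparrow}_0$ is a string C-group; similarly for $\Gamma^{i\uparrow}_{r}$, which is the r\&d extension of $\Gamma_{r-1}$. The hypothesis $\{a,b\}\cap Fix(G_i)=\emptyset$ is what guarantees the split remains "genuine" (both endpoints moved) after we pass to parabolics, and it is exactly this condition that fails in the hemi-cube counterexample $\Gamma^{0\uparrow}$ mentioned before the theorem.

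The remaining and hardest part is the intersection equality $G^{i\uparrow}_0\cap G^{i\uparrow}_{r}= G^{i\uparrow}_{0,r}$. Here I would argue concretely with the orbit structure. Write $G^{i\uparrow}_0=\langle\delta_1,\dots,\delta_r\rangle$ and $G^{i\uparrow}_{r}=\langle\delta_0,\dots,\delta_{r-1}\rangle$. Using that the perfect split is preserved, $G^{i\uparrow}_{0}$ decomposes (on its orbits) essentially as a subgroup of $(\text{group on }O_1\setminus\{a\}\text{-ish})\times(\text{group on }O_2\cup\{c\})$ while $G^{i\uparrow}_{r}$ decomposes dually, and the intersection is forced to act as a direct product of the intersections on each common orbit block — which is precisely $G^{i\uparrow}_{0,r}$ — provided we know $\Gamma=(G,S)$ already satisfies $G_0\cap G_{r-1}=G_{0,r-1}$ (true, since $\Gamma$ is a string C-group) and that the extra point $c$ does not create any collapse. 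The subtle point is controlling "diagonal" elements: an element in the intersection could in principle act correctly on each orbit but fail to lie in the smaller group; this is ruled out exactly as in the proof of Proposition~\ref{11} and Proposition~\ref{sesqui1}, by noting that any element of $G^{i\uparrow}_0\cap G^{i\uparrow}_{r}$ lies in $G_0\cap G_{r-1}$ on the old points and fixes $c$, hence lies in $G_{0,r-1}=G^{i\uparrow}_{0,r}$. I expect the bulk of the write-up effort — and the main obstacle — to be bookkeeping the orbit decomposition carefully enough to make this last containment rigorous, particularly checking that the triviality of $\beta_i$ (versus $\alpha_i$) is used correctly so that $\delta_{i+1}=(c,b)$ really is an isolated transposition and does not link the two sides.
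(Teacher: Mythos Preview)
Your overall architecture matches the paper's: induction on $r$, base case from Proposition~\ref{i1r3}, and Proposition~\ref{arp} for the inductive step. Two places need repair.

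First, the claim that an element of $G^{i\uparrow}_0\cap G^{i\uparrow}_r$ ``fixes $c$'' is false: both $\delta_i$ and $\delta_{i+1}$ lie in $(G^{i\uparrow})_{0,r}\leq G^{i\uparrow}_0\cap G^{i\uparrow}_r$ and both move $c$. Likewise the ``restriction to old points lands in $G_0\cap G_{r-1}$'' step does not work as stated, since on $\{1,\dots,n\}$ the generator $\delta_i$ restricts to $\alpha_i$, not to $\rho_i$. The paper's argument for the intersection is genuinely different and is the real content of the proof: one observes that $(\delta_i\delta_{i+1})^2=(a,b,c)$ is a $3$-cycle in $(G^{i\uparrow})_{0,r}$, so by Proposition~\ref{3cyc} the action of $(G^{i\uparrow})_{0,r}$ on the orbit $X':=a^{G_{0,r-1}}\cup\{c\}$ contains $A_{X'}$; then the triviality of $\alpha_i$ (resp.\ $\beta_i$) gives the transposition $\delta_i=(a,c)$ (resp.\ $\delta_{i+1}=(c,b)$) inside $(G^{i\uparrow})_{0,r}$, upgrading this to $S_{X'}$. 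With $H,K$ the actions of $G_{0,r-1}$ on the other two orbit-pieces $Y,Z$, this yields $(G^{i\uparrow})_{0,r}\cong S_{X'}\times H\times K$, which is already the full stabiliser of the partition, so the intersection $G^{i\uparrow}_0\cap G^{i\uparrow}_r$ cannot exceed it. This is where the hypothesis ``$\alpha_i$ or $\beta_i$ trivial'' is actually used.

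Second, in the inductive step you need $\{a,b\}\cap\mathrm{Fix}((G_0)_i)=\emptyset$ to reapply the theorem to $\Gamma_0$, and this can fail precisely when $i=1$ (dually $i=r-2$). The paper handles this boundary case separately: when $i=1$, either $0$ is also a perfect split and $(\Gamma^{i\uparrow})_0\cong\Gamma$, or the extra components carrying $\rho_0$ are tiny ($0$-edges or $\{0,1\}$-double edges), and $(\Gamma^{i\uparrow})_0$ is identified with the string C-group $\Delta$ underlying a sesqui-extension. You should not expect this to fall out of the generic argument. (Relatedly, note that $\{a,b\}\cap\mathrm{Fix}(G_i)=\emptyset$ already forces $i\notin\{0,r-1\}$, so your discussion of $i=0,r-1$ is unnecessary.)
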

\begin{proof}

As $\{a,b\}\cap Fix(G_i)=\emptyset$, $i\notin\{ 0,\,\,r-1\}$ and $r\geq 3$. If $r=3$ then the theorem holds by Proposition~\ref{i1r3}. Let $r> 3$.
Assume, by induction, that this theorem holds for any rank smaller than $r$. 
Suppose that $\alpha_i$ is trivial (the same arguments will be valid if, instead, $\beta_i$ is trivial). 
Since $i\not\in\{0, r-1\}$, we have the following equalities.
\[(\Gamma^{i\uparrow})_0\cong (\Gamma_0)^{i\uparrow}\mbox{ and }
(\Gamma^{i\uparrow})_r\cong (\Gamma_{r-1})^{i\uparrow}.\]
Suppose that $\{a,b\}\cap Fix(G_{0,i})\neq \emptyset$. 
In that case $i=1$. 
Let $\mathcal G$ be the permutation representation graph of $\Gamma$.
If $\mathcal G$ is connected then $0$ is also a perfect split, and $(\Gamma^{i\uparrow})_0$ is isomorphic to $\Gamma$, hence a string C-group.
If $\mathcal G$ is disconnected then,  one of the components of $\mathcal G$ has the $1$-split connected to a 0-edge, 
and the remaining components not fixed by $\rho_0$ have exactly two vertices, that are either connected by a single $0$-edge or with a double $\{0,1\}$-edge.  Hence the connected components having 0-edges are of the following forms.

$$\xymatrix@-1.2pc{*+[o][F]{}\ar@{-}[rr]^0&&*+[o][F]{} & *+[o][F]{}\ar@{=}[rr]^0_1&&*+[o][F]{}  & *+[o][F]{}\ar@{-}[rr]^{0} &&*+[o][F]{a} \ar@{-}[rr]^1&&*+[o][F]{b} \ar@{-}[rr]^(.4){2}&& *++++[F]{}}$$

Thus $\Gamma$ is a  sesqui-extension with respect to $\rho_0$ of a string C-group $\Delta$ whose permutation representation graph is obtained from the one above by removing the 0-edges from the connected components of size 2.

$$\xymatrix@-1.2pc{*+[o][F]{}\ar@{}[rr]&&*+[o][F]{} & *+[o][F]{}\ar@{-}[rr]_1&&*+[o][F]{}  & *+[o][F]{}\ar@{-}[rr]^{0} &&*+[o][F]{a} \ar@{-}[rr]^1&&*+[o][F]{b} \ar@{-}[rr]^(.4){2}&& *++++[F]{}}$$

But then $\Gamma^{1\uparrow}$ has the following graph.

$$\xymatrix@-1.2pc{*+[o][F]{}\ar@{-}[rr]^0&&*+[o][F]{} & *+[o][F]{}\ar@{=}[rr]^0_1&&*+[o][F]{} &*+[o][F]{}\ar@{-}[rr]^{0} &&*+[o][F]{a} \ar@{-}[rr]^1&& *+[o][F]{c}  \ar@{-}[rr]^{2} &&*+[o][F]{b} \ar@{-}[rr]^(.4){3}&&*++++[F]{} }$$ 

Hence $(\Gamma^{1\uparrow})_0$ is precisely $\Delta$ (up to a renumbering of the labels on the edges and an extra isolated vertex) and thus it is a string C-group as well.

The same argument can be used when $\{a,b\}\cap Fix(G_{r-1,i})\neq \emptyset$.
 
Assume that $\{a,b\}\cap Fix(G_{0,i})= \emptyset$ and $\{a,b\}\cap Fix(G_{r-1,i})= \emptyset$.
Then, by induction,  $(\Gamma^{i\uparrow})_0$ and $(\Gamma^{i\uparrow})_r$ are string C-groups. 
   
By Proposition~\ref{arp}, it remains to prove that $(G^{i\uparrow})_0\cap( G^{i\uparrow})_r=(G^{i\uparrow})_{0,r}$.

Note that $G_{0,r-1}$  has at least three orbits. 
Let $X:=a^{G_{0,r-1}}$, $Z := Fix(G_{<i}) \setminus X$ and $Y := Fix(G_{>i}) \setminus X$.
Obviously, $\{X,\,Y,\,Z\}$ is a partition of $\{1,\ldots, n\}$.
Consider the group actions $H$ and $K$ of $G_{0,r-1}$ on $Y$ and $Z$ respectively. 
Let $G^{i\uparrow}=\langle\delta_0,\ldots,\delta_r\rangle$.

As $(\delta_i\delta_{i+1})^2=(a,b,c)\in (G^{i\uparrow})_{0,r}$, by Proposition~\ref{sntosn} the group action of $(G^{i\uparrow})_{0,r}$ on $X':=X\cup \{c\}$ contains the alternating group $A_{X'}$.
Thus, by Proposition~\ref{3cyc}(c),
$(G^{i\uparrow})_{0,r}$ contains $A_{X'}$. 
As $\alpha_i$ is trivial, $\delta_i=(a,c)\in (G^{i\uparrow})_{0,r}$, hence  $(G^{i\uparrow})_{0,r}$ is  isomorphic to $S_{X'}\times H\times K$.
The group action of  $(G^{i\uparrow})_0$ on $Y\cup Z$ is $H\times K$.
Hence $(G^{i\uparrow})_0\cap( G^{i\uparrow})_r$ is a subgroup of $(G^{i\uparrow})_{0,r}$, as wanted and therefore $(G^{i\uparrow})_0\cap( G^{i\uparrow})_r =(G^{i\uparrow})_{0,r}$. 
\end{proof}
\section{Sesqui-extensions of symmetric and alternating groups}\label{se}

We say that a sesqui-extension of a string C-group $\Gamma$ is \emph{proper} if the resulting group is $C_2\times \Gamma$, otherwise it is  \emph{improper}.
\begin{prop}\label{properAn}
Any sesqui-extension of an alternating group is proper.
\end{prop}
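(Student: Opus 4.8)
The plan is to use the characterization of sesqui-extensions provided by Lemma~\ref{sesqui}, in particular parts (2) and (3), together with the fact that in a sesqui-extension $\Gamma^*$ of a string C-group for $A_n$, all generators $\rho_i$ of $\Gamma$ are even permutations. Recall that $\Gamma^* = (G^*, S^*)$ is obtained from $\Gamma = (G, \{\rho_0,\ldots,\rho_{r-1}\})$ by picking an involution $\tau$ commuting with $G$ with $\tau \notin G$, fixing an index $k$, and setting $S^* = \{\rho_i\tau^{\eta_i}\}$ with $\eta_i = 1$ iff $i = k$. By Lemma~\ref{sesqui}(1), either $G^* \cong G$ or $G^* \cong G \times \langle\tau\rangle \cong C_2 \times G$; the sesqui-extension is proper precisely in the latter case, so we must rule out $G^* \cong G$.

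The key step is to show that the identity of $G = A_n$ can be written as a product of the generators $\rho_0,\ldots,\rho_{r-1}$ that involves an odd number of occurrences of $\rho_k$; then Lemma~\ref{sesqui}(2) immediately gives $G^* \cong G \times \langle\tau\rangle$, i.e. the extension is proper. Since $\rho_k$ is an involution, $\rho_k^2 = 1$ is a relation using $\rho_k$ an \emph{even} number of times, so what we actually need is a relation of \emph{odd} length in $\rho_k$. Equivalently, consider the homomorphism $\epsilon : F \to C_2$ from the free product $F$ of $r$ copies of $C_2$ onto $C_2$ sending the generator corresponding to $\rho_k$ to the nontrivial element and all others to $1$; we must show $\epsilon$ does not factor through the quotient map $F \to G$. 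If it did factor, then every relator of $G$ (as a quotient of $F$) would have even total $\rho_k$-exponent, and in particular the natural map $G \to C_2$ induced by ``parity of the number of $\rho_k$-letters'' would be well-defined. But this map would be a surjective homomorphism from $A_n$ onto $C_2$ — and for $n \geq 5$, $A_n$ is simple, hence has no such quotient, a contradiction. (For the small degrees $n \in \{3,4\}$ one checks directly, or notes $A_n$ does not admit a string C-group there by Theorem~\ref{altn}; and Theorem~\ref{altn} tells us string C-groups for $A_n$ exist only for $n = 5$ or $n \geq 9$, all of which are simple.)

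Thus the whole argument reduces to: if $\Gamma$ is any sggi (or string C-group) for $A_n$ with $n \geq 5$, then for every index $k$ the identity is expressible as a word in the $\rho_i$ using $\rho_k$ an odd number of times. This follows from simplicity of $A_n$ as sketched, but it is worth giving the clean form: the assignment $\rho_i \mapsto \eta_i \in C_2$ (with $\eta_k = 1$, $\eta_i = 0$ else) extends to a group homomorphism $G \to C_2$ if and only if it extends consistently over all relations, and if it did the composite $A_n = G \to C_2$ would be nontrivial (it sends $\rho_k \neq 1$ to $1$), contradicting simplicity of $A_n$; hence it does not extend, which is exactly the statement that some relation of $G$ has odd $\rho_k$-content. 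Combined with Lemma~\ref{sesqui}(2), $G^* \cong C_2 \times A_n$, so the sesqui-extension is proper.

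The main obstacle, such as it is, is purely bookkeeping: making sure the logical equivalence ``$\eta$ extends to a homomorphism $G \to C_2$'' $\iff$ ``every relation uses $\rho_k$ an even number of times'' is stated correctly, and that the alternative ``$\tau \in G^*$'' in Lemma~\ref{sesqui}(1) is genuinely the ``improper'' case so that negating it gives what we want. One should also handle the degenerate small cases $n \le 4$ by invoking Theorem~\ref{altn} (no string C-group, hence nothing to prove) — although if one wants the statement for arbitrary sggi's rather than just string C-groups, a direct check for $A_3, A_4$ suffices since neither is simple but one verifies the claim by hand, or one simply restricts the statement to $n \ge 5$ as is implicit in the paper's context.
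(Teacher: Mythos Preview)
Your proposal is correct and follows essentially the same approach as the paper's proof: both show that the identity of $G \cong A_n$ can be written as a word in the generators with an odd number of $\rho_k$'s, then invoke Lemma~\ref{sesqui}(b). The only cosmetic difference is that the paper phrases the key group-theoretic input as ``$A_n$ has no subgroup of index $2$'' (which holds uniformly for all $n$, avoiding any separate treatment of small degrees), whereas you invoke simplicity of $A_n$ for $n \geq 5$ and then handle $n \leq 4$ by side remarks.
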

\begin{proof}
Suppose that $\Gamma = (G,\{\rho_0, \ldots, \rho_{r-1}\})$ is a sggi with $G$ a permutation group of degree $n$ isomorphic to $A_n$.
Let $\Gamma^*$ be a sesqui-extension of $\Gamma$ with respect to $\rho_i$.
Let $G'$ be the subgroup of $G$ whose elements are written as a product of generators with an even number of $\rho_i$'s.
As the alternating group has no subgroups of index $2$, $G'=G$. Hence, any generator $\rho_i$ can be written as a word $w$ with an even number of generators $\rho_i$ in it. Thus the identity can be written as $w\rho_i$, which has an odd number of $\rho_i$'s.
By Lemma~\ref{sesqui}(b),  we then have that $G^* \cong C_2\times G$ meaning that $\Gamma^*$ is proper.
\end{proof}


\begin{prop}\label{properSn}
Let $G$ be a transitive permutation group of degree $n$.
Let $\Gamma = (G,\{ \rho_0,\ldots,\rho_{r-1}\})$ be a string C-group.
Suppose that $0$ is the label of a perfect split of $\Gamma$.
If $r\geq \frac{n+3}{2}$, then $G\cong S_n$ and a sesqui-extension of $\Gamma$ with respect to $\rho_0$ is proper.
\end{prop}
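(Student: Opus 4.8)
The plan is to prove the statement in two parts: first that $G\cong S_n$, then that the sesqui-extension with respect to $\rho_0$ is improper would be contradictory, whence it is proper.

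For the first part I would argue as follows. Since $0$ is the label of a perfect split of $\Gamma$ and $G$ is transitive, Proposition~\ref{primpb} gives that $G$ is primitive. On the other hand, because $\Gamma$ has a split it has a fracture graph, so by Proposition~\ref{int} (noting $r\geq\frac{n+3}{2}$ is equivalent to $n\geq 2\kappa+3$ with $\kappa=n-r$) we get $G\cong S_n$ directly. Alternatively one can invoke Theorems~\ref{altn}--\ref{bigprim}: the rank $r\geq\frac{n+3}{2}>n/2$ exceeds the bounds for alternating groups (rank $\leq\lfloor(n-1)/2\rfloor$), for primitive groups other than $S_n$ or $A_n$ (rank $<n/2$ once $n$ is large, and the small exceptional tables of rank $\geq n/2$ do not have a perfect split by the analysis feeding Proposition~\ref{att}), and imprimitive groups are ruled out by primitivity. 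So $G\cong S_n$.

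For the second part, suppose for contradiction that the sesqui-extension $\Gamma^*$ of $\Gamma$ with respect to $\rho_0$ is improper, i.e. $G^*\cong G\cong S_n$ rather than $G^*\cong C_2\times S_n$. By Lemma~\ref{sesqui}(3), $G^*\cong G$ exactly when $\rho_0$ and $\tau$ are odd permutations and every other $\rho_i$ is even; contrapositively, if the identity of $G$ can be written as a product of generators involving an odd number of $\rho_0$'s then $G^*\cong G\times\langle\tau\rangle$ by Lemma~\ref{sesqui}(2). So, in the improper case, $\rho_0$ must be an odd permutation and $\rho_1,\ldots,\rho_{r-1}$ must all be even permutations; equivalently, $G_0=\langle\rho_1,\ldots,\rho_{r-1}\rangle\leq A_n$. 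Now I would use that $0$ is a perfect split: with the partition $\{O_1,O_2\}$ of $\{1,\ldots,n\}$ associated to the perfect $0$-split $\{a,b\}$, the group $G_0$ is (isomorphic to) the action $G_{>0}$ on $O_2$, and it fixes $O_1$ pointwise; here $O_1=\{a\}$ has size $1$ since a perfect $0$-split forces $G_{<0}$ to be trivial. Thus $G_0$ acts on the $n-1$ points of $O_2$, fixing $a$, and lies inside $A_n$, i.e. $G_0\leq A_{O_2}$ acting naturally on $n-1$ points while fixing $a$. But $\Gamma_0$ is a string C-group of rank $r-1\geq\frac{n+1}{2}=\frac{(n-1)+2}{2}>\frac{n-1}{2}$ for a transitive group on $n-1$ points (transitivity of $G_0$ on $O_2$ follows since a perfect split attaches the whole of $O_2$ to the single point $a$ via the pendant $0$-edge, and $G_{>0}$ is transitive on $O_2$), contradicting the bound $\lfloor\frac{(n-1)-1}{2}\rfloor$ of Theorem~\ref{altn} for alternating groups once $n-1\geq 12$; for the finitely many small values of $n$ one checks directly (or uses the explicit tables and Proposition~\ref{att}) that no such $G_0$ exists. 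Hence the improper case is impossible and the sesqui-extension is proper.

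The main obstacle I expect is the transitivity and fixed-point bookkeeping around the perfect $0$-split: one must pin down precisely that a perfect $0$-split forces $O_1=\{a\}$ (so that $\rho_0=(a,b)$ composed with a permutation of $O_2$, with trivial $\alpha_0$ in the notation of Section~\ref{RD}), and that $G_0=G_{>0}$ is transitive on the remaining $n-1$ points. Once that is in place, the parity argument via Lemma~\ref{sesqui} together with the rank bound for alternating groups of Theorem~\ref{altn} closes the argument cleanly; the small-$n$ cases are a routine finite check using the classification tables already cited.
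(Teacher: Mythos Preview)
Your overall strategy matches the paper's, but there is a genuine gap in the second part.

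The first part is fine. For the second part, you correctly deduce (via the contrapositive of Lemma~\ref{sesqui}(b)) that if the sesqui-extension is improper then there is a well-defined homomorphism $G\to C_2$ counting the parity of $\rho_0$'s, whose kernel must be $A_n$; hence $G_0\leq A_n$, and since $G_0$ fixes $a$, in fact $G_0\leq A_{n-1}$. (Your citation of Lemma~\ref{sesqui}(c) as an ``exactly when'' is inaccurate---it is only a sufficient condition---but you recover the correct conclusion through part (b).)

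The gap is in how you derive the contradiction. You invoke Theorem~\ref{altn}, but that theorem bounds the rank of string C-groups \emph{for} $A_{n-1}$, not for arbitrary transitive subgroups of $A_{n-1}$. From $G_0\leq A_{n-1}$ you cannot conclude $G_0=A_{n-1}$. In particular, $G_0$ could be transitive imprimitive on the $n-1$ points: graph~(1) of Table~\ref{BI} is an infinite family of imprimitive string C-groups of degree $m$ and rank $(m+2)/2$, so when $r=(n+3)/2$ and $n$ is odd this rank matches $r-1=((n-1)+2)/2$ exactly. This is not a ``finitely many small $n$'' phenomenon; it must be ruled out for all $n$.

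The paper closes this gap by combining Theorems~\ref{altn}--\ref{bigprim} to reduce to the transitive imprimitive case (Table~\ref{BI}) and then invoking Proposition~\ref{att}: none of the Table~\ref{BI} graphs are split-attachable, so they cannot arise as $\Gamma_0$ when $\Gamma$ has a perfect $0$-split. You need this split-attachability argument (or something equivalent) to finish; the rank bound for alternating groups alone does not suffice.
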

\begin{proof}
By Proposition~\ref{primpb} $G$ is primitive and by Theorem~\ref{altn} and Theorem~\ref{bigprim}, $G\cong S_n$.
Consider the subgroup $G'=\langle \rho_1^{\rho_0},\rho_1,\ldots,\rho_{r-1}\rangle$. 
If a sesqui-extension with respect to $\rho_0$ is improper, then $|G:G'|=2$. Hence $G'\cong A_n$.
This implies that $G_0$ has one orbit of length 1 and one orbit of length $n-1$. Therefore $G_0$ is a subgroup of $A_{n-1}$. 
As $r\geq \frac{n+3}{2}$, 
combining Theorems~\ref{altn} to~\ref{bigprim} we readily see that $G_0$ has to be transitive imprimitive on the orbit of $n-1$ points and that the permutation representation graph on that orbit must be one of graphs (2) and (3) of Table~\ref{BI}.
But, by Proposition~\ref{att}, none of these graphs are split-attachable,  a contradiction.
\end{proof}


\section{Transitive string C-groups having  perfect splits}\label{tranS}

In what follows let $G$ be a transitive permutation group of degree $n$ and let $\Gamma = (G,\{ \rho_0,\ldots,\rho_{r-1}\})$ be a string C-group. In this case, 
when $i$ is a perfect split of $\Gamma$, then $G_i$ has exactly two orbits. As before let $n_1:=|O_1|$, where $O_1$ is the $G_{<i}$-orbit and $n_2:=|O_2|$,  where $O_2$ is the $G_{>i}$-orbit.
Let $\rho_i=\alpha_i\beta_i(a,b)$ with $\alpha_i$ and $\beta_i$ being the actions of $\rho_i$ in $O_1$ and $O_2$ respectively.  

When $\alpha_i$ is nontrivial $\Gamma_{>i-1}$ is, by Proposition~\ref{sesqui1}, a sesqui-extension of a transitive string C-group on $n_2+1$ points, having the same rank as $\Gamma_{>i-1}$.
Analogously, when $\beta_i$ is nontrivial  $\Gamma_{<i+1}$ is a sesqui-extension of a transitive string C-group on $n_1+1$ points, having the same rank as $\Gamma_{<i+1}$.

\begin{prop}\label{OneisSn}
Let  $r\geq \frac{n+3}{2}$.
Suppose that $i$ is the label of a perfect split of $\Gamma$.
Then either $G_{<i+1}$ or $G_{>i-1}$ acts as the symmetric group on the orbit containing the  perfect $i$-split $\{a,b\}$.
\end{prop}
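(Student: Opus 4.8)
The plan is to argue by contradiction, assuming that neither $G_{<i+1}$ nor $G_{>i-1}$ acts as a symmetric group on the relevant orbit, and then to deduce from the rank hypothesis $r\geq\frac{n+3}{2}$ that $\Gamma$ cannot exist. First I would record the basic dimension count. Since $i$ is a perfect split, $G_{<i}$ acts on $O_1$ with $n_1=|O_1|$ points using the generators $\rho_0,\ldots,\rho_{i-1}$, so $\Gamma_{<i}$ (or rather the string C-group it generates, possibly after stripping a sesqui-extension) has rank $i$ on roughly $n_1$ points, and similarly $\Gamma_{>i}$ has rank $r-1-i$ on roughly $n_2$ points, with $n_1+n_2=n$. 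The remark just before the statement tells us that when $\alpha_i\neq 1$, $\Gamma_{>i-1}$ is a sesqui-extension of a transitive string C-group on $n_2+1$ points of the same rank $r-i$, and symmetrically for $\beta_i\neq 1$ and $\Gamma_{<i+1}$ on $n_1+1$ points of rank $i+1$.

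The core of the argument is to feed these two sub-string-C-groups into the rank bounds of Section~\ref{T} (Theorems~\ref{altn}, \ref{bigimp}, \ref{bigprim}). Suppose for contradiction that the group $\Gamma_{<i+1}$ induces on $O_1\cup\{a'\}$ (the $n_1+1$ points, where $a'$ is the extra sesqui point, or just $O_1$ if $\beta_i$ is trivial) is \emph{not} a symmetric group, and likewise the group $\Gamma_{>i-1}$ induces on $O_2\cup\{b'\}$ is not symmetric. Then each of these transitive string C-groups, being non-symmetric, has rank bounded by roughly half its degree: rank at most $\lceil (n_2+1)/2\rceil$ for $\Gamma_{>i-1}$ and at most $\lceil (n_1+1)/2\rceil$ for $\Gamma_{<i+1}$ (using that alternating groups give $\lfloor(m-1)/2\rfloor$ by Theorem~\ref{altn}, transitive imprimitive groups give at most $\lceil(m+1)/2\rceil$ by Theorem~\ref{bigimp}, and other primitive groups at most $\lceil m/2\rceil$ by Theorem~\ref{bigprim}). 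Adding the two and accounting for the perfect-split generator $\rho_i$, which is shared, I would obtain $r \leq (i+1) + (r-i) \leq \lceil(n_1+1)/2\rceil + \lceil(n_2+1)/2\rceil$, roughly $\frac{n_1+n_2+2}{2}+1 = \frac{n}{2}+2 = \frac{n+4}{2}$; this is dangerously close to the hypothesis and needs to be sharpened. The place where the sharpening must come from is the fact that $\rho_i$ is counted in both halves: the rank of $\Gamma$ is $i + 1 + (r-1-i) + 1 = r$ where one $+1$ is $\rho_i$ itself, so more carefully $r = (\text{rank of }\Gamma_{<i+1}) + (\text{rank of }\Gamma_{>i}) = (\text{rank of }\Gamma_{<i+1}) + (\text{rank of }\Gamma_{>i-1}) - 1$. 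Since $\Gamma_{<i+1}$ has degree $n_1+1$ (or $n_1$) and $\Gamma_{>i-1}$ has degree $n_2+1$ (or $n_2$), and neither is symmetric, the bound becomes $r \leq \lceil(n_1+1)/2\rceil + \lceil(n_2+1)/2\rceil - 1 \leq \frac{n_1+n_2+4}{2}-1 = \frac{n+2}{2}$, contradicting $r\geq\frac{n+3}{2}$. I would need to verify the parity bookkeeping so that the ceilings do not cost an extra $+1$; most likely one treats the cases $\alpha_i=1$, $\beta_i=1$, and both nontrivial separately, and in the degenerate cases (one of the orbits tiny, $\rho_i$ at the very end, etc.) one invokes Proposition~\ref{primpb} and the fact that $G$ is then primitive of degree $n$, hence symmetric by the threshold, forcing one half to already be symmetric.

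The main obstacle I anticipate is precisely this tightness: the naive sum of the two ``at most half'' bounds lands at $\frac{n+2}{2}$ or $\frac{n+4}{2}$ depending on how carefully one handles the shared generator $\rho_i$ and the $+1$ from each sesqui-extension, and $\frac{n+3}{2}$ sits right on the boundary. So the delicate part is to extract, from the structure theorems of Section~\ref{T}, the exact inequalities rather than the order-of-magnitude ones, and to rule out the borderline configurations (e.g.\ $\Gamma_{>i-1}$ being the transitive imprimitive graph (1) of Table~\ref{BI} with rank exactly $\frac{n_2+3}{2}$) by observing that $\rho_i$ would then have to break the blocks while commuting with all but one generator, which is impossible — exactly the argument already used in the proof of Proposition~\ref{int}. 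Once both halves are forced to stay strictly below their thresholds, the sum is strictly below $\frac{n+3}{2}$ and the contradiction closes; hence at least one of $G_{<i+1}$, $G_{>i-1}$ must induce the full symmetric group on its orbit.
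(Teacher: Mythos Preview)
Your approach is the same as the paper's---contradiction plus the rank bounds of Section~\ref{T}---and your final arithmetic $r\leq\frac{n+2}{2}$ is exactly what the paper obtains. However, you are missing the one observation that collapses all of the ``delicate'' parts you worry about: Proposition~\ref{primpb}. Since the $i$-split is perfect, the transitive string C-group underlying $\Gamma_{<i+1}$ on its orbit of size $n_1+1$ has a perfect split (namely the last generator $\rho_i$, whose removal leaves $b$ as a fixed point), and likewise for $\Gamma_{>i-1}$ on $n_2+1$ points. Proposition~\ref{primpb} then says both of these groups are \emph{primitive}. This immediately removes Theorem~\ref{bigimp} from the picture: there is no imprimitive case to consider, and in particular the borderline graph~(1) of Table~\ref{BI} that you single out cannot occur at all.

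Once primitivity is in hand, the paper simply invokes Theorems~\ref{altn} and~\ref{bigprim}: a primitive non-symmetric string C-group of degree $m$ has rank at most $\frac{m+1}{2}$ (the alternating bound and the ``other primitive'' bound both sit at or below this, and the one exceptional graph~(7) of Table~\ref{primPolys} with rank $4$ on $6$ points has no perfect split, so cannot arise here). Hence $i+1\leq\frac{(n_1+1)+1}{2}$ and $r-i\leq\frac{(n_2+1)+1}{2}$; adding gives $r+1\leq\frac{n+4}{2}$, i.e.\ $r\leq\frac{n+2}{2}$, contradicting $r\geq\frac{n+3}{2}$. That is the entire proof---three lines---and there is no parity bookkeeping, no case split on $\alpha_i,\beta_i$, and no borderline analysis needed.
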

\begin{proof}
Suppose that neither of the groups $G_{<i+1}$ or $G_{>i-1}$ acts as a symmetric group on the orbit containing $\{a,b\}$.
Hence, by Theorem~\ref{altn}, Theorem~\ref{bigprim} and Proposition~\ref{primpb}, $i+1\leq \frac{(n_1+1)+1}{2}$ and  $r-i\leq \frac{(n_2+1)+1}{2}$.
Therefore $r\leq  \frac{n+2}{2}$, a contradiction.
\end{proof}

\begin{prop}\label{SnBigrank}
Let  $r\geq \frac{n+3}{2}$.
Suppose that $i$ is the label of a perfect split of $\Gamma$.
If  $G_{<i+1}$ does not contain $A_{n_1+1}$, then one of the following situations occurs.
\begin{enumerate}
\item $G_{>i}\cong S_{n_2}$ and $r-(i+1)\geq \frac{n_2+3}{2}$;
\item $\Gamma_{<i+1}$ is one of the string C-groups (2), (3) and (8) of Table~\ref{primPolys}.
\end{enumerate}
\end{prop}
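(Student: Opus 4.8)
The plan is to exploit the two sesqui-extension statements just before the proposition. Since $i$ is a perfect split, $\rho_i = \alpha_i\beta_i(a,b)$ and, by the remarks preceding Proposition~\ref{OneisSn}, when $\beta_i$ is nontrivial $\Gamma_{<i+1}$ is a sesqui-extension with respect to $\rho_i$ of a transitive string C-group $\Delta$ of degree $n_1+1$ of the same rank $i+1$; and when $\beta_i$ is trivial $\Gamma_{<i+1}$ itself is already a transitive string C-group of degree $n_1+1$ (the added point $b$ being moved only by $\rho_i$), so we may take $\Delta = \Gamma_{<i+1}$ in that degenerate case. In either case $\Delta$ is a transitive string C-group of rank $i+1$ acting on $n_1+1$ points, and its group is $G_{<i+1}$ acting on the orbit $O_1\cup\{b\}$ (possibly modulo the $C_2$ coming from the sesqui-extension). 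By hypothesis $G_{<i+1}\not\geq A_{n_1+1}$, so $\Delta$ is a transitive string C-group whose group is neither $S_{n_1+1}$ nor $A_{n_1+1}$.

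First I would apply Proposition~\ref{primpb}: since $\Delta$ has a perfect split (the vertex $b$, which after adding it is a perfect $i$-split because $\alpha_i\beta_i(a,b)$ restricted to $O_1\cup\{b\}$ moves $b$ only via the single transposition), $\Delta$ is primitive. Hence $\Delta$ is a primitive string C-group of degree $n_1+1$ whose group is neither symmetric nor alternating. By Theorem~\ref{altn} (ruling out the sporadic alternating possibilities on few points) and Theorem~\ref{bigprim}, either the rank $i+1$ of $\Delta$ satisfies $i+1 < (n_1+1)/2$, or the permutation representation graph of $\Delta$ is one of those in Table~\ref{primPolys}. I would handle these two alternatives separately.

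In the first alternative, $i+1 < (n_1+1)/2$, i.e. $i \le (n_1-1)/2$. Then, using $r \ge (n+3)/2$ with $n = n_1+n_2$ and the fact that $\Gamma_{>i}$ has rank $r-i \le (n_2+2)/2$ unless $G_{>i}$ is symmetric (again by Theorems~\ref{altn} and~\ref{bigprim} together with Proposition~\ref{primpb}, since $i$ being a perfect split forces $G_{>i}$ to be primitive on $O_2$... here one must be slightly careful: $G_{>i}$ acts on $O_2$, and by Proposition~\ref{OneisSn} one of $G_{<i+1}$, $G_{>i-1}$ is symmetric on the orbit with $\{a,b\}$; since $G_{<i+1}$ is not, $G_{>i-1}$ is, which forces $G_{>i}\cong S_{n_2}$). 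With $G_{>i}\cong S_{n_2}$, the string C-group $\Gamma_{>i}$ of rank $r-(i+1)$ on $n_2$ points must itself have rank $\ge (n_2+3)/2$: indeed $r-(i+1) \ge (n+3)/2 - (i+1)$ and $i \le (n_1-1)/2$ give $r-(i+1) \ge (n+3)/2 - (n_1+1)/2 = (n_2+2)/2 - 1 + 1 = (n_2+3)/2$ after the arithmetic. So we land in case (1).

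In the second alternative, $\Delta$'s graph is one of (1)--(8) of Table~\ref{primPolys}. Here the key tool is Proposition~\ref{att}: a string C-group among those of Table~\ref{primPolys} that is split-attachable must be one of (2), (3), (5), (8). But $\Delta$ is exactly $\Gamma_{<i+1}$ (or its sesqui-extension source), and $\Gamma_{<i+1}$ must be split-attachable because $\Gamma_{>i}$ is attached to it via the perfect split at $b$ / the pendant edge with label $i$; thus $\Delta$ is one of graphs (2), (3), (5), (8) of Table~\ref{primPolys}, i.e. (after accounting for the sesqui-extension and relabelling) $\Gamma_{<i+1}$ is one of the string C-groups (2), (3), (8) of Table~\ref{primPolys} — graph (5) being excluded because its degree/rank combination is incompatible with $r\ge(n+3)/2$, or because its sesqui-extension by $\rho_i$ at the first generator does not keep it split-attachable in the required way. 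This gives case (2). The main obstacle I anticipate is the bookkeeping in the first alternative: correctly transferring ``$\not\geq A_{n_1+1}$ on $O_1\cup\{b\}$'' through the sesqui-extension to a genuine primitive non-alternating string C-group, pinning down $G_{>i}\cong S_{n_2}$ via Proposition~\ref{OneisSn}, and then verifying the rank inequality $r-(i+1)\ge (n_2+3)/2$ cleanly from $r\ge(n+3)/2$ and $i+1 < (n_1+1)/2$ — together with checking that the Table~\ref{primPolys} graphs of small degree indeed cannot meet $r\ge(n+3)/2$ except in the listed exceptional cases.
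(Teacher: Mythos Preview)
Your overall structure---split into the two alternatives coming out of Theorem~\ref{bigprim}---is the same as the paper's, but both branches contain genuine gaps.

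\textbf{First alternative (rank bound).} Your arithmetic does not work as written: from $i+1<(n_1+1)/2$ you write ``i.e.\ $i\le(n_1-1)/2$'', which is the \emph{non-strict} restatement and then yields only $r-(i+1)\ge(n_2+2)/2$; the line ``$(n_2+2)/2-1+1=(n_2+3)/2$'' is simply false. The fix is to use integrality: $i+1<(n_1+1)/2$ with $i+1\in\mathbb Z$ gives $2(i+1)\le n_1$, i.e.\ $i+1\le n_1/2$, and then $r-(i+1)\ge(n+3)/2-n_1/2=(n_2+3)/2$ drops out directly. Once you have this inequality, the detour through Proposition~\ref{OneisSn} is unnecessary and in fact does not give what you claim (knowing $G_{>i-1}$ acts as $S_{n_2+1}$ on $O_2\cup\{a\}$ does not immediately identify $G_{>i}$): instead, $\Gamma_{>i}$ is a transitive string C-group of rank $r-(i+1)\ge(n_2+3)/2$ on $n_2$ points, and Theorems~\ref{altn}--\ref{bigprim} force $G_{>i}\cong S_{n_2}$.

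\textbf{Second alternative (Table~\ref{primPolys}).} Using Proposition~\ref{att} to reduce to graphs (2), (3), (5), (8) is fine, but your two proposed reasons for excluding (5) both fail. Its degree/rank combination ($n_1+1=6$, $i+1=3$) is perfectly compatible with $r\ge(n+3)/2$ for all $n\ge 9$, and the sesqui-extension remark is not relevant here. The paper excludes (5) via Proposition~\ref{int}: $\Gamma$ has a fracture graph, so every $G_j$ is intransitive. If $\Gamma_{<i+1}$ were (the dual of) graph (5), then $\langle\rho_1,\rho_2\rangle$ is already transitive on the $6$-point orbit $O_1\cup\{b\}$; since $G_{>i}$ is transitive on $O_2\ni b$, this makes $G_0=\langle\rho_1,\ldots,\rho_{r-1}\rangle$ transitive on all of $\{1,\ldots,n\}$, contradicting the fracture graph. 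Graphs (2), (3), (8) do not have this defect.
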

\begin{proof}
Suppose first that $\Gamma_{<i+1}$ is one of the string C-groups of Table~\ref{primPolys}. Since, by Proposition~\ref{int}, $\Gamma$ has a fracture graph, only string C-groups (2), (3) and (8) are possible for $\Gamma_{<i+1}$, which gives situation (b).
As $G_{<i+1}$ does not contain $A_{n_1+1}$, we have that $\Gamma_{<i+1}$ is not one of the string C-groups of Table~\ref{an}. Consequently,  by Proposition~\ref{primpb} and Theorem~\ref{bigprim}, $i+1\leq \frac{(n_1+1)-1}{2}$. Hence $r-(i+1)\geq \frac{n_2+3}{2}$.
Moreover by Theorems~\ref{altn} to~\ref{bigprim}, we have $G_{>i}\cong S_{n_2}$. This gives situation (a).
\end{proof}

\begin{prop}\label{SnBigrankdual}
Let  $r\geq \frac{n+3}{2}$.
Suppose that $i$ is the label of a perfect split of $\Gamma$.
If $G_{>i-1}$ does not contain $A_{n_2+1}$, then one of the following situations occurs.
\begin{enumerate}
\item $G_{<i}\cong S_{n_1}$ and $i\geq \frac{n_1+3}{2}$;
\item $\Gamma_{>i-1}$ is the dual of one of the string C-groups (2), (3) or (8) of Table~\ref{primPolys}.
\end{enumerate}
\end{prop}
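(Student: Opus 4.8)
The statement of Proposition~\ref{SnBigrankdual} is simply the dual of Proposition~\ref{SnBigrank}, so the plan is to obtain it by applying Proposition~\ref{SnBigrank} to the dual string C-group $\Gamma' = (G, \{\rho_{r-1}, \ldots, \rho_0\})$ and then translating the conclusions back. First I would note that if $i$ is the label of a perfect $i$-split $\{a,b\}$ of $\Gamma$, with $G_{<i}$ and $G_{>i}$ the actions on the orbits $O_1$ and $O_2$ respectively, then in the dual $\Gamma'$ the same pair $\{a,b\}$ is a perfect split with label $i' := r-1-i$, since relabelling $j \mapsto r-1-j$ turns the conditions ``$\alpha_j = 1_G$ for $j \in \{i+1,\ldots,r-1\}$'' and ``$\beta_j = 1_G$ for $j \in \{0,\ldots,i-1\}$'' into the analogous conditions for $\Gamma'$ with the roles of the two orbits interchanged. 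Thus for $\Gamma'$ the orbit $O_2$ plays the role of ``$O_1$'' (the $\Gamma'_{<i'+1}$-orbit) with size $n_2$, and $O_1$ plays the role of ``$O_2$'' with size $n_1$.

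Next I would check that the hypotheses transfer: $r \geq \frac{n+3}{2}$ is self-dual, and the assumption that $G_{>i-1}$ does not contain $A_{n_2+1}$ is exactly the assumption that, for $\Gamma'$, the group $G'_{<i'+1} = G_{>i-1}$ (acting on the orbit $O_2$ together with the split, when $\beta_i \neq 1_G$) does not contain the alternating group on $n_2 + 1$ points. Applying Proposition~\ref{SnBigrank} to $\Gamma'$ then yields that either (a$'$) $G'_{>i'} \cong S_{n_1}$ with $r - (i'+1) \geq \frac{n_1+3}{2}$, or (b$'$) $\Gamma'_{<i'+1}$ is one of the string C-groups (2), (3), (8) of Table~\ref{primPolys}. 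Translating: $G'_{>i'} = G_{<i}$, and $r - (i'+1) = r - (r-1-i) - 1 = i$, so (a$'$) becomes $G_{<i} \cong S_{n_1}$ and $i \geq \frac{n_1+3}{2}$, which is conclusion (a). Likewise $\Gamma'_{<i'+1}$ is, by definition of the dual, exactly the dual of $\Gamma_{>i-1}$, so (b$'$) becomes: $\Gamma_{>i-1}$ is the dual of one of the string C-groups (2), (3), (8) of Table~\ref{primPolys}, which is conclusion (b).

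The only genuinely delicate point — and the one I would be careful to spell out — is the bookkeeping around the ``$+1$'' in the sesqui-extension picture: Proposition~\ref{SnBigrank} is phrased in terms of $\Gamma_{<i+1}$ being a transitive string C-group (a sesqui-extension on $n_1+1$ points when $\beta_i$ is nontrivial, or $\Gamma_{<i}$ itself when $\beta_i$ is trivial), and I must make sure the dualization sends $\Gamma_{<i+1}$ to $\Gamma'_{<(r-1-i)+1} = \Gamma'_{<r-i}$, which consists of the generators $\delta_0,\dots,\delta_{r-i-1}$ of $\Gamma'$, i.e. $\rho_{r-1},\dots,\rho_i$ — precisely the reverse of $\Gamma_{>i-1}$. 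Since duality is an involution on string C-groups preserving the intersection property, rank, degree, and the property of having a fracture graph (hence Proposition~\ref{int} applies equally), everything goes through mechanically. I expect no real obstacle beyond stating these identifications cleanly; the substance of the argument is entirely contained in Proposition~\ref{SnBigrank}.

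\medskip
\noindent\textit{Proof.} Apply Proposition~\ref{SnBigrank} to the dual string C-group $\Gamma' = (G,\{\rho_{r-1},\ldots,\rho_0\})$. The pair $\{a,b\}$ is a perfect split of $\Gamma'$ with label $i' = r-1-i$, and under this duality the orbit $O_2$ (of size $n_2$) becomes the $\Gamma'_{<i'+1}$-orbit while $O_1$ (of size $n_1$) becomes the $\Gamma'_{>i'}$-orbit; moreover $\Gamma'_{<i'+1}$ is the dual of $\Gamma_{>i-1}$ and $G'_{>i'} = G_{<i}$. The hypotheses $r \geq \frac{n+3}{2}$ and ``$G'_{<i'+1}$ does not contain $A_{n_2+1}$'' hold, the latter being the present hypothesis on $G_{>i-1}$. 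Proposition~\ref{SnBigrank} thus gives that either $G'_{>i'} \cong S_{n_1}$ with $r-(i'+1) \geq \frac{n_1+3}{2}$, or $\Gamma'_{<i'+1}$ is one of the string C-groups (2), (3), (8) of Table~\ref{primPolys}. Since $r-(i'+1) = i$, the first alternative is precisely (a), and since $\Gamma'_{<i'+1}$ is the dual of $\Gamma_{>i-1}$, the second is precisely (b). \qed
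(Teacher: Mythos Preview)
Your proof is correct and takes essentially the same approach as the paper, which simply states that the result is the dual of Proposition~\ref{SnBigrank} and hence the proof is analogous. You have spelled out the dualization bookkeeping in more detail than the paper does, but the underlying idea is identical.
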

\begin{proof}
This is the dual of Proposition~\ref{SnBigrank} thus its proof is analogous.
\end{proof}


\begin{prop}\label{bothAnSn}
Let  $r\geq \frac{n+3}{2}$.
Suppose that $i$ is the label of a perfect split of $\Gamma$.
If  $A_{n_1+1}\leq G_{<i+1}$ and $A_{n_2+1}\leq G_{>i-1}$ then either $\alpha_i$ or $\beta_i$ is trivial.
\end{prop}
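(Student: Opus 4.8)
\textbf{Proof plan for Proposition~\ref{bothAnSn}.}

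The plan is to argue by contradiction: suppose that both $\alpha_i$ and $\beta_i$ are nontrivial, and derive a contradiction from the intersection property of $\Gamma$. The key structural observation is that, since $\alpha_i\neq 1_G$, the subgroup $\Gamma_{>i-1}$ is (by Proposition~\ref{sesqui1}, as noted at the start of Section~\ref{tranS}) a sesqui-extension with respect to $\rho_i$ of a transitive string C-group on $n_2+1$ points whose group contains $A_{n_2+1}$; by Proposition~\ref{properAn} this sesqui-extension is proper, so $G_{>i-1}\cong C_2\times G_{>i}^{\,0}$ where $G_{>i}^{\,0}$ is alternating or symmetric on $n_2+1$ points. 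Symmetrically, since $\beta_i\neq 1_G$, the subgroup $\Gamma_{<i+1}$ is a proper sesqui-extension of a string C-group on $n_1+1$ points, giving $G_{<i+1}\cong C_2\times G_{<i}^{\,0}$ with $G_{<i}^{\,0}$ containing $A_{n_1+1}$. First I would make these two decompositions explicit, identifying the $C_2$ factor in each as generated by a transposition (the transposition $(a,b)$ "seen from" the relevant orbit) that does \emph{not} lie in $G_{<i}$, respectively $G_{>i}$.

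Next I would analyse the maximal parabolic $G_i = G_{<i}\times G_{>i}$. Here $G_{<i}$ is the action of $G_i$ on $O_1$ and $G_{>i}$ the action on $O_2$; from the sesqui-extension structure, $G_{<i}$ is obtained from $G_{<i}^{\,0}$ on $n_1+1$ points by deleting the point $b$ (equivalently by deleting the $(a,b)$-transposition), so $G_{<i}$ contains at least $A_{n_1}$ on $O_1$; likewise $G_{>i}$ contains at least $A_{n_2}$ on $O_2$. The heart of the argument is then to compute, or bound, $G_{i-1}\cap G_{i+1}$ against $G_{i-1,i+1}$, or more directly to apply Proposition~\ref{arp}-type reasoning to the pair $(\Gamma_{<i+1},\Gamma_{>i-1})$ inside $\Gamma$: the intersection property forces $G_{<i+1}\cap G_{>i-1} = G_{\{i\}}=\langle\rho_i\rangle$. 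But with both $\alpha_i$ and $\beta_i$ nontrivial and both flanking groups large (containing big alternating groups together with the extra transposition from the proper sesqui-extension), one shows $G_{<i+1}\cap G_{>i-1}$ is strictly larger than $\langle\rho_i\rangle$ — for instance it contains a nonidentity even permutation supported on $O_1\cup O_2$, or the "diagonal" transposition $(a,b)$ itself together with something in the alternating groups, contradicting that the intersection is cyclic of order $2$ generated by $\rho_i$ (which moves $a$). Concretely, I expect the contradiction to come from a small explicit element: a word in the generators that, read in $O_1$, uses an odd number of $\rho_i$'s (so it picks up the $C_2$ factor) while acting trivially enough on $O_2$ to land in $G_{>i-1}$ as well, or vice versa.

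The main obstacle is bookkeeping the two $C_2$ factors correctly: one must be careful that "proper sesqui-extension with respect to $\rho_i$" produces an extra involution commuting with everything, and that this involution, when restricted to the relevant orbit, is the product of $(a,b)$ with a fixed-point-free or near-trivial tail, so that it genuinely enlarges the relevant parabolic. The delicate point is to verify that the extra transposition coming from the $O_1$-side and the extra transposition coming from the $O_2$-side interact so as to violate $G_{<i+1}\cap G_{>i-1}=\langle\rho_i\rangle$; this requires knowing precisely which points are fixed, and here the hypothesis that $\Gamma$ has a perfect split (so $G_{<i}$ fixes $O_2$ pointwise and $G_{>i}$ fixes $O_1$ pointwise) is exactly what pins everything down. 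Once the two decompositions $G_{<i+1}\cong C_2\times (\text{big})$ and $G_{>i-1}\cong C_2\times (\text{big})$ are in hand, the cardinality/structure clash is essentially forced, and I would close the argument by exhibiting the offending element or by a short order comparison $|G_{<i+1}\cap G_{>i-1}|>2$.
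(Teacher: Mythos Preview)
Your overall strategy is exactly the paper's: assume both $\alpha_i$ and $\beta_i$ nontrivial, recognise $\Gamma_{<i+1}$ and $\Gamma_{>i-1}$ as proper sesqui-extensions, and derive a violation of $G_{<i+1}\cap G_{>i-1}=\langle\rho_i\rangle$. Two concrete pieces are missing, however.

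First, you invoke only Proposition~\ref{properAn} to conclude properness of the sesqui-extensions, but that proposition applies only when the underlying group is \emph{alternating}. Under the hypotheses here the group acting on $O_1\cup\{b\}$ (resp.\ $O_2\cup\{a\}$) could be the full symmetric group, and for that case you need Proposition~\ref{properSn}, which is precisely where the rank hypothesis $r\ge(n+3)/2$ enters. Without it the $C_2$ factor you want may fail to split off.

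Second, and more seriously, you do not pin down the ``offending element''. The natural candidate is $\alpha_i$ itself: it lies in $G_{>i-1}$ as the extra $C_2$ factor of the proper sesqui-extension, but to place it in $G_{<i+1}$ you need $G_{<i+1}$ to act as the \emph{full} symmetric group $S_{n_1+1}$ on $O_1\cup\{b\}$, since $\alpha_i$ may well be an odd permutation. Merely containing $A_{n_1+1}$ is not enough. This is exactly what Proposition~\ref{OneisSn} supplies: at least one of the two sides acts as the full symmetric group, and up to duality one may take it to be $G_{<i+1}$. Then $\alpha_i\in S_{n_1+1}\times\langle\beta_i\rangle\cong G_{<i+1}$, while also $\alpha_i\in G_{>i-1}$, forcing $\alpha_i\in\langle\rho_i\rangle$; since $\alpha_i$ fixes $b$ and $\rho_i$ does not, this is the contradiction. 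Your sketch gestures toward ``a small explicit element'' but never identifies it, and the parity obstacle explains why the argument cannot be completed symmetrically without first invoking Proposition~\ref{OneisSn}.
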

\begin{proof}
Suppose that neither $\alpha_i$ nor $\beta_i$ is trivial.
By Propositions~\ref{properAn} and \ref{properSn}, $\Gamma_{<i+1}$ (resp. $\Gamma_{>i-1}$) is a proper sesqui-extension of  a string C-group whose group is $A_{n_1+1}$ or $S_{n_1+1}$ (resp. $A_{n_2+1}$ or $S_{n_2+1}$).
By Proposition~\ref{OneisSn}  we may assume (up to duality) that $G_{<i+1}\cong S_{n_1+1}\times \langle \beta_i\rangle$ and $G_{>i-1}$ contains a subgroup $A_{n_2+1}\times \langle \alpha_i\rangle$.
But then $\alpha_i\in G_{<i+1}\cap G_{>i-1}=\langle \rho_i\rangle$, a contradiction.
\end{proof}

\begin{prop}\label{4ps}
Suppose that $i$ is the label of a perfect split of $\Gamma$.
If $j\neq i$ is such that $\Gamma_i$ has a perfect $j$-split,
then either $\Gamma$ has a perfect $j$-split or $j=i\pm 1$.
\end{prop}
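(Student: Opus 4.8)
\textbf{Proof plan for Proposition~\ref{4ps}.}
The plan is to work entirely at the level of permutation representation graphs and the combinatorics of splits. Let $\{a,b\}$ be the given perfect $i$-split of $\Gamma$, with associated partition $\{O_1,O_2\}$ of $\{1,\ldots,n\}$ into the $G_{<i}$-orbit $O_1$ (containing $a$) and the $G_{>i}$-orbit $O_2$ (containing $b$). Since the split is perfect, $\rho_h$ acts trivially on $O_2$ for $h<i$ and trivially on $O_1$ for $h>i$, and $\rho_i=\alpha_i\beta_i(a,b)$ with $\alpha_i$ supported in $O_1$ and $\beta_i$ in $O_2$. Now suppose $j\neq i$ and that $\Gamma_i$ has a perfect $j$-split $\{c,d\}$; we must show $\Gamma$ itself has a perfect $j$-split unless $j=i\pm1$. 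First I would dispose of the case $|i-j|\geq 2$ of the hypothesis that $j=i\pm1$ fails: here $\rho_i$ and $\rho_j$ commute, so $\rho_i$ preserves every $G_{i,j}$-orbit; in particular $\rho_i$ preserves the two $\Gamma_i$-orbits of the $j$-split, which are just the $G_i$-orbits restricted appropriately.

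The key step is to translate the $j$-split data for $\Gamma_i$ into a $j$-split for $\Gamma$. Because $j$ is the label of a perfect split of $\Gamma_i$, there is a partition $\{U_1,U_2\}$ of $\{1,\ldots,n\}$ such that, among the generators of $\Gamma_i$ (that is, all $\rho_h$ with $h\neq i$), $\rho_j$ is the unique one swapping the two blocks, it does so via a single transposition $\{c,d\}$, and moreover the generators $\rho_h$ with $h>j$, $h\neq i$ act trivially on $U_1$ while those with $h<j$, $h\neq i$ act trivially on $U_2$. I would then check that, when $|i-j|\geq 2$, the \emph{same} partition $\{U_1,U_2\}$ witnesses a perfect $j$-split for $\Gamma$: the only generator not already accounted for is $\rho_i$, and since $(\rho_i\rho_j)^2=1$, the involution $\rho_i$ either fixes both blocks $U_1,U_2$ setwise or interchanges them. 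If $\rho_i$ interchanged $U_1$ and $U_2$, then $\rho_i$ together with $\rho_j$ would both link the two orbits, and an $\{i,j\}$-square argument (as in the proof of Proposition~\ref{primpb}, or the standard square analysis in a sggi) forces the remaining two vertices of that square to lie in different blocks and be swapped by one of $\rho_i,\rho_j$, giving a second $j$-edge or a contradiction with $\{c,d\}$ being the unique pair linking $U_1$ and $U_2$ in $\Gamma_i$. Hence $\rho_i$ fixes $U_1$ and $U_2$ setwise, and then $\rho_i$ acts trivially on one of them precisely when $i>j$ or $i<j$: if $i>j$ then $i>j$ puts $\rho_i$ among the ``high'' generators which must fix $U_1$ pointwise for perfectness of the $j$-split — and I need to argue that $\rho_i$ indeed acts trivially on the correct block. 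This is where the perfectness of the original $i$-split is used: the block of the $j$-split on which the ``high'' generators of $\Gamma_i$ act nontrivially is contained in $O_2$ (respectively $O_1$), because every $\rho_h$ with $h>j$, $h\neq i$ that acts nontrivially on $U_2$ must — by perfectness of the $i$-split combined with $h$ being on the appropriate side of $i$ — already be confined; a short case check on whether $j<i$ or $j>i$ then shows $\rho_i$ acts trivially on the relevant block, completing the verification that $\{U_1,U_2\}$ is a perfect $j$-split of $\Gamma$.

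The main obstacle I anticipate is bookkeeping the interaction of the two ``sidedness'' conditions: the $i$-split splits the generators at position $i$, the $j$-split (inside $\Gamma_i$) splits the remaining generators at position $j$, and one must verify these are compatible, i.e. that the block structure of the $j$-split refines or is compatible with that of the $i$-split in the right way, so that $\rho_i$ lands on the trivial side. Concretely, the worry is a configuration where $\rho_i$ acts nontrivially on both $U_1$ and $U_2$ even though it fixes each setwise; I would rule this out by noting that such an action, combined with $\rho_i$ being the split generator at level $i$ and the commuting relations $(\rho_i\rho_h)^2=1$ for $|i-h|\geq2$, would propagate a nontrivial action of $\rho_i$ across one of the orbits $O_1,O_2$ in a way incompatible with $\{a,b\}$ being the unique $i$-edge between $O_1$ and $O_2$ — again a square-chasing argument of the type used in Proposition~\ref{2fpaths} and the split analysis of Section~\ref{SSplit}. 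Once that is excluded, the remaining verification that the sets of ``high'' and ``low'' generators behave correctly is routine. I would also handle separately the degenerate possibilities where one of the groups $A$ or $B$ (in the $i$-split) is trivial, since then $\Gamma$ is essentially a sesqui-extension (Proposition~\ref{sesqui1}) and the $j$-split of $\Gamma_i$ transports transparently; and the case $j=i\pm1$ is simply the exception allowed by the statement, so no work is needed there.
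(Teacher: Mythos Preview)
Your overall plan matches the paper's three-line proof: reduce to $|i-j|\geq 2$ and argue that $\rho_i$ must act trivially on the far block $U_2$ of the $j$-split of $\Gamma_i$. But the two steps on which your argument hinges are both wrong, not merely sketchy.

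First, the dichotomy ``$\rho_i$ either fixes both blocks $U_1,U_2$ setwise or interchanges them'' does not follow from $(\rho_i\rho_j)^2=1$: the blocks are $G_{i,j}$-orbits, and $\rho_i$ does not normalise $G_{i,j}$ (it fails to commute with $\rho_{i\pm1}$), so a priori $\rho_i$ could have just one or two transpositions crossing $\{U_1,U_2\}$. Second, your proposed final contradiction is mis-aimed: any nontrivial action of $\beta_i$ on $U_2\subseteq O_2$ lives entirely inside $O_2$ and creates no new $i$-edge between $O_1$ and $O_2$, so it cannot conflict with the uniqueness of $\{a,b\}$. The correct mechanism (what the paper compresses into ``the commuting property'') is this: since $\Gamma$ has a split it has a fracture graph, so $G_j$ is intransitive; but $G_{i,j}$ has exactly the three orbits $O_1$, $U_1\cap O_2$, $U_2$, and $(a,b)$ already merges the first two, so any $\rho_i$-transposition crossing $\{U_1,U_2\}$ would make $G_j$ transitive --- hence $\rho_i$ does preserve each $U_k$ setwise. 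To finish, look specifically at the split vertex $d\in U_2$: if $\rho_i$ moved $d$ then (as $|i-j|\geq2$) the $\{i,j\}$-square on $(c,d)$ would yield either a crossing $i$-transposition (just excluded) or a \emph{second} $j$-edge between $U_1$ and $U_2$, contradicting uniqueness in the $j$-split of $\Gamma_i$. So $\rho_i$ fixes $d$, and since $\beta_i$ commutes with the transitive action of $\langle\rho_h:h>j\rangle$ on $U_2$, it fixes all of $U_2$.
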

\begin{proof}
Suppose without loss of generality that $j>i$.
If $\Gamma_i$ has a perfect $j$-split that is not a perfect $j$-split for $\Gamma$, 
then $\rho_i$ acts non-trivially on both orbits of $\Gamma_j$. 
The commuting property then implies that $j=i+1$.
\end{proof}

\begin{prop}\label{5ps}
Let $r\geq \frac{n+3}{2}$. Suppose that $\Gamma$ has a perfect $0$-split. If $1$ is the label of a perfect split of $\Gamma_0$ then $1$ is the label of a perfect split of $\Gamma$.
\end{prop}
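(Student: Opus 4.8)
The statement to prove is Proposition~\ref{5ps}: if $r \geq \frac{n+3}{2}$, $\Gamma$ has a perfect $0$-split, and $1$ is the label of a perfect split of $\Gamma_0$, then $1$ is the label of a perfect split of $\Gamma$ itself. The plan is to argue by contradiction: assume $1$ is a perfect split of $\Gamma_0 = (G_0, \{\rho_1,\dots,\rho_{r-1}\})$ but not a perfect split of $\Gamma$. Since $\Gamma$ has a perfect $0$-split, write $\rho_0 = \alpha_0\beta_0(a,b)$ with $a \in O_1 = \mathrm{Fix}(G_{>0})$-complement and $b \in O_2$, where by definition of a perfect split $\alpha_0$ is trivial (it acts on $O_1$, which for a perfect $0$-split must be trivial since there are no generators $\rho_j$ with $j < 0$). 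So $\rho_0 = \beta_0(a,b)$ where $\beta_0$ acts on $O_2$; in fact $O_1 = \{a\}$, so $\rho_0 = (a,b)\beta_0$ with $\beta_0$ acting on $\{1,\dots,n\}\setminus\{a\}$.

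**Key steps.** First I would invoke Proposition~\ref{4ps} applied to the perfect $0$-split of $\Gamma$: since $1$ is a perfect split of $\Gamma_0$ and $1 \neq 0$, either $1$ is a perfect split of $\Gamma$ (which is what we want, done) or $1 = 0 \pm 1$, i.e. $j = i+1$ with $i = 0$ — which is automatically the case here. So Proposition~\ref{4ps} alone does not close the argument; the real content is to rule out the ``genuinely new'' perfect $1$-split of $\Gamma_0$. In that bad case, by the proof of Proposition~\ref{4ps}, $\rho_0$ acts nontrivially on both orbits of $\Gamma_1$ (the block structure for the $1$-split of $\Gamma_0$). Let $\{O_1', O_2'\}$ be the partition of $\{1,\dots,n\}$ giving the perfect $1$-split of $\Gamma_0$, with $G_{0,<1} = \langle\rangle$ trivial acting on $O_1'$ and $G_{0,>1} = G_{0,1}$ acting on $O_2'$; perfectness forces $O_1'$ to be a single point, say $O_1' = \{a'\}$, and $\rho_1 = (a',b')\beta_1'$ with $\beta_1'$ acting on $O_2'$. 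Now I would use the commuting property: $\rho_0$ commutes with all $\rho_j$ for $j \geq 2$, hence $\rho_0$ normalizes $G_{0,1} = \langle \rho_2,\dots,\rho_{r-1}\rangle$ and therefore preserves the orbit structure of $G_{0,1}$ on $O_2'$. Combined with the fact that $\rho_0$ moves points of $O_1'=\{a'\}$ into $O_2'$, this should force $\rho_0$ to swap $a'$ with a point of $O_2'$ and then act inside $O_2'$ — and I would derive that actually $G_{<2}$ has a specific constrained structure.

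**The main obstacle.** The crux, as in all of §\ref{tranS}, is to extract a contradiction with $r \geq \frac{n+3}{2}$, i.e. with the rank being large. The plan is: since $\Gamma$ has a perfect $0$-split, Proposition~\ref{primpb} gives $G$ primitive, and Theorem~\ref{altn} with Theorem~\ref{bigprim} give $G \cong S_n$ (using Proposition~\ref{int}, $\Gamma$ has a fracture graph). By Proposition~\ref{OneisSn} applied to the $0$-split, $G_{>-1} = G$ acts as $S_n$ — automatic — but more usefully apply it to the $1$-split of $\Gamma_0$: either $G_{0,<2}$ or $G_{0,>0}$... Actually the cleanest route is to apply Proposition~\ref{SnBigrank}/\ref{SnBigrankdual} to the $0$-split of $\Gamma$. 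Since $O_1 = \{a\}$, $n_1 = 1$, we have $\Gamma_{<1}$ trivial and $\Gamma_{>-1} = \Gamma$; so Proposition~\ref{SnBigrankdual} tells us $G_{<0} = G$-ish — these degenerate. The genuinely useful statement is that $\Gamma_0 = \Gamma_{>0}$ (on $n-1$ points, $O_2$) is a string C-group of rank $r-1 \geq \frac{n+1}{2} = \frac{(n-1)+3}{2} - 1$... wait, $r - 1 \geq \frac{n+1}{2}$ and $(n-1) = |O_2|$, so $r-1 \geq \frac{(n-1)+2}{2}$, which is just barely below the threshold $\frac{(n-1)+3}{2}$. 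I expect the hard part is handling this boundary: one likely needs the classification of transitive imprimitive string C-groups of rank exactly $\frac{m+2}{2}$ (Theorem~\ref{bigimp}, Table~\ref{BI}) together with Proposition~\ref{att} (split-attachability) to show that the only way a ``new'' $1$-split can occur forces $\Gamma_0$ to be one of the finitely many graphs in the tables — and then checking directly, via Proposition~\ref{IPfails} or a small-degree argument, that attaching the $0$-split of $\Gamma$ to such a $\Gamma_0$ either already makes $1$ a perfect split of $\Gamma$ or violates the intersection property, contradicting that $\Gamma$ is a string C-group. The bookkeeping of which orbit of $\Gamma_1$ the perfect split lies in, and matching the block systems of the $0$-split and the $1$-split, will be the fussiest part of the write-up.
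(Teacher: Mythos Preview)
Your ingredients are right --- the commuting property, Theorem~\ref{bigimp}, and Proposition~\ref{att} --- but you have not assembled them into a proof, and in places you are looking at the wrong subgroup. The detours through Propositions~\ref{OneisSn} and~\ref{SnBigrank}/\ref{SnBigrankdual} are red herrings: those concern whether $G_{<i+1}$ or $G_{>i-1}$ contains an alternating group, which is irrelevant here. Your final plan, ``$\Gamma_0$ must be one of the graphs in the tables,'' targets the wrong object; the group you need to constrain is $G_{>1}$ on its single nontrivial orbit, not $\Gamma_0$.

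The missing key observation is this. If $1$ is a perfect split of $\Gamma_0$ but not of $\Gamma$, then $\rho_0$ acts nontrivially on the $G_{>1}$-orbit $O_2'$ (of size $n-2$). Since $\rho_0$ commutes with every generator of $G_{>1}$ and $G_{>1}$ is transitive on $O_2'$, the fixed-point set of $\rho_0$ in $O_2'$ is $G_{>1}$-invariant, hence empty; so $\rho_0$ is fixed-point-free on $O_2'$ and its $2$-cycles form a system of imprimitivity for $G_{>1}$. Now $\Gamma_{>1}$ is a transitive imprimitive string C-group of degree $n-2$ and rank $r-2$. By Theorem~\ref{bigimp}, either $r-2\le\frac{n-2}{2}$ or $\Gamma_{>1}$ has one of the permutation representation graphs of Table~\ref{BI}; but by Proposition~\ref{att} none of those graphs is split-attachable, whereas $\Gamma_{>1}$ visibly is (the $1$-split of $\Gamma_0$ attaches at a pendant vertex). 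Hence $r-2\le\frac{n-2}{2}$, i.e.\ $r\le\frac{n+2}{2}$, contradicting $r\ge\frac{n+3}{2}$. That is the entire argument; no case analysis on tables of small graphs, and no appeal to the intersection-property failures of Proposition~\ref{IPfails}, is needed.
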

\begin{proof}
Suppose $1$  is a label of perfect split of $\Gamma_0$ but not of $\Gamma$. Then  $\rho_0$ acts nontrivially on the $G_{>1}$-orbit. Then, on that orbit, as $\rho_0$ commutes with all $\rho_j$'s, with $j>1$, there must be a block system for $G_{>1}$ with blocks of size 2 determined by the action of $\rho_0$.
Moreover $\rho_0$ is fix-point-free on the $G_{>1}$-orbit. 
In this case  $G_{>1}$ is imprimitive but, by Proposition~\ref{att}, $\Gamma_{>1}$ cannot be any of the string C-groups of highest rank having one of the permutation representation graphs of Table~\ref{BI}.
Hence $r-2\leq \frac{n-2}{2}$, a contradiction.
\end{proof}

\begin{prop}\label{7ps}
Suppose that $\Gamma$ has a perfect $i$-split. If $j$ is the label of a perfect split of $\Gamma_{>i}$ then, either $j=i+1$ or $j$ is the label of a perfect split of $\Gamma$.
\end{prop}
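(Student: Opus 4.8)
The plan is to prove the contrapositive-style dichotomy directly, mirroring the structure of Proposition~\ref{4ps} but one level down in the parabolic hierarchy. Suppose $j$ is the label of a perfect split of $\Gamma_{>i}$ and that $j\neq i+1$; we must show $j$ is then the label of a perfect split of $\Gamma$. First I would observe that $\Gamma_{>i}$ is itself a string C-group (as a maximal parabolic, indeed an iterated parabolic, of a string C-group), so the notion of a perfect $j$-split for $\Gamma_{>i}$ makes sense, and necessarily $j>i$ since $j$ is a label appearing in $\Gamma_{>i}$, i.e. $j\in\{i+1,\ldots,r-1\}$; excluding $j=i+1$ leaves $j\geq i+2$.

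The heart of the argument is to transfer the split datum from $\Gamma_{>i}$ up to $\Gamma$. A perfect $j$-split of $\Gamma_{>i}$ gives a partition of the support of $G_{>i}$ into two sets $O_1',O_2'$, with $\rho_j$ the unique generator of $G_{>i}$ swapping the two parts, swapping exactly one pair $\{a,b\}\in O_1'\times O_2'$, and with $\rho_\ell$ acting trivially on $O_1'$ for $\ell>j$ (within $\{i+1,\ldots,r-1\}$) and trivially on $O_2'$ for $\ell<j$ (within $\{i+1,\ldots,r-1\}$). To promote this to $\Gamma$ I need to control $\rho_\ell$ for $\ell\le i$, and in particular $\rho_i$. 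Here is where the hypothesis $j\geq i+2$ does the work: for any $\ell\le i-1$, the commuting property gives $(\rho_\ell\rho_j)^2=1$, and combined with the commuting property between $\rho_\ell$ and all of $\rho_{i+1},\ldots,\rho_{r-1}$ one sees that $\langle\rho_0,\ldots,\rho_{i-1}\rangle$ commutes with $\langle\rho_{i+1},\ldots,\rho_{r-1}\rangle$; so $G_{<i}$ stabilises each of $O_1'$ and $O_2'$ setwise (an invariant subset for $G_{>i}$ on its support that is also $G_{<i}$-invariant, extended by fixed points on the rest). For $\rho_i$ itself: since $j\ge i+2$, the pair $\{i,j\}$ is non-consecutive, so $(\rho_i\rho_j)^2=1$; I then argue, exactly as in the proof of Proposition~\ref{primpb} and Proposition~\ref{4ps}, that $\rho_i$ cannot swap $O_1'$ and $O_2'$ — otherwise a $\{i,j\}$-square would force a second generator swapping the two parts or a second swapped pair, contradicting that the $j$-split of $\Gamma_{>i}$ is a split. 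Hence $\rho_i$ preserves the partition as well.

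With the partition $\{O_1,O_2\}$ of $\{1,\ldots,n\}$ obtained by adjoining the remaining fixed points of $G_{>i}$ to $O_1'$, the pieces fall into place: $\rho_j$ is still the unique element of the full generating set $\{\rho_0,\ldots,\rho_{r-1}\}$ swapping $O_1$ and $O_2$ (the new generators $\rho_0,\ldots,\rho_i$ all preserve the partition, by the previous paragraph), it still swaps exactly the single pair $\{a,b\}$ across the partition, and $G_j$ has exactly one more orbit than $G$, so $j$ is the label of a split of $\Gamma$. Perfectness then says: $\rho_\ell$ trivial on $O_1$ for all $\ell>j$ and trivial on $O_2$ for all $\ell<j$. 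The first condition is inherited verbatim from the perfect $j$-split of $\Gamma_{>i}$ since every $\ell>j$ satisfies $\ell>i$. The second splits into $i+1\le\ell<j$ (inherited from $\Gamma_{>i}$) and $\ell\le i$: for these, $\rho_\ell$ lies in $G_{<i+1}$; I claim $G_{<i+1}$ acts trivially on $O_2$. This follows because $\langle\rho_0,\ldots,\rho_i\rangle$ commutes with $\langle\rho_{i+1},\ldots,\rho_{r-1}\rangle\supseteq\langle\rho_j\rangle$, and the $G_{>i}$-orbit structure on $O_2'$ is non-trivial (it contains the moved point $b$); a standard argument — a subgroup commuting with a transitive-on-its-orbit action and itself preserving that orbit, while we already know $\rho_i$ does not swap the parts — forces $\rho_0,\ldots,\rho_i$ to fix $O_2$ pointwise, or else one produces an alternating square linking $O_1$ to $O_2$ with a second cross edge. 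This yields that $j$ is the label of a \emph{perfect} split of $\Gamma$.

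The main obstacle I anticipate is the last claim — showing $\rho_\ell$ is pointwise trivial on $O_2$ for $\ell\le i$, not merely that it preserves $O_2$ setwise. The subtlety is that commuting with $G_{>i}$ only constrains $\rho_\ell$ on each $G_{>i}$-orbit inside $O_2$, and a priori $\rho_\ell$ could act on $O_2$ as a non-trivial element of the centraliser of that action (e.g. permuting several isomorphic orbits, or acting within an imprimitivity block system of size two). I expect to rule this out using the split hypothesis: any non-trivial such action would create an additional edge (of label $\le i$) crossing between $O_1$ and $O_2$ via an alternating square with a label $>i$ edge, contradicting either that $\rho_j$ is the unique generator swapping the parts, or the commuting property, or the intersection property of $\Gamma$ itself (computing $G_{<i+1}\cap G_{>i-1}=G_i$ if needed). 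The bookkeeping of exactly which label pairs are consecutive — and the clean separation afforded by the standing assumption $j\ge i+2$ — is what makes this go through, and I would be careful to invoke it at each step rather than only at the start.
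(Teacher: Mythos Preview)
Your approach matches the paper's in spirit: both hinge on showing that if some $\rho_l$ with $l\le i$ acts nontrivially on the $G_{>j}$-orbit $O_2'$, then the alternating $\{l,j\}$-square at the split vertex $b$ forces $l$ to be adjacent to $j$, hence $l=j-1$ and $j=i+1$. The paper runs this contrapositively in four lines; you run it directly under the assumption $j\ge i+2$.

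Two issues, one a missed simplification and one an actual error in justification. First, the hypothesis that $\Gamma$ has a \emph{perfect} $i$-split already says that $\rho_\ell$ fixes $O_2$ (and hence $O_2'\subseteq O_2$) \emph{pointwise} for every $\ell<i$. So your ``main obstacle'' collapses to the single case $\ell=i$, and the whole setwise-versus-pointwise discussion is unnecessary. Second, your parenthetical justification that $G_{<i}$ stabilises $O_1',O_2'$ setwise because these are ``invariant subsets for $G_{>i}$'' is wrong: $O_1',O_2'$ are orbits of $(G_{>i})_j$, not of $G_{>i}$ (which is transitive on $O_1'\cup O_2'$). The claim is true, but for the reason just stated, not the one you give.

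The paper's proof sidesteps both issues by arguing uniformly: if $\rho_l$ (any $l\le i$) is nontrivial on $O_2'$, then since $\rho_l$ commutes with the transitive $G_{>j}$, it is fix-point-free on $O_2'$ and in particular moves the split vertex $b$; but a generator moving $b$ must have label in $\{j-1,j,j+1\}$ (else the $\{l,j\}$-square gives a second $\rho_j$-crossing of $O_1',O_2'$), and $l\le i<j$ forces $l=j-1$, $j=i+1$.
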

\begin{proof}
Suppose that $j$ is the label of a perfect split of $\Gamma_{>i}$ but not of $\Gamma$. 
Then there exists a permutation $\rho_l$ with $l\leq i$ acting nontrivially on the $G_{>j}$-orbit.
Then  there is a block system for $G_{>j}$ with blocks of size 2 determined by the action of $\rho_l$.
Moreover $\rho_l$ is fix-point-free on the $G_{>j}$-orbit. Particularly it acts nontrivially on one of the vertices of the perfect  $j$-split, which implies that  $l=j-1$.
As $l\leq i$ and $j=l+1>i$, we must have $l=i$ and $j=i+1$, as wanted.
\end{proof}



\begin{prop}\label{6ps}
Suppose that $\Gamma$ has perfect splits with labels $i$ and $j$ with $i<j$.
If $l$ is a perfect split of $\Gamma_{\{i,\ldots,j\}}$ then $l$ is a perfect split of $\Gamma$.
\end{prop}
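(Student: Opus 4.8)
The statement concerns a perfect $l$-split of the sub-sggi $\Gamma_{\{i,\ldots,j\}}$ of a string C-group $\Gamma$ having perfect splits at $i$ and $j$ with $i<j$, and we must show such an $l$ is already a perfect split of $\Gamma$. The natural strategy is to reduce to Proposition~\ref{7ps} (or its dual) by a ``sandwiching'' argument: since $\Gamma$ has perfect splits at both $i$ and $j$, the generators $\rho_0,\ldots,\rho_{i-1}$ on one side and $\rho_{j+1},\ldots,\rho_{r-1}$ on the other are, in a sense, decoupled from the middle block $\{\rho_i,\ldots,\rho_j\}$. First I would set up notation: write the perfect $i$-split as $\{a,b\}$ with the $G_{<i}$-orbit $O_1$ and the $G_{>i}$-orbit $O_2$, so in particular $\rho_t$ for $t\le i-1$ acts trivially on $O_2$ and $\rho_t$ for $t\ge i+1$ acts trivially on $O_1$; similarly for the perfect $j$-split $\{c,d\}$. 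The key observation is that, because $i$ and $j$ are perfect splits of $\Gamma$, the restriction of $\Gamma$ to the ``middle'' generators $\Gamma_{\{i,\ldots,j\}}$ acts on a set $P$ of points on which $\rho_0,\ldots,\rho_{i-1}$ and $\rho_{j+1},\ldots,\rho_{r-1}$ all act trivially, \emph{except} possibly on a bounded set of points near the split edges $\{a,b\}$ and $\{c,d\}$.

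The main step is then to argue that the hypothetical failure of $l$ to be a perfect split of $\Gamma$ forces some generator $\rho_t$ with $t<l$ or $t>l$ (but with $t\notin\{i,\ldots,j\}$) to act nontrivially on one of the two orbits of $G_{>l}$ (or $G_{<l}$, by duality). Suppose, up to duality, $l$ is meant to be ``directed'' so that $\rho_l$ decouples $\Gamma_{\{i,\ldots,l-1\}}$ from $\Gamma_{\{l+1,\ldots,j\}}$ in the required way for $\Gamma_{\{i,\ldots,j\}}$ but not for $\Gamma$. Then there is $t\notin\{i,\ldots,j\}$ with $\rho_t$ nontrivial on, say, the $G_{>l}$-orbit $O$ of $\Gamma$. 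By the commuting property, $\rho_t$ commutes with all $\rho_s$ for $|s-t|\ge 2$; since $\rho_t$ moves a point $p$ of $O$ and every $\rho_s$ with $s>l$ (and $s\neq t$) stabilizes $O$, the standard square argument (as in Propositions~\ref{primpb} and~\ref{7ps}) shows that $\rho_t$ must in fact be one of $\rho_{l-1},\rho_{l+1}$, and moreover $\rho_t$ must move one of the two vertices of the perfect $l$-split of $\Gamma_{\{i,\ldots,j\}}$. Combined with $t\notin\{i,\ldots,j\}$ and $l\in\{i,\ldots,j\}$, this says $t=i-1$ and $l=i$, or $t=j+1$ and $l=j$. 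In the first case $\rho_{i-1}$ moves a vertex of the $l=i$ split of $\Gamma_{\{i,\ldots,j\}}$, but that split lies in the $G_{>i}$-orbit $O_2$ of $\Gamma$ (when the directed split points ``forward''), on which $\rho_{i-1}$ acts trivially because $i$ is a \emph{perfect} split of $\Gamma$ — contradiction. The second case is the dual.

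Carrying this out cleanly requires handling the orientation of the split carefully: a perfect $l$-split of $\Gamma_{\{i,\ldots,j\}}$ separates $\{\rho_i,\ldots,\rho_{l-1}\}$ from $\{\rho_{l+1},\ldots,\rho_j\}$, and the obstruction generator $\rho_t$ could in principle attack either side's orbit, so there are two (dual) subcases; in each, the perfectness of the $i$-split or $j$-split of $\Gamma$ is exactly what kills $\rho_t$'s action on the relevant orbit. I would also need to verify that enlarging the action from $P$ to $\{1,\ldots,n\}$ does not introduce a new orbit on which $\rho_t$ could legitimately act, which is precisely guaranteed by the fact that $i$ and $j$ are perfect splits — the points outside $P$ are partitioned into $\mathrm{Fix}$-sets of the relevant parabolics.

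\textbf{Main obstacle.} The delicate point is the bookkeeping of which orbit of which parabolic subgroup a given point of a perfect split of the sub-sggi belongs to, relative to the ambient $\Gamma$: a perfect split ``inside'' $\Gamma_{\{i,\ldots,j\}}$ is a statement about orbits of $(G_{\{i,\ldots,j\}})_l$, and one must translate this into a statement about orbits of $G_{>l}$ or $G_{<l}$ in $\Gamma$, using the perfectness at $i$ and $j$ to control the action of the ``outside'' generators. Once that dictionary is in place, the square/commuting argument of Proposition~\ref{7ps} applies almost verbatim, and the contradiction is immediate. I expect the proof to be quite short — essentially a two-sided application of Proposition~\ref{7ps} and its dual, glued together by the observation that $\{i,\ldots,j\}$ is sandwiched between two perfect splits.
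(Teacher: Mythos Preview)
Your plan would work, but it is considerably more elaborate than what the paper actually does. The paper's proof is three lines and invokes neither Proposition~\ref{7ps} nor any block-system or alternating-square reasoning. It simply notes that if $l$ fails to be a perfect split of $\Gamma$, the obstruction must come from some $\rho_k$ with $k\notin\{i,\ldots,j\}$ (the generators inside that range already satisfy the perfect condition, since $l$ is perfect in $\Gamma_{\{i,\ldots,j\}}$); taking $k<i$ without loss of generality, the perfectness of the $i$-split of $\Gamma$ confines the entire support of $\rho_k$ to $O_1^{(i)}$, and since $i\le l$ this set lies wholly on one side of the $l$-split --- contradiction. The case $k>j$ is dual via the perfect $j$-split.

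The point is that the decisive fact is a containment of supports, not a local analysis at the split vertex. Your detour through the fix-point-free/block argument to pin the obstructing label down to $t\in\{l-1,l+1\}$ is correct but unnecessary: you never need to locate $\rho_t$ that precisely, because its whole support is already on the correct side of the $l$-split by virtue of the outer perfect splits. Once you have set up the ``dictionary'' you mention (that the $G_l$-orbits of $\Gamma$ are $O_1^{(i)}\cup M_1$ and $M_2\cup O_2^{(j)}$), the conclusion is immediate, and the subsequent square argument, the case split $l=i$ versus $l=j$, and the orientation bookkeeping all become superfluous. What you identify as the ``main obstacle'' is in fact the entire content of the proof; everything you planned to do afterwards is already implied by it.
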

\begin{proof}
Suppose that $l$ is not a perfect split of $\Gamma$.
Then there exists $k\notin\{i,\ldots,j\}$ such that $\rho_k$ acts on both $G_l$-orbits.
Suppose without loss of generality that $k<i$. This contradicts the fact that $i$ is the label of a perfect split.
\end{proof}


\begin{prop}\label{3ps}
Let  $r\geq \frac{n+3}{2}$. 
Then $\Gamma$ has a perfect split with label $i\notin\{0,r-1\}$.
\end{prop}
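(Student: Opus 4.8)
The statement to prove is: if $r \geq (n+3)/2$, then $\Gamma$ has a perfect split whose label is neither $0$ nor $r-1$. The natural starting point is Proposition~\ref{int}, which guarantees (since $r \geq (n+3)/2 \Leftrightarrow n \geq 2\kappa+3$) that $G \cong S_n$ and $\Gamma$ admits a fracture graph. Then Proposition~\ref{split} applies: either $\Gamma$ has a perfect split, or $\Gamma$ is (up to duality) one of the permutation representation graphs of Table~\ref{notperfect} and has rank $\leq (n+1)/2$. The latter is incompatible with $r \geq (n+3)/2$, so $\Gamma$ \emph{must} have a perfect split. So the only real work is to upgrade ``has a perfect split'' to ``has a perfect split with label $i \notin \{0, r-1\}$''.

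First I would handle the case where $\Gamma$ has a perfect split with label $i$ satisfying $0 < i < r-1$ — then we are done immediately. So assume, for contradiction, that every perfect split of $\Gamma$ has label $0$ or $r-1$. Suppose $0$ is the label of a perfect split (the case $r-1$ is dual). Write $n_1 = |O_1|$, $n_2 = |O_2|$ with $O_1$ the (trivial) $G_{<0}$-orbit — wait, $G_{<0}$ is trivial, so in fact a perfect $0$-split means $a$ is a fixed point of $G_0$ and $\rho_0 = (a,b)$ with $b$ in the large orbit; i.e. $\Gamma$ is essentially a sesqui-/degree extension attaching a pendant $0$-edge. Then $\Gamma_0$ is a string C-group of rank $r-1 \geq (n+1)/2 = ((n-1)+2)/2$ acting on $n-1$ points, and by Proposition~\ref{int} applied to $\Gamma_0$ (its rank $r-1$ versus degree $n-1$: we need $r-1 \geq ((n-1)+3)/2 = (n+2)/2$, i.e. $r \geq (n+4)/2$), either $\Gamma_0$ itself falls under the high-rank classification or we must argue more carefully at the boundary rank $r = (n+3)/2$. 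The cleaner route: apply Proposition~\ref{3ps} inductively — but $3ps$ \emph{is} the statement being proved, so instead I would induct on $n$ (equivalently on $r$), using that $\Gamma_0$ is a transitive string C-group of rank $r-1$ and degree $n-1$ with $r - 1 \geq (n-1+2)/2$; by the inductive hypothesis (or a direct appeal to Proposition~\ref{split} applied to $\Gamma_0$), $\Gamma_0$ has a perfect split, say with label $j$. By Proposition~\ref{5ps}, if $j=1$ and $j$ is a perfect split of $\Gamma_0 = (\Gamma)_0$ then $1$ is a perfect split of $\Gamma$, contradicting our assumption (since $1 \notin \{0, r-1\}$ once $r \geq 4$, and $r \geq (n+3)/2 \geq 3$ with equality only for small $n$ handled separately). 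More generally, by Proposition~\ref{4ps} the perfect splits of $\Gamma_0$ are either perfect splits of $\Gamma$ or have label $1$; in the first case we get a perfect split of $\Gamma$ with label $j$, and unless $j \in \{0, r-1\}$ we are done; the label $j = r-1$ possibility (a perfect split of $\Gamma_0$ at the other end) can be excluded or reduced by the dual argument, and the persistent edge cases collapse to $\Gamma$ having perfect splits \emph{only} at both ends $0$ and $r-1$ simultaneously.

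The main obstacle is precisely this last configuration: $\Gamma$ has a perfect $0$-split and a perfect $(r-1)$-split but no perfect split at any intermediate label. I would attack it by considering $\Gamma_{0,r-1}$ (rank $r-2$, degree $n-2$) and combining Propositions~\ref{5ps}, \ref{6ps}, \ref{7ps}: a perfect split of $\Gamma_{0,r-1}$ pushes up to a perfect split of $\Gamma_{0}$ or $\Gamma_{r-1}$ (via \ref{7ps} and its dual) and then to a perfect split of $\Gamma$ (via \ref{6ps}), with the only escape being labels $1$ or $r-2$; but a perfect $1$-split together with the perfect $0$-split forces, by the commuting property and the structure of $\rho_0, \rho_1$, that $\rho_1$ fixes a point outside the $G_{>1}$-orbit, so $G_{>1}$ is imprimitive of rank $r-2$ on $n-2$ points — and by Proposition~\ref{att} none of the imprimitive graphs of Table~\ref{BI} are split-attachable, forcing $r - 2 \leq (n-2)/2$, i.e. $r \leq (n+2)/2 < (n+3)/2$, a contradiction. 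The symmetric argument at the $r-2$ end rules out that case too, and by downward induction on the rank of the ``interior'' of $\Gamma$ we eventually reach a string C-group of bounded rank where the base cases (small $n$, checkable by the classification in Tables~\ref{T2F} and~\ref{notperfect} or directly) complete the proof. I expect the bookkeeping in chaining \ref{5ps}--\ref{7ps} to be the delicate part, since one must track which orbit the relevant split lies in and ensure the ``fix-point-free on the other orbit'' dichotomy is correctly applied at each step.
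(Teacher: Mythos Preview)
Your overall strategy matches the paper's: establish that $\Gamma$ has a perfect split via Propositions~\ref{int} and~\ref{split}, assume for contradiction that all perfect splits have label $0$ or $r-1$, pass to $\Gamma_0$ and then to $\Gamma_{0,r-1}$, and derive a rank contradiction. However, two steps in your execution are genuine gaps.

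First, before you can apply Proposition~\ref{split} to $\Gamma_0$, you must verify that $G_0\cong S_{n-1}$. At the boundary rank $r=(n+3)/2$ you only get $r-1=(n+1)/2=((n-1)+2)/2$, which is not enough for Proposition~\ref{int}. The paper closes this by observing that $\Gamma_0$ is split-attachable (there is a perfect $0$-split sitting above it), and then invoking Proposition~\ref{att} together with the results of Section~\ref{T}: the transitive string C-groups of rank $\geq ((n-1)+2)/2$ that are \emph{not} $S_{n-1}$ all appear in Tables~\ref{an}, \ref{BI}, \ref{primPolys}, and none of the relevant ones is split-attachable. You gesture at ``argue more carefully at the boundary'' but never supply this.

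Second, your treatment of the configuration where both $0$ and $r-1$ are perfect splits of $\Gamma$ is garbled. You want to push perfect splits of $\Gamma_{0,r-1}$ up to $\Gamma$; the correct tool here is Proposition~\ref{4ps} (applied twice), not Proposition~\ref{7ps}. The paper argues cleanly: any perfect $j$-split of $\Gamma_{0,r-1}$ is, by Proposition~\ref{4ps} and its dual, forced to have $j\in\{1,r-2\}$, and then Proposition~\ref{5ps} (and its dual) promote it to a perfect split of $\Gamma$ with label in $\{1,r-2\}$, contradicting the hypothesis. Hence $\Gamma_{0,r-1}$ has \emph{no} perfect splits at all. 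Your sentence ``a perfect $1$-split together with the perfect $0$-split forces \ldots that $\rho_1$ fixes a point outside the $G_{>1}$-orbit, so $G_{>1}$ is imprimitive'' has the roles reversed (it is $\rho_0$ acting nontrivially on the $G_{>1}$-orbit that forces imprimitivity, and this is the contrapositive of Proposition~\ref{5ps}, not a separate argument). Once one knows $\Gamma_{0,r-1}$ has no perfect splits, the paper finishes directly: since $\Gamma_{0,r-1}$ is split-attachable on both ends, Proposition~\ref{att} excludes all graphs of Tables~\ref{notperfect}, \ref{an}, and~\ref{BI}; combined with Proposition~\ref{split} and Section~\ref{T}, this forces $r-2\leq (n-2)/2$, the desired contradiction. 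Your ``downward induction on the rank of the interior'' is neither needed nor justified as written.
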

\begin{proof}
Suppose that if $i$ is the label of a perfect split of $\Gamma$ then either $i=0$ or $i=r-1$.

Since $r\geq \frac{n+3}{2}$, by Proposition~\ref{int}, $G$ is isomorphic to $S_n$ and $\Gamma$ has a fracture graph and, by Proposition~\ref{split}, $\Gamma$ has a perfect $i$-split.
 Without loss of generality we may assume that $0$ is the label of a perfect split of $\Gamma$.
Then $G_0$ is a transitive group on $n-1$ points and has rank $r-1\geq \frac{(n-1)+2}{2}$. 
By the results of Section~\ref{T} and the fact that $\Gamma_0$ is split-attachable, we have that $G_0\cong S_{n-1}$.
Hence, by Proposition~\ref{split}, $\Gamma_0$ has a perfect $j$-split.

Suppose that $j\neq r-1$. 
Then $j$ cannot be the label of a perfect split of $\Gamma$. 
By Proposition~\ref{4ps}, $j=1$. Then by Proposition~\ref{5ps}, $1$ is the label of a perfect split of $\Gamma$, a contradiction. 
Hence $j=r-1$ and both $0$ and $r-1$ are labels of perfect splits of $\Gamma$.

Now consider $\Gamma_{0,r-1}$ acting transitively on $n-2$ points.
Suppose that $\Gamma_{0,r-1}$ has a perfect $j$-split. Then it cannot be a perfect split of $\Gamma$.
By Proposition~\ref{4ps} it is either a perfect split of $\Gamma_{r-1}$ or $j=1$.
But if  $1$ is the label of a perfect split of $\Gamma_{0,r-1}$ it must be the label of a perfect split of $\Gamma_0$, by Proposition~\ref{4ps}. 
Then, by Proposition~\ref{5ps}, $1$ is the label of a perfect split of $\Gamma$, a contradiction.
Hence $j$ is the label of a perfect split of $\Gamma_{r-1}$.
By the dual of Proposition~\ref{4ps}, as $j$ is not the label of perfect split of $\Gamma$, $j=r-2$.
But the dual of Proposition~\ref{5ps}, gives a contradiction as well.
Consequently $\Gamma_{0,r-1}$  has no perfect splits.

By Proposition~\ref{att}, all the sggi's of Table~\ref{notperfect} are not split-attachable. 
Hence, by Proposition~\ref{split}, $G_{0,r-1}\not\cong S_{n-2}$.
Similarly, 
by Proposition~\ref{att}, 
$\Gamma_{0,r-1}$ cannot be one of the string C-groups of Table~\ref{an} 
nor one of the imprimitive string C-groups of highest rank of Table~\ref{BI}. Hence the rank of $\Gamma_{0,r-1}$ is at most $\frac{n-2}{2}$, meaning $r-2\leq \frac{n-2}{2}$, a contradiction.

This shows that $\Gamma$ has a perfect split with label $i\notin\{0,r-1\}$.
\end{proof}

\begin{prop}\label{3ps++}
Suppose that $\Gamma$ has a fracture graph.
Let $r\geq \frac{n+1}{2}$ and $n>3$. If $0$ and $r-1$ are perfect splits, then $\Gamma_{0,r-1}$ has a perfect split.
\end{prop}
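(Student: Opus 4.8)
\textbf{Proof proposal for Proposition~\ref{3ps++}.}

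The plan is to reduce to an application of Proposition~\ref{arp} applied to the smaller sggi $\Gamma_{0,r-1}$, using the fact that $0$ and $r-1$ are perfect splits to pin down the structure of the relevant parabolic subgroups. First I would set up notation: since $0$ is a perfect split, $G_0$ has exactly two orbits $O_1\ni a$ (the $G_{<1}$-orbit, which here is the singleton fixed by all of $\rho_1,\dots,\rho_{r-1}$ since $G_{<1}$ is trivial) and $O_2$; more precisely the perfect $0$-split means $\rho_0=(a,b)\beta_0$ with $a$ fixed by $G_0$. Dually, $r-1$ being a perfect split gives $\rho_{r-1}=(a',b')\alpha_{r-1}$ with $a'$ fixed by $G_{r-1}$. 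Because both splits are perfect, $\rho_0$ moves only one point of $O_2$ into $\{a\}$ and $\rho_{r-1}$ moves only one point into $\{a'\}$, and in particular the pendant $0$-edge and pendant $(r-1)$-edge of the permutation representation graph $\mathcal G$ sit at opposite ends of $\mathcal G$ and do not interfere (by the commuting property, $\rho_0$ and $\rho_{r-1}$ commute, and $a\neq a'$ as long as $r\geq 3$, which holds since $r\geq\frac{n+1}{2}>\frac{n-1}{2}\geq 1$ and $n>3$).

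Next I would identify the candidate for a perfect split of $\Gamma_{0,r-1}$. The natural candidate is the label $1$ (or dually $r-2$): in $\Gamma_{0,r-1}=(G_{0,r-1},\{\rho_1,\dots,\rho_{r-2}\})$, the generator $\rho_1$ is the only one that can detach the vertex $b$ (the other endpoint of the old $0$-edge) since in $\Gamma$ the vertex $a$ has been removed from play. Concretely, having deleted $\rho_0$, the point $b$ lies in some orbit of $G_{0,r-1}$, and I would argue that $G_{0,r-1,1}=G_{0,1,r-1}$ has one more orbit than $G_{0,r-1}$, with the split point being the single $1$-edge at $b$. The key point is that perfectness of the $0$-split of $\Gamma$ forces: for $j>1$, $\rho_j$ acts trivially on the component of $\mathcal G$ containing $a$ (the "$O_1$-side"), which after removing $a$ still says $\rho_j$ fixes a suitable set of points; hence when we remove $\rho_0$ and $\rho_{r-1}$, the label $1$ still cleanly separates $b$ from the rest, and no $\rho_j$ with $j>1$ (and $j<r-1$) crosses that separation. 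One must check the "exactly one pair of points" condition of the definition of a split, which follows because in $\mathcal G$ the $1$-edges form a matching and perfectness of the $0$-split bounds how many of them straddle the partition.

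The main obstacle I anticipate is the bookkeeping when $\mathcal G$ is disconnected, or when the $0$-split and the $(r-1)$-split are "close" (e.g. $r$ small, so that the $1$-edge at $b$ and the $(r-2)$-edge at $b'$ might coincide or share structure). For the disconnected case I would use the same trick that appears repeatedly in Section~\ref{SSplit} and in the proof of Theorem~\ref{RDE}: components of $\mathcal G$ not met by $\rho_0$ are untouched, and components met by $\rho_0$ but not by higher generators are tiny (single edges or double edges), so removing $\rho_0$ and $\rho_{r-1}$ only strips pendant or near-pendant edges and does not create a transitive $G_{0,r-1,1}$. The hypothesis $r\geq\frac{n+1}{2}$ is exactly what is needed to rule out the alternative that $G_{0,r-1}$ is transitive imprimitive of near-maximal rank with no perfect split (by Theorem~\ref{bigimp} and Proposition~\ref{att}, as in Proposition~\ref{3ps}); if $\Gamma_{0,r-1}$ had no perfect split at all, then combining Proposition~\ref{split} with the non-split-attachability results of Proposition~\ref{att} one gets $r-2\leq\frac{n-2}{2}$, contradicting $r\geq\frac{n+1}{2}$ together with $n>3$ (this forces $r-2>\frac{n-2}{2}$ strictly once $n\geq 5$; the residual tiny cases $n=4$ are excluded or checked by hand). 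So in all cases $\Gamma_{0,r-1}$ has a perfect split, which is what we wanted.
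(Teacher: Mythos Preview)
Your proposal has two serious gaps.

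First, the direct argument that $1$ (or $r-2$) is a perfect split of $\Gamma_{0,r-1}$ does not work. Having a perfect $0$-split in $\Gamma$ only tells you that the point $a$ is fixed by $\rho_1,\dots,\rho_{r-1}$; it says nothing about $\rho_1$ having a \emph{single} transposition that separates $b$ from the rest, nor about $\rho_j$ (for $j\ge 2$) acting trivially on one side of such a separation. In general $\rho_1$ may have many transpositions in the $(n-2)$-point action of $\Gamma_{0,r-1}$ and need not be a split at all, let alone a perfect one.

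Second, your fallback contradiction argument is both logically and arithmetically broken. You invoke Proposition~\ref{split}, but that result requires $G\cong S_n$ and rank $\ge n/2$; for $\Gamma_{0,r-1}$ you know neither that $G_{0,r-1}\cong S_{n-2}$ nor that $r-2\ge (n-2)/2$. Worse, the inequality you claim to contradict is not actually contradicted: from $r\ge\frac{n+1}{2}$ you only get $r-2\ge\frac{n-3}{2}$, which is perfectly compatible with $r-2\le\frac{n-2}{2}$ (take e.g.\ $n=7$, $r=4$).

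The paper's proof avoids both traps. It assumes $\Gamma_{0,r-1}$ has no perfect split and splits into two cases. If $\Gamma_{0,r-1}$ admits a $2$-fracture graph, then since its rank $r-2$ satisfies $r-2\ge\frac{(n-2)-1}{2}$, Proposition~\ref{frac2} (which needs only rank $\ge\frac{n'-1}{2}$, not Proposition~\ref{split}) places it in Table~\ref{T2F}; split-attachability via Proposition~\ref{att} then forces graph~(9), and the resulting $\Gamma$ is graph~(3) of Proposition~\ref{IPfails}, violating the intersection property. If instead $\Gamma_{0,r-1}$ has a non-perfect $i$-split, the paper argues directly: some $\rho_h$ with $h\ne i$ acts on both sides, so both $G_{<i}$ and $G_{>i}$ act nontrivially on the $G_i$-orbit not fixed by $\rho_0$; by Proposition~\ref{CCD} that action cannot be primitive, and then a block-system argument in the spirit of Proposition~\ref{primpb} yields a contradiction. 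No appeal to Proposition~\ref{split} (and no delicate rank inequality for $\Gamma_{0,r-1}$) is needed.
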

\begin{proof}
Consider the group $\Gamma_{0,r-1}$ acting on $n-2$ points and rank $r-2\geq \frac{(n-2)-1}{2}$.
Suppose that $\Gamma_{0,r-1}$ does not have a perfect split.
Then, either $\Gamma_{0,r-1}$ has a 2-fracture graph, or it has a split that is not perfect.
First assume that $\Gamma_{0,r-1}$ admits a 2-fracture graph.
Then $\Gamma_{0,r-1}$ must be one of the string C-groups of Table~\ref{T2F}. By Proposition~\ref{att}, it must then have permutation representation graph number (9), but then $\Gamma$ has the permutation representation graph (3) of Proposition~\ref{IPfails}. Hence $\Gamma$
does not satisfy the intersection property.
Thus $\Gamma_{0,r-1}$ must have a split of label $i$ and this split is not perfect.
Suppose there exists $h$ with $i<h<r-1$ such that $\rho_h$ acts nontrivially in both $G_i$-orbits.
Then $G_{>i}$ and $G_{<i}$, both act nontrivially on the
 $G_i$-orbit not fixed pointwise by $\rho_0$.
 Let $A$ be the group action of $G_i$ on that orbit.
By Proposition~\ref{CCD}, $A$ cannot be primitive.
But then an argument similar to the one in the proof of Proposition~\ref{primpb}, gives a contradiction. 
The case where $h<i$ is handled by dualizing all the arguments.
\end{proof}

\begin{prop}\label{3ps+}
 If  $r\geq \frac{n+3}{2}$ then there exists a perfect split of $\Gamma$ with label $i\notin\{ 0,\,r-1\}$ such that either $\alpha_i$ or $\beta_i$ is trivial.
\end{prop}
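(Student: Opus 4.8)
The plan is to combine Proposition~\ref{3ps} with the structural dichotomies already established for perfect splits. First I would invoke Proposition~\ref{3ps} to obtain at least one perfect split of $\Gamma$ with label $i\notin\{0,r-1\}$; we may assume (replacing $\Gamma$ by a dual if necessary, which does not affect the hypothesis $r\geq\frac{n+3}{2}$) that $i$ is the \emph{smallest} such label. Fix this split $\{a,b\}$ with $\rho_i=\alpha_i\beta_i(a,b)$, $n_1=|O_1|$, $n_2=|O_2|$ and recall $r\ge\frac{n+3}{2}$ forces $G\cong S_n$ and the existence of a fracture graph by Proposition~\ref{int}. If either $\alpha_i$ or $\beta_i$ is trivial, we are done immediately, so assume both are nontrivial and aim for a contradiction, or rather, aim to produce a \emph{different} perfect split with the desired triviality.

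The key step is a case analysis on whether $G_{<i+1}$ contains $A_{n_1+1}$ and whether $G_{>i-1}$ contains $A_{n_2+1}$. If both containments hold, Proposition~\ref{bothAnSn} already gives that $\alpha_i$ or $\beta_i$ is trivial, contradicting our assumption — so at least one containment fails. Suppose $G_{<i+1}$ does not contain $A_{n_1+1}$ (the other case is dual, using Proposition~\ref{SnBigrankdual}). Then Proposition~\ref{SnBigrank} applies: either $\Gamma_{<i+1}$ is one of the split-attachable primitive string C-groups (2), (3), (8) of Table~\ref{primPolys}, or else $G_{>i}\cong S_{n_2}$ with $r-(i+1)\geq\frac{n_2+3}{2}$. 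In the second subcase, $\Gamma_{>i}$ is itself a transitive string C-group for $S_{n_2}$ of rank $\ge\frac{n_2+3}{2}$, so by Proposition~\ref{split} it has a perfect $j$-split; by Proposition~\ref{7ps} this $j$ is either $i+1$ or already a perfect split of $\Gamma$. Iterating this descent — each time peeling off the symmetric factor and re-applying Propositions~\ref{split} and~\ref{7ps} — one is driven to a perfect split $j$ of $\Gamma$ which is adjacent to the symmetric ``tail'' of the diagram, and there $\alpha_j$ (or $\beta_j$) must be trivial because $\rho_j$ can no longer act nontrivially on both orbits (the commuting property, combined with the fact that everything beyond $j$ already generates the full symmetric group on its orbit and the split has only one cross-edge). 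In the first subcase, where $\Gamma_{<i+1}$ is one of the small primitive graphs of Table~\ref{primPolys}, one checks directly — these graphs have rank at most $5$ — that the corresponding $\alpha_i$ is trivial or that attaching the rest of $\Gamma$ forces a perfect split lower down with trivial action, using Proposition~\ref{att} to control how these small pieces can be extended.

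The main obstacle I anticipate is the bookkeeping in the iterative descent: one must ensure that passing from $\Gamma$ to $\Gamma_{>i}$ (or $\Gamma_{<i+1}$) preserves \emph{both} the rank inequality needed to re-apply Proposition~\ref{split} \emph{and} the property that the relevant $G_{<\cdot+1}$ or $G_{>\cdot-1}$ still fails the alternating containment, so that the induction does not stall in a configuration where Proposition~\ref{bothAnSn} gives triviality for the \emph{wrong} split (one with label in $\{0,r-1\}$, or one not inherited by $\Gamma$). The cleanest way to organize this is probably an induction on $r$ (or on $n$): the base cases are small enough to be covered by the tables and by Propositions~\ref{i1r3}--\ref{RDE}, and in the inductive step, after finding a perfect $i$-split with $i\notin\{0,r-1\}$, if $\alpha_i$ or $\beta_i$ is trivial we stop, and otherwise Proposition~\ref{bothAnSn} together with the failure of one alternating containment lets us replace $\Gamma$ by a strictly smaller string C-group ($\Gamma_{>i}$ or $\Gamma_{<i+1}$, suitably a symmetric group of large rank) that still satisfies the hypothesis, apply the inductive hypothesis there to get a perfect split with trivial action, and lift it back to $\Gamma$ via Propositions~\ref{6ps} and~\ref{7ps}. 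Verifying that this lift keeps the label outside $\{0,r-1\}$ and keeps the triviality of the transverse action is the delicate point, and is where Propositions~\ref{4ps}, \ref{5ps}, \ref{6ps}, \ref{7ps} — which precisely describe how perfect splits of parabolic subgroups relate to perfect splits of $\Gamma$ — do the real work.
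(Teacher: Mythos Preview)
Your overall architecture is right and matches the paper's proof: induction on $r$, with Proposition~\ref{3ps} supplying a perfect split of label $i\notin\{0,r-1\}$, Proposition~\ref{bothAnSn} forcing one of the alternating containments to fail when both $\alpha_i,\beta_i$ are nontrivial, Proposition~\ref{SnBigrank} splitting into the two subcases, and Proposition~\ref{7ps} lifting the split obtained inductively back to $\Gamma$. Where you land in the last paragraph --- apply the inductive hypothesis to a strictly smaller string C-group still satisfying $r'\ge(n'+3)/2$, obtain a perfect split with trivial transverse action there, and lift --- is exactly the paper's argument.

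The middle of your plan, however, is muddled in both subcases. In case~(a) there is no ``iterating the descent'' and no structural argument about a ``symmetric tail'' forcing triviality: one simply applies the inductive hypothesis \emph{once} to $\Gamma_{>i}$ (a string C-group for $S_{n_2}$ of rank $r-i-1\ge(n_2+3)/2$), obtaining a perfect $j$-split with $j\notin\{i+1,r-1\}$ and $\alpha_j$ or $\beta_j$ trivial; since $j\neq i+1$, Proposition~\ref{7ps} makes $j$ a perfect split of $\Gamma$ with the same triviality. In case~(b) you cannot ``check directly'': in each of the three configurations (2), (3), (8) of Table~\ref{primPolys} one has $i=2$ and $\alpha_2$ is visibly nontrivial, so the split at hand is useless. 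Nor can you apply induction to $\Gamma_{<i+1}$, which is one of these small non-symmetric primitive groups. The paper's move is to pass instead to $\Gamma_{0,1}=\langle\rho_2,\ldots,\rho_{r-1}\rangle$, which is a sesqui-extension (with respect to its first generator) of a transitive string C-group of rank $r-2$ on $n-4$ points; since $r-2\ge\frac{(n-4)+3}{2}$, the inductive hypothesis yields a perfect $j$-split with $j\in\{3,\ldots,r-2\}$ and $\alpha_j$ or $\beta_j$ trivial, which again lifts to $\Gamma$. So in case~(b) the ``strictly smaller string C-group'' you need is $\Gamma_{0,1}$, not $\Gamma_{>i}$ or $\Gamma_{<i+1}$.
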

\begin{proof}
For $r=4$, $n\leq 5$ hence there is only one possibility for  $\Gamma$, namely the string C-group of degree 5 corresponding to the 4-simplex. In this case, the proposition holds trivially.
Suppose that the proposition holds for every rank, from $4$ up to $r-1$.
Let $i\in\{1,\ldots, r-2\}$ be the label of a perfect split (whose existence is guaranteed by Proposition~\ref{3ps}). Assume that $\rho_i$ acts nontrivially in both $G_i$ orbits.
By Proposition~\ref{bothAnSn} we need to consider the cases $A_{n_2+1}\nleq G_{>i-1}$ or $A_{n_1+1}\nleq G_{<i+1}$, which can be dealt similarly by duality.
Assume that $A_{n_1+1}\nleq G_{<i+1}$. By Proposition~\ref{SnBigrank} we have two possible situations, (a) and (b). 

If we have the situation (a) of Proposition~\ref{SnBigrank}, which is $G_{>i}\cong S_{n_2}$ and $r-(i+1)\geq \frac{n_2+3}{2}$, then, by induction, $\Gamma_{>i}$ has a perfect $j$-split with $j\not\in \{i+1, r-1\}$ and either $\alpha_j$ or $\beta_j$ is trivial. 
Hence we have what we want if $j$ is the label of a perfect split of $\Gamma$. 
Suppose the contrary. Then, by Proposition~\ref{7ps}, $j=i+1$, a contradiction.
Hence, $j$ must be the label of a perfect split of $\Gamma$ satisfying what we need.

Consider that we have situation (b) of Proposition~\ref{SnBigrank}. Then we have the following three possibilities.

\begin{center}
\begin{tabular}{cc}
\begin{tabular}{c}
$\xymatrix@-1.3pc{*+[o][F]{} \ar@{=}[rr]^0_2 && *+[o][F]{}  \ar@{-}[rr]^1&&*+[o][F]{} \ar@{-}[rr]^0 && *+[o][F]{} \ar@{-}[rr]^1 && *+[o][F]{} \ar@{-}[rr]^2&&*++++[][F]{} }$\\
\\
$\xymatrix@-1.3pc{*+[o][F]{} \ar@{-}[rr]^0 && *+[o][F]{}  \ar@{-}[rr]^1&&*+[o][F]{} \ar@{=}[rr]^0_2 && *+[o][F]{} \ar@{-}[rr]^1 && *+[o][F]{} \ar@{-}[rr]^2&&*++++[][F]{} }$\\
\end{tabular}
&
\begin{tabular}{c}
$\xymatrix@-1.3pc{*+[o][F]{}  \ar@{-}[rr]^2&&*+[o][F]{}  \ar@{-}[rr]^1 && *+[o][F]{} \ar@{-}[rr]^2&& *++++[][F]{} \\
                               *+[o][F]{}  \ar@{=}[rr]_2^1 \ar@{-}[u]^0&&*+[o][F]{}  \ar@{-}[u]_0&& &&}$\\
 \\
\\
\end{tabular}
\end{tabular}
\end{center}

Moreover $\Gamma_{0,1}$ is a sesqui-extension of a string C-group of rank $r-2\geq\frac{ (n-4)+3}{2}$ acting transitively on $n-4$ points. 

Then by induction $\Gamma_{0,1}$ has a perfect $j$-split with $j\in\{3,\ldots, r-2\}$
with  $\alpha_j$ or $\beta_j$ trivial.
Again if $j$ is the label of a perfect split of $\Gamma$ we are done. 
Otherwise, by Proposition~\ref{7ps} $j=2$, a contradiction.
\end{proof}

\section{The proof of Theorem~\ref{main}}\label{theTh}
 
In what follows let $G$ be the symmetric group $S_n$ acting on $n$ points.
Let $r=n-\kappa$ with $n\geq 2\kappa+3$.
Finally let $\Gamma=(G,\{ \rho_0,\ldots, \rho_{r-1}\}) \in \Sigma^{{\kappa}}(n)$ (see Section~\ref{intrud} for the definition of $\Sigma^\kappa(n)$).

By Proposition~\ref{int}, $G_i$ is intransitive for all $i\in\{0,\ldots, r-1\}$.
Here we will continue to use the notation described at the begining of Section~\ref{tranS}.
 
\begin{prop}\label{imink'}
Let $i$ be the minimal label of a perfect split of $\Gamma$.
If $i\neq 0$ then $(n_2+1)-(r-i)<\kappa$.
\end{prop}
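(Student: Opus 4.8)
The quantity $(n_2+1)-(r-i)$ is exactly the ``defect'' $\kappa$-type invariant of the string C-group $\Gamma_{>i-1}$ sitting on the orbit $O_2\cup\{?\}$; more precisely, $\Gamma_{>i}$ is a transitive string C-group on $n_2$ points of rank $r-i-1$, so its associated $\kappa$-value is $\kappa_2:=n_2-(r-i-1)=(n_2+1)-(r-i)$. The plan is to show $\kappa_2<\kappa$ by a strict-inequality argument coming from the fact that $i$ is the \emph{minimal} label of a perfect split and $i\neq 0$, so the subgroup $G_{<i}$ genuinely contributes at least one point and at least one generator that is ``wasted'' from the point of view of $O_2$. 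First I would use Proposition~\ref{int} (applicable since $n\geq 2\kappa+3$, i.e. $r\geq (n+3)/2$, and restated for $\Gamma_{>i}$ once we check its rank is large enough) together with Proposition~\ref{OneisSn}: since $i$ is a perfect split, one of $G_{<i+1}$, $G_{>i-1}$ acts as the full symmetric group on its orbit. I would split into the two cases according to which side is symmetric.

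If $G_{>i-1}\cong S_{n_2+?}$ acts as a symmetric group on $O_2$ (possibly after the sesqui-extension described before Proposition~\ref{OneisSn}, when $\alpha_i\neq 1$), then $\Gamma_{>i}$ (or the underlying transitive string C-group on $n_2$ points obtained by removing the sesqui-extension factor) is a string C-group for $S_{n_2}$ of rank $r-i-1$; applying Proposition~\ref{int} to it forces $n_2\geq 2\kappa_2+3$ and in particular it has a fracture graph, but more importantly I can compare directly: $n=n_1+n_2$ and $r=i+(r-i)$, so $\kappa=n-r=(n_1-i)+(n_2-(r-i))=(n_1-i)+(\kappa_2-1)$. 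Thus $\kappa-\kappa_2=(n_1-i)-1$, and it remains to prove $n_1-i\geq 2$, i.e. $n_1\geq i+2$. Since $i\neq 0$, the orbit $O_1$ carries the sggi $\Gamma_{<i}$ of rank $i$ acting on $n_1$ points, and $\rho_i$ restricted to $O_1$ together with the $i$-split vertex $a\in O_1$; by Whiston's bound (rank at most $n_1-1$, cited in the introduction) applied to $\langle\rho_0,\dots,\rho_{i-1}\rangle$ — which has rank $i$ by the intersection property, hence $i\leq n_1-1$ — and then the extra point forced because $a$ cannot be a fixed point of $G_{<i}$ pointwise while also lying on the $i$-edge (this is where the perfect-split structure and the minimality of $i$ enter: if $n_1=i+1$ one checks $G_{<i}$ would have to be transitive of rank $i=n_1-1$ on $O_1$, i.e. a simplex, and then tracing $\rho_i$ and $\rho_{i-1}$ one produces a perfect split of smaller label or contradicts $i$ being minimal), I get $n_1\geq i+2$ strictly. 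The other case, $G_{<i+1}$ symmetric and $G_{>i-1}$ not, is handled by the same bookkeeping with the roles reversed, now using Proposition~\ref{SnBigrankdual} to control $\Gamma_{>i-1}$: either $G_{<i}\cong S_{n_1}$ with $i\geq (n_1+3)/2$, which again gives $n_1\geq i+2$ (indeed $n_1\geq 2i-3\geq i$, and equality analysis excludes $n_1\in\{i,i+1\}$ because those would make $\Gamma_{<i}$ a simplex or near-simplex contradicting $r\geq(n+3)/2$), or $\Gamma_{>i-1}$ is a dual of one of the three small primitive graphs (2),(3),(8) of Table~\ref{primPolys}, which have bounded rank and degree and can be checked directly against $n\geq 2\kappa+3$.

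The main obstacle I anticipate is the boundary bookkeeping: getting the inequality to be \emph{strict} ($\kappa_2<\kappa$, not $\leq$) requires ruling out $n_1=i+1$ precisely, and this is exactly the place where the hypothesis ``$i$ is the \emph{minimal} label of a perfect split'' must be used rather than just ``$i$ is a perfect split.'' I expect the clean way to do this is: if $n_1=i+1$ then $\Gamma_{<i}$ is a rank-$i$ string C-group on $i+1$ points, hence by the uniqueness of the top-rank string C-group of $S_{i+1}$ (Fernandes--Leemans, the $n-1$ case mentioned in the introduction) it is Moore's simplex with permutation representation graph a path $0,1,\dots,i-1$; then the $i$-edge $\{a,b\}$ with $a$ an endpoint of $O_1$ and the commuting property force $\rho_{i-1}$ to be the unique generator moving $a$, so $\{a', a\}$ (with $a'=a\rho_{i-1}$) together with the partition $(\mathrm{Fix}\ \text{vs.\ rest})$ would exhibit an $(i-1)$-split, and one checks it is \emph{perfect} — contradicting minimality of $i$. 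I would also need to double-check the degenerate small-rank cases ($r=4$, and the Table~\ref{primPolys} alternatives) separately, but these are finite checks. Modulo that boundary analysis, the argument is a direct consequence of Propositions~\ref{int}, \ref{OneisSn}, \ref{SnBigrank} and \ref{SnBigrankdual} together with the additivity $\kappa=(n_1-i)+(\kappa_2-1)$.
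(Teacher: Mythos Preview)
Your core idea is the same as the paper's and is correct: the whole statement is equivalent, via the arithmetic identity
\[
\kappa - \bigl((n_2+1)-(r-i)\bigr) \;=\; (n-r) - (n_2+1-r+i) \;=\; n_1 - i - 1,
\]
to showing $n_1 \geq i+2$, and the only boundary case $n_1=i+1$ is ruled out because then $\Gamma_{<i}$ is a rank-$(n_1-1)$ string C-group on $n_1$ points, hence essentially the simplex, which forces a perfect split of label $<i$ and contradicts minimality. That is exactly the paper's argument, which is three lines.

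Where your plan goes astray is the scaffolding. The case split via Propositions~\ref{OneisSn}, \ref{SnBigrank}, \ref{SnBigrankdual} is entirely unnecessary: the identity $\kappa-\kappa_2=n_1-i-1$ is pure arithmetic, independent of which side is a symmetric group, so there is nothing to split on. Worse, your Case~2 contains an error: from $i\geq (n_1+3)/2$ you deduce ``$n_1\geq 2i-3$'', but the correct direction is $n_1\leq 2i-3$, which does not yield $n_1\geq i+2$. Since that case is superfluous anyway, the error is harmless to the overall strategy, but you should simply delete the case analysis. One small point you do miss and the paper handles: for small $n_1$ (specifically $n_1=4$) the rank-$(n_1-1)$ string C-group on $n_1$ points need not be the simplex a priori (the hemicube is another candidate); the paper rules this out using the fact that $\Gamma$ has a fracture graph, so every $G_j$ is intransitive, which the hemicube violates.
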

\begin{proof}
Suppose that $(n_2+1)-(r-i)\geq \kappa$. Then $i\geq n_1-1$, and therefore $\Gamma_{<i}$ is either trivial or the $i$-simplex. It cannot be the string C-group corresponding to the hemicube as all $G_i$'s have to be intransitive and the hemicube does not satisfy that. Hence either $i=0$ or $\Gamma_{<i}$ has a perfect split, a contradition.
Hence if $i\neq 0$, $(n_2+1)-(r-i)<\kappa$.  
\end{proof}
\begin{prop}\label{imin}
Let $i\neq 0$.
If $i$ is the minimal label of a perfect split  of $\Gamma$  then $i\leq \frac{n_1-1}{2}$. 
\end{prop}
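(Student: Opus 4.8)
\textbf{Proof plan for Proposition~\ref{imin}.}
The goal is to upgrade the weak bound $(n_2+1)-(r-i)<\kappa$ of Proposition~\ref{imink'} into the statement $i\le \frac{n_1-1}{2}$, which (using $n=n_1+n_2$ and $r=n-\kappa$) is equivalent to showing that $\Gamma_{<i+1}$ acts on its $(n_1+1)$-point orbit with rank at most $\frac{(n_1+1)+1}{2}$, i.e.\ that $G_{<i+1}$ does \emph{not} act as $S_{n_1+1}$ or $A_{n_1+1}$ there. So the first step is to translate the inequality: since $\Gamma_{<i+1}$ is a transitive string C-group of rank $i+1$ of degree $n_1+1$ (recall from the discussion at the start of Section~\ref{tranS} that when $\beta_i$ is nontrivial it is a sesqui-extension of a transitive string C-group on $n_1+1$ points, and when $\beta_i$ is trivial it is literally $\Gamma_{<i}$ plus an extra edge, still of degree $n_1+1$ after accounting for the split vertex), the claim $i\le\frac{n_1-1}{2}$ reads $i+1\le \frac{n_1+1}{2}$, and by Theorems~\ref{altn} to~\ref{bigprim} together with Proposition~\ref{primpb} this fails only if $G_{<i+1}$ contains $A_{n_1+1}$ or $\Gamma_{<i+1}$ is one of the exceptional graphs of Tables~\ref{an}, \ref{BI}, \ref{primPolys}.

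The plan is then a case analysis driven by the minimality of $i$. If $G_{<i+1}$ contained $A_{n_1+1}$, I would argue that $\Gamma_{<i}$ would then be a transitive string C-group on $n_1$ or $n_1+1$ points of rank $i$ which is large relative to its degree, so by Proposition~\ref{split} it has a perfect split, or by Proposition~\ref{3ps+} / the structure theory it reduces the rank, and tracing this perfect split back up through Propositions~\ref{4ps}, \ref{5ps}, \ref{7ps} produces a perfect split of $\Gamma$ with label strictly smaller than $i$ (unless it equals $i\pm1$, which still contradicts minimality or forces $i=0$) — contradiction. For the exceptional-table cases, the key lever is Proposition~\ref{att}: none of the graphs of Tables~\ref{an} and \ref{BI} are split-attachable, and only graphs (2),(3),(5),(8) of Table~\ref{primPolys} and graph (9) of Table~\ref{T2F} are; but in all the attachable cases one checks directly (as in Propositions~\ref{SnBigrank} and \ref{SnBigrankdual}) that the resulting configuration either fails the intersection property via Proposition~\ref{IPfails} or forces $n_1$ so small that $i\le\frac{n_1-1}{2}$ holds anyway. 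I would also use Proposition~\ref{imink'} itself to bound $n_2$ from above in terms of $i$ and $\kappa$, which keeps the remaining degenerate cases finite and checkable.

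The main obstacle I anticipate is the bookkeeping between the three slightly different "degrees" in play: $n_1$ (the $G_{<i}$-orbit), $n_1+1$ (that orbit with the split vertex $a$ adjoined, on which $\Gamma_{<i+1}$ really acts), and the ambient $n$; getting the inequality $i\le\frac{n_1-1}{2}$ rather than an off-by-one variant requires being careful about whether $\alpha_i$ (equivalently $\beta_i$ on the other side) is trivial, and about whether the sesqui-extension picture of Section~\ref{se} applies. A secondary subtlety is ruling out that $\Gamma_{<i+1}$ is itself an alternating-group string C-group of high rank from Table~\ref{an} that happens to be split-attachable — here one leans on Proposition~\ref{att}, which lists graph (9) of Table~\ref{T2F} and the four primitive graphs as the \emph{only} split-attachable candidates, so Table~\ref{an} contributes nothing and the argument closes.
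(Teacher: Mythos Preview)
Your overall strategy — use the minimality of $i$ together with the classification theorems and split-attachability to force a rank bound — is the right one, and it is the same engine the paper uses. The paper's execution, however, is much more direct: it works with $\Gamma_{<i}$ (rank $i$, degree $n_1$) rather than $\Gamma_{<i+1}$. Assuming $i\ge n_1/2$, the paper observes two facts simultaneously about $\Gamma_{<i}$: it is split-attachable (the $i$-split is attached to it), and by minimality of $i$ it has no perfect split. Proposition~\ref{att} then rules out all string C-groups from Tables~\ref{an}, \ref{BI}, \ref{primPolys}, \ref{notperfect} in one stroke (each either has a perfect split or fails split-attachability), after which Proposition~\ref{split} and Theorems~\ref{altn}--\ref{bigprim} leave $G_{<i}$ nowhere to live. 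No detour through $\Gamma_{<i+1}$ and no lifting propositions are needed.

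Your route through $\Gamma_{<i+1}$ introduces two avoidable complications. First, your opening ``equivalence'' is not one: $i+1\le\frac{n_1+1}{2}$ is a rank bound, not the statement that $G_{<i+1}$ avoids $A_{n_1+1}$ or $S_{n_1+1}$; symmetric groups admit string C-groups of every rank up to $n-1$, so ruling them out as groups is neither necessary nor sufficient. Second, once you return to $\Gamma_{<i}$ you invoke Proposition~\ref{split}, which only applies when $G_{<i}\cong S_{n_1}$; you wave at ``Proposition~\ref{3ps+} / the structure theory'' for the other cases, but \ref{3ps+} needs $r\ge(n+3)/2$, which you do not have for $\Gamma_{<i}$. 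The paper closes this gap by explicitly invoking Theorems~\ref{altn}, \ref{bigimp}, \ref{bigprim} to exclude $A_{n_1}$, imprimitive, and other primitive groups in turn. Also, Propositions~\ref{4ps}, \ref{5ps}, \ref{7ps} concern lifting splits from $\Gamma_i$, $\Gamma_0$, $\Gamma_{>i}$ — not from $\Gamma_{<i}$ — so they are the wrong citations for your lifting step; that lift is immediate from the definition of a perfect split once you note $a$ lies on the correct side. Finally, Proposition~\ref{imink'} plays no role here; the paper's proof of~\ref{imin} is entirely self-contained.
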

\begin{proof}
Suppose that $i\geq \frac{n_1}{2}$.
The sggi's with the permutation representations given in Tables~\ref{an}, \ref{BI}, \ref{primPolys} and \ref{notperfect}, either have a perfect split, or are not split-attachable by Proposition~\ref{att}.
Hence $\Gamma_{<i}$ cannot be any of the string C-groups of these tables. 
Therefore by Proposition~\ref{split}, $G_{<i}$  is  not isomorphic to $S_{n_1}$,  and by  Theorem~\ref{altn} $G_{<i}$  is not isomorphic to $A_{n_1}$.
 By Theorem~\ref{bigprim} $G_{<i}$ cannot act primitively on the orbit of size $n_1$ of $G_i$, but by Theorem~\ref{bigimp} it cannot act imprimitively either, which gives a contradiction.
\end{proof}

\begin{prop}\label{i2}
Let $i\neq 0$.
If $i$ is the minimal label of a perfect split, then either  $\Gamma_{>i-1}\in \Sigma^{\kappa'}(n_2+1)$ or $\Gamma_{>i-1}$ is a sesqui-extension, with respect to  the first generator,  of a string C-group $\Phi$ with $\Phi\in \Sigma^{\kappa'}(n_2+1)$.
In any case $n_2+1\geq 2\kappa'+l$ and $\kappa'<\kappa$.
\end{prop}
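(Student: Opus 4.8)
The plan is to unpack Proposition~\ref{imink'} and the structure of a perfect $i$-split. Since $i$ is the \emph{minimal} label of a perfect split of $\Gamma$ and $i \neq 0$, we know $\Gamma_{<i}$ is a nontrivial string C-group with no perfect split; combined with Proposition~\ref{imin}, which gives $i \leq (n_1-1)/2$. By the definition of a perfect $i$-split, $G_{>i-1}$ is the action of $G_i$ on $O_2$, extended by the transposition $(a,b)$, so $\Gamma_{>i-1}$ acts on the $n_2+1$ points $O_2 \cup \{a\}$ (identifying $a$ with the extra point in the sesqui-extension picture). The two cases in the statement come directly from the discussion preceding Proposition~\ref{OneisSn}: if $\alpha_i$ is trivial then $\Gamma_{>i-1}$ acts faithfully and transitively on $O_2 \cup \{a\}$ as a string C-group in its own right, landing in some $\Sigma^{\kappa'}(n_2+1)$ once we identify its group as a symmetric group (via Proposition~\ref{OneisSn}, Theorem~\ref{altn}, Theorem~\ref{bigprim}); if $\alpha_i$ is nontrivial then by Proposition~\ref{sesqui1} $\Gamma_{>i-1}$ is a (proper or improper) sesqui-extension with respect to its first generator of a transitive string C-group $\Phi$ on $n_2+1$ points, and $\Phi \in \Sigma^{\kappa'}(n_2+1)$.

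The first main step is to verify that the rank of $\Gamma_{>i-1}$ (equivalently of $\Phi$) is $r - i$, which is immediate from the definition $\Gamma_{>i-1} = (G_{>i-1}, \{\rho_i, \ldots, \rho_{r-1}\})$ (in the sesqui-extended case the rank is preserved by Proposition~\ref{sesqui1}). So with $\kappa' := (n_2+1) - (r-i)$ we have $\Gamma_{>i-1}$ (or $\Phi$) of rank $(n_2+1) - \kappa'$ on $n_2+1$ points, i.e.\ it lies in $\Sigma^{\kappa'}(n_2+1)$ provided its group is $S_{n_2+1}$ — which is exactly what Proposition~\ref{OneisSn} combined with Theorems~\ref{altn}--\ref{bigprim} delivers: one of the two orbit-actions at the $i$-split must be the full symmetric group, and since $\Gamma_{<i}$ has no perfect split (by minimality of $i$) while $G_{<i+1}$ acting on $O_1 \cup\{b\}$ would be split-attachable of high rank, the symmetric-group side has to be the $O_2$-side. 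I would spell this out: $n_1 + 1 = 2(i+1)$ would be forced if $G_{<i+1}$ were the relevant large group, contradicting $i \leq (n_1-1)/2$.

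Next, $\kappa' < \kappa$ is precisely Proposition~\ref{imink'}: it states $(n_2+1) - (r-i) < \kappa$, i.e.\ $\kappa' < \kappa$, which holds because $i \neq 0$. For the inequality $n_2 + 1 \geq 2\kappa' + l$ — here $l$ seems to be a typo in the excerpt for a small constant, presumably $3$ — I would argue: since $\Gamma_{>i-1}$ is a transitive string C-group of rank $r - i = (n_2+1) - \kappa'$ on $n_2+1$ points whose group is $S_{n_2+1}$, and string C-groups for $S_m$ of rank $\geq (m+3)/2$ exist only when the rank is at most $m-1$ (with the relevant bound forcing $m \geq 2\kappa' + 3$ whenever $\kappa' \geq 1$), we get $n_2 + 1 \geq 2\kappa' + 3$. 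The case $\kappa' = 0$ would make $\Gamma_{>i-1}$ the $(n_2)$-simplex, still satisfying $n_2+1 \geq 3$; I would check this degenerate case separately.

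The main obstacle I anticipate is the bookkeeping around which of the two orbit-actions is symmetric and whether we are in the sesqui-extension case or not, together with correctly propagating the degree: in the sesqui-extension case the underlying string C-group $\Phi$ acts on $n_2+1$ points but its group might a priori be $A_{n_2+1}$ rather than $S_{n_2+1}$ — this is ruled out by combining Proposition~\ref{OneisSn} (one side is symmetric) with the observation that the symmetric-group side cannot be $O_1\cup\{b\}$ under the hypotheses, using Proposition~\ref{att} exactly as in the proof of Proposition~\ref{3ps+}. Once the group is pinned down as $S_{n_2+1}$, the membership in $\Sigma^{\kappa'}(n_2+1)$ and both inequalities follow formally from Propositions~\ref{int} and~\ref{imink'}.
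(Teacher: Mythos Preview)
Your proposal has the right ingredients (Propositions~\ref{imin} and~\ref{imink'}, the sesqui-extension dichotomy for $\Gamma_{>i-1}$) but misses the single arithmetic step that drives the whole argument. From the standing hypothesis $r\geq (n+l)/2$ (here $l$ is not a typo: it is the constant $3$ or $4$ in $n\geq 2\kappa+l$, depending on where in Section~\ref{theTh} the proposition is invoked) together with $i\leq (n_1-1)/2$ from Proposition~\ref{imin}, one subtracts to obtain
\[
r-i \;\geq\; \frac{n+l}{2}-\frac{n_1-1}{2}\;=\;\frac{(n_2+1)+l}{2}.
\]
This one line does two jobs at once. First, since $l\geq 3$, the action of $G_{>i-1}$ on the $n_2+1$ points has rank at least $((n_2+1)+3)/2$, so by Theorems~\ref{altn}--\ref{bigprim} (as in Proposition~\ref{int}) the group is $S_{n_2+1}$; no elimination via Proposition~\ref{OneisSn} is needed. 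Second, setting $\kappa'=(n_2+1)-(r-i)$, the displayed inequality rewrites as $n_2+1\geq 2\kappa'+l$ directly.

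Your detours do not work as written. Proposition~\ref{OneisSn} only says that \emph{one} of the two sides is symmetric, and your elimination claim ``$n_1+1=2(i+1)$ would be forced if $G_{<i+1}$ were the relevant large group'' is unfounded: $G_{<i+1}\cong S_{n_1+1}$ imposes no such equation. Likewise, your argument for $n_2+1\geq 2\kappa'+3$ via ``rank at most $m-1$'' yields only $\kappa'\geq 1$, not the desired lower bound on $n_2+1$. Once you insert the arithmetic step above, both issues disappear and the proof reduces to the paper's: the sesqui-extension dichotomy you describe is correct, and $\kappa'<\kappa$ is indeed exactly Proposition~\ref{imink'}.
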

\begin{proof}
As $n\geq 2\kappa+l$ (or  equivalently $r\geq \frac{n+l}{2}$) and, by Proposition~\ref{imin}, $i\leq \frac{n_1-1}{2}$, we have $r-i\geq \frac{(n_2+1)+l}{2}$. 
Let $\kappa'=(n_2+1)-(r-i)$. If we consider the equivalence
$$n\geq 2\kappa+l\Leftrightarrow r \geq \frac{n+l}{2}$$
and replace $\kappa$ by $\kappa'$, $n$ by $n_2+1$ and $r$ by $r-1$, one get 
$$n_2+1\geq 2\kappa'+l.$$
As  $r-i\geq \frac{(n_2+1)+l}{2}$ and $l\in\{3,4\}$,  $G_{>i-1}$ acts as the symmetric group on the orbit of size $n_2+1$.
The fact that $\kappa'<\kappa$ follows from Proposition~\ref{imink'}.
\end{proof}
 
 For each string C-group $\Phi=(S_n, \{ \gamma_0,\ldots, \gamma_{r-1} \}) \in \Sigma^{{\kappa}}(n)$ consider the following set.
 
\begin{center}

\begin{tabular}{rl}
$P_{\Phi}:=\{i\; |$ & $i$ is the label of a perfect split of $\Phi$ and\\
			  & $\gamma_i$ fixes each point on one of the two\\
			  & $\Phi_i$-orbit unless the point belongs to the split\}.
\end{tabular}

 \end{center}

\begin{prop}\label{1}
 $|\Sigma^{\kappa}(n)|\leq| \Sigma^{\kappa}(n+1)|$.
\end{prop}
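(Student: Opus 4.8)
The strategy is to construct an explicit injection $\Sigma^{\kappa}(n)\hookrightarrow\Sigma^{\kappa}(n+1)$ using the $r\&d$-extension of Section~\ref{RD}. Given $\Phi=(S_n,\{\gamma_0,\ldots,\gamma_{r-1}\})\in\Sigma^{\kappa}(n)$, Proposition~\ref{3ps+} provides a perfect split with label $i\notin\{0,r-1\}$ such that either $\alpha_i$ or $\beta_i$ is trivial; equivalently, the set $P_{\Phi}$ defined just above is nonempty. The idea is to pick a canonical element $i\in P_{\Phi}$ (for definiteness, the minimal one, possibly after replacing $\Phi$ by its dual so that the component on which $\gamma_i$ acts trivially is, say, $O_1$), form the $r\&d$-extension $\Phi^{i\uparrow}$, and check that it is a string C-group for $S_{n+1}$ of rank $r+1=(n+1)-\kappa$, i.e.\ an element of $\Sigma^{\kappa}(n+1)$. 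The first observation is that $\{a,b\}\cap\mathrm{Fix}(G_i)=\emptyset$ holds automatically here: since $i$ is in $P_{\Phi}$ and $i\notin\{0,r-1\}$, both $a$ and $b$ are moved by some $\gamma_j$ with $j\ne i$ (otherwise $i=0$ or $i=r-1$ as in the proof of Proposition~\ref{primpb}). Then Theorem~\ref{RDE} applies directly (its hypotheses are: a perfect $i$-split, a fracture graph --- which $\Phi$ has by Proposition~\ref{int} --- $\{a,b\}\cap\mathrm{Fix}(G_i)=\emptyset$, and $\alpha_i$ or $\beta_i$ trivial), so $\Phi^{i\uparrow}$ is a string C-group; and Proposition~\ref{sntosn} gives $A_{n+1}\le G^{i\uparrow}$, while $\gamma_i=\alpha_i(a,c)$ is an odd permutation (a transposition, when $\alpha_i=1$, or more generally $G^{i\uparrow}$ contains the transposition $(a,c)$), so $G^{i\uparrow}\cong S_{n+1}$. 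Hence $\Phi^{i\uparrow}\in\Sigma^{\kappa}(n+1)$.

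The second, and more delicate, step is to show that the assignment $\Phi\mapsto\Phi^{i\uparrow}$ descends to a well-defined \emph{injective} map on $\sim$-classes. Well-definedness: if $\Phi'$ is isomorphic to $\Phi$, the isomorphism carries the chosen canonical split to a corresponding canonical split (one must check that the choice ``minimal $i\in P_{\Phi}$, up to duality'' is isomorphism-invariant --- which it is, since $P_{\Phi}$ is defined purely group-theoretically and duality is built into the $\sim$-relation), so the extensions are isomorphic; and one must check compatibility with duality, namely that $(\Phi^{i\uparrow})^{\mathrm{dual}}$ is the $r\&d$-extension of $\Phi^{\mathrm{dual}}$ at the dual split, which is immediate from the symmetry of the construction $S^{i\uparrow}$. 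Injectivity: here the key is to \emph{recover} $\Phi$ from $\Phi^{i\uparrow}$. The extension $\Phi^{i\uparrow}$ has two adjacent perfect splits $\{a,c\}$ and $\{c,b\}$ with labels $i$ and $i+1$, where $c$ is the unique new point and $\delta_i=\alpha_i(a,c)$, $\delta_{i+1}=\beta_i(c,b)$. To invert, one identifies inside $\Phi^{i\uparrow}$ the pair of adjacent perfect splits sharing a common vertex $c$ of degree exactly $2$, ``contracts'' the $i$- and $(i+1)$-edges through $c$ back to a single $i$-edge $\{a,b\}$, and relabels. Since the reconstruction is canonical, two string C-groups in $\Sigma^{\kappa}(n)$ that are not $\sim$-equivalent produce non-$\sim$-equivalent extensions.

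\textbf{The main obstacle.} The genuinely delicate point is the canonical reconstruction used for injectivity: one must verify that from $\Phi^{i\uparrow}$ (given only up to isomorphism and duality) one can \emph{unambiguously} locate ``the'' adjacent pair of perfect splits produced by the construction and distinguish it from any other pair of adjacent perfect splits that $\Phi^{i\uparrow}$ might happen to possess. This is where the earlier structural results are needed: Propositions~\ref{4ps}, \ref{7ps}, \ref{6ps}, together with the fact (Propositions~\ref{imin}, \ref{i2}) that minimal perfect splits force $i\le (n_1-1)/2$ and reduce $\kappa$, pin down the label $i$ and its component structure tightly enough that the choice made in the forward direction (minimal $i\in P_{\Phi}$, up to duality) matches exactly what one reads off $\Phi^{i\uparrow}$ after contracting the vertex $c$. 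Concretely, I would argue that in $\Phi^{i\uparrow}$ the new vertex $c$ is characterized as the unique vertex lying on two consecutive-label edges $\{a,c\}$ (label $i$) and $\{c,b\}$ (label $i+1$) both of which are perfect splits, with the further property that contracting it yields a string C-group in $\Sigma^{\kappa}(n)$ whose distinguished split (minimal element of $P$) has label $i$; uniqueness then follows from the classification of how perfect splits can coexist (the cited propositions on pairs of perfect splits). Once this inversion is shown canonical, injectivity is immediate and the proposition follows. I expect that the forward direction is essentially a citation of Theorem~\ref{RDE}, Proposition~\ref{sntosn} and Proposition~\ref{3ps+}, and that the bulk of the remaining work is bookkeeping to make the reconstruction genuinely well-defined on $\sim$-classes.
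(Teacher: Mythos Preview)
Your plan is essentially the paper's approach: pick $j=\min(P_\Phi\setminus\{0,r-1\})$ (nonempty by Proposition~\ref{3ps+}), apply Theorem~\ref{RDE} together with Proposition~\ref{sntosn} to land in $\Sigma^{\kappa}(n+1)$, and argue injectivity. Where you diverge is in the injectivity step, which you make considerably harder than necessary.

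The paper's injectivity argument is one sentence: because the generators $\delta_l=\gamma_l$ for $l<j$ are untouched by the extension, a label $l<j$ lies in $P_{\Phi^{j\uparrow}}$ if and only if it lies in $P_\Phi$; and $j$ itself lies in $P_{\Phi^{j\uparrow}}$ since $\delta_j=\alpha_j(a,c)$ fixes every point of the second $G^{j\uparrow}_j$-orbit except $c$. Hence $j=\min(P_{\Phi^{j\uparrow}}\setminus\{0,r\})$, so from the image one reads off $j$ canonically and contracts. There is no need to search for ``the'' adjacent pair of splits or to locate the vertex $c$ by structural arguments; once $j$ is known, $c$ is simply the second vertex of the $j$-split, and the contraction is forced. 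Your invocation of Propositions~\ref{4ps}, \ref{7ps}, \ref{6ps}, \ref{imin}, \ref{i2} at this stage is unnecessary: those results are used later in the paper to establish \emph{surjectivity} (Propositions~\ref{0r-1}--\ref{indnot0}), not the present inequality.

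A minor point: your proposed normalization ``replace $\Phi$ by its dual so that $\gamma_i$ acts trivially on $O_1$'' is not what the paper does and would interfere with the clean minimality argument; the paper simply takes the numerical minimum of $P_\Phi\setminus\{0,r-1\}$ on a representative, and the preservation of this minimum under extension is what makes the map injective on $\sim$-classes.
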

\begin{proof} 
Let $H$ be the symmetric group $S_n$ of degree $n$ and let $\Phi= (H,\{ \gamma_0,\ldots, \gamma_{r-1} \}) \in \Sigma^{\kappa}(n)$.
By Proposition~\ref{3ps+},  $P_{\Phi}\setminus\{0,r-1\}\neq \emptyset$.
Moreover if $j\in P_{\Phi}\setminus\{0,r-1\}$ then, $Fix(H_j)=\emptyset$. 
Thus by Theorem~\ref{RDE},  $\Phi^{j\uparrow}\in \Sigma^{\kappa}(n+1)$.
Now consider the following correspondence from $\Sigma^{\kappa}(n)$ to $\Sigma^{\kappa}(n+1)$.
 $$\Phi\mapsto \Phi^{j\uparrow}\mbox{ where }j=\min(P_{\Phi}\setminus\{0,r-1\})$$
 The minimality of $j$ forces this mapping to be injective as $j$ being minimal on $\Phi$ will force $j$ to be also minimal in $\Phi^{j\uparrow}$.
 Therefore $|\Sigma^{\kappa}(n)|\leq| \Sigma^{\kappa}(n+1)|$.
\end{proof}

In what follows we will prove that the correspondence given in the proof of Proposition~\ref{1} is a bijection, meaning the following: for  each string C-group $\Phi=(S_n, \{ \gamma_0,\ldots, \gamma_{r-1} \}) \in \Sigma^{\kappa}(n)$ such that  $n\geq 2\kappa+4$, if $j=\min(P_{\Phi}\setminus\{0,r-1\})$ then $j+1\in P_{\Phi}\setminus\{0,r-1\}$. Moreover either $\gamma_j$ or $\gamma_{j+1}$ is a transposition. This is true when $\kappa=1$ as in this case, $n\geq 5$ and we have the simplex.

We now proceed by induction on $\kappa$.

\begin{induction} \label{ind}  If $\kappa'<\kappa$,  $n\geq 2\kappa'+4$, $\Phi=\langle \gamma_0,\ldots, \gamma_{r-1} \rangle$ and  $j=\min(P_{\Phi}\setminus\{0,r-1\})$  then $j+1\in P_{\Phi}\setminus\{0,r-1\}$ and either $\gamma_j$ or $\gamma_{j+1}$ is a transposition. \end{induction}

\begin{prop}\label{0r-1}
Let $r\geq \frac{n+2}{2}$.
If  $\{0,r-1\} \subseteq P_{\Gamma}$ and  $\{1,r-2\} \cap P_{\Gamma}=\emptyset $ then one of the following situations occurs.
\begin{enumerate}
 \item there exists $j\in P_{\Gamma}\setminus\{0,r-1\}$ such that $j+1\in P_{\Gamma}\setminus\{0,r-1\}$  and either $\rho_j$ or $\rho_{j+1}$ is a transposition.
 \item $3$ is the label of a perfect split of $\Gamma$ and $\Gamma_{<3}$ is the string C-group (3) of Table~\ref{primPolys}.
 \end{enumerate}
\end{prop}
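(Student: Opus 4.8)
\textbf{Proof plan for Proposition~\ref{0r-1}.}

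The plan is to work under the hypotheses $\{0,r-1\}\subseteq P_\Gamma$ and $\{1,r-2\}\cap P_\Gamma=\emptyset$, and to drill down through the parabolic subgroups $\Gamma_0$, $\Gamma_{0,r-1}$ and beyond, using the propagation results (Propositions~\ref{4ps}, \ref{5ps}, \ref{7ps}, \ref{6ps}) to locate a perfect split of $\Gamma$ with small label, and then the r\&d-machinery together with the inductive hypothesis Induction~\ref{ind} to upgrade it to an adjacent pair $j,j+1\in P_\Gamma$ one of whose generators is a transposition. First I would record that since $0\in P_\Gamma$, the group $G_0$ is transitive of degree $n-1$ and rank $r-1\geq\frac{(n-1)+3}{2}$; by Proposition~\ref{int} (and the fact, from Proposition~\ref{att}, that $\Gamma_0$ is split-attachable, being obtained from $\Gamma$ by deleting the pendant $0$-edge of the perfect split) we get $G_0\cong S_{n-1}$, and hence by Proposition~\ref{split} (no graph of Table~\ref{notperfect} being split-attachable, by Proposition~\ref{att}) that $\Gamma_0$ has a perfect $j$-split. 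By Proposition~\ref{4ps}, $j$ is either a perfect split of $\Gamma$ or $j=1$; and $j=1$ is excluded by Proposition~\ref{5ps} combined with the hypothesis $1\notin P_\Gamma$. So $j$ is the label of a perfect split of $\Gamma$; iterating this argument at the other end via the dual of Propositions~\ref{4ps} and \ref{5ps} shows $r-1$ behaves symmetrically, but the key point is that we now have a perfect split $j$ of $\Gamma$ with $j\notin\{0,r-1\}$ (if $j=r-1$ we repeat inside $\Gamma_{0,r-1}$).

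Next I would analyse $\Gamma_{0,r-1}$, transitive of degree $n-2$ and rank $r-2\geq\frac{(n-2)+2}{2}$. By Proposition~\ref{att} none of the sggi's of Tables~\ref{an}, \ref{BI}, \ref{notperfect} is split-attachable, so if $\Gamma_{0,r-1}$ had no perfect split then either it admits a 2-fracture graph — whence by Proposition~\ref{att} it must be graph (9) of Table~\ref{T2F}, forcing $\Gamma$ to contain the forbidden permutation representation graph (3) of Proposition~\ref{IPfails}, a contradiction — or it has a non-perfect split, which the argument of Proposition~\ref{3ps++} rules out. Hence $\Gamma_{0,r-1}$ has a perfect $l$-split; by Proposition~\ref{4ps} applied twice (inside $\Gamma_0$ and inside $\Gamma_{r-1}$) together with Proposition~\ref{5ps} and its dual and the hypothesis $\{1,r-2\}\cap P_\Gamma=\emptyset$, $l$ must be the label of a perfect split of $\Gamma$ with $l\notin\{0,1,r-2,r-1\}$ — unless one of these propagation steps collapses onto the boundary labels $1$ or $r-2$, and it is precisely that collapse which will have to be chased down and shown either to recur inductively or to produce case (b).

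Having secured an "interior" perfect split label $l$, I would invoke Proposition~\ref{OneisSn}, Proposition~\ref{SnBigrank} and Proposition~\ref{SnBigrankdual} to analyse the two sides $\Gamma_{<l+1}$ and $\Gamma_{>l-1}$. In the generic branch, one side (say $G_{>l}\cong S_{n_2}$) has rank $r-(l+1)\geq\frac{n_2+3}{2}$ with $n_2+1\geq 2\kappa'+4$ and $\kappa'<\kappa$ (Propositions~\ref{imin}, \ref{i2}), so the inductive hypothesis Induction~\ref{ind} applies to $\Gamma_{>l}$, giving a consecutive pair of perfect-split labels one of whose generators is a transposition; Proposition~\ref{7ps} then transfers these to perfect splits of $\Gamma$ (the only alternative, that one of them equals $l+1$, still leaves us with the desired adjacent pair since $l$ itself is a perfect split of $\Gamma$), and after also checking that $\alpha_l$ or $\beta_l$ is trivial via Proposition~\ref{bothAnSn}/Proposition~\ref{3ps+}, we land in conclusion~(a). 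The exceptional branch is case~(b) of Proposition~\ref{SnBigrank}, where $\Gamma_{<l+1}$ (with $l=2$, so that the "$3$" of the statement is $l+1$) is one of the small primitive string C-groups (2), (3), (8) of Table~\ref{primPolys}; here I would eliminate (2) and (8) by the split-attachability data in Proposition~\ref{att} (only (2), (3), (5), (8) are split-attachable, and a direct check using Propositions~\ref{sesqui1} and \ref{IPfails} on the relevant $\Gamma_J$ knocks out (2) and (8) in this configuration), leaving graph (3) of Table~\ref{primPolys} with $l+1=3$, which is conclusion~(b). The main obstacle will be the bookkeeping in the middle paragraph: carefully tracking how a perfect split of a deeply nested parabolic propagates back up to $\Gamma$ without landing on label $1$ or $r-2$, since the hypotheses explicitly forbid those, and showing that every way it could land there either recurs (so induction on $r-1-l$ or on $\kappa$ closes the loop) or forces the Table~\ref{primPolys}(3) configuration.
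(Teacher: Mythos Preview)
The proposal has a genuine gap stemming from the rank hypothesis. Proposition~\ref{0r-1} is stated with $r\geq \frac{n+2}{2}$, and this weaker bound is essential: the proposition is applied (in Proposition~\ref{remaining}) to $\Gamma_{0,r-1}$, where one only has $r-2\geq\frac{(n-2)+2}{2}$, and in the paper's own proof it is invoked recursively on the smaller string C-group $\Psi=\Gamma_{\{i,\ldots,j\}}$ via a smallest-counterexample argument. Your plan, however, repeatedly invokes results that require $r\geq\frac{n+3}{2}$ or more: Proposition~\ref{int} (to conclude $G_0\cong S_{n-1}$), Proposition~\ref{5ps} (to exclude $j=1$), Propositions~\ref{OneisSn}, \ref{SnBigrank}, \ref{SnBigrankdual}, \ref{bothAnSn}, \ref{3ps+}. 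Your very first computation, $r-1\geq\frac{(n-1)+3}{2}$, already assumes $r\geq\frac{n+4}{2}$. None of these steps are available at $r\geq\frac{n+2}{2}$, so the argument does not go through at the stated hypothesis, and cannot be used as the recursive engine the paper needs.

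The paper's proof is engineered to survive at the weaker bound. It goes straight to $\Gamma_{0,r-1}$ and uses only Proposition~\ref{3ps++} (which needs merely $r\geq\frac{n+1}{2}$) to produce perfect splits there. It then takes $i$ and $j$ to be the minimal and maximal perfect-split labels of $\Gamma_{0,r-1}$, extracts quantitative inequalities $(E1)$, $(E2)$ from this extremality, and applies the proposition itself to the middle piece $\Psi=\Gamma_{\{i,\ldots,j\}}$ by the smallest-counterexample device. When this forces $i,i+1$ to be adjacent perfect splits of $\Gamma$ with neither $\rho_i$ nor $\rho_{i+1}$ a transposition, a direct intersection-property argument shows $S_{n_1+1}\not\leq G_{<i+1}$; then the minimality of $i$ (not Proposition~\ref{IPfails}) rules out graphs (2), (5), (8) of Table~\ref{primPolys}, leaving (3) and case~(b). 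Induction~\ref{ind} is invoked only at the very end, after the improved inequality $(E3)$ has been pushed to a strict inequality that guarantees $\kappa'<\kappa$ for $\Psi$. Your route via Propositions~\ref{SnBigrank} and \ref{7ps} tries to shortcut this chain, but those tools are simply unavailable at rank $\frac{n+2}{2}$.
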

\begin{proof}
Suppose that this proposition is false and let  $\Gamma$ be the smallest string C-group with $r\geq \frac{n+2}{2}$, $\{0,r-1\} \subseteq P_{\Gamma}$, $\{1,r-2\} \cap P_{\Gamma}=\emptyset $ and such that neither (a) nor (b) hold.

Consider $\Gamma_{0,r-1}$. It is such that $G_{0,r-1}$ acts transitively on $n-2$ points and it has rank $r-2\geq \frac{n-2}{2}$. By Proposition~\ref{3ps++}, $\Gamma_{0,r-1}$ has a perfect split.


Let $i$ and $j$ be, respectively, the minimal  and the maximal labels of  perfect splits of $\Gamma_{0,r-1}$.
Let $n_1$ be the size of the $G_i$-orbit that is not fixed by $G_{<i}$ and  $n_2$ be the size of the $G_j$-orbit that is not fixed by $G_{>j}$. Notice that as $\{1,r-2\} \cap P_{\Gamma}=\emptyset $, $n_1>2$ and $n_2>2$.

\[\begin{array}{c}
\xymatrix@-1.3pc{*+[o][F]{} \ar@{-}[rr]^(.4)0 && *++++[][F]{}  \ar@{-}[rr]^i&&*++++[][F]{} \ar@{-}[rr]^j && *++++[][F]{} \ar@{-}[rr]^(.65){r-1} && *+[o][F]{} }\\
\underbrace{\hspace{50pt}}\hspace{70pt}\underbrace{\hspace{50pt}}\\
\quad n_1\hspace{110pt}n_2\\
\end{array}\]

By construction, the string C-group $\Gamma_{\{1,\ldots, i-1\}}$ has no perfect splits on the orbit of size $n_1-1$. Therefore
Proposition~\ref{3ps++} applied to $\Gamma_{\{0,\ldots,i\}}$ on the orbit of size $n_1+1$ gives
$$ (E1) \qquad i+1\leq \frac{n_1+1}{2}.$$

Similarly,
Proposition~\ref{3ps++} applied to $\Gamma_{\{j,\ldots,r-1\}}$ on the orbit of size $n_2+1$ gives
$$(E2) \qquad r-j\leq \frac{n_2+1}{2}.$$

 
If $i=j$, then  $n=n_1+n_2$ and  $r\leq \frac{n}{2}$, a contradiction.
Thus $i\neq j$.

In what follows let $\Psi$ be the string C-group corresponding to the action of $G_{\{i,\ldots, j\}}$ on $n-n_1-n_2+2$ points and with rank $j-i+1$ satisfying the following inequality.
$$j-i+1\geq \frac{(n-n_1-n_2+2)+2}{2}$$

Suppose first that  $\{i+1, j-1\}\cap P_{\Psi}=\emptyset$.
As $\Gamma$ is a smallest counter-example to this proposition, then $\Psi$ must satisfy either (a) or (b).
Suppose that $\Psi$ satisfies (a).
Hence there exists  $l\in P_{\Psi}\setminus\{i,j\}$ such that $l+1\in P_{\Psi}\setminus\{i,j\}$  and either $\rho_l$ or $\rho_{l+1}$ is a transposition.
But then by Proposition~\ref{6ps}, $\Gamma$ also satisfies (a), a contradiction.

If $\Psi$ satisfies (b), then $\Gamma$ has a permutation representation graph as follows.
$$\xymatrix@-1.3pc{*+[o][F]{} \ar@{-}[rr]^(.4)0 && *++++[][F]{}  \ar@{-}[rr]^(.6)i&& *+[o][F]{}  \ar@{-}[rr]^{i+1}&&*+[o][F]{} \ar@{=}[rr]^{i+2}_i && *+[o][F]{} \ar@{-}[rr]^{i+1} && *+[o][F]{} \ar@{-}[rr]^(.4){i+2}&& *++++[][F]{} \ar@{-}[rr]^(.65){r-1} && *+[o][F]{} }$$
Then the intersection property fails, as $A_{n-n_1-1}\leq G_{>i}$, $A_{n_1+5}\leq G_{<i+3}$ and $\langle \rho_{i+1},\rho_{i+2}\rangle\cong D_5$. 
Thus $\{i+1, j-1\}\cap P_{\Psi}\neq \emptyset$.

By duality we may assume that  $i+1$ is the label of a perfect split of $\Psi$, while $j-1$ might be the label of a perfect split or not.
Then by Proposition~\ref{6ps}  $i+1$ must be the label of a perfect split of $\Gamma$.
Hence we have that $i$ and $i+1$ are consecutive labels of perfect splits of $\Gamma$ as shown in the following graph.
\[\begin{array}{c}
\xymatrix@-1.3pc{*+[o][F]{} \ar@{-}[rr]^(.4)0 && *++++[][F]{}  \ar@{-}[rr]^(.6)i&&*+[o][F]{}  \ar@{-}[rr]^(.4){i+1}&&*++++[][F]{} \ar@{-}[rr]^(.6){r-1} && *+[o][F]{} }\\
\underbrace{\hspace{50pt}}\hspace{25pt}\underbrace{\hspace{80pt}}\\
n_1\hspace{70pt}n-n_1\\
\end{array}\]

As by assumption $\Gamma$ is a counter-example to this proposition, neither $\rho_{i+1}$ nor $\rho_i$ is a transposition. 
Particularly if $\{a,b\}$ is the $i$-split, $(a,b)\notin \langle \rho_i\rangle$.
If $S_{n_1+1}\leq G_{<i+1}$ and $ S_{n-n_1+1}\leq G_{>i-1}$ then  $(a,b)\in G_{<i+1}\cap G_{>i-1}$, a contradiction. 
Hence either  $S_{n_1+1}\not\leq G_{<i+1}$ or $S_{n-n_1+1}\not\leq G_{>i-1}$.

If $S_{n_1+1}\not\leq G_{<i+1}$ we can improve $(E1)$ in the following way.
By Proposition~\ref{primpb}, we have that $G_{<i+1}$ is primitive on the orbit of size $n_1$. 
Suppose we have $i+1\geq \frac{n_1+1}{2}$.
In that case, $\Gamma_{<i+1}$ has, on the orbit of $n_1+1$ points, a permutation representation graph that is one of Table~\ref{an} or Table~\ref{primPolys}. 
By Proposition~\ref{att}, the graphs of Table~\ref{an} are excluded and only graphs (2), (3) (5) and (8) of Table~\ref{primPolys} need to be considered.
Note that the only possible perfect splits of $\Gamma_{<i+1}$ have labels $0$ and $i$ (thanks to the minimality of $i$).
Hence $\Gamma_{<i+1}$ needs to have permutation representation graph (3) on the orbit of $n_1+1$ points.
Therefore $\Gamma$ satisfies (b) of this proposition, a contradiction.
Thus  $S_{n_1+1}\not\leq G_{<i+1}$ implies that  
$$(E1^+)\qquad i+1\leq \frac{n_1}{2}.$$

Dually  $S_{n-n_1+1}\not\leq G_{>i-1}$ implies $(r-1)-i+1=r-i\leq \frac{n-n_1}{2}$. 
But then, combining this with the inequality (E1), we have $r\leq  \frac{n-n_1}{2}+\frac{n_1-1}{2}= \frac{n-1}{2}$, a contradiction. Thus  we necessarily have $S_{n-n_1+1}\leq G_{>i-1}$.
Thus, the intersection property forces that  $S_{n_1+1}\not\leq G_{<i+1}$ and $(E1^+)$ holds. Therefore combining $(E2)$ with $(E1^+)$, we get
$$(E3)\qquad j-i+1\geq \frac{(n-n_1-n_2+2)+3}{2}.$$

If we have equality, then $j-i+1= \frac{(n-n_1-n_2+2)+3}{2}$. But this can only happen when we have the equality in $(E2)$, namely that  $r-j=\frac{n_2+1}{2}$ (and in particular, $n_2$ is odd).

Now either $j-1\in P_{\Psi}$ or not.
Suppose first that $j-1\in P_{\Psi}$. 
Then, we have that $j-1$ and $j$ are consecutive labels of perfect splits, as shown in the following graph.
\[\begin{array}{c}
\xymatrix@-1.3pc{*+[o][F]{} \ar@{-}[rr]^(.4)0 && *++++[][F]{}  \ar@{-}[rr]^(.6){j-1}&&*+[o][F]{}  \ar@{-}[rr]^(.4)j&&*++++[][F]{} \ar@{-}[rr]^(.6){r-1} && *+[o][F]{} }\\
\underbrace{\hspace{80pt}}\hspace{25pt}\underbrace{\hspace{55pt}}\\
n-n_2\hspace{70pt}n_2\\
\end{array}\]
 Then assuming that neither $\rho_{j-1}$ nor $\rho_{j}$ is a transposition, the intersection property implies that $S_{n_2+1}\not\leq G_{>j-1}$. Dually to what we did before, with the consecutive labels of perfect splits $i$ and $i+1$, we get an improvement of  the inequality (E2).
$$r-j\leq \frac{n_2}{2}.$$
So $r-j \neq \frac{n_2+1}{2}$, a contradiction.
Hence $j-1\notin P_{\Psi}$.

Now consider the string C-group $\Phi$ corresponding to the action of $\Gamma_{\{i+1,\ldots,j\}}$ on $n-n_1-n_2+1$ points with rank $j-(i+1)+1=j-i$.
\[\begin{array}{c}
\xymatrix@-1.3pc{*+[o][F]{} \ar@{-}[rr]^(.4)0 && *++++[][F]{}  \ar@{-}[rr]^(.6)i&&*+[o][F]{}  \ar@{-}[rr]^(.4){i+1}&&*++++[][F]{} \ar@{-}[rr]^(.5){j} &&*++++[][F]{} \ar@{-}[rr]^(.6){r-1} && *+[o][F]{} }\\

\hspace{30pt}\underbrace{\hspace{70pt}}\hspace{5pt}\\
\hspace{30pt}n-n_1-n_2+1\\
\end{array}\]

If $i+2$ is the label of a perfect split then $\rho_{i+1}$ is a transposition, which implies that $\Gamma$ satisfies (a), a contradiction. Thus $\{i+2,j-1\}\cap P_{\Phi}=\emptyset$.
The inequalities  (E1$^+$) and (E2) give the following.
$$j-i\geq \frac{(n-n_1-n_2+1) +2}{2}$$
If  $\Phi$ satisfies (a) of this proposition then $\Gamma$ also satisfies (a), a contradiction.
If $\Phi$ satisfies (b), we get a contradiction with the intersection property.
But then $\Phi$ is a smaller counter-example than $\Gamma$ to the proposition, a contradiction.

We have thus proven that $(E3)$ is a strict inequality, namely that $$j-i+1\geq \frac{(n-n_1-n_2+2)+4}{2}.$$ 
Let us now prove that  $\kappa':= (n-n_1-n_2+2)-(j-i+1)<\kappa$. 
\[\begin{array}{rl}
(n-n_1-n_2+2)-(j-i+1)&=(n-r)-n_1-n_2+(r-j)+(i+1) \\
&\leq \kappa-n_1-n_2 +  \frac{n_2+1}{2}+\frac{n_1}{2}\\
&=\kappa-\frac{n_1+n_2-1}{2}<\kappa
\end{array}\]
By Induction~\ref{ind}, we get that there exists $j\in P_{\Gamma}\setminus\{0,r-1\}$ 
such that $j+1\in P_{\Gamma}\setminus\{0,r-1\}$  and either $\rho_j$ or $\rho_{j+1}$ is a transposition, a contradiction.
\end{proof}


In what follows let  $n\geq 2\kappa+4$ (or $r\geq \frac{n+4}{2}$).

\begin{prop}\label{remaining}
If  $\{0,1, r-2,\,r-1\} \subseteq P_{\Gamma}$  then there exists $j\in P_{\Gamma}\setminus\{0,r-1\}$ 
such that $j+1\in P_{\Gamma}\setminus\{0,r-1\}$  and either $\rho_j$ or $\rho_{j+1}$ is a transposition.
\end{prop}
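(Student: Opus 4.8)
The goal is the final inductive step needed to complete the bijection between $\Sigma^\kappa(n)$ and $\Sigma^\kappa(n+1)$: starting from a string C-group $\Gamma$ for $S_n$ of rank $r = n-\kappa$ with $n \ge 2\kappa+4$, and knowing that $0,1,r-2,r-1$ are all labels of perfect splits of the ``right kind'' (i.e. in $P_\Gamma$), I must produce two \emph{consecutive} labels $j,j+1 \in P_\Gamma \setminus \{0,r-1\}$ with one of $\rho_j,\rho_{j+1}$ a transposition. The plan is to mimic closely the structure of the proof of Proposition~\ref{0r-1}, exploiting that we now have \emph{more} perfect splits to work with, so that we can peel off splits from \emph{both} ends simultaneously and reduce to a string C-group of strictly smaller $\kappa$, where Induction~\ref{ind} applies.

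\textbf{Key steps.}
First I would look at $\Gamma_{\{1,\ldots,r-2\}}$ (remove the two outermost generators $\rho_0$ and $\rho_{r-1}$), acting transitively on $n-2$ points with rank $r-2 \ge \frac{(n-2)+2}{2}$; since $1$ and $r-2$ are perfect splits of $\Gamma$, one checks they stay perfect splits of this restriction, and membership in the $P$-set is inherited (the transposed fixed orbit is only shrunk). So $\Gamma_{\{1,\ldots,r-2\}}$ satisfies $\{0,1,r-3,r-2\}\subseteq P$ in its own indexing, \emph{unless} its rank is too small, i.e. unless $n-2 < 2\kappa'+4$ for the relevant $\kappa' = (n-2)-(r-2) = \kappa$ — but $\kappa'=\kappa$ here, not smaller, so this does not immediately help. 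The right move instead is: take $i = \min(P_\Gamma\setminus\{0,r-1\})$ and $j' = \max(P_\Gamma\setminus\{0,r-1\})$ (these exist and include $1$ and $r-2$ respectively, so $i=1$ or $i$ is between them). If $\rho_1$ or $\rho_0$ is a transposition we may be done with $j=0,\,j+1=1$ once we verify $1 \in P_\Gamma\setminus\{0,r-1\}$ (it is, by hypothesis, since $r-1 \ge 2$); so the only obstruction is that neither $\rho_0$ nor $\rho_1$, and dually neither $\rho_{r-2}$ nor $\rho_{r-1}$, is a transposition. Assuming this, I would argue as in Proposition~\ref{0r-1}: the intersection property $G_{<2}\cap G_{>0} = \langle\rho_1\rangle$ (for the $1$-split) combined with $A_{n_1+1}\le G_{<2}$, $A_{n-n_1+1}\le G_{>0}$ being incompatible when the $1$-split edge is not $\langle\rho_1\rangle$, forces one of $G_{<2}$, $G_{>0}$ to miss the full symmetric group; this yields the sharpened bound $i+1 \le n_1/2$ (here $i=1$) on one side. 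The dual statement at the top end gives $r-(r-2) = 2 \le n_2/2$, i.e. $n_2 \ge 4$, which is automatic — so instead the contradiction must come from combining the two improved bounds to force $r \le (n-1)/2$, exactly as in the middle of the proof of Proposition~\ref{0r-1}.

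Concretely: with $i$ and $j'$ the extreme interior perfect splits, $n_1$ the size of the orbit cut off on the left and $n_2$ on the right, the bounds (analogues of $(E1)$, $(E1^+)$, $(E2)$) give $i+1 \le n_1/2$ if $S_{n_1+1}\not\le G_{<i+1}$ and $r-j' \le n_2/2$ if $S_{n_2+1}\not\le G_{>j'-1}$, and at least one of each pair of symmetric-group containments must fail (else a transposition appears, or the intersection property breaks, or we land on graph (3) of Table~\ref{primPolys}, which would give a transposition at an adjacent label anyway under the present hypothesis $1,r-2 \in P_\Gamma$). Combining, the middle piece $\Psi = \Gamma_{\{i,\ldots,j'\}}$ on $n-n_1-n_2+2$ points has rank $j'-i+1 \ge \frac{(n-n_1-n_2+2)+4}{2}$, and its $\kappa'' := (n-n_1-n_2+2)-(j'-i+1) \le \kappa - \frac{n_1+n_2-1}{2} < \kappa$. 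Then Induction~\ref{ind} applied to $\Psi$ (which, after checking $\{i+1,j'-1\}\cap P_\Psi = \emptyset$ would itself force a transposition and hence (a) for $\Gamma$, or else $i+1$ or $j'-1 \in P_\Psi$ and Proposition~\ref{6ps} promotes it to a perfect split of $\Gamma$ giving consecutive splits as in Proposition~\ref{0r-1}) produces consecutive labels in $P_\Psi\setminus\{i,j'\}$ with a transposition; Proposition~\ref{6ps} transports these to $P_\Gamma\setminus\{0,r-1\}$, finishing the argument.

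\textbf{Main obstacle.}
The delicate point, as in Proposition~\ref{0r-1}, is the bookkeeping around the ``graph (3) of Table~\ref{primPolys}'' exceptional configuration and the cases where one of the end generators \emph{is} already a transposition: one must show that the extra hypothesis here ($1$ and $r-2$, not just $0$ and $r-1$, lie in $P_\Gamma$) genuinely rules out the exceptional branch (b) of Proposition~\ref{0r-1} — essentially because the graph (3) of Table~\ref{primPolys} attached at an end would put a transposition ($\rho_{i+1}$) adjacent to a perfect split, directly giving conclusion (a). So I expect the proof to run: invoke Proposition~\ref{0r-1} on suitable sub-string-C-groups, observe that the alternative (b) there is incompatible with $\{1,r-2\}\subseteq P_\Gamma$, and patch the finitely many boundary cases by hand. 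The hard part will be making the ``(b) is incompatible'' step airtight without re-deriving the whole of Proposition~\ref{0r-1}.
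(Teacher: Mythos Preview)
Your final paragraph has the right idea, and it is essentially what the paper does: the whole proof is a direct application of Proposition~\ref{0r-1} to the sub-string-C-group $\Gamma_{0,r-1}$ (which acts on $n-2$ points with rank $r-2\ge\frac{(n-2)+2}{2}$, and has $1,r-2\in P_{\Gamma_{0,r-1}}$). The paper first disposes of the case where $2$ or $r-3$ is a perfect split of $\Gamma_{0,r-1}$: then by Proposition~\ref{6ps} it is a perfect split of $\Gamma$, and since $0,1,2$ are three consecutive perfect splits one checks that $\rho_1$ is forced to be a transposition and $2\in P_\Gamma$, so $j=1$ works. Otherwise $\{2,r-3\}\cap P_{\Gamma_{0,r-1}}=\emptyset$, Proposition~\ref{0r-1} applies verbatim, case~(a) transports to $\Gamma$ via Proposition~\ref{6ps}, and case~(b) is killed by a short intersection-property computation (the resulting graph has $G_{<4}\cap G_0\ge S_5$ while $G_{\{1,2,3\}}\cong A_5$).

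Your middle paragraphs, by contrast, re-derive the machinery of Proposition~\ref{0r-1} from scratch (the bounds $(E1^+)$, $(E2)$, the reduction to a smaller $\kappa''$ via Induction~\ref{ind}). That is unnecessary here: you already have Proposition~\ref{0r-1} as a black box, and the extra hypothesis $1,r-2\in P_\Gamma$ is exactly what lets you apply it one level down. Two specific errors in the proposal: first, the remark ``we may be done with $j=0$, $j+1=1$'' is wrong, since the conclusion requires $j\in P_\Gamma\setminus\{0,r-1\}$, which $j=0$ violates. Second, your handling of branch~(b) is incorrect: you claim the graph~(3) configuration would place a transposition adjacent to a perfect split and hence yield conclusion~(a), but in fact in that configuration $\rho_1=(2,3)(4,5)$ is \emph{not} a transposition and $2$ is not even a split of $\Gamma$. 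The paper instead shows this configuration is impossible because it violates the intersection property; that is the argument you need.
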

\begin{proof}
If either $2$ or $r-3$ are labels of perfect splits of $\Gamma_{0,r-1}$, and therefore, by Proposition~\ref{6ps}, of $\Gamma$, we get  that $j=1$ is as wanted.
Suppose that $\{2,r-3\}\cap P_{\Gamma}=\emptyset$.
Consider $\Gamma_{0,r-1}$ with rank $r-2$ acting on $n-2$ points. As $r-2\geq \frac{(n-2)+2}{2}$, by Proposition~\ref{0r-1}, there are two possibilities. 
If we have case (b), $\Gamma$ has a perfect split with label $4$ and is as follows.
$$\xymatrix@-1.3pc{*+[o][F]{} \ar@{-}[rr]^0 &&*+[o][F]{} \ar@{-}[rr]^1 && *+[o][F]{}  \ar@{-}[rr]^2&&*+[o][F]{} \ar@{=}[rr]^1_3 && *+[o][F]{} \ar@{-}[rr]^2 && *+[o][F]{} \ar@{-}[rr]^3&& *+[o][F]{} \ar@{-}[rr]^(.4)4&&*++++[][F]{} }$$
But then $S_7\leq G_{<4}$ and $G_0\cong S_{n-1}$ (as $r-1\geq \frac{(n-1)+3}{2}$), so $G_{<4} \cap G_0 \geq S_5$ and $G_{\{1,2,3\}}\cong A_5$, contradicting the intersection property. Thus only Case (a) may occur and this finishes the proof.
\end{proof}

\begin{prop}\label{0r-1(2)}
If  $\{0,\,r-1\} \subseteq P_{\Gamma}$ and  $|\{1,\,r-2\}\cap P_{\Gamma}|\leq 1$ then there exists $j\in P_{\Gamma}\setminus\{0,r-1\}$ 
such that $j+1\in P_{\Gamma}\setminus\{0,r-1\}$  and either $\rho_j$ or $\rho_{j+1}$ is a transposition.
\end{prop}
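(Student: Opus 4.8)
The plan is to reduce to the situation handled by Proposition~\ref{0r-1}. Since $\{0,r-1\}\subseteq P_\Gamma$ and at most one of $1,r-2$ lies in $P_\Gamma$, there are two cases: either $\{1,r-2\}\cap P_\Gamma=\emptyset$, or (up to duality) $1\in P_\Gamma$ but $r-2\notin P_\Gamma$. In the first case we invoke Proposition~\ref{0r-1} directly; in the second we work with the subdiagram $\Gamma_{0}$, which has one fewer generator.

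\textbf{First case.} Suppose $\{1,r-2\}\cap P_\Gamma=\emptyset$. Then Proposition~\ref{0r-1} applies (note $r\geq\frac{n+4}{2}>\frac{n+2}{2}$), and we get either conclusion (a), which is exactly what we want, or conclusion (b): $3$ is the label of a perfect split of $\Gamma$ and $\Gamma_{<3}$ is the string C-group (3) of Table~\ref{primPolys}. In case (b), $\Gamma$ has a permutation representation graph beginning with a double $\{0,2\}$-edge, then a $1$-edge, then a $0$-edge, then a $1$-edge, then a $2$-edge leading into the rest. As in the proof of Proposition~\ref{remaining}, we obtain a contradiction with the intersection property: since $r-1\geq\frac{(n-1)+3}{2}$ and $r\geq\frac{n+4}{2}$, the relevant maximal parabolics $G_0$ and $G_{<3}$ are (up to known exceptions) symmetric groups on their nontrivial orbits, while $G_{\{0,1,2\}}$ is a dihedral group $D_5$, so that $G_0\cap G_{<3}$ is strictly larger than $G_{0,<3}$. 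Hence only conclusion (a) survives, and we are done in this case.

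\textbf{Second case.} Suppose $1\in P_\Gamma$ and $r-2\notin P_\Gamma$ (the case $r-2\in P_\Gamma$, $1\notin P_\Gamma$ follows by duality). Consider $\Gamma_0$, a string C-group for a transitive group on $n-1$ points of rank $r-1$; since $r\geq\frac{n+4}{2}$ we have $r-1\geq\frac{(n-1)+3}{2}$, so by Proposition~\ref{int} its group is $S_{n-1}$, and by Proposition~\ref{5ps} (and Proposition~\ref{att}, which forbids the imprimitive graphs of Table~\ref{BI}) the labels $1$ and $r-1$ are perfect splits of $\Gamma_0$, i.e. (re-indexing $\Gamma_0$ so that its labels run $0,\dots,r-2$) the endpoints $0$ and $r-2$ lie in $P_{\Gamma_0}$. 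By Proposition~\ref{7ps} and the dual of Proposition~\ref{5ps}, a perfect split of $\Gamma_0$ that is not a perfect split of $\Gamma$ would have to be $r-2$; but $r-2\notin P_\Gamma$ by assumption, and one checks as in Proposition~\ref{3ps} that this forces $r-3\notin P_{\Gamma_0}\setminus\{\text{endpoints}\}$ only in controlled ways. Applying the already-established conclusion to $\Gamma_0$ — which is legitimate because $\Gamma_0$ has strictly fewer generators and $n-1\geq 2\kappa+3$ — yields consecutive perfect splits $j,j+1$ of $\Gamma_0$, one of the corresponding generators being a transposition, with $j,j+1\notin\{0,r-2\}$. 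By Proposition~\ref{5ps} and Proposition~\ref{7ps} these labels $j,j+1$ are perfect splits of $\Gamma$ as well (they are interior, hence not equal to $r-2$, and not equal to $0$), and the transposition generator of $\Gamma_0$ is still a transposition in $\Gamma$ since passing from $\Gamma$ to $\Gamma_0$ only deletes $\rho_0$. This gives the desired $j$.

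\textbf{Main obstacle.} The delicate point is the bookkeeping in the second case: ensuring that the inductive application to $\Gamma_0$ is valid (degree and rank bounds, and that $\Gamma_0$ genuinely falls under a previously-proved instance rather than the statement currently being proved), and that the consecutive splits produced for $\Gamma_0$ transfer back to $\Gamma$ without collapsing onto the forbidden endpoint labels $0$ or $r-1$. The transfer of perfect splits is handled by Propositions~\ref{5ps} and~\ref{7ps}, but one must carefully track the identification of $r-2$ in $\Gamma_0$ with $r-1$ in $\Gamma$, and rule out the exceptional configuration where the split produced in $\Gamma_0$ is exactly the endpoint $r-2$; this is where the hypothesis $r-2\notin P_\Gamma$ and the rank bound $n\geq 2\kappa+4$ do the work.
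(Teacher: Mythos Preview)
Your overall two–case split is reasonable, and your Second case is close in spirit to the paper's argument. However, there is a genuine error in your First case.

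\textbf{The gap in the First case.} When $\{1,r-2\}\cap P_\Gamma=\emptyset$ and case~(b) of Proposition~\ref{0r-1} occurs, $\Gamma_{<3}$ is the string C-group (3) of Table~\ref{primPolys} with labels $0,1,2$. You claim an IP violation ``as in the proof of Proposition~\ref{remaining}'', asserting that $G_{\{0,1,2\}}$ is a dihedral group $D_5$ and that $G_0\cap G_{<3}$ is strictly larger. But $G_{\{0,1,2\}}=G_{<3}$ is the primitive group of graph~(3), which is $A_5$ acting on six points, not $D_5$; it is $G_{\{1,2\}}$ that equals $D_5$. Since $G_0\cong S_{n-1}$ fixes the pendant vertex $v_1$, one computes $G_0\cap G_{<3}=\mathrm{Stab}_{G_{<3}}(v_1)$, which has order $|A_5|/6=10=|D_5|$. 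Hence $G_0\cap G_{<3}=G_{\{1,2\}}$ and the intersection property is \emph{not} violated. The analogy with Proposition~\ref{remaining} fails because there graph~(3) carries labels $1,2,3$ and the extra pendant $0$-edge boosts $G_{<4}$ to $S_7$; here there is no such extra edge to exploit.

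\textbf{Comparison with the paper's approach.} The paper does not attempt your First case at all. It passes immediately to $\Gamma_0$ (using $1\in P_\Gamma$) and applies Proposition~\ref{0r-1} to $\Gamma_0$. Case~(b) for $\Gamma_0$ then produces graph~(3) with labels $1,2,3$; re-attaching the pendant $0$-edge yields exactly the configuration in Proposition~\ref{remaining}, where the IP argument ($G_{<4}\cong S_7$, $G_0\cong S_{n-1}$, but $G_{\{1,2,3\}}\cong A_5$) genuinely works. Your Second case is essentially this, though you phrase it as an inductive self-application rather than a direct appeal to Proposition~\ref{0r-1}; the paper's route is cleaner and avoids the circularity worry you flag in your ``Main obstacle'' paragraph. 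Note also that applying Proposition~\ref{0r-1} to $\Gamma_0$ requires $2\notin P_{\Gamma_0}$; if $2\in P_{\Gamma_0}$ one is already done with $j=1$ (since $0,1,2$ are then all perfect splits, forcing $\rho_1$ to be a transposition), a small case the paper leaves implicit.
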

\begin{proof}
Up to duality we may assume that $1$ is the label of a perfect split of $\Gamma$.
In this case, as 0 is the label of a perfect split, $G_0$ is acting transitively on $n-1$ points. Also $\Gamma_0$ is of rank $r-1\geq \frac{(n-1)+3}{2}$.
Now Proposition~\ref{0r-1} applies. Case (b) is excluded by the intersection property as in the previous proof. Case (a) gives what we want.
\end{proof}
\
\begin{prop}\label{indnot0}
If $0\notin P_{\Gamma}$ then there exists $j\in P_{\Gamma}\setminus\{0,r-1\}$ 
such that $j+1\in P_{\Gamma}\setminus\{0,r-1\}$  and either $\rho_j$ or $\rho_{j+1}$ is a transposition.
\end{prop}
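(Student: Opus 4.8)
The strategy is to reduce to the already–established results of Propositions~\ref{0r-1(2)} and~\ref{remaining} (which cover the case $0\in P_{\Gamma}$) by a symmetric-difference argument on perfect splits. By Proposition~\ref{3ps+}, since $r\geq\frac{n+3}{2}$ we know $\Gamma$ has a perfect split with label $i\notin\{0,r-1\}$ and, moreover, one for which either $\alpha_i$ or $\beta_i$ is trivial; fix such an $i$ with $i$ minimal, so that by Proposition~\ref{imin} (applied as it stands, using that all $G_j$ are intransitive by Proposition~\ref{int}) we have $i\leq\frac{n_1-1}{2}$. The point is then to pass to $\Gamma_{>i-1}$, which by Proposition~\ref{i2} is either an element of $\Sigma^{\kappa'}(n_2+1)$ or a sesqui-extension with respect to its first generator of such a $\Phi\in\Sigma^{\kappa'}(n_2+1)$, with $\kappa'<\kappa$ and $n_2+1\geq 2\kappa'+3$ (indeed $\geq 2\kappa'+4$ when $n\geq 2\kappa+4$). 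Since $\kappa'<\kappa$, Induction~\ref{ind} furnishes a label $j\in P_{\Phi}\setminus\{0,(r-i)-1\}$ with $j+1\in P_{\Phi}\setminus\{0,(r-i)-1\}$ and one of $\gamma_j,\gamma_{j+1}$ a transposition.

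First I would translate this information back to $\Gamma$. The labels $j,j+1$ of $\Phi$ correspond to labels $i+j,i+j+1$ of $\Gamma_{>i-1}$, and since a sesqui-extension with respect to the first generator does not disturb perfect splits that avoid that generator (this is exactly the content built into Propositions~\ref{sesqui1} and~\ref{7ps}), these are labels of perfect splits of $\Gamma_{>i-1}$, neither equal to $i$ nor to $r-1$, with the transposition property inherited. Now I apply Proposition~\ref{7ps}: a perfect split of $\Gamma_{>i}$ with label $\neq i+1$ is a perfect split of $\Gamma$. The two candidate labels $i+j,i+j+1$ are consecutive, so at most one of them can equal $i+1$; hence at least one of them, call it $k$, satisfies $k\geq i+1$, $k\neq i+1$ unless it is the smaller of the two — in which case I work instead with the larger. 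In all cases I obtain a label $k$ of a perfect split of $\Gamma$ with $k\notin\{0,r-1\}$, together with the consecutive label $k\pm1$ also a label of a perfect split of $\Gamma$ (again via Proposition~\ref{7ps}, noting consecutiveness), and one of $\rho_k,\rho_{k\pm1}$ a transposition. That is precisely the conclusion sought, with $j$ there being $\min(k,k\pm1)$.

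The genuine obstacle is the borderline case in which the pair $\{i+j,i+j+1\}$ produced by the induction is forced to straddle the ``forbidden'' label $i+1$, i.e.\ $i+j=i$ (impossible, as $j\geq 1$ in $\Phi$) or, after reindexing, the transposition sits on the generator $\rho_{i+1}$ and the other member of the pair is $i$ itself — where $i$ is indeed a perfect split of $\Gamma$ but we must still exhibit a \emph{pair} of consecutive perfect-split labels inside $P_{\Gamma}\setminus\{0,r-1\}$. Here I would invoke the structural dichotomy already used repeatedly: since $\alpha_i$ or $\beta_i$ was chosen trivial, $\rho_i$ (on the side of the $(a,b)$-split) is itself essentially a transposition, so the pair $\{i,i+1\}$ directly does the job — one checks that the triviality of $\alpha_i$ or $\beta_i$ makes $i+1$ a perfect split of $\Gamma$ and not merely of $\Gamma_{>i}$, because the only generator that could act on both $\Gamma_{i+1}$-orbits is $\rho_i$, which on one side is trivial. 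The remaining small-rank base cases ($\kappa=1$, the simplex) are immediate as recorded just before Induction~\ref{ind}. Assembling these pieces completes the inductive step and hence, together with Propositions~\ref{0r-1(2)} and~\ref{remaining}, establishes that the correspondence $\Phi\mapsto\Phi^{j\uparrow}$ of Proposition~\ref{1} is onto; combined with injectivity this yields $|\Sigma^{\kappa}(n)|=|\Sigma^{\kappa}(n+1)|$ for $n\geq 2\kappa+3$, i.e.\ Theorem~\ref{main}.
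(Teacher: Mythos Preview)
Your overall strategy matches the paper's: reduce via Proposition~\ref{i2} to a string C-group $\Phi\in\Sigma^{\kappa'}(n')$ with $\kappa'<\kappa$, apply Induction~\ref{ind} there, and lift the resulting pair of labels back to $\Gamma$. The paper's proof is essentially those four lines. However, two points in your write-up deserve correction.

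First, the detour through Proposition~\ref{3ps+} creates a genuine gap. Propositions~\ref{imin} and~\ref{i2} are stated for the \emph{minimal label of a perfect split}, not the minimal element of $P_\Gamma$; the two need not coincide. The paper avoids this by observing that $0\notin P_\Gamma$ forces $0$ not to be the label of a perfect split at all (if $0$ were a perfect split then $O_1=\{a\}$ and $\alpha_0$ is automatically trivial, so $0\in P_\Gamma$). Hence the minimal perfect-split label $l$ satisfies $l>0$, and~\ref{i2} applies directly to that $l$. The $\alpha/\beta$-triviality from~\ref{3ps+} is not needed anywhere in this argument.

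Second, your ``borderline case'' does not arise. Induction~\ref{ind} returns $j$ and $j+1$ strictly between the first and last labels of $\Phi$; in $\Gamma$'s labelling these lie in $\{l+1,\dots,r-2\}\subset\{1,\dots,r-2\}$. For any such $k>l$ one has $\rho_k=\gamma_k$ as permutations of $\{1,\dots,n\}$ (they agree on $O_2\cup\{a\}$ and $\rho_k$ fixes $O_1$ pointwise because $l$ is a perfect split). Consequently the $P_\Phi$-condition (unique perfect $k$-split with one side fixed outside the split) transfers verbatim to $P_\Gamma$: the added block $O_1$ simply joins the $G_{<k}$-side and does not disturb uniqueness, perfection, or the trivial-side property. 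There is no need to route through Proposition~\ref{7ps} with its $i{+}1$ exception, nor to invoke the triviality of $\alpha_i$ to rescue a straddling pair; the lift is direct. Once you drop~\ref{3ps+} and this case analysis, your argument is exactly the paper's.
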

\begin{proof}
By Proposition~\ref{i2}  there exists $l>0$ and  $\Phi=\langle \gamma_l,\ldots,\gamma_{r-1}\rangle\in \Sigma^{\kappa'}(n')$ with $\kappa'<\kappa$ and $n'\geq 2\kappa'+4$, such that $\Gamma_{>l-1}$ is either isomorphic to $\Phi$ or to a sesqui-extention of $\Phi$ with respect to its first generator.
By Induction~\ref{ind},  if  $j=\min(P_{\Phi}\setminus\{l,r-1\})$  then $j+1\in P_{\Phi}\setminus\{l,r-1\}$ 
and either $\gamma_j$ or $\gamma_{j+1}$ is a transposition.  As $\rho_j=\gamma_j$ and $\rho_{j+1}=\gamma_{j+1}$  we get what we want.
Note that $j$ and $j+1$ must be labels of perfect splits of $\Gamma$, by Proposition~\ref{6ps}.
\end{proof}


\begin{thm}
 $|\Sigma^{\kappa}(n)|$ is a constant for $n\geq 2\kappa+ 3$.

\end{thm}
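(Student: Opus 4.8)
The plan is to combine Proposition~\ref{1} with a converse inequality $|\Sigma^{\kappa}(n+1)|\leq|\Sigma^{\kappa}(n)|$, obtained by showing that the injective map $\Phi\mapsto\Phi^{j\uparrow}$ of Proposition~\ref{1} is in fact a bijection. The key point is that the r\&d-extension can be \emph{inverted}: if $\Psi\in\Sigma^{\kappa}(n+1)$, one should locate a perfect split of $\Psi$ with the right local structure, ``contract'' the two adjacent perfect splits back to one, and recover a string C-group in $\Sigma^{\kappa}(n)$ whose image under the forward map is $\Psi$. Concretely, I would first invoke Propositions~\ref{3ps}, \ref{3ps+}, \ref{split} and~\ref{int} to guarantee that $\Psi$ (having rank $r+1\geq\frac{(n+1)+3}{2}$) is a string C-group for $S_{n+1}$ with a fracture graph and a perfect split with label not in $\{0,r\}$, and then use Propositions~\ref{remaining}, \ref{0r-1(2)} and~\ref{indnot0} to show that the minimal such label $j:=\min(P_{\Psi}\setminus\{0,r\})$ satisfies $j+1\in P_{\Psi}\setminus\{0,r\}$ with one of $\rho_j,\rho_{j+1}$ a transposition — this is precisely the content of Induction~\ref{ind}, which these three propositions establish for the base case and the inductive step.

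Once that structural fact is in hand, the inverse map is forced: the contraction of $\Psi$ at the consecutive perfect splits $j,j+1$ (removing the vertex $c$ that is the lone point of the transposition among $\rho_j,\rho_{j+1}$, and merging the labels $j,j+1$ into a single generator $\rho_j\rho_{j+1}$ restricted appropriately) yields a sggi $\Phi$ of degree $n$ and rank $r$; by Proposition~\ref{arp} together with the same intersection-property bookkeeping used in Theorem~\ref{RDE} (the maximal parabolics $\Phi_0,\Phi_{r-1}$ are string C-groups by induction on the rank, and $G_0\cap G_{r-1}=G_{0,r-1}$ because the relevant point sets decompose as products of symmetric groups), $\Phi$ is a string C-group for $S_n$, so $\Phi\in\Sigma^{\kappa}(n)$. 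By construction $\Phi^{j\uparrow}\cong\Psi$ and $j=\min(P_{\Phi}\setminus\{0,r-1\})$, so $\Psi$ lies in the image of the forward map. Hence the map is surjective, and being injective by Proposition~\ref{1}, it is a bijection between $\Sigma^{\kappa}(n)$ and $\Sigma^{\kappa}(n+1)$.

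The hard part — and the reason the preceding sections are so long — is exactly the verification of Induction~\ref{ind}: one must show that for \emph{every} $\Phi\in\Sigma^{\kappa}(n)$ the minimal non-extreme label $j$ of a perfect split has $j+1$ also a label of a perfect split, with $\rho_j$ or $\rho_{j+1}$ a transposition. The delicate cases are when the minimal perfect split occurs at label $0$ (or, dually, $r-1$), and one has to descend into the parabolic $\Gamma_0$ and its further parabolics, controlling what happens when $\{0,1,r-2,r-1\}$ are all labels of perfect splits versus when only some of them are. This is handled in Propositions~\ref{0r-1}, \ref{remaining}, \ref{0r-1(2)} and~\ref{indnot0} by an interlocking induction on $\kappa$ (via Proposition~\ref{i2}, which drops $\kappa$ strictly when passing to the relevant parabolic) and by the exclusion lists of Propositions~\ref{IPfails} and~\ref{IPfails2} and the split-attachability classification of Proposition~\ref{att}. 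Granting all of this, the proof of the theorem is short.

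\begin{proof}
Let $n\geq 2\kappa+3$. By Proposition~\ref{1} we have $|\Sigma^{\kappa}(n)|\leq|\Sigma^{\kappa}(n+1)|$, the inequality being realised by the injection $\iota\colon\Phi\mapsto\Phi^{j\uparrow}$ with $j=\min(P_{\Phi}\setminus\{0,r-1\})$ (well defined by Proposition~\ref{3ps+} and landing in $\Sigma^{\kappa}(n+1)$ by Theorem~\ref{RDE}). It remains to prove that $\iota$ is onto when $n\geq 2\kappa+4$; for $\kappa=1$ both sides equal $1$ (the simplex) so we may assume $\kappa\geq 2$ and argue by induction on $\kappa$, Induction~\ref{ind} being available for all smaller values.

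Let $\Psi=(S_{n+1},\{\rho_0,\ldots,\rho_r\})\in\Sigma^{\kappa}(n+1)$, so $\Psi$ has rank $r+1=(n+1)-\kappa\geq\frac{(n+1)+3}{2}$. By Proposition~\ref{int}, all maximal parabolics of $\Psi$ are intransitive and $\Psi$ admits a fracture graph; by Proposition~\ref{split} it has a perfect split, and by Proposition~\ref{3ps} one with label in $\{1,\ldots,r-1\}$. Set $j:=\min(P_{\Psi}\setminus\{0,r\})$. We must show $j+1\in P_{\Psi}\setminus\{0,r\}$ with $\rho_j$ or $\rho_{j+1}$ a transposition: if $0\notin P_{\Psi}$ this is Proposition~\ref{indnot0}; if $\{0,r\}\subseteq P_{\Psi}$ and $\{1,r-1\}\cap P_{\Psi}$ has at most one element it is Proposition~\ref{0r-1(2)}; if $\{0,1,r-1,r\}\subseteq P_{\Psi}$ it is Proposition~\ref{remaining}. (The remaining configurations reduce to these by duality, noting $P_{\Psi}$ is carried to $P_{\Psi}$ under the duality $\rho_t\mapsto\rho_{r-t}$.)

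Now let $\Phi$ be the sggi obtained from $\Psi$ by contracting the two consecutive perfect splits $j,j+1$: deleting the unique vertex $c$ fixed by all generators of label $\neq j,j+1$ and not belonging to the splits, and replacing $\rho_j,\rho_{j+1}$ by the single involution induced on the remaining $n$ points. Then $\Phi$ has degree $n$, rank $r$, and $\Phi^{j\uparrow}=\Psi$ by construction. That $\Phi$ is a string C-group follows from Proposition~\ref{arp}: by induction on the rank $\Phi_0$ and $\Phi_{r-1}$ are string C-groups, and $G_0\cap G_{r-1}=G_{0,r-1}$ by the same computation as in the proof of Theorem~\ref{RDE}, the relevant fixed-point sets splitting the groups as products of symmetric groups. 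Since $\Phi$ has group $S_n$ (Proposition~\ref{int}), $\Phi\in\Sigma^{\kappa}(n)$, and since $j$ is minimal in $P_{\Phi}\setminus\{0,r-1\}$ we have $\iota(\Phi)=\Phi^{j\uparrow}=\Psi$. Hence $\iota$ is surjective, and being injective it is a bijection. Therefore $|\Sigma^{\kappa}(n)|=|\Sigma^{\kappa}(n+1)|$ for all $n\geq 2\kappa+4$, and together with the case $n=2\kappa+3$ this shows $|\Sigma^{\kappa}(n)|$ is constant for $n\geq 2\kappa+3$.
\end{proof}
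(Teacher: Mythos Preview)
Your proof is correct and takes essentially the same approach as the paper, whose own proof is a single sentence citing Propositions~\ref{0r-1}--\ref{indnot0} and asserting that the map of Proposition~\ref{1} is a bijection; your version usefully spells out the surjectivity argument via the contraction. One minor slip: your threshold ``onto when $n\geq 2\kappa+4$'' should read $n\geq 2\kappa+3$ (the structural propositions, stated for degree $\geq 2\kappa+4$, are being applied to $\Psi$ of degree $n+1$), which also makes your closing clause about ``the case $n=2\kappa+3$'' superfluous; and your justification that the contraction $\Phi$ is itself a string C-group (``same computation as in Theorem~\ref{RDE}'') is somewhat hand-wavy, though the paper is equally silent on this point.
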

\begin{proof}
By Propositions~\ref{0r-1} to~\ref{indnot0}, the correspondence from $\Sigma^{\kappa}(n)$  to $\Sigma^{\kappa}(n+1)$, given in Proposition~\ref{1} is a bijection, hence  $|\Sigma^{\kappa}(n)|=| \Sigma^{\kappa}(n+1)|$ for $n \geq 2\kappa+ 3$.
This is equivalent to say that $|\Sigma^{\kappa}(n)|$ is constant for $n\geq  2\kappa+ 3$.
\end{proof}

\section{Acknowledgements}
The authors thank Natalia Garcia Colin and Mark Mixer for useful comments on a preliminary version of this paper.

\bibliographystyle{amsplain}

\end{document}